\documentclass[11pt,reqno]{amsart}
\usepackage{amssymb,amscd,amsfonts,amsbsy}
\usepackage{enumerate}
\usepackage{amsmath,amsthm,amssymb,amsfonts}
\usepackage{mathrsfs}
\usepackage{epsf,epsfig}

\usepackage{color}
\usepackage[colorlinks=true, pdfstartview=FitV, linkcolor=blue, citecolor=blue, urlcolor=blue]{hyperref}

\setlength{\oddsidemargin}{0in} \setlength{\evensidemargin}{0in}
\setlength{\evensidemargin}{0in} \setlength{\textwidth}{6in}
\setlength{\textheight}{9.50in} \setlength{\topmargin}{-0.50in}

\numberwithin{equation}{section}

\newcommand{\meanint}{{\int{\mkern-19mu}-}}

\newtheorem{proposition}{Proposition}[section]
\newtheorem{theorem}[proposition]{Theorem}
\newtheorem{lemma}[proposition]{Lemma}
\newtheorem{corollary}[proposition]{Corollary}

\newtheorem{remark}[proposition]{Remark}

\newtheorem{definition}[proposition]{Definition}

\begin{document}

\title[Characterizing Regularity of Domains via Riesz Transforms]
{Characterizing Regularity of Domains via the Riesz Transforms on
their Boundaries}

\author{Dorina Mitrea, Marius Mitrea, and Joan Verdera}

\date{January 24, 2016}

\subjclass[2010]{Primary 42B20; Secondary 15A66, 42B37.
\newline
{\it Key words}: Singular integral operator, Riesz transform,
uniform rectifiability, H\"older space, Lyapunov domain, Clifford
algebra, Cauchy-Clifford operator, BMO, VMO, Reifenberg flat, SKT
domain, Besov space}

\begin{abstract}
Under mild geometric measure theoretic assumptions on an open subset
$\Omega$ of ${\mathbb{R}}^n$, we show that the Riesz transforms on
its boundary are continuous mappings on the H\"older space
${\mathscr{C}}^\alpha(\partial\Omega)$ if and only if $\Omega$ is a
Lyapunov domain of order $\alpha$ (i.e., a domain of class
${\mathscr{C}}^{1+\alpha}$). In the category of Lyapunov domains we also
establish the boundedness on H\"older spaces of
singular integral operators with kernels of the form
$P(x-y)/|x-y|^{n-1+l}$, where $P$ is any odd homogeneous polynomial
of degree $l$ in ${\mathbb{R}}^n$. This family of
singular integral operators, which may be thought of as generalized
Riesz transforms, includes the boundary layer potentials associated
with basic PDE's of mathematical physics, such as the Laplacian, the
Lam\'e system, and the Stokes system. We also consider the limiting
case $\alpha=0$ (with ${\rm VMO}(\partial\Omega)$ as the natural
replacement of ${\mathscr{C}}^\alpha(\partial\Omega)$), and discuss
an extension to the scale of Besov spaces.
\end{abstract}

\maketitle

\allowdisplaybreaks

\section{Introduction}
\setcounter{equation}{0} \label{S-1}

Let $\Omega\subset{\mathbb{R}}^n$ be an open set. Singular integral
operators mapping functions on $\partial\Omega$ into functions
defined either on $\partial\Omega,$ or in $\Omega$, arise naturally
in many branches of mathematics and engineering. From the work of
G.~David and S.~Semmes (cf. \cite{DaSe91}, \cite{DS2}) we know that
uniformly rectifiable ({\rm UR}) sets make up the
most general context in which Calder\'on-Zygmund like operators are
bounded on Lebesgue spaces $L^p$, with $p\in(1,\infty)$ (see
Theorem~\ref{Main-T2-BBBB} in the body of the paper
for a concrete illustration of the scope of this theory). David and Semmes
have also proved that, under the background assumption of Ahlfors regularity,
uniform rectifiability is implied by the simultaneous $L^2$ boundedness
of all integral convolution type operators on $\partial\Omega$, whose
kernels are smooth, odd, and satisfy standard growth conditions (cf.
\cite[Definition~1.20, p.\,11]{DS2}). In fact, a remarkable recent
result proved by F. Nazarov, X. Tolsa, and A. Volberg in \cite{NTV}
states that the $L^2$-boundedness of the Riesz transforms alone
yields uniform rectifiability. The corresponding result in the
plane was proved much earlier in \cite{MMV}.

The above discussion points to uniform rectifiability as being
intimately connected with the boundedness of a large class of
Calder\'on-Zygmund like operators on Lebesgue spaces. This being
said, uniform rectifiability is far too weak to guarantee by itself
analogous boundedness properties in other functional analytic
contexts, such as the scale of H\"older spaces
${\mathscr{C}}^\alpha$, with $\alpha\in(0,1)$.

The goal of this paper is to identify the category of domains for
which the Riesz transforms are bounded on H\"older spaces as the
class of Lyapunov domains (cf. Definition~\ref{lipdom}), and also
show that, in fact, a much larger family of singular integral
operators (generalizing the Riesz transforms) act naturally in this
setting. On this note we wish to remark that the trade-mark property
of Lyapunov domains is the H\"older continuity of their outward unit
normals. Alternative characterizations, of a purely geometric
flavor, may be found in \cite{ABMMZ}. The issue of boundedness of
singular integral operators on H\"older spaces has a long history,
with early work focused on Cauchy-type operators in the plane (cf.
\cite{Mu}, \cite{Gak}, and the references therein). More recently
this topic has been considered in \cite{Dyn1}, \cite{Dyn2},
\cite{FMM}, \cite{GG}, \cite{Ga}, \cite{Kr}, \cite{MOV},
\cite[Chapter~X, \S\,4]{Meyer}, \cite{Tay}, \cite{Wi}.

Consider an Ahlfors regular subset $\Sigma$ of ${\mathbb{R}}^n$
(i.e., a closed, nonempty, set satisfying \eqref{2.0.1}), and equip it with ${\mathcal{H}}^{n-1}$,
the $(n-1)$-dimensional Hausdorff measure in ${\mathbb{R}}^n$, restricted to $\Sigma$.
The latter measure happens to be a positive, locally finite, complete, doubling,
Borel regular (hence Radon) measure on $\Sigma$. In particular, the Lebesgue scale $L^p(\Sigma)$,
$0<p\leq\infty$, is always understood with respect to the aforementioned measure.
A good deal of analysis goes through in this setting, such as the
$L^p$-boundedness of Hardy-Littlewood maximal operator on $\Sigma$, Lebesgue's Differentiation
Theorem for locally integrable functions on $\Sigma$, and the density of H\"older functions with
bounded support in $L^p(\Sigma)$. See, e.g., \cite{AM}, \cite{CoWe71}, \cite{CW}, \cite{Ch},
and the references therein.

Classically, given an Ahlfors regular subset $\Sigma$ of ${\mathbb{R}}^n$, the Riesz transforms
are defined as principal value singular integral operators on $\Sigma$ with kernels
$(x_j-y_j)/(\omega_{n-1}|x-y|^{n})$, for $1\leq j\leq n$.
Specifically, if $\omega_{n-1}$ is the area of the unit sphere in
${\mathbb{R}}^n$, for each $j\in\{1,\dots,n\}$ define the $j$-th principal value Riesz transform
\begin{eqnarray}\label{T-pv.4khg}
\begin{array}{c}
R^{{}^{\rm pv}}_jf(x):=\lim\limits_{\varepsilon\to 0^{+}}R_{j,\varepsilon}f(x)\,\,\text{ where, for each }\,
\varepsilon>0,
\\[10pt]
\displaystyle
R_{j,\varepsilon}f(x):=\frac{1}{\omega_{n-1}}
\int\limits_{\stackrel{y\in\Sigma}{|x-y|>\varepsilon}}
\frac{x_j-y_j}{|x-y|^{n}}f(y)\,d{\mathcal{H}}^{n-1}(y),\quad x\in\Sigma.
\end{array}
\end{eqnarray}
It turns out that if $\Sigma$ is countably rectifiable (of dimension $n-1$)
then for each $f\in L^2(\Sigma)$ the above limit exists at ${\mathcal{H}}^{n-1}$-a.e.
point $x\in\Sigma$. In fact, a result of X. Tolsa (cf. \cite{Tol}) states that if
an arbitrary set $\Sigma\subset{\mathbb{R}}^{n}$ has
${\mathcal{H}}^{n-1}(\Sigma)<+\infty$ then
\begin{equation}\label{TToo-1}
\parbox{7.80cm}{$\Sigma$ is countably rectifiable (of dimension $n-1$) if and only if
for each $j\in\{1,\dots,n\}$ the limit
$$
\hskip -3.00in
\lim_{\varepsilon\to 0^+}\int\limits_{\stackrel{y\in\Sigma}{|y-x|>\varepsilon}}
\frac{x_j-y_j}{|x-y|^{n}}\,d{\mathcal{H}}^{n-1}(y)
$$
exists for ${\mathcal{H}}^{n-1}$-a.e. point $x$ belonging to $\Sigma$.}
\end{equation}

There is yet another related brand of Riesz transforms whose definition places no additional
demands on the underlying Ahlfors regular set $\Sigma$ of ${\mathbb{R}}^n$. The definition
in question is of a distribution theory flavor and proceeds by fixing $\alpha\in(0,1)$ and
considering ${\mathscr{C}}_c^\alpha(\Sigma)$, the space of H\"older functions of order $\alpha$
with compact support in $\Sigma$. This is a Banach space, and we denote by
$\big({\mathscr{C}}_c^\alpha(\Sigma)\big)^\ast$ its dual. Then, for each $j\in\{1,\dots,n\}$,
one defines the $j$-th distributional Riesz transform as the operator
\begin{equation}\label{yrf56f-RRR}
R_j:{\mathscr{C}}_c^\alpha(\Sigma)\longrightarrow\big({\mathscr{C}}_c^\alpha(\Sigma)\big)^\ast
\end{equation}
with the property that for every $f,g\in{\mathscr{C}}_c^\alpha(\Sigma)$ one has
\begin{equation}\label{yrf56f-RRR.1}
\big\langle R_jf,g\big\rangle=\frac{1}{2\,\omega_{n-1}}\int_{\Sigma}\int_{\Sigma}
\frac{x_j-y_j}{|x-y|^{n}}\big[f(y)g(x)-f(x)g(y)\big]\,d{\mathcal{H}}^{n-1}(y)\,d{\mathcal{H}}^{n-1}(x)
\end{equation}
where, in this context, $\langle\cdot,\cdot\rangle$ stands for the natural paring between
$\big({\mathscr{C}}_c^\alpha(\Sigma)\big)^\ast$ and ${\mathscr{C}}_c^\alpha(\Sigma)$.
It may be checked without difficulty that the above integral is absolutely convergent,
ultimately rendering the distributional Riesz transform $R_j$ linear and continuous in
the context of \eqref{yrf56f-RRR}. Moreover, the distributional Riesz transform $R_j$
just introduced is associated with the kernel $(x_j-y_j)/(\omega_{n-1}|x-y|^{n})$ in
the sense that for each $f\in{\mathscr{C}}_c^\alpha(\Sigma)$ the functional
$R_jf\in\big({\mathscr{C}}_c^\alpha(\Sigma)\big)^\ast$ is of function type
on the set $\Sigma\setminus{\rm supp}\,f$ and
\begin{eqnarray}\label{T-juVVae35}
R_jf(x)=\frac{1}{\omega_{n-1}}\int\limits_{\Sigma}
\frac{x_j-y_j}{|x-y|^{n}}f(y)\,d{\mathcal{H}}^{n-1}(y)\,\,\text{ for }\,\,x\in\Sigma\setminus{\rm supp}\,f.
\end{eqnarray}

The above definition of the distributional Riesz transforms is very much in line with the point of view
adopted in the statement of the classical $T(1)$ theorem of G.~David and J.-L.~Journ\'e \cite{DaJo}.
Originally formulated in the entire Euclidean space, the latter result turned out to be remarkably resilient,
in terms of the demands it places on the ambient. Indeed, the $T(1)$ theorem has been subsequently
generalized to spaces of homogeneous type (in the sense of Coifman and Weiss \cite{CoWe71}, \cite{CW}),
a setting where only the existence of a quasi-distance and a doubling measure is postulated
(cf., e.g., \cite[Theorem~12.3]{AH}, \cite[Chapter~IV]{Ch}, \cite[Theorem~5.56, p.\,166]{HaMuYa08}).
This is a framework in which an Ahlfors regular set $\Sigma\subset{\mathbb{R}}^n$, equipped with
the Euclidean distance and the $(n-1)$-dimensional Hausdorff measure, fits in naturally.

As it turns out, much information (both of analytic and geometric flavor) is encapsulated into
the action of the distributional Riesz transforms \eqref{yrf56f-RRR}-\eqref{yrf56f-RRR.1} on the
constant function $1$. Since the function $1$ may not belong to ${\mathscr{C}}_c^\alpha(\Sigma)$
(which happens precisely when $\Sigma$ is unbounded), one should be careful defining $R_j(1)$.
In agreement with the procedures set in place by the $T(1)$ theorem, we consider $R_j(1)$ to be
the linear functional acting on each function $g\in{\mathscr{C}}_c^\alpha(\Sigma)$ satisfying
the cancellation condition $\int_{\Sigma}g\,d{\mathcal{H}}^{n-1}=0$ according to
\begin{align}\label{yrf56f-RRR.2}
\big\langle R_j(1),g\big\rangle
&:=\frac{1}{2\,\omega_{n-1}}\int_{\Sigma}\int_{\Sigma}
\frac{x_j-y_j}{|x-y|^{n}}\big[\phi(y)g(x)-\phi(x)g(y)\big]\,d{\mathcal{H}}^{n-1}(y)\,d{\mathcal{H}}^{n-1}(x)
\nonumber\\[6pt]
&\quad -\frac{1}{\omega_{n-1}}\int_{\Sigma}\int_{\Sigma}\frac{x_j-y_j}{|x-y|^{n}}
(1-\phi(x))g(y)\,d{\mathcal{H}}^{n-1}(y)\,d{\mathcal{H}}^{n-1}(x),
\end{align}
where $\phi\in{\mathscr{C}}_c^\alpha(\Sigma)$ is an auxiliary function chosen to satisfy
$\phi\equiv 1$ near ${\rm supp}\,g$. In this vein, let us remark that, in the case when
$\Sigma$ is compact, we do have ${\mathscr{C}}_c^\alpha(\Sigma)={\mathscr{C}}^\alpha(\Sigma)$
hence, in particular, we now have $1\in{\mathscr{C}}_c^\alpha(\Sigma)$. In such a scenario,
it may be readily verified that $R_j(1)$, defined as in \eqref{yrf56f-RRR.2}, is the restriction
of the functional $R_j1\in\big({\mathscr{C}}_c^\alpha(\Sigma)\big)^\ast$, defined as in
\eqref{yrf56f-RRR.1} with $f=1$, to the space consisting of functions from ${\mathscr{C}}_c^\alpha(\Sigma)$
which integrate to zero. It is therefore reassuring to know that the various points of view on the
nature of the action of the distributional Riesz transform $R_j$ on the constant function $1$ are consistent.

At the analytical level, the $T(1)$ theorem (for operators associated with odd kernels)
gives that, for each fixed $j\in\{1,\dots,n\}$,
\begin{equation}\label{Ma-utrt}
\parbox{11.00cm}{the distributional Riesz transform $R_j$ from \eqref{yrf56f-RRR}-\eqref{yrf56f-RRR.1}
extends to a bounded linear operator on $L^2(\Sigma)$ if and only if $R_j(1)\in{\rm BMO}(\Sigma)$,}
\end{equation}
where ${\rm BMO}(\Sigma)$ is the John-Nirenberg space of functions of bounded
mean oscillations on $\Sigma$ (regarded as a space of homogeneous type).

At this stage, a few comments are in order, about the specific manner in which
the various brands of Riesz transforms introduced earlier relate to one another. Assume that
$\Sigma$ is an Ahlfors regular subset of ${\mathbb{R}}^n$ which is countably rectifiable
(of dimension $n-1$). First, it turns out that if for some $j\in\{1,\dots,n\}$
one (hence both) of the two equivalent conditions in \eqref{Ma-utrt} holds then the extension
of the distributional Riesz transform $R_j$ to a bounded linear operator on $L^2(\Sigma)$
(mentioned in \eqref{Ma-utrt}) is realized precisely by the principal value Riesz transform
$R^{{}^{\rm pv}}_j$ (defined for each $f\in L^2(\Sigma)$ as in \eqref{T-pv.4khg} at
${\mathcal{H}}^{n-1}$-a.e. $x\in\Sigma$). In particular, for each $j\in\{1,\dots,n\}$ there holds:
\begin{equation}\label{TTbb-1aG}
\begin{array}{c}
\parbox{8.00cm}{if $\Sigma\subset{\mathbb{R}}^n$ is a compact Ahlfors regular set which is countably rectifiable
(of dimension $n-1$) and $R_j(1)\in{\rm BMO}(\Sigma)$ then, for ${\mathcal{H}}^{n-1}$-a.e. $x\in\Sigma$,}
\\[16pt]
\displaystyle
R_j(1)(x)=\lim_{\varepsilon\to 0^+}\int\limits_{\stackrel{y\in\Sigma}{|y-x|>\varepsilon}}
\frac{x_j-y_j}{|x-y|^{n}}\,d{\mathcal{H}}^{n-1}(y).
\end{array}
\end{equation}
Second, if for some $j\in\{1,\dots,n\}$ the principal value Riesz transform
$R^{{}^{\rm pv}}_j$, originally acting on ${\mathscr{C}}_c^\alpha(\Sigma)$, is known to extend to a bounded
linear operator on $L^2(\Sigma)$, then $R^{{}^{\rm pv}}_j$ coincides on ${\mathscr{C}}_c^\alpha(\Sigma)$
with the distributional Riesz transform $R_j$ defined as in \eqref{yrf56f-RRR}-\eqref{yrf56f-RRR.1}.
Third, having fixed $j\in\{1,\dots,n\}$, the principal value Riesz transform $R^{{}^{\rm pv}}_j$
extends to a bounded linear operator on $L^2(\Sigma)$ if and only if for each $\varepsilon>0$
the $j$-th truncated Riesz transform $R_{j,\varepsilon}$ defined as in \eqref{T-pv.4khg}
is bounded on $L^2(\Sigma)$ uniformly in $\varepsilon$, if and only if the $j$-th maximal
Riesz transform $R_{j,\ast}$ is bounded on $L^2(\Sigma)$ where, for each $f\in L^2(\Sigma)$,
\begin{eqnarray}\label{T-p-utr}
R_{j,\ast}f(x):=\sup\limits_{\varepsilon>0}\big|(R_{j,\varepsilon}f)(x)\big|,\qquad x\in\Sigma.
\end{eqnarray}
All these results may be established via arguments of Calder\'on-Zygmund theory flavor, such as
Cotlar's inequality, the Calder\'on-Zygmund decomposition, Marcinkiewicz's interpolation theorem,
the boundedness of the Hardy-Littlewood maximal operator, etc.

At the geometrical level, the Nazarov, Tolsa, Volberg recent main result in \cite{NTV} mentioned
earlier may be rephrased, in light of \eqref{Ma-utrt}, as follows: under the background assumption
that $\Sigma$ is an Ahlfors regular subset of ${\mathbb{R}}^n$, one has
\begin{equation}\label{Mabb88}
\Sigma\,\mbox{ uniformly rectifiable set}\,\Longleftrightarrow\,
R_j(1)\in{\rm BMO}(\Sigma)\,\,\,\mbox{ for each }\,\,j\in\{1,\dots,n\}.
\end{equation}
Hence, within the class of Ahlfors regular subsets of ${\mathbb{R}}^n$,
the membership of the $R_j(1)$'s to the John-Nirenberg space ${\rm BMO}$
characterizes uniform rectifiability. As mentioned earlier in the introduction, this result
refines earlier work of G.~David and S.~Semmes \cite{DaSe91} in which these authors have
proved that uniform rectifiability within the class of Ahlfors regular subsets of
${\mathbb{R}}^n$ is equivalent to the $L^2$-boundedness in that ambient of all truncated
singular integral operators, uniform with respect to the truncation (or, equivalently,
the $L^2$-boundedness of all maximal operators), associated with all kernels of the
form $k(x-y)$, where the function $k\in{\mathscr{C}}^\infty({\mathbb{R}}^n\setminus\{0\})$
is odd and satisfies
\begin{equation}\label{Ma-hREDa}
\sup_{x\in{\mathbb{R}}^n\setminus\{0\}}\Big[|x|^{(n-1)+|\gamma|}\big|(\partial^\gamma k)(x)\big|\Big]<+\infty,
\qquad\forall\,\gamma\in{\mathbb{N}}_0^n.
\end{equation}

In relation to the brands of Riesz transforms introduced earlier, the results
of G.~David and S.~Semmes in \cite{DaSe91} imply\footnote{in concert with the Calder\'on-Zygmund
machinery alluded to earlier, and bearing in mind \eqref{URRFVCa}} that, for each $j\in\{1,\dots,n\}$,
\begin{equation}\label{uytggf-RRR}
\parbox{11.00cm}{whenever $\Sigma$ is a uniformly rectifiable set in ${\mathbb{R}}^n$,
the principal value Riesz transform $R^{{}^{\rm pv}}_j$ is a well-defined, linear and bounded
operator on $L^2(\Sigma)$, which agrees on ${\mathscr{C}}^\alpha_c(\Sigma)$ with the distributional
Riesz transform $R_j$.}
\end{equation}

From the perspective of \eqref{Mabb88}, one of the issues addressed by our first main result is
that of extracting more geometric regularity for $\Sigma$ if more analytic regularity
for the $R_j(1)$'s is available. We shall study this issue in the case when $\Sigma:=\partial\Omega$,
the topological boundary of an open subset $\Omega$ of ${\mathbb{R}}^n$.
This fits into the paradigm of describing
geometric characteristics (such as regularity of a certain nature)
of a given set in terms of properties of suitable analytical
entities (such as singular integral operators) associated with this
environment. Specifically, we have the following theorem (for all
relevant definitions the reader is referred to \S\ref{S-2}).

\begin{theorem}\label{Main-T1aa}
Assume $\Omega\subseteq\mathbb{R}^n$ is an Ahlfors regular domain with a compact boundary, satisfying
$\partial\Omega=\partial(\overline{\Omega})$. Set $\sigma:={\mathcal{H}}^{n-1}\lfloor\partial\Omega$
and define $\Omega_{+}:=\Omega$ and $\Omega_{-}:=\mathbb{R}^n\setminus\overline{\Omega}$.

Then for each $\alpha\in(0,1)$ the following claims are equivalent:
\begin{enumerate}
\item[{\rm (a)}] $\Omega$ is a domain of class ${\mathscr{C}}^{1+\alpha}$
{\rm (}or Lyapunov domain of order $\alpha${\rm )};
\item[{\rm (b)}] the distributional Riesz transforms, defined as in
\eqref{yrf56f-RRR}-\eqref{yrf56f-RRR.1} with $\Sigma:=\partial\Omega$,
satisfy
\begin{equation}\label{eq:RIESZ33}
R_j1\in{\mathscr{C}}^{\alpha}(\partial\Omega)
\,\,\,\mbox{ for each }\,\,j\in\{1,\dots,n\};
\end{equation}
\item[{\rm (c)}] $\Omega$ is a {\rm UR} domain and, given any odd homogeneous polynomial
$P$ of degree $l\geq 1$ in ${\mathbb{R}}^{n}$, the singular integral operator
\begin{eqnarray}\label{T-pv.44}
Tf(x):=\lim_{\varepsilon\to 0^{+}}
\int\limits_{\stackrel{y\in\partial\Omega}{|x-y|>\varepsilon}}
\frac{P(x-y)}{|x-y|^{n-1+l}}f(y)\,d\sigma(y),\qquad
x\in\partial\Omega,
\end{eqnarray}
is meaningfully defined for every $f\in{\mathscr{C}}^\alpha(\partial\Omega)$, and
maps ${\mathscr{C}}^\alpha(\partial\Omega)$ boundedly into itself;
\item[{\rm (d)}] $\Omega$ is a {\rm UR} domain and one has
\begin{equation}\label{eq:RIEtt}
{\mathscr{R}}^{\pm}_j1\in{\mathscr{C}}^{\alpha}(\Omega_{\pm})\,\,\,\mbox{
for each }\,\,j\in\{1,\dots,n\}
\end{equation}
where, for $j\in\{1,\dots,n\}$,
\begin{eqnarray}\label{T-pv.4kh445}
{\mathscr{R}}^{\pm}_jf(x):=\frac{1}{\omega_{n-1}}\int\limits_{\partial\Omega}
\frac{x_j-y_j}{|x-y|^{n}}f(y)\,d\sigma(y),\qquad x\in\Omega_{\pm};
\end{eqnarray}
\item[{\rm (e)}] $\Omega$ is a {\rm UR} domain and, for each odd homogeneous
polynomial $P$ of degree $l\geq 1$ in ${\mathbb{R}}^{n}$, the integral
operators
\begin{eqnarray}\label{T-layer.44}
{\mathbb{T}}_{\pm}f(x):=\int\limits_{\partial\Omega}\frac{P(x-y)}{|x-y|^{n-1+l}}f(y)\,d\sigma(y),
\qquad x\in\Omega_\pm,
\end{eqnarray}
map ${\mathscr{C}}^\alpha(\partial\Omega)$ boundedly into
${\mathscr{C}}^\alpha\big(\Omega_{\pm}\big)$.
\end{enumerate}

Moreover, if $\Omega$ is a ${\mathscr{C}}^{1+\alpha}$ domain for
some $\alpha\in(0,1)$, there exists a finite constant $C>0$,
depending only on $n$, $\alpha$, ${\rm diam}(\partial\Omega)$, the
upper Ahlfors regularity constant of $\partial\Omega$, and
$\|\nu\|_{{\mathscr{C}}^\alpha(\partial\Omega)}$ {\rm (}where $\nu$
is the outward unit normal to $\Omega${\rm )} with the property that
for each odd homogeneous polynomial $P$ of degree $l\geq 1$ in ${\mathbb{R}}^{n}$
the integral operators \eqref{T-layer.44}, \eqref{T-pv.44} satisfy
\begin{align}\label{jhygff8533}
\big\|{\mathbb{T}}_{\pm}f\big\|_{{\mathscr{C}}^\alpha\big(\overline{\Omega_{\pm}}\,\big)}
\leq
C^{l}2^{l^2}\|P\|_{L^2(S^{n-1})}\|f\|_{{\mathscr{C}}^\alpha(\partial\Omega)},\qquad
\forall\,f\in{\mathscr{C}}^\alpha(\partial\Omega),
\end{align}
and
\begin{align}\label{jhygff8533.2}
\big\|Tf\big\|_{{\mathscr{C}}^\alpha(\partial\Omega)} \leq
C^{l}2^{l^2}\|P\|_{L^2(S^{n-1})}\|f\|_{{\mathscr{C}}^\alpha(\partial\Omega)},\qquad
\forall\,f\in{\mathscr{C}}^\alpha(\partial\Omega).
\end{align}
\end{theorem}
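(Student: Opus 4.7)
My plan is to close the equivalences via the loop (a) $\Rightarrow$ (e) $\Rightarrow$ (d), (a) $\Rightarrow$ (c) $\Rightarrow$ (b), and finally (b) $\Rightarrow$ (a), deriving the quantitative bounds \eqref{jhygff8533}-\eqref{jhygff8533.2} directly from the proof of the forward implications out of (a). The implication (d) $\Rightarrow$ (b) is a trivial consequence of the Plemelj-Sokhotski jump identity ${\mathscr R}^+_j 1|_{\partial\Omega}+{\mathscr R}^-_j 1|_{\partial\Omega}=2R_j 1$ for the harmonic Riesz potentials.

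\textbf{The analytic core, (a) $\Rightarrow$ (e).} On a Lyapunov domain the outward unit normal $\nu$ lies in $\mathscr{C}^\alpha(\partial\Omega)$, and this H\"older control on the geometry is what powers the estimate. I would first treat the base case $l=1$ directly: writing
\[
{\mathbb T}_\pm f(x)-{\mathbb T}_\pm f(x_0)
={\mathbb T}_\pm(f-f(x_0))(x)+f(x_0)\big({\mathbb T}_\pm 1(x)-{\mathbb T}_\pm 1(x_0)\big),
\]
the first term is controlled by splitting at $|y-x_0|\le 2|x-x_0|$ (using the H\"older exponent of $f$ near $x_0$ and standard oscillation of an odd kernel far from $x_0$), while the second reduces to showing that ${\mathbb T}_\pm 1\in\mathscr{C}^\alpha\big(\overline{\Omega_\pm}\big)$; this last fact is a tangent-plane comparison that converts the cancellation integral into an absolutely convergent one precisely because $\nu\in\mathscr{C}^\alpha$. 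For higher-degree $P$, I would decompose $P=\sum_k P_k$ into its spherical-harmonic constituents (giving $\sum_k\|P_k\|_{L^2(S^{n-1})}$ controlled by $\|P\|_{L^2(S^{n-1})}$) and factor each kernel $P_k(x-y)/|x-y|^{n-1+l}$ as an $l$-fold Clifford product of Cauchy-Clifford type kernels, each bounded on $\mathscr{C}^\alpha$ by the $l=1$ step. Tracking the combinatorics of this Clifford expansion produces the $C^l 2^{l^2}\|P\|_{L^2(S^{n-1})}$ dependence in \eqref{jhygff8533}.

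\textbf{Boundary traces, and the cheap implications.} The implication (a) $\Rightarrow$ (c) with bound \eqref{jhygff8533.2} is obtained from (a) $\Rightarrow$ (e) by taking nontangential limits: the Plemelj-Sokhotski formula for the odd kernel $P(x-y)/|x-y|^{n-1+l}$ writes the principal value $T$ as one-half the sum of the two nontangential boundary traces of ${\mathbb T}_\pm$, so the H\"older bound on $\overline{\Omega_\pm}$ descends to one on $\partial\Omega$. Choosing $P(x)=x_j$ and $l=1$ in (e) produces (d); choosing $P(x)=x_j$, $l=1$, $f\equiv 1$ in (c) (admissible because $\partial\Omega$ is compact so $1\in\mathscr{C}^\alpha_c(\partial\Omega)$, and recalling \eqref{TTbb-1aG}-\eqref{uytggf-RRR} to identify the p.v.\ and distributional Riesz transforms) produces (b). Hence (a) implies each of (b), (c), (d), (e).

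\textbf{The closing step (b) $\Rightarrow$ (a), the main obstacle.} The hypothesis $R_j 1\in\mathscr{C}^\alpha(\partial\Omega)\subset{\rm BMO}(\partial\Omega)$ combined with the Nazarov-Tolsa-Volberg theorem \eqref{Mabb88} already gives that $\partial\Omega$ is uniformly rectifiable. The genuine task is to upgrade this conclusion, together with the H\"older control on $R_j 1$, to the Lyapunov conclusion $\nu\in\mathscr{C}^\alpha(\partial\Omega)$. Here the natural tool is the Cauchy-Clifford operator $\mathcal{C}$ on $\partial\Omega$: its Plemelj jump identity extracts the Clifford-valued normal $n$ from $\mathcal{C}^\pm 1$, and the scalar and $k$-vector components of $\mathcal{C}\,1$ are Clifford-linear combinations of the $R_j 1$'s (together with the harmonic double layer of $1$, whose $\mathscr{C}^\alpha$ behaviour on UR domains with compact boundary is standard). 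The Kerzman-Stein-type identity $(\mathcal C^{\rm pv})^2=\tfrac14 I+\text{compact}$ (valid once UR is in place) lets one solve for $\nu$ as an explicit combination of boundary traces of $\mathcal{C}\,1$, so that $\nu$ inherits the H\"older regularity of the $R_j 1$'s. The delicate point I expect to do most of the work is justifying the Clifford-Plemelj relations rigorously at the minimal regularity initially available (UR boundary with only BMO normal) and then bootstrapping to close the implication; it is here that the Clifford machinery plays an indispensable role.
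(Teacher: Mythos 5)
Your global architecture is sound (the loop closes, the jump\mbox{-}formula identities you invoke for (d)$\Rightarrow$(b) and for descending from (e) to (c) by averaging the two nontangential traces are correct, and the base case $l=1$ of (a)$\Rightarrow$(e) is handled essentially as in the paper), but there are two genuine gaps. The first is the inductive step of (a)$\Rightarrow$(e). You propose to ``factor each kernel $P_k(x-y)/|x-y|^{n-1+l}$ as an $l$-fold Clifford product of Cauchy--Clifford type kernels, each bounded on $\mathscr{C}^\alpha$ by the $l=1$ step.'' No such factorization exists for general harmonic $P_k$ in dimension $n\geq 3$, and even where a pointwise factorization of the kernel were available it would not help: a product of kernels is not the kernel of a composition of operators, so boundedness of the factors on $\mathscr{C}^\alpha$ transfers nothing to the operator with the product kernel. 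The mechanism that actually works (Lemma~\ref{L-Semmes} together with \eqref{IBP-k}--\eqref{IBP-k3}) is of a different nature: one writes $P(x)/|x|^{n-1+l}=\sum_{r,s}[k_{rs}(x)]_s$ with $D_Rk_{rs}$ equal to a partial derivative of a kernel built from a harmonic polynomial of degree $l-2$, and then integrates by parts over $\Omega$ (Clifford--Stokes) to convert ${\mathbb{T}}_{rs}\nu$ into a degree-$(l-2)$ operator applied to the component $\nu_r$. The induction thus reduces ``$T(1)$ for degree $l$'' to ``$T(\nu_r)$ for degree $l-2$,'' and it is precisely the hypothesis $\nu\in\mathscr{C}^\alpha(\partial\Omega)$ that makes the induction hypothesis applicable; the constant $C^l2^{l^2}$ comes out of tracking this two-step descent, not out of a product expansion.

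The second gap is in (b)$\Rightarrow$(a). The identity you invoke, $(\mathcal{C}^{\rm pv})^2=\tfrac14 I+\text{compact}$, is both misstated and insufficient: the relation that is needed, and that holds (Proposition~\ref{iTRfc}, Theorem~\ref{i65r5ED}), is the \emph{exact} involution $(\mathcal{C}^{\rm pv})^2=\tfrac14 I$, without error term. With only a Fredholm-type relation you could not ``solve for $\nu$ as an explicit combination''; you would obtain $\tfrac14\nu=-\mathcal{C}^{\rm pv}\big(\sum_j(R_j1)e_j\big)-K\nu$ with no control on the regularity of $K\nu$, and the argument collapses. Two further points: the correct algebraic link is $\mathcal{C}^{\rm pv}\nu=-\sum_j(R^{\rm pv}_j1)e_j$ (it is $\mathcal{C}^{\rm pv}$ applied to $\nu$, not to $1$, that produces the Riesz transforms of the constant, because $\nu\odot\nu=-1$ absorbs the normal in the Cauchy--Clifford kernel), and the step you defer --- that $\mathcal{C}^{\rm pv}$ is bounded on $\mathscr{C}^\alpha(\partial\Omega)\otimes\mathcal{C}\ell_n$ --- is not a routine verification but a higher-dimensional Plemelj--Privalov theorem (Theorem~\ref{i65r5ED}) whose proof rests on the uniform bound \eqref{Cau-CJiigv} for the truncated integrals of the kernel against $\nu$; without it, even granting the exact involution, $\nu=-4\mathcal{C}^{\rm pv}\big(\sum_j(R_j1)e_j\big)$ does not yield $\nu\in\mathscr{C}^\alpha(\partial\Omega)$.
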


The operators described in \eqref{T-pv.44} may be thought of as
generalized Riesz transforms since they correspond
to \eqref{T-pv.44} with
\begin{equation}\label{eq:Pjah}
P(x):=x_j/\omega_{n-1}\,\,\mbox{ for
}\,\,x=(x_1,\dots,x_n)\in{\mathbb{R}}^n, \quad 1 \le j \le n.
\end{equation}
For the same choices of the polynomials, the claim in part {\rm (e)}
of Theorem~\ref{Main-T1aa} implies that the harmonic single-layer
operator (cf. \eqref{iu7grEE} for a definition) is well-defined,
linear, and bounded as a mapping
\begin{equation}\label{Rdac-1}
\mathscr{S}:{\mathscr{C}}^\alpha(\partial\Omega)\longrightarrow
{\mathscr{C}}^{1+\alpha}(\Omega_{\pm}).
\end{equation}

In concert with the comments meant to clarify how the distributional Riesz transforms related
to the principal value Riesz transforms, Theorem~\ref{Main-T1aa} readily implies the following corollary.

\begin{corollary}\label{Main-T1aa-CCCa}
Let $\Omega$ be a nonempty, proper, open subset of $\mathbb{R}^n$ with compact boundary,
satisfying $\partial\Omega=\partial\big(\,\overline{\Omega}\,\big)$. Then for every
$\alpha\in(0,1)$ the following statements are equivalent:
\begin{enumerate}
\item[{\rm (i)}] $\Omega$ is a domain of class ${\mathscr{C}}^{1+\alpha}$;
\item[{\rm (ii)}] $\Omega$ is an Ahlfors regular domain and, for each $j\in\{1,\dots,n\}$
the distributional Riesz transform $R_j$ defined as in \eqref{yrf56f-RRR}-\eqref{yrf56f-RRR.1}
with $\Sigma:=\partial\Omega$ induces a linear and bounded operator in the context
\begin{equation}\label{eq:RIE-yttr.1}
R_j:{\mathscr{C}}^{\alpha}(\partial\Omega)\longrightarrow{\mathscr{C}}^{\alpha}(\partial\Omega);
\end{equation}
\item[{\rm (iii)}] $\Omega$ is an Ahlfors regular domain and
\begin{equation}\label{eq:RIE-yttr.1bb}
R_j1\in{\mathscr{C}}^{\alpha}(\partial\Omega)\,\,\text{ for each }\,\,j\in\{1,\dots,n\};
\end{equation}
\item[{\rm (iv)}] $\Omega$ is a {\rm UR} domain and, for each $j\in\{1,\dots,n\}$
the principal value Riesz transform $R^{{}^{\rm pv}}_j$ defined as in \eqref{T-pv.4khg}
with $\Sigma:=\partial\Omega$ induces a linear and bounded operator in the context
\begin{equation}\label{eq:RIE-yttr.2}
R^{{}^{\rm pv}}_j:{\mathscr{C}}^{\alpha}(\partial\Omega)\longrightarrow{\mathscr{C}}^{\alpha}(\partial\Omega).
\end{equation}
\item[{\rm (v)}] $\Omega$ is a {\rm UR} domain and
\begin{equation}\label{eq:RIE-yttr.2bb}
R^{{}^{\rm pv}}_j1\in{\mathscr{C}}^{\alpha}(\partial\Omega)\,\,\text{ for each }\,\,j\in\{1,\dots,n\}.
\end{equation}
\end{enumerate}
\end{corollary}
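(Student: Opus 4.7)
The strategy is to anchor the whole chain of implications in the equivalence (a)$\Leftrightarrow$(b) of Theorem~\ref{Main-T1aa}, which already delivers (i)$\Leftrightarrow$(iii) at one stroke, and then to graft each of (ii), (iv), (v) onto this core by invoking the bridges between the distributional Riesz transform, its principal-value realization, and its action on the constant function~$1$ that were carefully laid out in the introduction (see \eqref{yrf56f-RRR.1}, \eqref{yrf56f-RRR.2}, \eqref{Ma-utrt}, \eqref{TTbb-1aG}, \eqref{uytggf-RRR}, \eqref{Mabb88}). A recurring convenience is that, because $\partial\Omega$ is compact, the constant function $1$ lies in $\mathscr{C}^\alpha(\partial\Omega)=\mathscr{C}_c^\alpha(\partial\Omega)$ and therefore doubles as a legitimate test input for any bounded operator on $\mathscr{C}^\alpha(\partial\Omega)$, so that membership statements of the form ``$R_j1\in\mathscr{C}^\alpha$'' automatically follow from boundedness assertions of the form ``$R_j$ acts on $\mathscr{C}^\alpha$''.

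Concretely, I would first carry out (i)$\Rightarrow$(ii) by applying part~(c) of Theorem~\ref{Main-T1aa} to the odd polynomial $P(x)=x_j/\omega_{n-1}$ (of degree $l=1$); this yields boundedness of the principal-value operator \eqref{T-pv.4khg} on $\mathscr{C}^\alpha(\partial\Omega)$, after which \eqref{uytggf-RRR}, available because every $\mathscr{C}^{1+\alpha}$ domain is {\rm UR}, identifies $R^{{}^{\rm pv}}_j$ with the distributional $R_j$ on the dense subspace $\mathscr{C}_c^\alpha(\partial\Omega)=\mathscr{C}^\alpha(\partial\Omega)$. The step (ii)$\Rightarrow$(iii) is instantaneous by evaluating at $f=1$ and recalling (via the discussion following \eqref{yrf56f-RRR.2}) that in the compact case the two possible meanings of $R_j1$ are consistent. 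The step (iii)$\Rightarrow$(i) is then nothing but (b)$\Rightarrow$(a) of Theorem~\ref{Main-T1aa}. The parallel loop (i)$\Rightarrow$(iv)$\Rightarrow$(v) runs along identical lines, except that Theorem~\ref{Main-T1aa}(c) already delivers the principal-value realization directly, so no additional identification step is required.

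It remains to tie (v) back to the cycle. For (v)$\Rightarrow$(iii), I would invoke \eqref{TTbb-1aG}: under the compact {\rm UR} hypothesis, $R_j1=R^{{}^{\rm pv}}_j1$ holds $\mathcal{H}^{n-1}$-a.e.\ on $\partial\Omega$, so H\"older continuity of the right-hand side transfers verbatim to the left-hand side. The only genuinely delicate point in the whole argument is the reverse direction, namely upgrading the mere Ahlfors regularity hypothesis in (iii) to the {\rm UR} regularity needed to even speak about $R^{{}^{\rm pv}}_j1$; this is where the Nazarov--Tolsa--Volberg theorem \eqref{Mabb88} is unavoidable, and it applies painlessly here via the harmless inclusion $\mathscr{C}^\alpha(\partial\Omega)\subset\mathrm{BMO}(\partial\Omega)$. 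Once this bootstrap is in place, everything else is bookkeeping among the various definitions and the compactness of~$\partial\Omega$.
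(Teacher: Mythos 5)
Your proposal is correct and follows exactly the route the paper intends: the paper offers no separate argument for the corollary beyond the remark that it follows from Theorem~\ref{Main-T1aa} ``in concert with'' the introductory comparisons \eqref{Ma-utrt}, \eqref{TTbb-1aG}, \eqref{uytggf-RRR}, \eqref{Mabb88} between the distributional and principal value Riesz transforms, and your two cycles (i)$\Rightarrow$(ii)$\Rightarrow$(iii)$\Rightarrow$(i) and (i)$\Rightarrow$(iv)$\Rightarrow$(v)$\Rightarrow$(iii) are precisely the intended bookkeeping. The only cosmetic remark is that your closing appeal to Nazarov--Tolsa--Volberg is not needed as a separate step in your own chain, since (iii)$\Rightarrow$(i) already goes through (b)$\Rightarrow$(a) of Theorem~\ref{Main-T1aa} and every ${\mathscr{C}}^{1+\alpha}$ domain is automatically {\rm UR}.
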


In dimension two, there is a variant of Theorem~\ref{Main-T1aa}
starting from the demand that the boundary of the domain in question
is an upper Ahlfors regular Jordan curve and, in lieu of the Riesz
transforms, using the following version of the classical Cauchy
integral operator in the principal value sense:
\begin{equation}\label{eq:Cauchy33}
{\mathfrak{C}}^{{}^{\rm pv}}\!f(z):=\lim_{\varepsilon\to 0^{+}}\frac{1}{2\pi i}
\int\limits_{\stackrel{\zeta\in\partial\Omega}{|z-\zeta|>\varepsilon}}
\frac{f(\zeta)}{\zeta-z}\,d{\mathcal{H}}^1(\zeta),\qquad
z\in\partial\Omega.
\end{equation}

\begin{theorem}\label{THM-main222}
Let $\Omega\subseteq{\mathbb{C}}$ be a bounded open set whose
boundary is an upper Ahlfors regular Jordan curve and fix
$\alpha\in(0,1)$. Then $\Omega$ is a domain of class
${\mathscr{C}}^{1+\alpha}$ if and only if the operator
\eqref{eq:Cauchy33} satisfies
${\mathfrak{C}}^{{}^{\rm pv}}1\in{\mathscr{C}}^\alpha(\partial\Omega)$.
\end{theorem}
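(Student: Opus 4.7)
My plan is to reduce Theorem~\ref{THM-main222} to (the $n=2$ specialization of) Theorem~\ref{Main-T1aa} and Corollary~\ref{Main-T1aa-CCCa}, by writing the planar Cauchy kernel as a complex-linear combination of the two Riesz kernels. A direct computation with $z=x_1+ix_2$ and $\zeta=y_1+iy_2$ yields
\begin{equation*}
\frac{1}{2\pi i(\zeta-z)}=\frac{x_2-y_2}{2\pi|z-\zeta|^2}+i\,\frac{x_1-y_1}{2\pi|z-\zeta|^2},
\end{equation*}
so whenever both sides' principal values exist, ${\mathfrak{C}}^{{}^{\rm pv}}f=R_2^{{}^{\rm pv}}f+i\,R_1^{{}^{\rm pv}}f$. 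In particular, ${\mathfrak{C}}^{{}^{\rm pv}}1\in{\mathscr{C}}^\alpha(\partial\Omega)$ is equivalent to $R_j^{{}^{\rm pv}}1\in{\mathscr{C}}^\alpha(\partial\Omega)$ for both $j\in\{1,2\}$ (the two real-valued principal values being recovered as the real and imaginary parts of ${\mathfrak{C}}^{{}^{\rm pv}}1$).

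The forward implication is then immediate. If $\Omega$ is a $\mathscr{C}^{1+\alpha}$ domain then $\partial\Omega$ is a compact Ahlfors regular curve with $\partial\Omega=\partial(\overline{\Omega})$, so Corollary~\ref{Main-T1aa-CCCa}~(i)~$\Rightarrow$~(v) delivers $R_j^{{}^{\rm pv}}1\in{\mathscr{C}}^\alpha(\partial\Omega)$ for $j=1,2$, and therefore ${\mathfrak{C}}^{{}^{\rm pv}}1\in{\mathscr{C}}^\alpha(\partial\Omega)$.

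For the converse, the plan is to verify the geometric hypotheses required by Corollary~\ref{Main-T1aa-CCCa}~(v)~$\Rightarrow$~(i). The identity $\partial\Omega=\partial(\overline{\Omega})$ is automatic, as $\Omega$ is the bounded complementary component of a Jordan curve. Lower Ahlfors regularity follows from the Jordan curve structure by an elementary topological argument: for any $z\in\partial\Omega$ and any sufficiently small $r>0$, the complement in $\partial\Omega$ of a tiny open arc about $z$ is a closed arc that cannot be contained in $B(z,r)$, forcing $\partial\Omega$ to contain two disjoint subarcs issuing from $z$ and each exiting $B(z,r)$, and hence contributing length at least $2r$. Combined with the upper Ahlfors regularity hypothesis, this upgrades $\partial\Omega$ to an Ahlfors regular set. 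Next, from the Cauchy--Riesz decomposition above and the assumption ${\mathfrak{C}}^{{}^{\rm pv}}1\in{\mathscr{C}}^\alpha(\partial\Omega)\subset{\rm BMO}(\partial\Omega)$ one infers $R_j^{{}^{\rm pv}}1\in{\rm BMO}(\partial\Omega)$ for $j=1,2$; the Nazarov--Tolsa--Volberg theorem (or, rather, its planar predecessor by Mattila--Melnikov--Verdera, phrased as in~\eqref{Mabb88}) then shows that $\partial\Omega$ is uniformly rectifiable. Corollary~\ref{Main-T1aa-CCCa}~(v)~$\Rightarrow$~(i) now yields that $\Omega$ is a domain of class $\mathscr{C}^{1+\alpha}$.

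The main conceptual obstacle is the converse direction, since the hypotheses of Theorem~\ref{THM-main222} are strictly weaker than those of Theorem~\ref{Main-T1aa}: only \emph{upper} Ahlfors regularity is assumed, and uniform rectifiability is not assumed at all. Both deficiencies are repaired essentially for free in the plan above---the former through the topological Jordan curve hypothesis, the latter through the deep equivalence~\eqref{Mabb88}---at which point the heavy lifting has already been carried out in Theorem~\ref{Main-T1aa}.
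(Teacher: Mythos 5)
Your overall strategy---decomposing the planar Cauchy kernel into the two Riesz kernels and then invoking Corollary~\ref{Main-T1aa-CCCa}---is exactly the paper's: its entire proof consists of the identity ${\mathfrak{C}}^{{}^{\rm pv}}=iR^{{}^{\rm pv}}_1+R^{{}^{\rm pv}}_2$ together with a citation of Proposition~\ref{treePUb}, which packages all the geometric prerequisites (that $\Omega$ is a simply connected {\rm UR} domain with $\partial\Omega=\partial\big(\overline{\Omega}\big)$ and, in particular, ${\mathcal{H}}^1(\partial\Omega\setminus\partial_\ast\Omega)=0$). Your forward implication is correct. The difficulties lie in your attempt to reconstruct the content of Proposition~\ref{treePUb} in the converse direction.

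First, the appeal to \eqref{Mabb88} to obtain uniform rectifiability does not go through as written. That equivalence is formulated for the \emph{distributional} objects $R_j(1)$ of \eqref{yrf56f-RRR.2}, whereas your hypothesis only provides the $\sigma$-a.e. existence and H\"older continuity of the \emph{principal values} $R^{{}^{\rm pv}}_j1$. Identifying the two amounts to passing the limit $\varepsilon\to 0^{+}$ through $\int_{\partial\Omega}g\,R_{j,\varepsilon}1\,d\sigma$ for mean-zero H\"older $g$, which requires uniform control of the truncations $R_{j,\varepsilon}1$ (Cotlar-type bounds); on a general compact Ahlfors regular set this is essentially equivalent to the $L^2$-boundedness/uniform rectifiability you are trying to derive, so the argument is circular at this point. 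The paper sidesteps the issue entirely: Proposition~\ref{treePUb} yields uniform rectifiability from geometry alone (a connected Ahlfors regular set of dimension one is automatically uniformly rectifiable), with no analytic input from the Cauchy transform.

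Second, even granting that the \emph{set} $\partial\Omega$ is uniformly rectifiable, you never verify ${\mathcal{H}}^1(\partial\Omega\setminus\partial_\ast\Omega)=0$, which is part of the definition of an Ahlfors regular domain (Definition~\ref{ADRDOM}), hence of a {\rm UR} domain (Definition~\ref{Def-UB}), and is genuinely needed before Corollary~\ref{Main-T1aa-CCCa}\,(v)$\Rightarrow$(i) can be invoked: without it the outward unit normal is not defined $\sigma$-a.e., and the mechanism behind \eqref{eq:Csq22} breaks down. For a Jordan domain this condition encodes the classical but nontrivial fact that the perimeter of the enclosed region equals the ${\mathcal{H}}^1$ measure of the curve; it is precisely one of the conclusions of Proposition~\ref{treePUb}. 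Your topological argument for lower Ahlfors regularity is sound in substance (each of the two disjoint subarcs reaching $\partial B(z,r)$ contributes length at least $r$, for a total of $2r$), but the two gaps above must be closed---most economically by citing Proposition~\ref{treePUb} as the paper does.
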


Under the initial background hypotheses on $\Omega$ made in
Theorem~\ref{Main-T1aa}, $\Omega$ being a ${\mathscr{C}}^1$ domain
is equivalent with
$\nu\in{\mathscr{C}}^0(\partial\Omega)$ (cf.
\cite{HoMiTa07} in this regard). This being said, the limiting case
$\alpha=0$ of the equivalence {\rm (a)}$\Leftrightarrow${\rm (b)} in
Theorem~\ref{Main-T1aa} requires replacing the space of continuous
functions by the (larger) Sarason space ${\rm VMO}$, of functions of
vanishing mean oscillations (on $\partial\Omega$, viewed as a space
of homogeneous type, in the sense of Coifman-Weiss, when equipped
with the measure $\sigma$ and the Euclidean distance). Specifically,
the following result holds.

\begin{theorem}\label{Kbagg.A}
Let $\Omega\subseteq\mathbb{R}^n$ be an Ahlfors regular domain with a compact boundary,
and denote by $\nu$ the geometric measure theoretic outward unit normal to $\Omega$.
Then
\begin{equation}\label{eq:iugf.6r4}
\left.
\begin{array}{r}
\nu\in{\rm VMO}(\partial\Omega)\,\,\mbox{ and}
\\[4pt]
\mbox{$\partial\Omega$ uniformly rectifiable set}
\end{array}
\right\} \Longleftrightarrow R_j1\in{\rm
VMO}(\partial\Omega)\,\,\mbox{ for all }\,\,j\in\{1,\dots,n\}.
\end{equation}
\end{theorem}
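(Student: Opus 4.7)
The plan is to prove the two implications of \eqref{eq:iugf.6r4} via a combination of the Nazarov--Tolsa--Volberg theorem \eqref{Mabb88}; an algebraic identity linking $\nu$ with the vector $(R_1 1,\dots,R_n 1)$ that arises from the jump formulas for the gradient of the harmonic single layer $\mathscr{S}$ (and, more generally, from the Cauchy--Clifford calculus referenced in the abstract); and the Coifman--Rochberg--Weiss/Uchiyama characterization of ${\rm VMO}$ as the class of ${\rm BMO}$ functions whose commutator with any nondegenerate Calder\'on--Zygmund operator is compact on $L^2$.

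For the direction $(\Leftarrow)$, assume $R_j 1\in{\rm VMO}(\partial\Omega)$ for every $j$. Since ${\rm VMO}(\partial\Omega)\subseteq{\rm BMO}(\partial\Omega)$, the equivalence \eqref{Mabb88} immediately yields that $\partial\Omega$ is uniformly rectifiable, and then \eqref{uytggf-RRR} guarantees that each principal value Riesz transform $R^{{}^{\rm pv}}_j$ is bounded on $L^2(\partial\Omega)$ and agrees with the distributional $R_j$ on $\mathscr{C}^\alpha_c$. The remaining task is to extract $\nu\in{\rm VMO}(\partial\Omega)$. On any UR domain the standard jump relation $\partial_\nu\mathscr{S}f\big|^{\pm}=\mp\tfrac12 f+K^{\ast}f$ combined with the componentwise jumps of $\nabla\mathscr{S}f$ yields the scalar identity
\[
K^{\ast}(1)=\sum_{j=1}^{n}\nu_j R^{{}^{\rm pv}}_j 1 \,\,\text{ on }\,\,\partial\Omega,
\]
together with Cauchy--Clifford componentwise refinements that express each $\nu_j$ as an algebraic combination of the $R_k 1$'s modulo commutators of the form $[M_{\nu_k},R^{{}^{\rm pv}}_\ell]$ applied to suitable auxiliary functions. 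Since $\nu\in L^{\infty}\cap{\rm BMO}$ is automatic on Ahlfors regular domains, a bootstrap based on the Coifman--Rochberg--Weiss/Uchiyama compactness criterion, together with the BMO-closedness of ${\rm VMO}$, upgrades $\nu$ from ${\rm BMO}$ to ${\rm VMO}$.

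For the direction $(\Rightarrow)$, assume $\nu\in{\rm VMO}(\partial\Omega)$ and $\partial\Omega$ UR. By \eqref{Mabb88}, $R_j 1\in{\rm BMO}(\partial\Omega)$, so only the upgrade from BMO to VMO remains. Reading the same Cauchy--Clifford identity in reverse expresses each $R_j 1$ as a combination of components of $\nu$ and commutators $[M_{\nu_k},R^{{}^{\rm pv}}_\ell]\phi$ for carefully chosen test functions $\phi$. When $\nu_k\in{\rm VMO}$, Uchiyama's theorem forces these commutators to be compact on $L^2(\partial\Omega)$, and combined with a density argument---approximating $\nu$ in ${\rm BMO}$ norm by H\"older data, on which Theorem~\ref{Main-T1aa} already provides $R_j 1\in\mathscr{C}^{\alpha}\subseteq{\rm VMO}$---the BMO-closedness of VMO delivers $R_j 1\in{\rm VMO}$.

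The principal obstacle is to make the Cauchy--Clifford componentwise algebraic identity rigorous on a general UR domain, where pointwise jump formulas must be replaced by their distributional $T(1)$-theorem analogues, and then to organize the commutator bookkeeping so that VMO regularity transfers correctly in both directions simultaneously. The already-established H\"older version in Theorem~\ref{Main-T1aa} supplies both the qualitative template for the requisite identity and the quantitative input used in the density/approximation step.
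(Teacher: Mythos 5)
Your identification of the two outer ingredients --- the Nazarov--Tolsa--Volberg equivalence \eqref{Mabb88} to secure uniform rectifiability in the right-to-left direction, and an algebraic identity tying $\nu$ to the vector $\sum_{j}(R_j1)e_j$ --- matches the paper's strategy, and your scalar identity $K^{\ast}(1)=\sum_{j}\nu_jR^{{}^{\rm pv}}_j1$ is correct. But the mechanism you propose for transferring ${\rm VMO}$ membership is where the actual content lies, and it does not work as stated. The ``componentwise refinements \dots modulo commutators $[M_{\nu_k},R^{{}^{\rm pv}}_\ell]$'' are never written down (you yourself flag making them rigorous as the principal obstacle), and no such commutator decomposition is needed: the paper uses the exact Clifford-algebra identities ${\mathcal{C}}^{{}^{\rm pv}}\nu=-\sum_{j}\big(R^{{}^{\rm pv}}_j1\big)e_j$ (cf. \eqref{eq:CHba.11}) and $\big({\mathcal{C}}^{{}^{\rm pv}}\big)^2=\tfrac14 I$, which solve for $\nu$ outright as $\nu=-4{\mathcal{C}}^{{}^{\rm pv}}\big(\sum_{j}(R_j1)e_j\big)$, as in \eqref{eq:Csq22}. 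More seriously, the Coifman--Rochberg--Weiss/Uchiyama criterion cannot deliver what you ask of it: compactness of $[M_b,T]$ on $L^2$ characterizes membership of the \emph{symbol} $b$ in ${\rm VMO}$; it says nothing about whether the image of a particular function under some operator lies in ${\rm VMO}$. In your $(\Rightarrow)$ direction you pass from ``$\nu_k\in{\rm VMO}$, hence the commutators are compact'' to ``$R_j1\in{\rm VMO}$'', which is a non sequitur; in your $(\Leftarrow)$ direction you would need compactness of commutators whose symbols are the $\nu_k$'s --- precisely the conclusion you are trying to reach --- before Uchiyama can even be invoked.

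What actually closes the argument in the paper is Proposition~\ref{MPjab}: a direct proof that ${\mathcal{C}}^{{}^{\rm pv}}$ is bounded on ${\rm BMO}(\partial\Omega)\otimes{\mathcal{C}}\!\ell_{n}$ (via an oscillation estimate on surface balls of Fefferman--Stein type), that it preserves ${\rm VMO}$ (by density of H\"older functions in ${\rm VMO}$ combined with the H\"older boundedness of ${\mathcal{C}}^{{}^{\rm pv}}$ from Theorem~\ref{i65r5ED}), and that the involution identity persists on these spaces. This yields the quantitative two-sided comparison between ${\rm dist}\big(\nu,{\rm VMO}(\partial\Omega)\big)$ and $\big(\sum_{j}{\rm dist}\big(R_j1,{\rm VMO}(\partial\Omega)\big)^2\big)^{1/2}$ recorded in Theorem~\ref{Kbagg}, from which both implications of \eqref{eq:iugf.6r4} follow at once. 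If you wish to salvage your outline, replace the commutator bookkeeping by a proof that ${\mathcal{C}}^{{}^{\rm pv}}$ maps ${\rm VMO}$ boundedly into itself and is invertible there; the rest of your plan then collapses onto the paper's argument.
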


Equivalence \eqref{eq:iugf.6r4} should be contrasted with
\eqref{Mabb88}. In the present context, the additional background
assumption ${\mathcal{H}}^{n-1}(\partial\Omega\setminus\partial_\ast\Omega)=0$
(which is part of the definition of an Ahlfors regular domain; cf. Definition~\ref{ADRDOM})
merely ensures that the geometric measure theoretic outward unit
normal $\nu$ to $\Omega$ is well-defined $\sigma$-a.e. on
$\partial\Omega$.

The collection of all geometric conditions entering
Theorem~\ref{Kbagg.A}, i.e., that $\Omega\subseteq\mathbb{R}^n$ is
an Ahlfors regular domain such that $\partial\Omega$ is a uniformly rectifiable set,
amounts to saying that $\Omega$ is a {\rm UR} domain (cf.
Definition~\ref{Def-UB}). Concerning this class of domains, it has
been noted in \cite[Corollary~3.9, p.\,2633]{HoMiTa10} that
\begin{equation}\label{eq:rTG866}
\parbox{9.80cm}
{if $\Omega\subset{\mathbb{R}}^n$ is an open set satisfying a
two-sided corkscrew condition (in the sense of Jerison-Kenig
\cite{JeKe82}) and whose boundary is Ahlfors regular, then $\Omega$
is a {\rm UR} domain.}
\end{equation}

In fact, the same circle of techniques yielding
Theorem~\ref{Kbagg.A} also allows us to characterize the class of
regular {\rm SKT} domains, originally introduced in
\cite[Definition~4.8, p.\,2690]{HoMiTa10} by demanding:
$\delta$-Reifenberg flatness for some sufficiently small $\delta>0$
(cf. Definition~\ref{Def-R4}), Ahlfors regular boundary, and
vanishing mean oscillations for the geometric measure theoretic
outward unit normal. Specifically, combining \eqref{eq:rTG866},
Theorem~\ref{Kbagg.A}, Theorem~\ref{KbAAbb}, and \cite[Theorem~4.21,
p.\,2711]{HoMiTa10} gives the following theorem.

\begin{theorem}\label{yr46bp}
If $\Omega\subseteq\mathbb{R}^n$ is an open set with a compact
Ahlfors regular boundary, satisfying a two-sided John condition as
described in Definition~\ref{Def-John} {\rm (}which, in particular,
implies the two-sided corkscrew condition{\rm )} then
\begin{equation}\label{eq:rDC}
\begin{array}{c}
\mbox{$R_j1\in{\rm VMO}(\partial\Omega)$ for every
$j\in\{1,\dots,n\}$}
\\[4pt]
\mbox{if and only if $\Omega$ is a regular {\rm SKT} domain}.
\end{array}
\end{equation}
\end{theorem}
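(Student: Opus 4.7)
The plan is to assemble the four ingredients explicitly listed just before the theorem statement into a short chain of equivalences; no genuinely new analytic work is needed beyond verifying that the geometric hypotheses are strong enough to activate each link.

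First I would observe that the two-sided John condition, as formulated in Definition~\ref{Def-John}, trivially implies the two-sided corkscrew condition of Jerison-Kenig. Together with the standing assumption that $\partial\Omega$ is Ahlfors regular, \eqref{eq:rTG866} then promotes $\Omega$ to a {\rm UR} domain; in particular, $\partial\Omega$ is uniformly rectifiable and $\Omega$ is an Ahlfors regular domain. With this in hand, Theorem~\ref{Kbagg.A} applies and the right-hand side of \eqref{eq:iugf.6r4} collapses to the single condition $\nu \in {\rm VMO}(\partial\Omega)$, since the uniform rectifiability requirement is now automatic. Consequently
\begin{equation*}
R_j 1 \in {\rm VMO}(\partial\Omega)\,\,\text{for every}\,\,j\in\{1,\dots,n\} \,\,\Longleftrightarrow\,\, \nu \in {\rm VMO}(\partial\Omega).
\end{equation*}

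Next, to convert the condition $\nu \in {\rm VMO}(\partial\Omega)$ into the regular {\rm SKT} condition, I would appeal to Theorem~\ref{KbAAbb} together with \cite[Theorem~4.21, p.\,2711]{HoMiTa10}. Under a two-sided John condition with $\partial\Omega$ Ahlfors regular, the latter asserts that $\nu\in{\rm VMO}(\partial\Omega)$ is equivalent to $\partial\Omega$ being $\delta$-Reifenberg flat for every preassigned $\delta>0$ (in particular, for whichever small $\delta$ is built into Definition~\ref{Def-R4} and the definition of a regular {\rm SKT} domain). Combining this Reifenberg flat property with the Ahlfors regularity already assumed and with $\nu\in{\rm VMO}(\partial\Omega)$ itself recovers precisely the three defining conditions of a regular {\rm SKT} domain from \cite[Definition~4.8, p.\,2690]{HoMiTa10}. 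Chaining this with the previous equivalence yields \eqref{eq:rDC}.

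The step I expect to carry the real content of the argument is the implication from $\nu\in{\rm VMO}(\partial\Omega)$ to smallness of the Reifenberg flat constants that is packaged inside \cite[Theorem~4.21]{HoMiTa10}; without the two-sided John hypothesis, vanishing mean oscillation of the unit normal alone cannot force the quantitative flatness required in the definition of a regular {\rm SKT} domain, since {\rm VMO} membership controls the oscillation of $\nu$ only in an averaged sense. Thus the two-sided John hypothesis is not ornamental here: it is precisely what permits the promotion of an averaged smallness of the oscillation of $\nu$ to a genuine geometric flatness condition, and thereby allows the analytic condition on the distributional Riesz transforms $R_j 1$ to coincide with the geometric condition of being a regular {\rm SKT} domain. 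All remaining links in the chain are formal consequences of Theorem~\ref{Kbagg.A}, Theorem~\ref{KbAAbb}, and \eqref{eq:rTG866}.
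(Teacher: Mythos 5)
Your proposal is correct and is exactly the argument the paper intends: the paper offers no separate proof of this theorem beyond the remark that it follows by combining \eqref{eq:rTG866}, Theorem~\ref{Kbagg.A}, Theorem~\ref{KbAAbb}, and \cite[Theorem~4.21, p.\,2711]{HoMiTa10}, which is precisely the chain you assemble. (Minor slip: it is the left-hand side of \eqref{eq:iugf.6r4}, not the right-hand side, that collapses to $\nu\in{\rm VMO}(\partial\Omega)$ once uniform rectifiability is automatic.)
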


It turns out that the equivalence {\rm (a)}$\Leftrightarrow${\rm
(b)} in Theorem~\ref{Main-T1aa} essentially self-extends to the
larger scale of Besov spaces $B^{p,p}_s(\partial\Omega)$ with
$p\in[1,\infty]$ and $s\in(0,1)$ satisfying $sp>n-1$, for which the
H\"older spaces occur as a special, limiting case, corresponding to
$p=\infty$. For a precise statement, see Theorem~\ref{Thm-Besov}.

The category of singular integral operators falling under the scope
of Theorem~\ref{Main-T1aa} already includes boundary layer
potentials associated with basic PDE's of mathematical physics, such
as the Laplacian, the Helmholtz operator, the Lam\'e system, the
Stokes system, and even higher-order elliptic systems (cf., e.g.,
\cite{CK}, \cite{HW}, \cite{DM}, \cite{MM}). This being said,
granted the estimates established in the last part of
Theorem~\ref{Main-T1aa}, the method of spherical harmonics then
allows us to prove the following result, dealing with a more general
class of operators.

\begin{theorem}\label{Main-T1aBBB}
Let $\Omega$ be a ${\mathscr{C}}^{1+\alpha}$ domain,
$\alpha\in(0,1)$, with compact boundary, and let
$k\in{\mathscr{C}}^\infty({\mathbb{R}}^n\setminus\{0\})$ be an odd
function satisfying $k(\lambda x)=\lambda^{1-n}k(x)$ for all
$\lambda\in(0,\infty)$ and $x\in{\mathbb{R}}^{n}\setminus\{0\}$. In
addition, assume that there exists a sequence
$\{m_l\}_{l\in{\mathbb{N}}_0}\subseteq{\mathbb{N}}_0$ for which
\begin{equation}\label{eq:Reac}
\sum_{l=0}^\infty 4^{l^2}l^{-2m_l}
\big\|(\Delta_{S^{n-1}})^{m_l}\big(k\big|_{S^{n-1}}\big)\big\|_{L^2(S^{n-1})}
<+\infty,
\end{equation}
where $\Delta_{S^{n-1}}$ is the Laplace-Beltrami operator on the
unit sphere $S^{n-1}$ in ${\mathbb{R}}^n$.

Then the singular integral operators
\begin{align}\label{T-layer.44-BaB}
&
{\mathbb{T}}f(x):=\int_{\partial\Omega}k(x-y)f(y)\,d\sigma(y),\qquad
x\in\Omega,
\\[8pt]
&Tf(x):=\lim_{\varepsilon\to 0^{+}}
\int\limits_{\stackrel{y\in\partial\Omega}{|x-y|>\varepsilon}}
k(x-y)f(y)\,d\sigma(y),\qquad x\in\partial\Omega,
\label{T-pv.44-BaB}
\end{align}
induce linear and bounded mappings
\begin{align}\label{maKP-BaB}
{\mathbb{T}}:{\mathscr{C}}^\alpha(\partial\Omega)\longrightarrow
{\mathscr{C}}^\alpha\big(\overline{\Omega}\,\big),\qquad
T:{\mathscr{C}}^\alpha(\partial\Omega)\longrightarrow{\mathscr{C}}^\alpha(\partial\Omega).
\end{align}
\end{theorem}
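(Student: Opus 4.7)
My plan is to reduce Theorem~\ref{Main-T1aBBB} to Theorem~\ref{Main-T1aa} via the classical method of spherical harmonics: decompose the profile $k|_{S^{n-1}}$ into spherical harmonics, extend each piece by homogeneity to obtain a series of elementary kernels of the form $P_l(x)/|x|^{n-1+l}$ with $P_l$ an odd homogeneous polynomial of degree $l$, apply the explicit bounds \eqref{jhygff8533}--\eqref{jhygff8533.2} to each summand, and finally sum the series using hypothesis \eqref{eq:Reac}.

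Concretely, since $k\in\mathscr{C}^\infty(\mathbb{R}^n\setminus\{0\})$ the restriction $k|_{S^{n-1}}$ is smooth on $S^{n-1}$ and I would expand
\[
k|_{S^{n-1}}\;=\;\sum_{\text{odd }l\geq 1}Y_l,
\]
where $Y_l$ is the $L^2(S^{n-1})$-orthogonal projection of $k|_{S^{n-1}}$ onto the space of spherical harmonics of degree $l$, and only odd $l$ survive because $k$ is odd. Each $Y_l$ extends to a unique odd homogeneous harmonic polynomial $P_l$ of degree $l$ via $P_l(x):=|x|^l Y_l(x/|x|)$, and the homogeneity $k(\lambda x)=\lambda^{1-n}k(x)$ then promotes this to
\[
k(x)\;=\;|x|^{1-n}k(x/|x|)\;=\;\sum_{\text{odd }l\geq 1}\frac{P_l(x)}{|x|^{n-1+l}},\qquad x\in\mathbb{R}^n\setminus\{0\},
\]
with convergence in every $\mathscr{C}^N$-norm on compact subsets of $\mathbb{R}^n\setminus\{0\}$.

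Denote by $\mathbb{T}_l$ and $T_l$ the operators from \eqref{T-layer.44-BaB}--\eqref{T-pv.44-BaB} corresponding to the kernel $P_l(\cdot)/|\cdot|^{n-1+l}$. The final estimates of Theorem~\ref{Main-T1aa} yield
\[
\big\|\mathbb{T}_l\big\|_{\mathscr{C}^\alpha(\partial\Omega)\to\mathscr{C}^\alpha(\overline{\Omega})}
+\big\|T_l\big\|_{\mathscr{C}^\alpha(\partial\Omega)\to\mathscr{C}^\alpha(\partial\Omega)}
\;\leq\;2\,C^l\,2^{l^2}\,\|P_l\|_{L^2(S^{n-1})}.
\]
Since $Y_l$ is an eigenfunction of $\Delta_{S^{n-1}}$ with eigenvalue $-l(l+n-2)$ and distinct spherical harmonic subspaces are $L^2$-orthogonal, for every $m\in\mathbb{N}_0$
\[
\|P_l\|_{L^2(S^{n-1})}\;=\;[l(l+n-2)]^{-m}\big\|(\Delta_{S^{n-1}})^m Y_l\big\|_{L^2(S^{n-1})}
\;\leq\;[l(l+n-2)]^{-m}\big\|(\Delta_{S^{n-1}})^m(k|_{S^{n-1}})\big\|_{L^2(S^{n-1})}.
\]
Taking $m=m_l$, noting $C^l 2^{l^2}\leq 4^{l^2}$ once $l\geq \log_2 C$, and invoking \eqref{eq:Reac} forces absolute summability of the above operator norms, so the series $\sum_l \mathbb{T}_l$ and $\sum_l T_l$ converge in operator norm to bounded operators in the contexts asserted in \eqref{maKP-BaB}.

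The remaining and genuinely technical point, which I expect to be the main obstacle, is identifying these operator-norm limits with $\mathbb{T}$ and $T$ as originally defined. For $\mathbb{T}$ this is routine: for $x\in\Omega$ one has $\mathrm{dist}(x,\partial\Omega)>0$, and $\mathscr{C}^N$-convergence of the partial kernel sums on $\{y-x:y\in\partial\Omega\}$ together with dominated convergence close the argument. For the principal-value operator $T$ the interchange is more delicate: one must first truncate below by $\varepsilon>0$, exchange the sum over $l$ with the $\varepsilon$-truncated integrals using absolute convergence away from the diagonal, and only afterwards pass to the limit $\varepsilon\to 0^+$, leveraging the $\varepsilon$-uniform $L^2$-bounds on $\partial\Omega$ furnished by the UR hypothesis and the standard Calder\'on--Zygmund machinery recalled in the Introduction.
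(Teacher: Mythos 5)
Your proposal is correct and follows essentially the same route as the paper: expand $k|_{S^{n-1}}$ in spherical harmonics, bound $\|Y_l\|_{L^2(S^{n-1})}$ by $l^{-2m_l}\|(\Delta_{S^{n-1}})^{m_l}(k|_{S^{n-1}})\|_{L^2(S^{n-1})}$ via the eigenvalue relation, apply the quantitative estimates \eqref{jhygff8533}--\eqref{jhygff8533.2} to each homogeneous piece, and sum using \eqref{eq:Reac} together with $C^l2^{l^2}\leq 4^{l^2}$ for large $l$. If anything, you are slightly more explicit than the paper about identifying the operator-norm limit of $\sum_l T_l$ with the principal-value operator $T$, a point the paper dispatches with ``treated similarly.''
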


We wish to note that Theorem~\ref{Main-T1aBBB} refines the
implication {\rm (a)}$\,\Rightarrow${\rm (e)} in
Theorem~\ref{Main-T1aa} since, as explained in Remark~\ref{yafv},
condition \eqref{eq:Reac} is satisfied whenever the kernel $k$ is of
the form $P(x)/|x|^{n-1+l}$ for some homogeneous polynomial $P$ of
degree $l\in 2{\mathbb{N}}-1$ in ${\mathbb{R}}^n$. In fact,
condition \eqref{eq:Reac} holds for kernels $k$ that are
real-analytic away from $0$ with lacunary Taylor series (involving
sufficiently large gaps between the non-zero coefficients of their
expansions, depending on $n$, $\alpha$, ${\rm
diam}\,(\partial\Omega)$,
$\|\nu\|_{{\mathscr{C}}^\alpha(\partial\Omega)}$, and the upper
Ahlfors regularity constant of $\partial\Omega$). Thus, the
conclusions in Theorem~\ref{Main-T1aBBB} are valid for such kernels
which are also odd and positive homogeneous of degree $1-n$.

Even though the statement does not reflect it, the proof of
Theorem~\ref{Main-T1aa} makes essential use of the Clifford algebra
${\mathcal{C}}\!\ell_{n}$, a highly non-commutative generalization
of the field of complex numbers to $n$-dimensions, which also turns
out to be geometrically sensitive. Indeed, this is a tool which has
occasionally emerged at the core of a variety of problems at the
interface between geometry and analysis. For us, one key aspect of
this algebraic setting is the close relationship between the Riesz
transforms and the principal value\footnote{in the standard sense of
removing balls centered at the singularity and taking the limit as
the radii shrink to zero} Cauchy-Clifford integral operator
${\mathcal{C}}^{{}^{\rm pv}}$ (defined in \eqref{Cau-C2}). For the purpose of this
introduction we single out the remarkable formula
\begin{equation}\label{eq:Csq22.ABC}
\nu=-4{\mathcal{C}}^{{}^{\rm pv}}\!\Big(\sum_{j=1}^n\big(R^{{}^{\rm pv}}_j1\big)e_j\Big)
\,\,\mbox{ at $\sigma$-a.e. point on }\,\,\partial\Omega
\end{equation}
expressing the (geometric measure theoretic) outward unit normal to
$\Omega$ as the Clifford algebra cocktail $\sum_{j=1}^n
\big(R^{{}^{\rm pv}}_j1\big)e_j$ of principal value Riesz transforms acting
on the constant function $1$, coupled with the imaginary units $e_j$ in ${\mathcal{C}}\!\ell_{n}$,
then finally distorted through the action of the Cauchy-Clifford operator ${\mathcal{C}}^{{}^{\rm pv}}\!$.
Identity \eqref{eq:Csq22.ABC} plays a basic role in the proof of
{\rm (b)}\,$\Rightarrow$\,{\rm (a)} in Theorem~\ref{Main-T1aa},
together with a higher-dimensional generalization in a rough setting
of the classical Plemelj-Privalov theorem stating that the principal
value Cauchy integral operator on a piecewise smooth Jordan curve
without cusps in the plane is bounded on H\"older spaces (cf.
\cite{Plem}, \cite{Priv1}, \cite{Priv2}; cf. also \cite{Iftimie} for
a higher dimensional version for Lyapunov domains with compact
boundaries). Specifically, in Theorem~\ref{i65r5ED} we show that
whenever $\Omega\subset{\mathbb{R}}^{n}$ is a Lebesgue measurable
set whose boundary is compact, upper Ahlfors regular, and satisfies
${\mathcal{H}}^{n-1}(\partial\Omega\setminus\partial_\ast\Omega)=0$,
it follows that for each $\alpha\in(0,1)$ the principal value
Cauchy-Clifford operator ${\mathcal{C}}^{{}^{\rm pv}}\!$ induces a well-defined, linear,
and bounded mapping
\begin{equation}\label{Rd-8655}
{\mathcal{C}}^{{}^{\rm pv}}\!:{\mathscr{C}}^{\alpha}(\partial\Omega)\otimes{\mathcal{C}}\!\ell_{n}
\longrightarrow{\mathscr{C}}^{\alpha}(\partial\Omega)\otimes{\mathcal{C}}\!\ell_{n}.
\end{equation}
The strategy employed in the proof of the implication
{\rm (a)}\,$\Rightarrow$\,{\rm (e)} in Theorem~\ref{Main-T1aa}
is somewhat akin to that of establishing a ``$T(1)$-theorem" in the
sense that matters are reduced to checking that ${\mathbb{T}}_{\pm}$
act reasonably on the constant function $1$ (see \eqref{tAAcc.44iii}
in this regard). In turn, this is accomplished via a proof by induction
on $l\in 2{\mathbb{N}}-1$, the degree of the homogeneous polynomial $P$.
The base case $l=1$, corresponding to linear combinations of polynomials
as in \eqref{eq:Pjah}, is dealt with by viewing $(x_j-y_j)/|x-y|^n$
as a dimensional multiple of $\partial_j E_{\!{}_\Delta}\!(x-y)$
where $E_{\!{}_\Delta}$ is the standard fundamental solution for the
Laplacian in ${\mathbb{R}}^n$. As such, the key cancellation
property that eventually allows us to establish the desired H\"older
estimate in this base case may be ultimately traced back to the PDE
satisfied by $(x_j-y_j)/|x-y|^n$. In carrying out the inductive step
we make essential use of elements of Clifford analysis permitting us
to relate ${\mathbb{T}}_\pm 1$ to the action of certain integral
operators constructed as in \eqref{T-layer.44} but relative to lower
degree polynomials acting on components of the outward unit normal
$\nu$ to $\Omega$. In this scenario, what allows the use of the
induction hypothesis is the fact that, since $\Omega$ is a domain of
class ${\mathscr{C}}^{1+\alpha}$, the said components belong to
${\mathscr{C}}^{\alpha}(\partial\Omega)$.

The layout of the paper is as follows. Section~\ref{S-2} contains a
discussion of background material of geometric measure theoretic
nature, along with some auxiliary lemmas which are relevant in our
future endeavors. In Section~\ref{S-3} we first recall a version of
the Calder\'on-Zygmund theory for singular integral operators on
Lebesgue spaces in {\rm UR} domains, and then proceed to establish
several useful preliminary estimates for general singular integral
operators. Next, Section~\ref{S-4} is reserved for a presentation of
those aspects of Clifford analysis which are relevant for the
present work. Section~\ref{S-5} is devoted to a study of
Cauchy-Clifford integral operators (both of boundary-to-domain and
boundary-to-boundary type) in the context of H\"older spaces. In
contrast with the Calder\'on-Zygmund theory for singular integrals
in {\rm UR} domains reviewed in the first part of Section~\ref{S-3},
the novelty here is the consideration of a much larger category of
domains (see Theorem~\ref{i65r5ED} for details). In the last part of
Section~\ref{S-5} we also discuss the harmonic single and double
layer potentials (involved in the initial induction step in the
proof of the implication {\rm (a)}\,$\Rightarrow$\,{\rm (e)} in
Theorem~\ref{Main-T1aa}). Finally, in Section~\ref{S-6}, the proofs
of Theorems~\ref{Main-T1aa}, \ref{THM-main222}, \ref{Main-T1aBBB}
are presented, while Section~\ref{S-7} contains the proofs of
Theorem~\ref{Kbagg.A}, and the Besov space version of the
equivalence {\rm (a)}$\Leftrightarrow${\rm (b)} in
Theorem~\ref{Main-T1aa} (see Theorem~\ref{Thm-Besov}), and
also a more general version of \eqref{eq:rDC} in
Theorem~\ref{KbAAbb}.

\vskip 0.08in
\noindent{\it Acknowledgments:}
The first author has been supported in part by the Simons Foundation
grant $\#\,$200750, the second author has been supported in part by the
Simons Foundation grant $\#\,$281566 and by a University of Missouri Research Leave grant,
while the third author has been supported in part by the grants 2009SGR420 (Generalitat de Catalunya)
and MTM2010-15657 (Ministerio de Educaci\'on y Ciencia). The authors are also grateful
to L. Escauriaza and M. Taylor for some useful correspondence on the subject of the paper.

\section{Geometric Measure Theoretic Preliminaries}
\setcounter{equation}{0} \label{S-2}

Throughout, ${\mathbb{N}}_0:={\mathbb{N}}\cup\{0\}$ and we shall
denote by ${\mbox{\bf 1}}_E$ the characteristic function of a set
$E$. For $\alpha\in(0,1)$ and $U\subseteq\mathbb{R}^n$ arbitrary set
(implicitly assumed to have cardinality $\geq 2$), define the {\tt
homogeneous} {\tt H\"older} {\tt space} {\tt of} {\tt order}
$\alpha$ on $U$ as
\begin{equation}\label{eqn.HolderSpace1}
\dot{\mathscr{C}}^\alpha(U):=\big\{u:U\to\mathbb{C}:\,[u]_{\dot{\mathscr{C}}^\alpha(U)}<+\infty\big\},
\end{equation}
where $[\,\cdot\,]_{\dot{\mathscr{C}}^\alpha(U)}$ stands for the
seminorm
\begin{equation}\label{lBba}
[u]_{\dot{\mathscr{C}}^\alpha(U)}:=\sup_{\substack{x,y\in U \\ x\neq
y}}\frac{|u(x)-u(y)|}{|x-y|^\alpha}.
\end{equation}
The {\tt inhomogeneous} {\tt H\"older} {\tt space} {\tt of} {\tt
order} $\alpha$ on $U$ is then defined as
\begin{equation}\label{eqn.HolderSpace2}
\mathscr{C}^\alpha(U):=\big\{u\in\dot{\mathscr{C}}^\alpha(U):\,u\mbox{
is bounded in $U$}\big\},
\end{equation}
and is equipped with the norm
\begin{equation}\label{eqn.HolderNNN}
\|u\|_{\mathscr{C}^\alpha(U)}:=\sup_U
|u|+[u]_{\dot{\mathscr{C}}^\alpha(U)},\qquad
\forall\,u\in\mathscr{C}^\alpha(U).
\end{equation}
Also, denote by $\mathscr{C}^\alpha_c(U)$ the subspace of ${\mathscr{C}}^\alpha(U)$ consisting
of functions vanishing outside of a relatively compact subset of $U$.
Moreover, if ${\mathcal{O}}$ is an open, nonempty, subset of
${\mathbb{R}}^n$, then for $\alpha\in(0,1)$ given define
\begin{equation}\label{eMnan}
{\mathscr{C}}^{1+\alpha}({\mathcal{O}}):=\big\{u\in{\mathscr{C}}^1({\mathcal{O}}):\,
\|u\|_{{\mathscr{C}}^{1+\alpha}({\mathcal{O}})}<+\infty\big\},
\end{equation}
where
\begin{equation}\label{eMnan.2}
\|u\|_{{\mathscr{C}}^{1+\alpha}({\mathcal{O}})}
:=\sup_{x\in\mathcal{O}}\,|u(x)|+\sup_{x\in\mathcal{O}}\,|(\nabla
u)(x)| +\sup_{\substack{x,y\in{\mathcal{O}} \\ x\neq y}}
\frac{\big|(\nabla u)(x)-(\nabla u)(y)\big|}{|x-y|^\alpha}.
\end{equation}

The following observations will be tacitly used in the sequel. For
each set $U\subseteq\mathbb{R}^n$ and any $\alpha\in(0,1)$, we have
that ${\mathscr{C}}^\alpha(U)$ is an algebra, the spaces
$\dot{\mathscr{C}}^\alpha(U)$ and $\mathscr{C}^\alpha(U)$ are
contained in the space of uniformly continuous functions on $U$, and
$\dot{\mathscr{C}}^\alpha(U)=\dot{\mathscr{C}}^\alpha(\overline{U})$,
$\mathscr{C}^\alpha(U)=\mathscr{C}^\alpha(\overline{U})$. Moreover,
$\dot{\mathscr{C}}^\alpha(U)=\mathscr{C}^\alpha(U)$ if $U$ is
bounded. Finally, we shall make no notational distinction between
a H\"older space of scalar functions and its version involving
vector-valued functions. A similar convention is employed for other
function spaces used in this work.

\begin{definition}\label{lipdom}
A nonempty, open, proper subset $\Omega$ of ${\mathbb{R}}^n$ is
called a {\tt domain} {\tt of} {\tt class}
${\mathscr{C}}^{1+\alpha}$ for some $\alpha\in(0,1)$ {\rm (}or a
{\tt Lyapunov} {\tt domain} {\tt of} {\tt order} $\alpha${\rm )}, if
there exist $r,\,h>0$ with the following significance. For every
point $x_0\in\partial\Omega$ one can find a coordinate system
$(x_1,\dots,x_n)=(x',x_n)$ in ${\mathbb{R}}^n$ which is isometric to
the canonical one and has origin at $x_0$, along with a real-valued
function $\varphi\in{\mathscr{C}}^{1+\alpha}({\mathbb{R}}^{n-1})$
such that
\begin{eqnarray}\label{cylinders}
\Omega\cap {\mathcal{C}}(r,h)=\bigl\{x=(x',x_n)\in{\mathbb{R}}^{n-1}
\times{\mathbb{R}}:\,|x'|<r\,\,\,\mbox{and}\,\,\varphi(x')<x_n<h\bigr\},
\end{eqnarray}
where ${\mathcal{C}}(r,h)$ stands for the cylinder
\begin{equation}\label{eq:123t6}
\bigl\{x=(x',x_n)\in{\mathbb{R}}^{n-1}\times{\mathbb{R}}:\,
|x'|<r\mbox{ and }-h<x_n<h\bigr\}.
\end{equation}
\end{definition}

Strictly speaking, the traditional definition of a
Lyapunov\footnote{Also spelled as Liapunov} domain
$\Omega\subseteq{\mathbb{R}}^n$ of order $\alpha$ requires that
$\partial\Omega$ is locally given by the graph of a differentiable
function $\varphi:{\mathbb{R}}^{n-1}\to{\mathbb{R}}$ whose normal
$\nu$ to its graph $\Sigma$ has the property that the acute angle
$\theta_{x,\,y}$ between $\nu(x)$ and $\nu(y)$ for two arbitrary
points $x,y\in\Sigma$ satisfies $\theta_{x,\,y}\leq C|x-y|^\alpha$;
see, e.g., \cite[D\'efinition~2.1, p.\,301]{Iftimie}. This being
said, it is easy to see that the latter condition implies that $\nu$
is H\"older continuous of order $\alpha$ and, ultimately, that
$\Omega$ is a domain of class ${\mathscr{C}}^{1+\alpha}$ in the
sense of our Definition~\ref{lipdom}.

We shall now present a brief summary of a number of definitions and
results from geometric measure theory which are relevant for the
current work (cf. H.\,Federer \cite{Fed96}, W.\,Ziemer \cite{Zi89},
L.\,Evans and R.\,Gariepy \cite{EvGa92} for more details). Call a
Lebesgue measurable set $\Omega\subset{\mathbb{R}}^n$ of
{\tt locally} {\tt finite} {\tt perimeter} provided
$\nabla{\bf 1}_\Omega$ is a locally finite Borel regular
${\mathbb{R}}^n$-valued
measure. Given a Lebesgue measurable set $\Omega\subset{\mathbb{R}}^n$
of locally finite perimeter we denote by $\sigma$ the total variation
measure of $\nabla{\bf 1}_\Omega$. Then $\sigma$ is a locally finite
positive measure, supported on $\partial\Omega$. Clearly, each
component of $\nabla{\bf 1}_\Omega$ is absolutely continuous with
respect to $\sigma$ so from the Radon-Nikodym theorem it follows
that
\begin{eqnarray}\label{hmttr-3}
\nabla{\mbox{\bf 1}}_\Omega=-\nu\sigma,
\end{eqnarray}
where
\begin{eqnarray}\label{Nu-Dj}
\parbox{9.80cm}{$\nu$ is an ${\mathbb{R}}^n$-valued function with components
in $L^\infty(\partial\Omega,\sigma)$ and which satisfies $|\nu(x)|=1$ at $\sigma$-a.e. point
$x\in\partial\Omega$.}
\end{eqnarray}
Above and elsewhere, by $L^p(\partial\Omega,\sigma)$,
$0<p\leq\infty$, we denote the usual scale of Lebesgue spaces on
$\partial\Omega$ (with respect to the measure $\sigma$). In the
sequel we shall frequently identify $\sigma$ with its restriction to
$\partial\Omega$, with no special mention. We shall refer to $\nu$
and $\sigma$, respectively, as the (geometric measure theoretic)
{\tt outward} {\tt unit} {\tt normal} and the {\tt surface} {\tt
measure} on $\partial\Omega$.

Next, denote by ${\mathscr{L}}^n$ the Lebesgue measure in
${\mathbb{R}}^n$ and recall that the {\tt measure}-{\tt theoretic}
{\tt boundary} $\partial_*\Omega$ of a Lebesgue measurable set
$\Omega\subseteq{\mathbb{R}}^n$ is defined by
\begin{align}\label{2.1.10}
\partial_*\Omega:=\Bigl\{x\in\partial\Omega:\,
\limsup\limits_{r\rightarrow
0^{+}}\,&\frac{{\mathscr{L}}^n(B(x,r)\cap\Omega)}{r^n}>0
\nonumber\\[6pt]
&\mbox{and }\,\,\, \limsup\limits_{r\rightarrow
0^{+}}\,\frac{{\mathscr{L}}^n(B(x,r)\setminus\Omega)}{r^n}>0\Bigr\}.
\end{align}
Also, the {\tt reduced} {\tt boundary} $\partial^*\Omega$ of
$\Omega$ is defined as
\begin{eqnarray}\label{2S-red}
\partial^*\Omega:=\bigl\{x\in\partial\Omega:\,|\nu(x)|=1\bigr\}.
\end{eqnarray}
As is well-known, (cf. \cite[Lemma~5.9.5 on p.\,252]{Zi89} and
\cite[p.\,208]{EvGa92}) one has
\begin{eqnarray}\label{2.1.11}
\partial^*\Omega\subseteq\partial_*\Omega\subseteq\partial\Omega
\quad\mbox{ and }\quad
\mathcal{H}^{n-1}(\partial_*\Omega\setminus\partial^*\Omega)=0,
\end{eqnarray}
where ${\mathcal{H}}^{n-1}$ is the $(n-1)$-dimensional Hausdorff
measure in ${\mathbb{R}}^n$. Also,
\begin{eqnarray}\label{2.1.5}
\sigma=\mathcal{H}^{n-1}\lfloor\partial^*\Omega.
\end{eqnarray}
Hence, if $\Omega$ has locally finite perimeter, it follows from
\eqref{2.1.11} that the outward unit normal is defined $\sigma$-a.e.
on $\partial_*\Omega$. In particular, if
\begin{eqnarray}\label{Tay-1}
{\mathcal{H}}^{n-1}(\partial\Omega\setminus\partial_*\Omega)=0,
\end{eqnarray}
then from \eqref{2S-red}-\eqref{2.1.11} we see that the outward unit
normal $\nu$ is defined $\sigma$-a.e.~on $\partial\Omega$, and
\eqref{2.1.5} becomes
$\sigma={\mathcal{H}}^{n-1}\lfloor\partial\Omega$. Works of Federer
and of De Giorgi also give that
\begin{eqnarray}\label{2.CRpar}
\mbox{$\partial^*\Omega$ is {\tt countably} {\tt rectifiable} (of
dimension $n-1$)},
\end{eqnarray}
in the sense that it is a countable disjoint union
\begin{eqnarray}\label{2.1.6}
\partial^*\Omega=N\cup\Big(\bigcup\limits_{k\in{\mathbb{N}}}M_k\Big),
\end{eqnarray}
where each $M_k$ is a compact subset of an $(n-1)$-dimensional
${\mathscr{C}}^1$ surface in ${\mathbb{R}}^n$ and
$\mathcal{H}^{n-1}(N)=0$. It then happens that $\nu$ is normal to
each such surface, in the usual sense. For further reference let us
remark here that, as is apparent from \eqref{2.CRpar},
\eqref{2.1.11}, and \eqref{2.1.6},
\begin{eqnarray}\label{TaGva134}
\parbox{8.60cm}{if $\Omega\subset{\mathbb{R}}^n$ is a Lebesgue measurable
set which has locally finite perimeter and for which \eqref{Tay-1} holds,
then $\partial\Omega$ is countably rectifiable (of dimension $n-1$).}
\end{eqnarray}

The following characterization of the class of
${\mathscr{C}}^{1+\alpha}$ domains from \cite{HoMiTa07} is going to
play an important role for us here.

\begin{theorem}\label{Th-C1}
Assume that $\Omega$ is a nonempty, open, proper subset of
${\mathbb{R}}^n$, of locally finite perimeter, with compact
boundary, for which
\begin{eqnarray}\label{hmttr-1}
\partial\Omega=\partial\big(\,\overline{\Omega}\,\big),
\end{eqnarray}
and denote by $\nu$ the geometric measure theoretic outward unit
normal to $\partial\Omega$, as defined in
\eqref{hmttr-3}-\eqref{Nu-Dj}. Also, fix $\alpha\in(0,1)$. Then
$\Omega$ is a ${\mathscr{C}}^{1+\alpha}$ domain if and only if,
after altering $\nu$ on a set of $\sigma$-measure zero, one has
$\nu\in{\mathscr{C}}^\alpha(\partial\Omega)$.
\end{theorem}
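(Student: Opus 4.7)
The plan is to prove the two implications separately, with the forward direction being essentially a direct computation and the backward direction requiring an initial appeal to the known $\mathscr{C}^0$-analogue of the statement.

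For the forward direction, I would start from the graph representation provided by Definition~\ref{lipdom}. Fix $x_0 \in \partial\Omega$ and let $\varphi \in \mathscr{C}^{1+\alpha}(\mathbb{R}^{n-1})$ parametrize $\partial\Omega$ as $x_n = \varphi(x')$ in the cylinder $\mathcal{C}(r,h)$. A direct computation using \eqref{hmttr-3} shows that, with respect to this chart, the outward unit normal takes the explicit form
\begin{equation*}
\nu(x',\varphi(x')) = \frac{\bigl(\nabla_{x'}\varphi(x'),\,-1\bigr)}{\sqrt{1+|\nabla_{x'}\varphi(x')|^2}}
\end{equation*}
at $\sigma$-a.e.\ point in the cylinder. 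Because $\varphi \in \mathscr{C}^{1+\alpha}$ means $\nabla\varphi \in \mathscr{C}^\alpha$, and since the map $v \mapsto (v,-1)/\sqrt{1+|v|^2}$ is smooth on $\mathbb{R}^{n-1}$, the composition delivers $\mathscr{C}^\alpha$-regularity of $\nu$ as a function of $x'$. To pass to Hölder continuity with respect to the intrinsic Euclidean distance on $\partial\Omega$, I would use the obvious bound $|x'-y'| \leq |x-y|$ for $x=(x',\varphi(x'))$ and $y=(y',\varphi(y'))$ on the graph. A standard patching argument using compactness of $\partial\Omega$ and a finite subcover by such cylinders then yields $\nu \in \mathscr{C}^\alpha(\partial\Omega)$ globally.

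For the converse, the crucial observation is that the case $\alpha = 0$ of this theorem is already available from \cite{HoMiTa07} (and is actually quoted in the paragraph just above Theorem~\ref{Kbagg.A}): under the standing hypotheses, $\nu \in \mathscr{C}^0(\partial\Omega)$ already forces $\Omega$ to be a $\mathscr{C}^1$ domain. So I would invoke this to conclude immediately that, locally near every $x_0 \in \partial\Omega$, there is a rotated coordinate system and a $\mathscr{C}^1$ function $\varphi$ with $\partial\Omega \cap \mathcal{C}(r,h) = \{x_n = \varphi(x')\}$. The task then reduces to upgrading $\varphi$ from $\mathscr{C}^1$ to $\mathscr{C}^{1+\alpha}$, which is an algebraic manipulation. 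Specifically, choosing coordinates so that $\nu(x_0) = -e_n$, continuity of $\nu$ guarantees that $\nu_n < -1/2$ throughout a neighborhood; inverting the formula from the forward direction gives
\begin{equation*}
\nabla_{x'}\varphi(x') = -\frac{\bigl(\nu_1,\dots,\nu_{n-1}\bigr)}{\nu_n}\Bigl(x',\varphi(x')\Bigr).
\end{equation*}
Since $\mathscr{C}^\alpha$ is an algebra stable under division by functions bounded away from zero, and since composition with the $\mathscr{C}^1$ map $x' \mapsto (x',\varphi(x'))$ preserves $\mathscr{C}^\alpha$-regularity (using the bi-Lipschitz equivalence $|x'-y'| \leq |x-y| \leq (1+\|\nabla\varphi\|_\infty)|x'-y'|$), we conclude $\nabla\varphi \in \mathscr{C}^\alpha(\{|x'|<r\})$, i.e., $\varphi \in \mathscr{C}^{1+\alpha}$.

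The main obstacle in this argument is really not in either of the two steps above; it is concentrated in the $\mathscr{C}^0$ result of \cite{HoMiTa07} that allows us to produce a $\mathscr{C}^1$ graph representation to begin with. That result itself requires the hypothesis $\partial\Omega = \partial(\overline{\Omega})$ (to exclude cusps and pathological boundaries) together with the local finiteness of perimeter, and it is what makes the implicit-function-style inversion in the displayed equation above meaningful. Once that input is granted, the passage from $\mathscr{C}^1$ to $\mathscr{C}^{1+\alpha}$ via the explicit formula is routine, and the two implications together close the equivalence.
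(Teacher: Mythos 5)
Your argument is correct in outline, but it is worth noting that the paper does not actually prove Theorem~\ref{Th-C1} at all: it is quoted wholesale from \cite{HoMiTa07}. So your proposal supplies more than the paper does, by isolating the genuinely hard content in the continuous case ($\nu\in{\mathscr{C}}^0(\partial\Omega)$ implies ${\mathscr{C}}^1$ domain) and reducing the H\"older case to it. Your forward direction (explicit formula for $\nu$ on a graph, matching \eqref{ndfidedo}, plus a finite-cover patching argument) and your bootstrap $\partial_j\varphi=-\nu_j(\cdot,\varphi(\cdot))/\nu_n(\cdot,\varphi(\cdot))$ are both sound; the stability of ${\mathscr{C}}^\alpha$ under division by a function bounded away from zero and under precomposition with the Lipschitz chart $x'\mapsto(x',\varphi(x'))$ is exactly the right routine input. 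The one point where you should be more careful is the hypotheses under which you invoke the $\alpha=0$ case: as quoted in the paragraph preceding Theorem~\ref{Kbagg.A}, that equivalence is stated ``under the initial background hypotheses of Theorem~\ref{Main-T1aa},'' i.e., for Ahlfors regular domains with ${\mathcal{H}}^{n-1}(\partial\Omega\setminus\partial_\ast\Omega)=0$, whereas Theorem~\ref{Th-C1} assumes only locally finite perimeter, compact boundary, and $\partial\Omega=\partial(\overline{\Omega})$. In particular, under the weaker hypotheses the measure $\sigma$ (the total variation of $\nabla{\mathbf 1}_\Omega$) is a priori supported only on $\partial^\ast\Omega$, which could be a small subset of $\partial\Omega$, so the meaning of ``$\nu\in{\mathscr{C}}^\alpha(\partial\Omega)$ after altering on a $\sigma$-null set'' and the deduction of a ${\mathscr{C}}^1$ graph structure require the more general form of the continuous-case result actually proved in \cite{HoMiTa07}, not merely the version restated in this paper. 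Once that is granted, your two-step reduction closes the equivalence.
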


\noindent Condition \eqref{hmttr-1} expresses the fact that the
domain $\Omega$ sits on just one side of its topological boundary,
and is designed to preclude pathological happenstances such as a
slit disk. By the Jordan-Brower separation theorem (cf.
\cite[Theorem~1, p.\,284]{Alexander}), \eqref{hmttr-1} is
automatically satisfied if $\partial\Omega$ is a compact, connected,
$(n-1)$-dimensional topological manifold without boundary (since in
this scenario ${\mathbb{R}}^n\setminus\partial\Omega$ consists
precisely of two components, each with boundary $\partial\Omega$;
see \cite{ABMMZ} for details).

Changing topics, we remind the reader that a set
$\Sigma\subset{\mathbb{R}}^n$ is called {\tt Ahlfors} {\tt regular}
provided it is closed, nonempty, and there exists $C\in(1,\infty)$
such that
\begin{eqnarray}\label{2.0.1}
C^{-1}\,r^{n-1}\leq{\mathcal{H}}^{n-1}\bigl(B(x,r)\cap\Sigma\bigr)\leq
C\,r^{n-1},
\end{eqnarray}
for each $x\in\Sigma$ and $r\in (0,{\rm diam}\,\Sigma)$. When
considered by itself, the second inequality above will be referred
to as {\tt upper} {\tt Ahlfors} {\tt regularity}. In this vein, we
wish to remark that (cf. \cite[Theorem~1, p.\,222]{EvGa92})
\begin{equation}\label{eq:hdusd}
\parbox{9.00cm}{any Lebesgue measurable subset of ${\mathbb{R}}^n$ with an
upper Ahlfors regular boundary is of locally finite perimeter.}
\end{equation}
It is natural to make the following definition.

\begin{definition}\label{ADRDOM}
Call an open, nonempty, proper subset $\Omega$ of ${\mathbb{R}}^n$ an
{\tt Ahlfors} {\tt regular} {\tt domain} provided $\partial\Omega$ is an Ahlfors regular
set and ${\mathcal{H}}^{n-1}(\partial\Omega\setminus\partial_*\Omega)=0$.
\end{definition}

Let us remark here that \eqref{TaGva134} and \eqref{eq:hdusd}
imply the following result:
\begin{eqnarray}\label{TaGv-kgggg}
\parbox{9.80cm}{if $\Omega\subset{\mathbb{R}}^n$ is a Lebesgue measurable set
with an upper Ahlfors regular boundary satisfying
${\mathcal{H}}^{n-1}(\partial\Omega\setminus\partial_*\Omega)=0$, then $\Omega$
is a set of locally finite perimeter and its topological boundary,
$\partial\Omega$, is countably rectifiable (of dimension $n-1$).}
\end{eqnarray}
For further use, we record the following consequence of \eqref{TaGv-kgggg} and Definition~\ref{ADRDOM}:
\begin{eqnarray}\label{TaGv-kgggg.222}
\parbox{8.70cm}{any Ahlfors regular domain in ${\mathbb{R}}^n$ has
a countably rectifiable topological boundary (of dimension $n-1$).}
\end{eqnarray}
Later on, the following result is going to be of significance to us.

\begin{proposition}\label{Yfe488}
Let $\Sigma\subseteq{\mathbb{R}}^n$ be an Ahlfors regular set which
is countably rectifiable {\rm (}of dimension $n-1${\rm )}. Define
$\sigma:={\mathcal{H}}^{n-1}\lfloor\Sigma$ and consider an arbitrary
function $f\in L^1_{\rm loc}(\Sigma,\sigma)$. Then for each
$j\in\{1,\dots,n\}$,
\begin{align}\label{Tdad-65}
\lim_{\varepsilon\to
0^+}\left\{\sup_{r\in(\varepsilon/2,\,\varepsilon)}
\Bigg|\int\limits_{\stackrel{y\in\Sigma}{\varepsilon/4<|y-x|\leq r}}
\frac{x_j-y_j}{|x-y|^{n}}f(y)\,d\sigma(y)\Bigg|\right\}=0\,\,\mbox{
for $\sigma$-a.e. $x\in\Sigma$}.
\end{align}
\end{proposition}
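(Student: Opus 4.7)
The plan decomposes into three stages: a density reduction via maximal function estimates, removal of the function $f$ from the integrand, and a purely geometric cancellation argument based on countable rectifiability together with the oddness of the Riesz kernel.

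For the first stage, the upper Ahlfors bound $\sigma(B(x,\varepsilon)\cap\Sigma)\le C\varepsilon^{n-1}$ combined with the elementary estimate $|x-y|^{-(n-1)}\le (\varepsilon/4)^{-(n-1)}$ on the annulus $\varepsilon/4<|y-x|\le r\le\varepsilon$ yields
\begin{equation*}
\sup_{r\in(\varepsilon/2,\varepsilon)}\Bigg|\int\limits_{\stackrel{y\in\Sigma}{\varepsilon/4<|y-x|\leq r}}\frac{x_j-y_j}{|x-y|^n}f(y)\,d\sigma(y)\Bigg|\le C\,M_\sigma f(x),
\end{equation*}
where $M_\sigma$ denotes the Hardy-Littlewood maximal operator on the space of homogeneous type $(\Sigma,\sigma,|\cdot|)$. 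Since $M_\sigma$ is of weak type $(1,1)$ in this setting, a standard linearization argument reduces the proof of \eqref{Tdad-65} to the case when $f$ belongs to $\mathscr{C}_c^0(\Sigma)$, which is dense in $L^1_{\mathrm{loc}}(\Sigma,\sigma)$.

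For the second stage, with $f\in\mathscr{C}_c^0(\Sigma)$, decompose $f(y)=f(x)+(f(y)-f(x))$. The contribution of the second summand is dominated, uniformly in $r\in(\varepsilon/2,\varepsilon)$, by
\begin{equation*}
\omega_f(\varepsilon)\int\limits_{\stackrel{y\in\Sigma}{\varepsilon/4<|y-x|\leq \varepsilon}}|x-y|^{-(n-1)}\,d\sigma(y)\le C\,\omega_f(\varepsilon),
\end{equation*}
where $\omega_f$ denotes the modulus of continuity of $f$. This error vanishes with $\varepsilon$, so matters reduce to establishing, for $\sigma$-a.e.\ $x\in\Sigma$, the purely geometric claim
\begin{equation*}
\lim_{\varepsilon\to 0^+}\sup_{r\in(\varepsilon/2,\varepsilon)}\Bigg|\int\limits_{\stackrel{y\in\Sigma}{\varepsilon/4<|y-x|\leq r}}\frac{x_j-y_j}{|x-y|^n}\,d\sigma(y)\Bigg|=0.
\end{equation*}

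For the third stage, invoke \eqref{2.1.6} to decompose $\Sigma=N\cup\bigcup_k M_k$, with $\sigma(N)=0$ and each $M_k$ compact inside some $C^1$ hypersurface $S_k\subset\mathbb{R}^n$. At $\sigma$-a.e.\ $x\in\Sigma$ one has $x\in M_k$ for some $k$, with $x$ simultaneously a point of $\sigma$-density $1$ of $M_k$ in $\Sigma$ and of $\mathcal{H}^{n-1}$-density $1$ of $M_k$ in $S_k$; moreover, $S_k$ admits a tangent hyperplane $T$ at $x$. For such a point $x$, compare the integral over $\Sigma$ with the corresponding integral over $T$: the latter is identically zero because the kernel $(x_j-y_j)/|x-y|^n$ is odd under the reflection $y\mapsto 2x-y$ through $x$, which preserves the annulus in $T$. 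The comparison produces three error contributions, arising from the symmetric differences $\Sigma\triangle M_k$, $M_k\triangle S_k$, and $S_k\triangle T$, all localized to $B(x,\varepsilon)$. The first two are bounded by $C\varepsilon^{-(n-1)}$ times the corresponding symmetric-difference measure inside $B(x,\varepsilon)$, which vanishes by the density-$1$ properties. The third is handled by representing $S_k$ near $x$ as a $C^1$ graph $x_n=\varphi(x')$ over $T$ (after a rigid motion placing $x$ at the origin and $T=\mathbb{R}^{n-1}\times\{0\}$), combined with the area formula and the mean value bound $|\varphi(y')|\le|y'|\sup_{|z'|\le|y'|}|\nabla\varphi(z')|$ together with $\nabla\varphi(0)=0$.

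The principal technical obstacle lies in this last comparison with the tangent plane: because the Riesz kernel has critical homogeneity $|x-y|^{-(n-1)}$ matching the dimension of the measure, a geometric deviation of relative size $\delta$ could in principle produce an $O(1)$ error in the integral. Only the vanishing of $\nabla\varphi$ at the base point $x$, combined with an integrable dyadic estimate of $|y'|^{-(n-1)}$ over the annulus in $T$, allows one to extract a genuine $o(1)$ factor as $\varepsilon\to 0^+$. The uniformity in $r\in(\varepsilon/2,\varepsilon)$ is automatic once the estimates are expressed through $\sigma$-measures of symmetric differences and the modulus of continuity of $\nabla\varphi$, neither of which depends on the precise value of $r$ within this narrow range.
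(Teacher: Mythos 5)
Your proof is correct, but it diverges from the paper's at the decisive step, namely the treatment of the pure-kernel term. The paper also peels off $f(x)$, but it controls the oscillation piece $\int_{\varepsilon/4<|y-x|\le r}\tfrac{x_j-y_j}{|x-y|^n}(f(y)-f(x))\,d\sigma(y)$ directly by the mean oscillation of $f$ over $\Sigma\cap B(x,\varepsilon)$, which vanishes at Lebesgue points of $f$; this renders your first stage (domination by the Hardy--Littlewood maximal function plus the weak-$(1,1)$ density argument) unnecessary, though perfectly valid --- the two devices are interchangeable here. For the kernel term, the paper writes $\int_{\varepsilon/4<|y-x|\le r}$ as a difference of two truncations against a fixed outer radius and invokes the left-to-right implication of Tolsa's theorem \eqref{TToo-1} for $\Sigma\cap B(0,R+1)$, so that the Cauchy criterion for the existing principal value finishes the job. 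You instead prove the required estimate from scratch: decompose $\Sigma$ as in \eqref{2.1.6}, work at a point which is simultaneously a $\sigma$-density point of $M_k$ in $\Sigma$ and an ${\mathcal H}^{n-1}$-density point of $M_k$ in $S_k$, and compare with the tangent plane, where the integral vanishes by oddness. Your error analysis closes correctly: each of the three error terms is $O(\varepsilon^{-(n-1)})$ times an $o(\varepsilon^{n-1})$ quantity (symmetric-difference measures, or the $C^1$-graph deviation controlled by $\nabla\varphi(0)=0$ and the modulus of continuity of $\nabla\varphi$), uniformly in $r\in(\varepsilon/2,\varepsilon)$. This is in essence the classical argument for a.e.\ existence of principal values of odd Calder\'on--Zygmund kernels on rectifiable Ahlfors regular sets, i.e., a self-contained proof of the (easy) direction of \eqref{TToo-1} that the paper cites from \cite{Tol}. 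What your route buys is self-containedness; what the paper's buys is brevity.
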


\begin{proof}
Fix $j\in\{1,\dots,n\}$ and pick some large $R>0$. For each
$\varepsilon\in(0,1)$, $r\in(\varepsilon/2,\varepsilon)$, and
$x\in\Sigma\cap B(0,R)$ split
\begin{align}\label{Tdad-65.A}
\int\limits_{\stackrel{y\in\Sigma}{\varepsilon/4<|y-x|\leq r}}
\frac{x_j-y_j}{|x-y|^{n}} &
f(y)\,d\sigma(y)=I_{\varepsilon,\,r}+II_{\varepsilon,\,r}
\end{align}
where
\begin{align}\label{Tdad-65.B}
I_{\varepsilon,\,r}
&:=\int\limits_{\stackrel{y\in\Sigma}{\varepsilon/4<|y-x|\leq r}}
\frac{x_j-y_j}{|x-y|^{n}}[f(y)-f(x)]\,d\sigma(y),
\\[4pt]
II_{\varepsilon,\,r}
&:=f(x)\Bigg\{\int\limits_{\stackrel{y\in\Sigma\cap
B(0,R+1)}{\varepsilon/4<|y-x|<1}}
\frac{x_j-y_j}{|x-y|^{n}}\,d\sigma(y)
-\int\limits_{\stackrel{y\in\Sigma\cap B(0,R+1)}{r<|y-x|<1}}
\frac{x_j-y_j}{|x-y|^{n}}\,d\sigma(y)\Bigg\}. \label{Tdad-65.C}
\end{align}
The left-to-right implication in \eqref{TToo-1}, used for the set
$\Sigma\cap B(0,R+1)$, gives that $\sigma$-a.e. point
$x\in\Sigma\cap B(0,R)$ has the property that for each $\delta>0$
there exists $\theta_\delta\in(0,1)$ such that for each
$\theta_1,\theta_2\in(0,\theta_\delta)$ we have
\begin{align}\label{Tdad-65.mabf}
\Bigg|\,\,\int\limits_{\stackrel{y\in\Sigma\cap
B(0,R+1)}{\theta_1<|y-x|<1}} \frac{x_j-y_j}{|x-y|^{n}}\,d\sigma(y)
-\int\limits_{\stackrel{y\in\Sigma\cap B(0,R+1)}{\theta_2<|y-x|<1}}
\frac{x_j-y_j}{|x-y|^{n}}\,d\sigma(y)\Bigg|<\delta.
\end{align}
In turn, this readily yields
\begin{align}\label{Tdad-65.G}
\lim_{\varepsilon\to
0^+}\left\{\sup_{r\in(\varepsilon/2,\varepsilon)}
\big|II_{\varepsilon,\,r}\big|\right\}=0\,\,\mbox{ for $\sigma$-a.e.
$x\in\Sigma\cap B(0,R)$}.
\end{align}
Next, thanks to the upper Ahlfors regularity condition satisfied by
$\Sigma$, we may estimate
\begin{align}\label{Tdad-65.D}
\big|I_{\varepsilon,\,r}\big|
&\leq\Big(\frac{4}{\varepsilon}\Big)^{n-1} \int_{\Sigma\cap
B(x,\varepsilon)}|f(y)-f(x)|\,d\sigma(y) \leq c\meanint_{\Sigma\cap
B(x,\varepsilon)}|f(y)-f(x)|\,d\sigma(y),
\end{align}
where the barred integral indicates mean average. Hence, on the one
hand,
\begin{align}\label{Tdad-65.E}
\lim_{\varepsilon\to
0^+}\left\{\sup_{r\in(\varepsilon/2,\varepsilon)}
\big|I_{\varepsilon,\,r}\big|\right\}=0\,\,\mbox{ if $x$ is a
Lebesgue point for $f$}.
\end{align}
On the other hand, the triplet $(\Sigma,|\cdot-\cdot|,\sigma)$ is a
space of homogeneous type and the underlying measure is Borel
regular. As such, Lebesgue's Differentiation Theorem gives that
$\sigma$-a.e. point in $\Sigma$ is a Lebesgue point for $f$. Bearing
this in mind, the desired conclusion now follows from
\eqref{Tdad-65.A}, \eqref{Tdad-65.G}, and \eqref{Tdad-65.E}.
\end{proof}

In the treatment of the principal value Cauchy-Clifford integral
operator in \S\ref{S-5}, the following lemma plays a significant
role.

\begin{lemma}\label{rtggb}
Let $\Omega\subset{\mathbb{R}}^n$ be a Lebesgue measurable set of locally
finite perimeter such that \eqref{Tay-1} holds. Then, for each
$x\in\partial^\ast\Omega$, there exists a Lebesgue measurable set
${\mathcal{O}}_x\subset(0,1)$ of density $1$ at $0$, i.e.,
satisfying
\begin{equation}\label{eq:pytr}
\lim_{\varepsilon\to 0^{+}}
\frac{{\mathscr{L}}^1\big({\mathcal{O}}_x\cap(0,\varepsilon)\big)}{\varepsilon}=1,
\end{equation}
with the property that
\begin{equation}\label{eq:UUt4ED}
\lim_{{\mathcal{O}}_x\ni r\to 0^{+}}
\frac{{\mathcal{H}}^{n-1}\big(\Omega\cap\partial B(x,r)\big)}
{\omega_{n-1}r^{n-1}}=\frac{1}{2}.
\end{equation}
\end{lemma}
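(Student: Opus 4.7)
The plan is to combine De~Giorgi's blow-up theorem with a weighted Chebyshev argument. Since $x\in\partial^\ast\Omega$ the outward unit normal $\nu(x)$ is defined, and De~Giorgi's structure theorem gives the rescaled convergence
\[
{\bf 1}_{(\Omega-x)/r}\longrightarrow {\bf 1}_{H}\quad\text{in}\quad L^1_{\rm loc}({\mathbb{R}}^n)\quad\text{as }\,r\to 0^+,
\]
where $H:=\{y\in{\mathbb{R}}^n:y\cdot \nu(x)<0\}$ is the half-space determined by $\nu(x)$. Integrating this $L^1$-convergence over $B(0,1)$ in polar coordinates, rescaling by $t=rs$, and using the elementary estimate $|{\mathcal{H}}^{n-1}(A)-{\mathcal{H}}^{n-1}(B)|\leq {\mathcal{H}}^{n-1}(A\triangle B)$ applied to sections by spheres, together with the identity ${\mathcal{H}}^{n-1}((x+H)\cap\partial B(x,t))=\tfrac{1}{2}\omega_{n-1}t^{n-1}$, I would derive
\[
\frac{1}{r^n}\int_0^r \omega_{n-1}t^{n-1}\,\Big|\psi(t)-\tfrac{1}{2}\Big|\,dt\longrightarrow 0\quad\text{as }\,r\to 0^+,
\]
where $\psi(t):={\mathcal{H}}^{n-1}(\Omega\cap \partial B(x,t))/(\omega_{n-1}t^{n-1})\in[0,1]$.

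Next I would show that $\psi$ converges to $1/2$ in density at $0$, i.e., that for every $\epsilon>0$ the superlevel set $E_\epsilon:=\{t\in(0,1):|\psi(t)-1/2|\geq\epsilon\}$ has density $0$ at $0$. Indeed, Chebyshev applied to the preceding display yields $r^{-n}\int_{E_\epsilon\cap(0,r)}t^{n-1}\,dt=o(1)$. Since the weight $t^{n-1}$ is increasing on $(0,r)$, among subsets $A\subseteq(0,r)$ of prescribed Lebesgue measure $m$ the integral $\int_A t^{n-1}\,dt$ is minimized by the leftmost interval $A=(0,m)$, with value $m^n/n$. Applied to $A:=E_\epsilon\cap(0,r)$, this produces
\[
\left(\frac{{\mathscr{L}}^1(E_\epsilon\cap(0,r))}{r}\right)^{\!n}\leq n\cdot o(1),
\]
so that ${\mathscr{L}}^1(E_\epsilon\cap(0,r))/r\to 0$ as $r\to 0^+$.

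Finally, I would manufacture ${\mathcal{O}}_x$ by a Cantor-type exhaustion. Pick $\rho_k\downarrow 0$ with $\rho_{k+1}\leq \rho_k/2$ and ${\mathscr{L}}^1(E_{1/k}\cap(0,r))<r/k^2$ for every $r\in(0,\rho_k]$; then set $B:=\bigcup_{k\geq 1}\big(E_{1/k}\cap(\rho_{k+1},\rho_k]\big)$ and ${\mathcal{O}}_x:=(0,\rho_1)\setminus B$. A routine telescoping estimate bounds ${\mathscr{L}}^1(B\cap(0,r))/r$ by $3/k^2$ whenever $r\in(\rho_{k+1},\rho_k]$, so $B$ has density $0$ at $0$ and ${\mathcal{O}}_x$ has density $1$ at $0$. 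Moreover, for $r\in{\mathcal{O}}_x\cap(\rho_{k+1},\rho_k]$ one has $|\psi(r)-1/2|<1/k$, which forces $\psi(r)\to 1/2$ as ${\mathcal{O}}_x\ni r\to 0^+$. The main obstacle is the first paragraph: cleanly converting the $L^1$-blow-up of De~Giorgi into a normalized integral over radial slices via Fubini in polar coordinates and scale-matching. The subsequent weighted-to-unweighted density comparison, exploiting monotonicity of $t^{n-1}$ on $(0,r)$, is the other crucial and somewhat unexpected observation.
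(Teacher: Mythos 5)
Your proposal is correct, and its overall architecture coincides with the paper's: both reduce matters to the integrated smallness estimate $\int_0^R{\mathcal{H}}^{n-1}\big((\Omega\triangle(x+H))\cap\partial B(x,t)\big)\,dt=o(R^n)$ and then upgrade it to convergence along a density-one set of radii by a Chebyshev-type selection. The differences are in the implementation. For the first step the paper simply quotes the computation from the proof of Proposition~3.3 in \cite{HoMiTa10}, whereas you rederive it from De~Giorgi's blow-up theorem at points of $\partial^\ast\Omega$ via Fubini in polar coordinates; these are equivalent inputs (the existence of the approximate tangent plane at such points is what both arguments ultimately rest on). For the selection step the paper works on dyadic shells $I_k=[2^{-(k+1)},2^{-k}]$, where the weight $t^{n-1}$ is comparable to a constant, and applies unweighted Chebyshev with the threshold $\sqrt{\delta_k}$; the single sequence $\sqrt{\delta_k}\to 0$ then produces ${\mathcal{O}}_x$ in one stroke. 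You instead keep the weight, invoke the bathtub/rearrangement inequality $\int_A t^{n-1}\,dt\geq\big({\mathscr{L}}^1(A)\big)^n/n$ to convert the weighted bound into an unweighted density-zero statement for each superlevel set $E_\epsilon$, and then need an extra diagonal (Cantor-type) extraction over $\epsilon=1/k$ to assemble ${\mathcal{O}}_x$; your telescoping bound $3/k^2$ for ${\mathscr{L}}^1(B\cap(0,r))/r$ checks out. Net effect: your route trades the paper's $\sqrt{\delta_k}$ trick for the rearrangement observation plus a diagonalization, at no loss of correctness; the paper's version is slightly more economical, while yours makes the ``convergence in density'' of $\psi(t)={\mathcal{H}}^{n-1}(\Omega\cap\partial B(x,t))/(\omega_{n-1}t^{n-1})$ to $1/2$ explicit as an intermediate statement.
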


\begin{proof}
We largely follow \cite{Ta-PC}.
Given $x\in\partial^\ast\Omega$, there exists an approximate tangent plane $\pi$ to $\Omega$ at $x$
(cf. the discussion on \cite[p.\,2627]{HoMiTa10}) and we denote by $\pi^{\pm}$ the two half-spaces
into which $\pi$ divides ${\mathbb{R}}^n$ (with the convention that the outward unit normal to $\pi^{-}$ is $\nu(x)$).
For each $r>0$, set $\partial^{\pm}B(x,r):=\partial B(x,r)\cap\pi^{\pm}$ and introduce
\begin{equation}\label{iagf.E1}
W(x,r):=\partial^{-}B(x,r)\triangle\big[\Omega\cap\partial B(x,r)\big]
\end{equation}
where, generally speaking, $U\triangle V$ denotes the symmetric difference $(U\setminus V)\cup(V\setminus U)$.
With this notation, in the proof of Proposition~3.3 on p.\,2628 of \cite{HoMiTa10} it has been shown that
\begin{equation}\label{iagf.E2}
\int_0^R{\mathcal{H}}^{n-1}\big(W(x,r)\big)\,dr=o(R^n)\,\,\mbox{ as }\,\,R\to 0^{+}.
\end{equation}
Thus, if we consider the function
\begin{equation}\label{iagf.E3}
\phi:(0,1)\to[0,\infty)\,\,\mbox{ given by }\,\,
\phi(r):=r^{1-n}{\mathcal{H}}^{n-1}\big(W(x,r)\big)\,\,\mbox{ for each }\,\,r\in(0,1),
\end{equation}
it follows from \eqref{iagf.E2} that
\begin{equation}\label{iagf.E4}
\int_{R/2}^R\phi(r)\,dr\leq\Big(\frac{R}{2}\Big)^{1-n}\int_0^R{\mathcal{H}}^{n-1}\big(W(x,r)\big)\,dr
=o(R)\,\,\mbox{ as }\,\,R\to 0^{+}.
\end{equation}
Bring in the dyadic intervals $I_k:=\big[2^{-(k+1)},2^{-k}\big]$ for $k\in{\mathbb{N}}_0$ and
note that \eqref{iagf.E4} entails
\begin{equation}\label{iagf.E5}
\delta_k:=\meanint_{I_k}\phi(r)\,dr\longrightarrow 0^{+}\,\,\mbox{ as }\,\,k\to\infty.
\end{equation}
For each $k\in{\mathbb{N}}_0$ split
\begin{equation}\label{iagf.E6}
I_k=A_k\cup B_k,\,\,\mbox{ with }\,\,
B_k:=\big\{r\in I_k:\,\phi(r)>\sqrt{\delta_k}\,\big\}\,\,\mbox{ and }\,\,
A_k:=I_k\setminus B_k.
\end{equation}
Then Chebychev's inequality permits us to estimate
\begin{equation}\label{iagf.E7}
\frac{{\mathscr{L}}^1(B_k)}{{\mathscr{L}}^1(I_k)}
\leq\frac{1}{\sqrt{\delta_k}}\meanint_{I_k}\phi(r)\,dr=\sqrt{\delta_k}\,,
\qquad\forall\,k\in{\mathbb{N}}_0.
\end{equation}
In light of \eqref{iagf.E5}, this implies that if we now define
\begin{equation}\label{iagf.E8}
{\mathcal{O}}_x:=\bigcup_{k\in{\mathbb{N}}_0}A_k\subset(0,1),
\end{equation}
then
\begin{equation}\label{iagf.E9}
\lim_{{\mathcal{O}}_x\ni r\to 0^{+}}\phi(r)=0.
\end{equation}
We claim that \eqref{eq:pytr} also holds for this choice of ${\mathcal{O}}_x$.
To see that this is the case, assume that some arbitrary $\theta>0$ has been fixed.
For each $\varepsilon\in(0,1)$, let $N_\varepsilon\in{\mathbb{N}}_0$ be such that
$2^{-N_\varepsilon-1}<\varepsilon\leq 2^{-N_\varepsilon}$. Since $N_\varepsilon\to\infty$
as $\varepsilon\to 0^{+}$, it follows from \eqref{iagf.E5} that there exists
$\varepsilon_\theta>0$ with the property that
\begin{equation}\label{iagf.E9aam}
\mbox{$\delta_k\leq\theta^2$ whenever $0<\varepsilon<\varepsilon_\theta$ and
$k\geq N_\varepsilon$}.
\end{equation}
Assuming that $0<\varepsilon<\varepsilon_\theta$ we may then estimate
\begin{align}\label{iagf.E9aanbV}
0 & \leq\frac{\varepsilon-{\mathscr{L}}^1\big({\mathcal{O}}_x\cap(0,\varepsilon)\big)}
{\varepsilon}
=\varepsilon^{-1}{\mathscr{L}}^1\big((0,\varepsilon)\setminus{\mathcal{O}}_x\big)
\nonumber\\[4pt]
&\leq\varepsilon^{-1}{\mathscr{L}}^1\big((0,2^{-N_\varepsilon})\setminus{\mathcal{O}}_x\big)
=\varepsilon^{-1}\sum_{k=N_\varepsilon}^\infty{\mathscr{L}}^1(B_k)
\nonumber\\[4pt]
&\leq\varepsilon^{-1}\sum_{k=N_\varepsilon}^\infty{\mathscr{L}}^1(I_k)\sqrt{\delta_k}
\leq\varepsilon^{-1}\theta 2^{-N_\varepsilon}\leq\theta/2.
\end{align}
This finishes the proof of \eqref{eq:pytr}. At this stage there remains to
observe that since, generally speaking,
$|{\mathscr{H}}^{n-1}(U)-{\mathscr{H}}^{n-1}(V)|\leq{\mathscr{H}}^{n-1}(U\Delta V)$,
from \eqref{iagf.E1} we have
\begin{equation}\label{iagf.E10}
\left|\frac{{\mathcal{H}}^{n-1}\big(\Omega\cap\partial B(x,r)\big)}
{\omega_{n-1}r^{n-1}}-\frac{1}{2}\right|\leq\frac{{\mathcal{H}}^{n-1}\big(W(x,r)\big)}
{\omega_{n-1}r^{n-1}}=\frac{1}{\omega_{n-1}}\phi(r),
\end{equation}
for each $r\in(0,1)$. Then formula \eqref{eq:UUt4ED} is a consequence of this and \eqref{iagf.E9}.
\end{proof}

Following G.~David and S.~Semmes \cite{DaSe91} we now make the
following definition.

\begin{definition}\label{Def-unif.rect}
Call a subset $\Sigma$ of ${\mathbb{R}}^{n}$ a {\tt uniformly} {\tt
rectifiable} {\tt set} provided it is Ahlfors regular and the
following holds. There exist $\varepsilon$, $M\in(0,\infty)$ such
that for each $x\in\Sigma$ and $R\in(0,{\rm diam}\,\Sigma)$, there
is a Lipschitz map $\varphi:B^{n-1}_R\rightarrow {\mathbb{R}}^{n}$
{\rm (}where $B^{n-1}_R$ is a ball of radius $R$ in
${\mathbb{R}}^{n-1}${\rm )} with Lipschitz constant $\leq M$, such
that
\begin{eqnarray}\label{3.1.9a}
{\mathcal{H}}^{n-1}\bigl(\Sigma\cap
B(x,R)\cap\varphi(B^{n-1}_R)\bigr)\geq \varepsilon R^{n-1}.
\end{eqnarray}
\end{definition}

\noindent Informally speaking, uniform rectifiability is about the
ability of identifying big pieces of Lipschitz images inside the
given set (in a uniform, scale invariant, fashion) and can be
thought of as a quantitative version of countable rectifiability.
From \cite[p.\,2629]{HoMiTa10} we know that
\begin{equation}\label{URRFVCa}
\parbox{7.00cm}{any uniformly rectifiable set $\Sigma\subset{\mathbb{R}}^n$ is countably rectifiable
(of dimension $n-1$).}
\end{equation}
Following \cite{HoMiTa10}, we shall also make the following
definition.

\begin{definition}\label{Def-UB}
Call a nonempty open proper subset $\Omega$ of ${\mathbb{R}}^n$ a {\tt UR}
{\rm (}{\tt uniformly} {\tt rectifiable}{\rm )} {\tt domain} provided $\Omega$
is an Ahlfors regular domain whose topological boundary, $\partial\Omega$, is a
uniformly rectifiable set.
\end{definition}

For further use, it is useful to point out that, as is apparent from
definitions,
\begin{eqnarray}\label{URUR}
\parbox{8.00cm}{if $\Omega\subset{\mathbb{R}}^n$ is a {\rm UR}
domain with $\partial\Omega=\partial(\overline{\Omega})$ then
${\mathbb{R}}^n\setminus\overline{\Omega}$ is a {\rm UR}
domain, with the same boundary.}
\end{eqnarray}

We now turn to the notion of nontangential boundary trace of
functions defined in a nonempty, proper, open set
$\Omega\subset{\mathbb{R}}^{n}$. Fix $\kappa>0$ and for each
boundary point $x\in\partial\Omega$ introduce the nontangential
approach region
\begin{eqnarray}\label{NT-1}
\Gamma_\kappa(x):=\{y\in\Omega:\,|x-y|<(1+\kappa)\,{\rm
dist}(y,\partial\Omega)\}.
\end{eqnarray}
It should be noted that, under the current hypotheses, it could
happen that $\Gamma_\kappa(x)=\varnothing$ for points
$x\in\partial\Omega$ (as is the case if, e.g., $\Omega$ has a
suitable cusp with vertex at $x$). Next, given a Lebesgue measurable function
$u:\Omega\to{\mathbb{R}}$, we wish to consider its limit at boundary points $x\in\partial\Omega$
taken from within nontangential approach regions with vertex at $x$.
For such a limit to be meaningfully defined at $\sigma$-a.e. point on $\partial\Omega$
(where, as usual, $\sigma:={\mathcal{H}}^{n-1}\lfloor\partial\Omega$), it is necessary
that
\begin{eqnarray}\label{YA-1}
x\in\overline{\Gamma_{\kappa}(x)}
\,\,\,\mbox{ for $\sigma$-a.e. }\,\,x\in\partial\Omega.
\end{eqnarray}
We shall call an open set $\Omega\subseteq{\mathbb{R}}^n$ satisfying
\eqref{YA-1} above {\tt weakly} {\tt accessible}. Assuming that this
is the case, we say that $u$ has a nontangential boundary trace almost everywhere on $\partial\Omega$
if for $\sigma$-a.e. point $x\in\partial\Omega$ there exists
some $N(x)\subset\Gamma_{\kappa}(x)$ of measure zero such that the limit
\begin{eqnarray}\label{nkc-EE-2}
\big(u\big|^{{}^{\rm n.t.}}_{\partial\Omega}\big)(x):=\lim_{\Gamma_{\kappa}(x)\setminus N(x)\ni y\to x}u(y)
\,\,\text{ exists}.
\end{eqnarray}
When $u$ is a continuous function in $\Omega$, we may take $N(x)=\varnothing$.
For future use, let us also define the nontangential maximal
operator of $u$ as
\begin{equation}\label{eq:NNNa}
\big({\mathcal{N}}u\big)(x)
:=\|u\|_{L^\infty(\Gamma_\kappa(x))}\in [0,\infty],\qquad\forall\,x\in\partial\Omega,
\end{equation}
where the essential supremum (taken to be $0$ if $\Gamma_\kappa(x)=\varnothing$)
in the right-hand side is taken with respect to the Lebesgue measure in ${\mathbb{R}}^n$.

The following result has been proved in \cite[Proposition~2.9, p.\,2588]{HoMiTa10}.

\begin{proposition}\label{Ctay-5}
Any Ahlfors regular domain is weakly accessible.
As a corollary, any {\rm UR} domain is weakly accessible.
\end{proposition}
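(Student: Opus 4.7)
My plan is to produce, at $\sigma$-a.e. point $x\in\partial\Omega$, a sequence $y_j\in\Omega$ with $y_j\to x$ and $|x-y_j|<(1+\kappa)\,\mathrm{dist}(y_j,\partial\Omega)$. Because $\Omega$ is an Ahlfors regular domain, Definition~\ref{ADRDOM} gives $\mathcal{H}^{n-1}(\partial\Omega\setminus\partial_*\Omega)=0$, so I may restrict attention to points $x\in\partial_*\Omega$. At any such point, \eqref{2.1.10} supplies $\delta_0>0$ and a sequence $r_j\downarrow 0$ with $\mathcal{L}^n(B(x,r_j)\cap\Omega)\geq\delta_0\,r_j^n$.

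The next step is a tubular-neighborhood estimate, where the upper Ahlfors regularity of $\partial\Omega$ enters decisively. Using a Vitali cover of $\partial\Omega\cap B(x,2r)$ by balls of radius $\rho$ whose count is controlled by the upper Ahlfors bound $\mathcal{H}^{n-1}(\partial\Omega\cap B(x,2r))\leq C r^{n-1}$, I would obtain
\begin{equation*}
\mathcal{L}^n\bigl(\{y\in B(x,r):\,\mathrm{dist}(y,\partial\Omega)<\rho\}\bigr)\leq C_0\,\rho\,r^{n-1}\qquad\text{for every }\rho\in(0,r),
\end{equation*}
with $C_0$ depending only on $n$ and the upper Ahlfors regularity constant. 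Setting $c:=\delta_0/(2C_0)$, the estimate with $\rho=c\,r_j$ bounds the ``near-boundary'' portion of $B(x,r_j)$ by $(\delta_0/2)r_j^n$, strictly smaller than $\mathcal{L}^n(B(x,r_j)\cap\Omega)$. Hence some $y_j\in B(x,r_j)\cap\Omega$ must satisfy $\mathrm{dist}(y_j,\partial\Omega)\geq c\,r_j$, whence $|x-y_j|<r_j\leq c^{-1}\,\mathrm{dist}(y_j,\partial\Omega)$. For any $\kappa$ with $1+\kappa\geq c^{-1}$ this places $y_j$ in $\Gamma_\kappa(x)$, and since $y_j\to x$ we conclude $x\in\overline{\Gamma_\kappa(x)}$.

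To cover the remaining range $0<\kappa<c^{-1}-1$, I would refine the argument at $\partial^*\Omega$-points (which are of full $\sigma$-measure by \eqref{2.1.11}), using the measure-theoretic blowup of $\Omega$ at $x$ to the half-space $H^-:=\{z\cdot\nu(x)<0\}$. For a test point $p:=-t\nu(x)$ with $t\in(0,1)$ and a radius $s\in(0,t)$, the $L^1_{\mathrm{loc}}$-convergence of $\mathbf{1}_\Omega(x+r\,\cdot\,)$ to $\mathbf{1}_{H^-}$ produces, for all sufficiently small $r$, points $q_r\in B(p,s)\cap(\Omega-x)/r$; the tubular estimate of the previous paragraph, combined with the density$\,1/2$ property of $\partial^*\Omega$-points (cf. also Lemma~\ref{rtggb}), keeps the rescaled boundary $(\partial\Omega-x)/r$ out of a slight thickening of $B(p,s)$. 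Scaling back yields $y_r:=x+rq_r\in\Omega$ with $|x-y_r|\leq r(t+s)$ and $\mathrm{dist}(y_r,\partial\Omega)\gtrsim r(t-s)$. Taking $s\ll t$ drives the ratio $(t+s)/(t-s)$ arbitrarily close to $1$, so $y_r\in\Gamma_\kappa(x)$ for any prescribed $\kappa>0$. The main obstacle is this last step: upgrading the $L^1_{\mathrm{loc}}$-blowup of $\Omega$ to a quantitative statement that keeps $\partial\Omega$ uniformly away from $B(x+rp,rs)$ as $r\to 0$. Finally, the corollary is immediate: by Definition~\ref{Def-UB} every {\rm UR} domain is an Ahlfors regular domain and therefore falls under the scope of the first assertion.
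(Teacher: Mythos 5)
The paper does not prove this proposition internally; it is imported from \cite[Proposition~2.9, p.\,2588]{HoMiTa10}, so there is no in-text argument to compare with, but your blow-up strategy is in the spirit of the cited proof. Your first paragraph is essentially sound, with two caveats. First, the ball-count behind the tubular estimate uses the \emph{lower} inequality in \eqref{2.0.1}: a disjoint subfamily of $\rho$-balls centered on $\partial\Omega\cap B(x,2r)$ each carries $\mathcal{H}^{n-1}$-mass $\gtrsim\rho^{n-1}$, and only then does the upper bound on $\mathcal{H}^{n-1}(\partial\Omega\cap B(x,3r))$ cap their number; upper Ahlfors regularity alone does not give the estimate (both bounds are of course available here). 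Second, \eqref{2.1.10} only provides a $\limsup$, so your $\delta_0$ --- hence the admissible aperture $c^{-1}-1$ --- depends on $x$; this is repaired by working at $x\in\partial^{*}\Omega$ (of full $\sigma$-measure by \eqref{Tay-1} and \eqref{2.1.11}), where the Lebesgue density of $\Omega$ is exactly $1/2$ and $\delta_0$ may be taken universal. Since weak accessibility is required for an arbitrarily prescribed $\kappa>0$, your second stage is indispensable, and the gap you flag there is genuine: $L^1_{\rm loc}$-convergence of ${\mathbf 1}_{(\Omega-x)/r}$ to ${\mathbf 1}_{H^{-}}$ says nothing by itself about where the \emph{topological} boundary sits, and without the avoidance statement the tube estimate only yields ${\rm dist}(y_r,\partial\Omega)\gtrsim s^{n}r$, so the aperture ratio does not improve as $s\to 0$.

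The missing lemma is: for $x\in\partial^{*}\Omega$ and any closed ball $\overline{B(q,\tau)}$ contained in the open half-space $H^{-}$, one has $\partial\Omega\cap\big(x+r\overline{B(q,\tau)}\big)=\varnothing$ for all sufficiently small $r>0$. The lower Ahlfors bound closes it. By \eqref{Tay-1}, \eqref{2.1.11}, and \eqref{2.1.5}, the perimeter measure $\|\nabla{\mathbf 1}_{\Omega}\|$ coincides with $\mathcal{H}^{n-1}\lfloor\partial\Omega$, so \eqref{2.0.1} forces every $z\in\partial\Omega$ to lie in ${\rm supp}\,\|\nabla{\mathbf 1}_{\Omega}\|$; the standard density estimates for sets of locally finite perimeter then give $\min\big\{\mathcal{L}^n(B(z,\rho)\cap\Omega),\,\mathcal{L}^n(B(z,\rho)\setminus\Omega)\big\}\geq c_n\rho^{n}$ for every such $z$ and every $\rho>0$. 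Hence a point $z_r\in\partial\Omega\cap\big(x+r\overline{B(q,\tau)}\big)$ would yield $\mathcal{L}^n\big(B(q,2\tau)\setminus(\Omega-x)/r\big)\geq c_n\tau^{n}$, contradicting the $L^1_{\rm loc}$ blow-up, which sends this quantity to $\mathcal{L}^n\big(B(q,2\tau)\setminus H^{-}\big)=0$ once $\overline{B(q,2\tau)}\subset H^{-}$. (Alternatively one may argue with De Giorgi's weak-$*$ convergence of the rescaled perimeter measures to $\mathcal{H}^{n-1}\lfloor\pi$.) Granted the lemma, take $y_r:=x-rt\,\nu(x)$: the ball $x+rB(-t\nu(x),(1-\eta)t)$ misses $\partial\Omega$, is connected, and carries positive $\Omega$-mass for small $r$, hence lies in $\Omega$; thus $y_r\in\Omega$, $|x-y_r|=rt$, and ${\rm dist}(y_r,\partial\Omega)\geq(1-\eta)rt$, so choosing $\eta$ with $(1-\eta)^{-1}<1+\kappa$ places $y_r\in\Gamma_\kappa(x)$ for any prescribed $\kappa>0$. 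With this insertion your proof is complete, and the corollary for {\rm UR} domains is immediate from Definition~\ref{Def-UB}, as you note.
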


We continue by recording the definition of the class of uniform
domains introduced by O.~Martio and J.~Sarvas in \cite{MaSa78}.

\begin{definition}\label{Def-UD}
Call a nonempty, proper, open set $\Omega\subseteq{\mathbb{R}}^n$ a
{\tt uniform} {\tt domain} if there exists a constant $c\in(0,\infty)$
with the property that:
\begin{eqnarray}\label{jon-1}
\parbox{11.00cm}{for each $x,y\in\Omega$ there exists $\gamma:[0,1]\to\Omega$ rectifiable curve
joining $x$ and $y$ such that ${\rm length}(\gamma)\leq c|x-y|$ and which has the property
$$
\hskip -1.50in
\min\,\bigl\{{\rm length}(\gamma_{x,z})\,,\,{\rm length}(\gamma_{z,y})\bigr\}\leq
c\,{\rm dist}(z,\partial\Omega)
$$
for all $z\in\gamma([0,1])$, where $\gamma_{x,z}$ and $\gamma_{z,y}$ are the two components of
the path $\gamma([0,1])$ joining $x$ with $z$, and $z$ with $y$, respectively.}
\end{eqnarray}
\end{definition}

Condition \eqref{jon-1} asserts that the length of $\gamma([0,1])$
is comparable to the distance between its endpoints and that, away
from its endpoints, the curve $\gamma$ stays correspondingly far
from $\partial\Omega$. Hence, heuristically, condition \eqref{jon-1}
implies that points in $\Omega$ can be joined in $\Omega$ by a
curvilinear (or twisted) double cone which is neither too crocked
nor too thin. Here we wish to note that, given an open nonempty
subset $\Omega$ of ${\mathbb{R}}^n$ with compact boundary along with
some $\alpha\in(0,1)$, the following implication holds:
\begin{equation}\label{eq:tr55}
\mbox{$\Omega$ is a ${\mathscr{C}}^{1+\alpha}$
domain}\,\Longrightarrow\, \mbox{$\Omega$ is a uniform domain}.
\end{equation}

Throughout, we make the convention that, given a nonempty, proper
subset $\Omega$ of ${\mathbb{R}}^{n}$, we abbreviate
\begin{equation}\label{eq:DFFV}
\rho(z):={\rm dist}(z,\partial\Omega)\,\,\,\mbox{ for every
}\,\,z\in\Omega.
\end{equation}

\begin{lemma}\label{Lem-J1}
Let $\Omega\subset{\mathbb{R}}^{n}$ be a uniform domain. Then for
each $\alpha\in(0,1)$ there exists a finite constant $C>0$,
depending only on $\alpha$ and $\Omega$, such that the estimate
\begin{eqnarray}\label{jon-1bis}
[u]_{\dot{\mathscr{C}}^\alpha(\Omega)} \leq
C\sup_{x\in\Omega}\,\Bigl\{\rho(x)^{1-\alpha}|\nabla u(x)|\Bigr\}
\end{eqnarray}
holds for every function $u\in {\mathscr{C}}^1(\Omega)$.
\end{lemma}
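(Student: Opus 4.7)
The plan is to prove the seminorm estimate by using the uniform domain property to connect any two points $x,y\in\Omega$ via a well-behaved rectifiable curve, along which we integrate $|\nabla u|$.

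Fix $u\in{\mathscr{C}}^1(\Omega)$ and set $M:=\sup_{z\in\Omega}\rho(z)^{1-\alpha}|\nabla u(z)|$; we may assume $M<\infty$. Given two distinct points $x,y\in\Omega$, invoke Definition~\ref{Def-UD} to produce a rectifiable curve $\gamma:[0,1]\to\Omega$ joining $x$ to $y$ with total length $L:={\rm length}(\gamma)\leq c|x-y|$ and satisfying the twisted double-cone condition. Reparametrize $\gamma$ by arc length (so $|\gamma'|=1$ a.e. on $[0,L]$, with $\gamma(0)=x$ and $\gamma(L)=y$). Because $u\in{\mathscr{C}}^1(\Omega)$ and $\gamma$ is absolutely continuous, the composition $u\circ\gamma$ is absolutely continuous on $[0,L]$, and the fundamental theorem of calculus combined with the chain rule yields
\begin{equation*}
|u(x)-u(y)|\leq\int_0^L |\nabla u(\gamma(s))|\,ds\leq M\int_0^L\rho(\gamma(s))^{\alpha-1}\,ds.
\end{equation*}

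The key point now is the pointwise lower bound on $\rho\circ\gamma$ coming from the uniform domain condition. For each $s\in[0,L]$, the two components of $\gamma([0,L])$ split at $z=\gamma(s)$ have lengths exactly $s$ and $L-s$, so \eqref{jon-1} gives
\begin{equation*}
\min(s,L-s)\leq c\,\rho(\gamma(s)),\qquad\text{i.e.,}\qquad\rho(\gamma(s))\geq c^{-1}\min(s,L-s).
\end{equation*}
Hence
\begin{equation*}
\int_0^L\rho(\gamma(s))^{\alpha-1}\,ds\leq c^{1-\alpha}\int_0^L\min(s,L-s)^{\alpha-1}\,ds
=c^{1-\alpha}\cdot\frac{2^{1-\alpha}L^\alpha}{\alpha},
\end{equation*}
after computing $\int_0^L\min(s,L-s)^{\alpha-1}\,ds=2\int_0^{L/2}s^{\alpha-1}\,ds=\tfrac{2^{1-\alpha}L^\alpha}{\alpha}$ (which is finite precisely because $\alpha\in(0,1)$).

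Combining these bounds with $L\leq c|x-y|$ gives
\begin{equation*}
|u(x)-u(y)|\leq M\cdot\frac{2^{1-\alpha}c^{1-\alpha}\cdot c^\alpha}{\alpha}\,|x-y|^\alpha=\frac{2^{1-\alpha}c}{\alpha}\,M\,|x-y|^\alpha,
\end{equation*}
which is \eqref{jon-1bis} with $C:=\tfrac{2^{1-\alpha}c}{\alpha}$. The constant depends only on $\alpha$ and on the uniform domain constant $c=c(\Omega)$, as required. The main (mild) technical point is justifying the one-variable integral inequality for the non-smooth curve $\gamma$; this is handled cleanly by arc-length reparametrization together with the absolute continuity of $u\circ\gamma$ (which only requires $u\in{\mathscr{C}}^1$ and $\gamma$ Lipschitz), after which the entire argument reduces to the elementary estimate on $\int_0^L\min(s,L-s)^{\alpha-1}\,ds$.
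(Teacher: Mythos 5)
Your proof is correct and follows essentially the same route as the paper: arc-length parametrization of the curve furnished by the uniform domain condition, the pointwise bound $\rho(\gamma(s))\geq c^{-1}\min(s,L-s)$, and the elementary computation of $\int_0^L\min(s,L-s)^{\alpha-1}\,ds$. The only difference is that you track the constant explicitly, which the paper does not bother to do.
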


\begin{proof}
Consider $c>0$ such that condition \eqref{jon-1} is satisfied. Let
then $x,y\in\Omega$ be two arbitrary points and assume that $\gamma$
is as in Definition~\ref{Def-UD}. Denote by $L$ and $s$,
respectively, the length of the curve $\gamma^*:=\gamma([0,1])$ and
the arc-length parameter on $\gamma^*$, $s\in[0,L]$. Also, let
$[0,\,L]\ni s\mapsto\gamma(s)\in\gamma^*$ be the canonical
arc-length parametrization of $\gamma^*$. In particular,
$s\mapsto\gamma(s)$ is absolutely continuous,
$\left|\frac{d\gamma}{ds}\right|=1$ for almost every $s$, and for
every continuous function $f$ in $\Omega$
\begin{eqnarray}\label{integral}
\int_{\gamma^*}f:=\int_0^{L}f(\gamma(s))\,ds.
\end{eqnarray}
Thus, from \eqref{jon-1} and \eqref{integral}, for each
$\alpha\in(0,1)$ we have
\begin{eqnarray}\label{property}
\int_{\gamma^*}\rho^{\alpha-1} & = &
\int_0^L\rho(\gamma(s))^{\alpha-1}\,ds \leq c^{1-\alpha}
\int_0^L\Bigl[\min\,\{s,L-s\}\Bigr]^{\alpha-1}\,ds
\nonumber\\[6pt]
& \leq & 2c^{1-\alpha}\int_0^{L/2}s^{\alpha-1}\,ds
=C(c,\alpha)L^{\alpha}\leq C(c,\alpha)|x-y|^{\alpha}.
\end{eqnarray}
Then, since $\left|\frac{d\gamma}{ds}\right|=1$ for almost every
$s$, for every $u\in {\mathscr{C}}^1(\Omega)$ we may write
\begin{align}\label{jon-4}
|u(x)-u(y)|
&=\Bigl|\int_0^{L}\frac{d}{ds}\big[u(\gamma(s))\big]\,ds\Bigr|
\nonumber\\[4pt]
& \leq\int_0^L\big|(\nabla
u)(\gamma(s))\big|\,ds=\int_{\gamma^*}|\nabla u|
\nonumber\\[4pt]
& \leq\sup_{\gamma^*}\Bigl\{|\nabla u|\,\rho^{1-\alpha}\Bigr\}
\int_{\gamma^*}\rho^{\alpha-1}
\nonumber\\[4pt]
& \leq C|x-y|^{\alpha}\Bigl\||\nabla
u|\,\rho^{1-\alpha}\Bigr\|_{L^\infty(\Omega)},
\end{align}
finishing the proof of \eqref{jon-1bis}.
\end{proof}

Recall that for each $k\in{\mathbb{N}}$ we let ${\mathscr{L}}^k$
stand for the $k$-dimensional Lebesgue measure in ${\mathbb{R}}^k$.
Also, we shall let $\langle\cdot,\cdot\rangle$ denote the standard
inner product of vectors in $\mathbb{R}^n$.

\begin{lemma}\label{uteeLL}
Assume that $D\subseteq{\mathbb{R}}^n$ is a set of locally finite
perimeter. Denote by $\nu$ its geometric measure theoretic outward
unit normal and define
$\sigma:={\mathcal{H}}^{n-1}\lfloor\partial_\ast D$. Also, suppose
that $\vec{F}\in{\mathscr{C}}^1_0({\mathbb{R}}^n,{\mathbb{R}}^n)$.
Then for each $x\in{\mathbb{R}}^n$,
\begin{equation}\label{eq:TgavFF.iii}
\int_{D\cap B(x,\,r)}{\rm div}\vec{F}\,d{\mathscr{L}}^n
=\int_{\partial_\ast D\cap
B(x,\,r)}\langle\vec{F},\nu\rangle\,d\sigma +\int_{D\cap\partial
B(x,\,r)}\langle\vec{F},\nu\rangle\,d{\mathcal{H}}^{n-1}
\end{equation}
and
\begin{equation}\label{eq:TgavFF}
\int_{D\setminus B(x,\,r)}{\rm div}\vec{F}\,d{\mathscr{L}}^n
=\int_{\partial_\ast D\setminus
B(x,\,r)}\langle\vec{F},\nu\rangle\,d\sigma -\int_{D\cap\partial
B(x,\,r)}\langle\vec{F},\nu\rangle\,d{\mathcal{H}}^{n-1}
\end{equation}
for ${\mathscr{L}}^1$-a.e. $r\in(0,\infty)$, where $\nu$ in each of
the last integrals in the above right hand-sides is the outward unit
normal to $B(x,r)$.
\end{lemma}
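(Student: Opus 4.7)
The plan is to apply the De~Giorgi--Federer divergence theorem for sets of locally finite perimeter to $D$ itself, using a family of test vector fields obtained by multiplying $\vec F$ by a smooth radial cutoff approximating $\mathbf{1}_{B(x,r)}$, and then pass to the limit as the cutoff concentrates on $\partial B(x,r)$. This bypasses the need to directly identify the reduced boundary of the intersection $D\cap B(x,r)$ via Federer's decomposition theorem for intersections of finite-perimeter sets.

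Pick a nonincreasing $\eta\in{\mathscr C}^\infty({\mathbb R};[0,1])$ with $\eta\equiv 1$ on $(-\infty,0]$ and $\eta\equiv 0$ on $[1,\infty)$, and for each $\varepsilon\in(0,r)$ set $\chi_\varepsilon(y):=\eta\big((|y-x|-r+\varepsilon)/\varepsilon\big)$. Then $\chi_\varepsilon\in{\mathscr C}^1_c({\mathbb R}^n)$ with $\chi_\varepsilon\equiv 1$ on $B(x,r-\varepsilon)$ and $\chi_\varepsilon\equiv 0$ off $B(x,r)$. Since $\chi_\varepsilon\vec F\in{\mathscr C}^1_c({\mathbb R}^n,{\mathbb R}^n)$, the divergence theorem applied to $D$ gives
\begin{equation*}
\int_D\chi_\varepsilon\,{\rm div}\vec F\,d{\mathscr L}^n\,+\,\int_D\langle\nabla\chi_\varepsilon,\vec F\rangle\,d{\mathscr L}^n\,=\,\int_{\partial_\ast D}\chi_\varepsilon\,\langle\vec F,\nu\rangle\,d\sigma.
\end{equation*}

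Now send $\varepsilon\to 0^+$. By dominated convergence the first term tends to $\int_{D\cap B(x,r)}{\rm div}\vec F\,d{\mathscr L}^n$; the right-hand side tends to $\int_{\partial_\ast D\cap B(x,r)}\langle\vec F,\nu\rangle\,d\sigma$ whenever $\sigma(\partial_\ast D\cap\partial B(x,r))=0$, which holds for all but countably many $r\in(0,\infty)$ since $\sigma$ is locally finite. For the middle term, $\nabla\chi_\varepsilon(y)=\phi_\varepsilon(|y-x|)(y-x)/|y-x|$ with $\phi_\varepsilon(s):=\varepsilon^{-1}\eta'\big((s-r+\varepsilon)/\varepsilon\big)$, so the coarea formula applied to the $1$-Lipschitz function $y\mapsto|y-x|$ yields
\begin{equation*}
\int_D\langle\nabla\chi_\varepsilon,\vec F\rangle\,d{\mathscr L}^n\,=\,\int_0^\infty\phi_\varepsilon(s)\,g(s)\,ds,\qquad g(s):=\int_{D\cap\partial B(x,s)}\langle\vec F,\nu_B\rangle\,d{\mathcal H}^{n-1},
\end{equation*}
where $\nu_B(y)=(y-x)/|y-x|$ is the outward unit normal to $B(x,s)$. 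Since $-\phi_\varepsilon\,ds$ is a probability measure supported in $[r-\varepsilon,r]$ and $g\in L^1_{\rm loc}(0,\infty)$ (as $|g(s)|\leq\omega_{n-1}s^{n-1}\|\vec F\|_{L^\infty}$), Lebesgue's differentiation theorem gives $\int_0^\infty\phi_\varepsilon(s)g(s)\,ds\to -g(r)$ for ${\mathscr L}^1$-a.e.\ $r$. Combining the three limits proves \eqref{eq:TgavFF.iii} for a.e.\ $r\in(0,\infty)$.

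Identity \eqref{eq:TgavFF} then follows by subtracting \eqref{eq:TgavFF.iii} from the divergence theorem applied to $D$ with the test field $\vec F$ alone, namely $\int_D{\rm div}\vec F\,d{\mathscr L}^n=\int_{\partial_\ast D}\langle\vec F,\nu\rangle\,d\sigma$. The main technical obstacle is the passage to the limit in the boundary-layer integral: one must verify that the spherical slicing function $g$ is locally integrable, so that Lebesgue differentiation applies and the exceptional set of radii is ${\mathscr L}^1$-null. This is secured precisely by the compact support and boundedness of $\vec F$, which force $|g(s)|$ to grow at most polynomially in $s$.
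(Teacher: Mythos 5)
Your proof is correct, but it is worth noting that the paper does not actually prove this lemma from scratch: its entire ``proof'' consists of citing \cite[Lemma~1, p.\,195]{EvGa92} for \eqref{eq:TgavFF.iii} and then subtracting it from the generalized Gauss--Green formula \cite[Theorem~1, p.\,209]{EvGa92} to get \eqref{eq:TgavFF}. What you have written is essentially a self-contained derivation of that cited Evans--Gariepy lemma: the smooth radial cutoff $\chi_\varepsilon$, the splitting $\mathrm{div}(\chi_\varepsilon\vec F)=\chi_\varepsilon\,\mathrm{div}\vec F+\langle\nabla\chi_\varepsilon,\vec F\rangle$, the coarea slicing of the boundary-layer term, and the Lebesgue differentiation argument identifying its limit with $-g(r)$ at a.e.\ $r$ are all sound, and the final subtraction step for \eqref{eq:TgavFF} matches the paper's. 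Two minor remarks: the condition $\sigma(\partial_\ast D\cap\partial B(x,r))=0$ you impose on the right-hand side is superfluous, since $\chi_\varepsilon\to{\mathbf 1}_{B(x,r)}$ pointwise with $B(x,r)$ open and dominated convergence already yields the open-ball integral for every $r$; and it would be worth stating explicitly that the exceptional null set of radii (the non-Lebesgue points of $g$) depends on $x$, $D$, and $\vec F$, which is consistent with the quantifier order in the statement. The trade-off is the usual one: the paper's citation is shorter, while your argument makes the lemma independent of the reference and makes visible exactly where the a.e.\ restriction on $r$ comes from.
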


\begin{proof}
Identity \eqref{eq:TgavFF.iii} is simply \cite[Lemma~1,
p.\,195]{EvGa92}. Then \eqref{eq:TgavFF} follows by combining this
with the Gauss-Green formula from \cite[Theorem~1, p.\,209]{EvGa92}.
\end{proof}

We conclude this section by recording the following two-dimensional
result which is going to be relevant when dealing with the proof of
Theorem~\ref{THM-main222}.

\begin{proposition}\label{treePUb}
Let $\Omega\subseteq{\mathbb{C}}$ be a bounded open set whose
boundary is an upper Ahlfors regular Jordan curve. Then $\Omega$ is
a simply connected {\rm UR} domain satisfying
$\partial\Omega=\partial(\overline{\Omega})$. Hence, in particular,
${\mathcal{H}}^1(\partial\Omega\setminus\partial_\ast\Omega)=0$ and
${\mathbb{C}}\setminus\overline{\Omega}$ is also a {\rm UR} domain
with the same boundary as $\Omega$.

Moreover, the curve $\partial\Omega$ is rectifiable and if $L$
denotes its length and $[0,L]\ni s\mapsto z(s)\in\Sigma$ is its
arc-length parametrization, then
\begin{equation}\label{jgvc-A.1iY}
{\mathscr{H}}^1(E)={\mathscr{L}}^1(z^{-1}(E)),\qquad\forall\,E\subseteq\partial\Omega
\,\,\mbox{ measurable set}
\end{equation}
where ${\mathscr{L}}^1$ is the one-dimensional Lebesgue measure, and
if $\nu$ denotes the geometric measure theoretic outward unit normal
to $\Omega$ then
\begin{eqnarray}\label{Ic-1aGf.Fva.3}
\nu(z(s))=-i\,z'(s)\,\,\,\mbox{ for ${\mathscr{L}}^1$-a.e.
$s\in[0,L]$}.
\end{eqnarray}
\end{proposition}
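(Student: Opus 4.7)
The Jordan curve theorem gives $\mathbb{C}\setminus\partial\Omega = U_{+}\sqcup U_{-}$, with $U_{+}$ bounded and simply connected, $U_{-}$ unbounded, and $\partial U_{+}=\partial U_{-}=\partial\Omega$. Since $\Omega$ is open and disjoint from $\partial\Omega$, each intersection $\Omega\cap U_{\pm}$ is open in $\mathbb{C}$; its boundary (in $\mathbb{C}$) lies in $\partial\Omega$ and is thus disjoint from $U_{\pm}$, making $\Omega\cap U_{\pm}$ clopen in $U_{\pm}$. Connectedness of $U_{\pm}$, together with $\Omega$ being bounded and nonempty, forces $\Omega=U_{+}$. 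Hence $\Omega$ is simply connected, $\mathbb{C}\setminus\overline{\Omega}=U_{-}$ is open, and $\partial\Omega=\partial(\overline{\Omega})$.

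\textbf{Step 2: Rectifiability, Ahlfors regularity, and UR.} Upper Ahlfors regularity together with compactness yields $\mathcal{H}^1(\partial\Omega)<+\infty$ (standard covering by comparably-sized balls), and the classical G\o{}\l{}\k{a}b-type identification of $\mathcal{H}^1$-measure with length on Jordan arcs shows that $\partial\Omega$ is rectifiable of length $L=\mathcal{H}^1(\partial\Omega)$. Consequently it admits a $1$-Lipschitz arc-length parametrization $z\colon[0,L]\to\partial\Omega$ with $z(0)=z(L)$ and $z$ injective on $[0,L)$, and \eqref{jgvc-A.1iY} is Federer's area formula applied to $z$. For the lower Ahlfors bound, fix $x\in\partial\Omega$ and $r\in(0,\mathrm{diam}(\partial\Omega))$: the connected component of $\partial\Omega\cap\overline{B(x,r)}$ through $x$ must reach $\partial B(x,r)$ (otherwise it would be a proper nonempty clopen subset of the connected set $\partial\Omega$), hence has diameter $\ge r$ and therefore $\mathcal{H}^1$-measure $\ge r$. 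Since $\partial\Omega$ must connect any pair of diametrically opposite points and return, $L\ge 2\,\mathrm{diam}(\partial\Omega)>2R$ for any $R<\mathrm{diam}(\partial\Omega)$. Then the map $\varphi\colon(-R,R)\to\mathbb{C}$, $\varphi(s):=z((s_0+s)\bmod L)$ with $z(s_0)=x$, is $1$-Lipschitz and injective; its image lies in $\partial\Omega\cap B(x,R)$ and has $\mathcal{H}^1$-measure $2R$. This verifies Definition~\ref{Def-unif.rect} with $M=1$ and $\varepsilon=2$.

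\textbf{Step 3: Measure-theoretic boundary and normal formula.} Since $z$ is $1$-Lipschitz, Rademacher's theorem gives $|z'(s)|=1$ at $\mathscr{L}^1$-a.e.\ $s\in[0,L]$, and via \eqref{jgvc-A.1iY} this transfers to: at $\mathcal{H}^1$-a.e.\ $x=z(s)\in\partial\Omega$, the curve has a genuine tangent line $\ell_x$ through $x$ with direction $z'(s)$. A standard blow-up argument for rectifiable curves then shows that $\partial\Omega\cap B(x,r)$ is Hausdorff-close to $\ell_x\cap B(x,r)$ as $r\downarrow 0$, and applying the Jordan separation inside $B(x,r)$ matches the two components $\Omega$ and $\mathbb{C}\setminus\overline{\Omega}$ with the two resulting half-disks of $B(x,r)$ up to $\mathscr{L}^2$-negligible error. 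Consequently both have $\mathscr{L}^2$-density exactly $1/2$ at $x$, so $x\in\partial_\ast\Omega$ and $\mathcal{H}^1(\partial\Omega\setminus\partial_\ast\Omega)=0$. This makes $\Omega$ an Ahlfors regular domain with uniformly rectifiable boundary, i.e., a UR domain, and \eqref{URUR} yields the corresponding statement for $\mathbb{C}\setminus\overline{\Omega}$. Finally, $\nu(x)$ is $\sigma$-a.e.\ characterized as the unit vector perpendicular to $\ell_x$ pointing into $\mathbb{C}\setminus\overline{\Omega}$; the orientation, fixed by Gauss-Green applied to $\Omega=U_{+}$ (or by inspection at a single smooth reference point), gives $\nu(z(s))=-i\,z'(s)$ for $\mathscr{L}^1$-a.e.\ $s$.

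The main obstacle is the blow-up step in Step 3: upgrading tangent existence to the metric statement that both $\Omega$ and $\mathbb{C}\setminus\overline{\Omega}$ have $\mathscr{L}^2$-density $1/2$ at the tangent point. The cleanest route is to combine local flatness of a rectifiable curve at a tangent point (the curve lies in an arbitrarily thin strip around $\ell_x$ inside $B(x,r)$ as $r\downarrow 0$) with the Jordan curve theorem applied inside $B(x,r)$ to identify the two components of the local complement with the two half-planes cut by $\ell_x$.
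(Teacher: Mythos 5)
The paper does not actually prove Proposition~\ref{treePUb}: it is imported wholesale from \cite{IMiMiTa.1}, so there is no internal argument to compare yours against, and your proposal must be judged on its own. Your Steps 1 and 2 are correct and complete: the clopen argument identifying $\Omega$ with the bounded Jordan component (hence simple connectivity and $\partial\Omega=\partial(\overline{\Omega})$), the identification of ${\mathcal{H}}^1$ with length, formula \eqref{jgvc-A.1iY} via the area formula and injectivity, the lower Ahlfors bound from connectedness, and the direct verification of Definition~\ref{Def-unif.rect} from the arc-length parametrization using $L\geq 2\,{\rm diam}(\partial\Omega)$ are all sound.

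Step 3 is where the substance of the proposition lies, and your sketch does not close it. First, a point in your favor: the ``Hausdorff-close to $\ell_x$'' claim is in fact true at every point of two-sided differentiability of $z$, but not by a ``blow-up argument for rectifiable curves'' (tangency of rectifiable sets is a measure-theoretic statement controlling only the ${\mathcal{H}}^1$-mass outside cones, not containment). The correct reason is elementary: the complementary arc $z([s_0+\delta,\,s_0-\delta+L])$ is compact and, by injectivity, at positive distance from $x=z(s_0)$, so for small $r$ all of $\partial\Omega\cap B(x,r)$ comes from parameters in $(s_0-\delta,s_0+\delta)$ and hence lies within $C\epsilon r$ of $\ell_x$. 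The genuine gap is the next step: local flatness plus ``Jordan separation inside $B(x,r)$'' does not identify the two near-half-disks with $\Omega$ and ${\mathbb{C}}\setminus\overline{\Omega}$. Each of the two components $W_{\pm}$ of $B(x,r)\setminus\{y:{\rm dist}(y,\ell_x)\leq C\epsilon r\}$ is connected and disjoint from $\partial\Omega$, hence lies entirely in one of the two global complementary domains; but nothing you have said prevents both $W_{+}$ and $W_{-}$ from lying in the \emph{same} domain, with the other domain confined to the thin strip around $\ell_x$ — in which case $x$ would fail the density condition defining $\partial_{\ast}\Omega$. Ruling this out at ${\mathcal{H}}^1$-a.e.\ point is precisely the content of ${\mathcal{H}}^1(\partial\Omega\setminus\partial_{\ast}\Omega)=0$ and needs a genuine input. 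One workable route: the winding-number identity $\mathbf{1}_{\Omega}(w)=\frac{1}{2\pi i}\int_0^L\frac{z'(s)}{z(s)-w}\,ds$ for $w\notin\partial\Omega$ (the Jordan curve theorem with index, valid for rectifiable Jordan curves); differentiating it in the sense of distributions and comparing with $\nabla\mathbf{1}_{\Omega}=-\nu\,{\mathcal{H}}^1\lfloor\partial^{\ast}\Omega$ yields simultaneously ${\mathcal{H}}^1(\partial\Omega\setminus\partial^{\ast}\Omega)=0$ (by taking total variations and using injectivity of $z$) and \eqref{Ic-1aGf.Fva.3}. Relatedly, the orientation in \eqref{Ic-1aGf.Fva.3} cannot be ``fixed by inspection at a single smooth reference point,'' since an upper Ahlfors regular Jordan curve need not have any classically smooth points; it is fixed by choosing the positively oriented parametrization, which is again the winding-number statement rather than something that can be read off locally.
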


\noindent A proof of Proposition~\ref{treePUb} may be found in
\cite{IMiMiTa.1}.

\section{Background and Preparatory Estimates for Singular Integrals}
\setcounter{equation}{0} \label{S-3}

The proofs of the main results require a number of prerequisites,
and this section collects several useful estimates for singular
integral operators. The first theorem in this regard essentially amounts to
a version of the Calder\'on-Zygmund theory for singular integrals on
uniformly rectifiable sets.

\begin{theorem}\label{Main-T2-BBBB}
There exists a positive integer $N=N(n)$ with the following
significance. Suppose $\Sigma\subseteq{\mathbb{R}}^{n}$ is a uniformly rectifiable set and define
$\sigma:={\mathcal{H}}^{n-1}\lfloor\Sigma$. Also, consider a function
\begin{eqnarray}\label{ker}
\begin{array}{l}
k\in{\mathscr{C}}^N({\mathbb{R}}^{n}\setminus\{0\})\,\,
\mbox{ satisfying }\,\, k(-x)=-k(x)\,\,\mbox{ for each }
\,\,x\in{\mathbb{R}}^n
\\[4pt]
\mbox{and so that }\,\,k(\lambda\,x)=\lambda^{-(n-1)}k(x)
\,\,\mbox{ for all }\,\, \lambda>0,\,\,\,
x\in{\mathbb{R}}^{n}\setminus\{0\}.
\end{array}
\end{eqnarray}
For each $\varepsilon>0$, consider the truncated singular
integral operator
\begin{eqnarray}\label{pv-layer2}
T_{\varepsilon}f(x):=\int\limits_{\stackrel{y\in\Sigma}{|x-y|>\varepsilon}}
k(x-y)f(y)\,d\sigma(y),\qquad x\in\Sigma,
\end{eqnarray}
and define the maximal operator $T_*$ by setting
\begin{eqnarray}\label{T-pv}
T_*f(x):=\sup_{\varepsilon>0}|T_{\varepsilon}f(x)|,\qquad x\in\Sigma.
\end{eqnarray}

Then for each $p\in(1,\infty)$ there exists a constant
$C\in(0,\infty)$ depending only on $p$ and $\Sigma$, such that
\begin{eqnarray}\label{Tmax-bdd}
\|T_*f\|_{L^p(\Sigma,\sigma)}\leq
C\big\|k\big|_{S^{n-1}}\big\|_{{\mathscr{C}}^N(S^{n-1})}\|f\|_{L^p(\Sigma,\sigma)}
\end{eqnarray}
for every $f\in L^p(\Sigma,\sigma)$. Furthermore, given any
$p\in[1,\infty)$, for each function $f\in L^p(\Sigma,\sigma)$ the limit
\begin{eqnarray}\label{main-lim}
Tf(x):=\lim_{\varepsilon\to 0^+}T_{\varepsilon}f(x)
\end{eqnarray}
exists for $\sigma$-a.e. $x\in\Sigma$, and the induced operators
\begin{align}\label{maKP56}
& T:L^p(\Sigma,\sigma)\longrightarrow
L^p(\Sigma,\sigma),\quad p\in(1,\infty),
\\[4pt]
& T:L^1(\Sigma,\sigma)\longrightarrow
L^{1,\infty}(\Sigma,\sigma),
\label{maKPbis56}
\end{align}
are well-defined, linear and bounded. In addition, for each
$p\in(1,\infty)$, the adjoint of the operator $T$ acting on
$L^p(\Sigma,\sigma)$ is $-T$ acting on $L^{p'}(\Sigma,\sigma)$
with $1/p+1/p'=1$. Finally, corresponding to the end-point $p=\infty$,
the operator $T$ also induces a linear and bounded mapping
\begin{align}\label{maKP56-BBN}
T:L^\infty(\Sigma,\sigma)\longrightarrow
{\rm BMO}\,(\Sigma).
\end{align}
\end{theorem}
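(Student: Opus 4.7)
The plan is to build the result in layers, starting from the David--Semmes $L^2$ theory on uniformly rectifiable sets and then upgrading via standard Calder\'on--Zygmund machinery. First I would invoke the main $L^2$-boundedness theorem of David and Semmes from \cite{DaSe91} for convolution-type kernels on uniformly rectifiable sets: since $k$ satisfies \eqref{ker} and is therefore odd, smooth away from the origin, and positive homogeneous of degree $-(n-1)$, it fits the hypotheses of \cite[Definition~1.20, p.\,11]{DS2}, and the truncated operators $T_\varepsilon$ in \eqref{pv-layer2} are bounded on $L^2(\Sigma,\sigma)$ uniformly in $\varepsilon>0$. A careful bookkeeping inside the David--Semmes argument (or alternatively passing through a spherical harmonics expansion, which introduces only finitely many derivatives of $k|_{S^{n-1}}$) produces a bound linear in $\|k|_{S^{n-1}}\|_{\mathcal{C}^N(S^{n-1})}$ for some $N=N(n)$, as needed for \eqref{Tmax-bdd}.

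Next I would extend to every $p\in(1,\infty)$ and to weak-type $(1,1)$. The symmetry $k(-x)=-k(x)$ makes the formal adjoint of $T_\varepsilon$ equal to $-T_\varepsilon$ on $L^{p'}$, so $L^2$-boundedness together with duality gives uniform-in-$\varepsilon$ bounds on $L^p$ for $p$ near $2$, and the full range will follow once weak-type $(1,1)$ is in place. For weak-type $(1,1)$ I would perform the Calder\'on--Zygmund decomposition in the space of homogeneous type $(\Sigma,|\cdot-\cdot|,\sigma)$: the standard H\"ormander-type estimate on $k$ (which follows from homogeneity and the control on finitely many derivatives) allows the usual good-function / bad-function splitting to go through. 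Marcinkiewicz interpolation then gives $L^p$-boundedness of $T_\varepsilon$ for $p\in(1,2)$ uniformly in $\varepsilon$, and the $p\in(2,\infty)$ range is obtained by duality. The maximal bound \eqref{Tmax-bdd} for $T_\ast$ is then derived from the uniform bounds on $T_\varepsilon$ by Cotlar's inequality, pointwise controlling $T_\ast f$ by $M(Tf)+M_p f$ where $M,M_p$ are Hardy--Littlewood-type maximal operators on $\Sigma$ (which are bounded on $L^q$ for $q>1$ in the homogeneous space setting).

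To establish the pointwise limit in \eqref{main-lim} for arbitrary $f\in L^p(\Sigma,\sigma)$, $p\in[1,\infty)$, I would use the classical two-step argument. First, on a dense subclass of $L^p$ (for instance $\mathcal{C}^\alpha_c(\Sigma)$, which is dense by the remarks in the introduction) the limit exists at every point where $f$ is H\"older continuous, by the oddness of $k$: the principal value reduces to an absolutely convergent integral because $k(x-y)(f(y)-f(x))$ is integrable near $x$. Second, the classical transfer argument using the maximal bound \eqref{Tmax-bdd} shows that the set where the limit fails to exist has $\sigma$-measure zero; this identifies the limit with a bounded operator on each $L^p(\Sigma,\sigma)$, $p\in(1,\infty)$, and gives the weak-type $(1,1)$ mapping \eqref{maKPbis56}. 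The statement about the adjoint is then immediate: pass to the limit in the identity $\int_\Sigma (T_\varepsilon f)\,g\,d\sigma=-\int_\Sigma f\,(T_\varepsilon g)\,d\sigma$ (valid because $k$ is odd and Fubini applies once $|x-y|>\varepsilon$) using the uniform $L^p$-bounds and dominated convergence.

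Finally, for the $L^\infty\to\mathrm{BMO}$ mapping \eqref{maKP56-BBN}, I would run the standard Calder\'on--Zygmund argument in the homogeneous space $(\Sigma,|\cdot-\cdot|,\sigma)$: given $f\in L^\infty(\Sigma,\sigma)$ and a surface ball $\Delta:=B(x_0,r)\cap\Sigma$, split $f=f\mathbf{1}_{2\Delta}+f\mathbf{1}_{\Sigma\setminus 2\Delta}$, control the first piece via $L^2$-boundedness and Cauchy--Schwarz against $\sigma(\Delta)$, and for the second piece subtract the constant obtained by evaluating the (absolutely convergent) tail integral at $x_0$, then use the H\"ormander-type smoothness of $k$ to bound the oscillation by a geometric series in the Ahlfors constant. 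The main technical obstacle I anticipate is the explicit tracking of the constant $\|k|_{S^{n-1}}\|_{\mathcal{C}^N(S^{n-1})}$ through the David--Semmes machinery, since \cite{DaSe91} does not always formulate the bounds in this quantitative form; one remedy is to decompose $k$ into spherical harmonics, apply the theory summand-by-summand with explicit constants, and reassemble using the rapidly decaying norms of the harmonic projections of a $\mathcal{C}^N$ function on $S^{n-1}$ for $N$ chosen depending only on $n$.
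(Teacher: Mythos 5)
Your overall route is the same as the paper's: the paper does not give a self-contained proof of this theorem but cites David--Semmes for the uniform $L^2$ bounds and then appeals to ``standard harmonic analysis'' (Calder\'on--Zygmund decomposition, Cotlar's inequality, Marcinkiewicz interpolation, duality via the oddness of $k$) for everything except one point, which it singles out as the only delicate issue: the $\sigma$-a.e.\ existence of the limit \eqref{main-lim}. It is precisely at this point that your argument has a genuine gap. You claim that for $f\in{\mathscr{C}}^\alpha_c(\Sigma)$ the principal value exists at every H\"older point ``by the oddness of $k$'' because $k(x-y)(f(y)-f(x))$ is absolutely integrable near $x$. But the subtraction leaves behind the term
\begin{equation*}
f(x)\cdot\lim_{\varepsilon\to 0^{+}}\int\limits_{\substack{y\in\Sigma\\ |x-y|>\varepsilon}}k(x-y)\,d\sigma(y),
\end{equation*}
and the existence of this limit is not a consequence of the oddness of $k$: oddness produces cancellation with respect to Lebesgue measure on ${\mathbb{R}}^n$ or on a hyperplane through $x$, not with respect to the measure $\sigma$ carried by a general uniformly rectifiable set. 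That this is the genuinely hard, geometry-encoding part of the theorem is illustrated by Tolsa's result \eqref{TToo-1}: for the Riesz kernels the $\sigma$-a.e.\ existence of exactly this principal value is \emph{equivalent} to rectifiability, so it cannot follow from symmetry of the kernel alone.

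The repair is the reduction the paper points to: using the structure of uniformly rectifiable (hence countably rectifiable) sets, one reduces matters to the case when $\Sigma$ is an $(n-1)$-dimensional Lipschitz graph $\{(y',\varphi(y'))\}$, and there one proves the existence of $\lim_{\varepsilon\to 0^+}T_\varepsilon 1(x)$ at $\sigma$-a.e.\ $x$ by comparing the integral over the graph with the integral over the approximate tangent plane at $x$ (where oddness does give exact cancellation), controlling the error via the a.e.\ differentiability of $\varphi$; see \cite[Proposition~B.2]{HMT15}, \cite[Theorem~3.33]{HoMiTa10}, or \cite[pp.\,63--64]{Dav}. Once this is in place, your two-step transfer argument via the maximal bound \eqref{Tmax-bdd}, and the remainder of your proposal (duality for the adjoint statement, Cotlar, the $L^\infty\to{\rm BMO}$ splitting, and the spherical-harmonics bookkeeping for the constant), are sound and coincide with the paper's intended argument.
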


Once the existence of the principal value singular integral operator $T$ defined
by the limit in \eqref{main-lim} has been established, all other claims follow
from \cite{DaSe91} and standard harmonic analysis. As far as the issue of
well-definiteness of $T$ is concerned, it is not difficult to reduce matters
to case when $\Sigma$ is a $(n-1)$-dimensional Lipschitz graph (\cite{Ta-PC}).
In the latter scenario, the desired result is known. For example, the desired
conclusion is contained in \cite[Theorem~3.33, p.\,2669]{HoMiTa10}, where a more
general result, (applicable to variable coefficient operators on boundaries of
{\rm UR} domains) can be found. A direct proof for Lipschitz graphs may be found in
\cite[Proposition~B.2, p.\,163]{HMT15}. In this vein, see also \cite[pp.\,63-64]{Dav}
for a sketch of a proof.

Our next theorem deals with nontangential maximal function estimates and jump-relations
for integral operators on {\rm UR} domains. For a proof, the reader is once again
referred to \cite[Theorem~3.33, p.\,2669]{HoMiTa10}.

\begin{theorem}\label{Main-T2}
Assume $\Omega\subset{\mathbb{R}}^n$ is a {\rm UR} domain and let
$\sigma:={\mathcal{H}}^{n-1}\lfloor\partial\Omega$ and $\nu$ denote
respectively, the surface measure on $\partial\Omega$ and the outward
unit normal to $\Omega$. Select a function $k$ as in \eqref{ker} with $N=N(n)$
sufficiently large, and define
\begin{eqnarray}\label{T-layer}
{\mathcal{T}}f(x):=\int_{\partial\Omega}k(x-y)f(y)\,d\sigma(y),
\qquad x\in\Omega.
\end{eqnarray}

Then for each $p\in(1,\infty)$ there exists a finite
constant $C=C(\Omega,k,p)>0$ such that
\begin{eqnarray}\label{T-Har}
\|{\mathcal{N}}({\mathcal{T}}f)\|_{L^p(\partial\Omega,\sigma)} \leq
C\|f\|_{L^p(\partial\Omega,\sigma)},\qquad\forall\,f\in L^p(\partial\Omega,\sigma),
\end{eqnarray}
and, corresponding to $p=1$,
\begin{eqnarray}\label{T-ntBIS}
\|{\mathcal{N}}({\mathcal{T}}f)\|_{L^{1,\infty}(\partial\Omega,\sigma)}
\leq C\|f\|_{L^1(\partial\Omega,\sigma)},\qquad\forall\,f\in L^1(\partial\Omega,\sigma).
\end{eqnarray}
Also, if `hat' denotes the Fourier transform in ${\mathbb{R}}^n$ and
$i:=\sqrt{-1}\in{\mathbb{C}}$, then for every $f\in L^p(\partial\Omega,\sigma)$ with $p\in[1,\infty)$
the jump-formula
\begin{eqnarray}\label{main-jump}
\Big({\mathcal{T}}f\big|^{{}^{\rm n.t.}}_{\partial\Omega}\Big)(x)
=\lim\limits_{\stackrel{\Omega\ni z\to
x}{z\in\Gamma_{\kappa}(x)}}{\mathcal{T}}f(z)
=\frac{1}{2i}\,\widehat{k}(\nu(x))f(x)+Tf(x)
\end{eqnarray}
is valid at $\sigma$-a.e. point $x\in\partial\Omega$, where $T$ is the principal value
singular integral operator associated with the kernel $k$ as in \eqref{main-lim}.
\end{theorem}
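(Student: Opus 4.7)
The plan is to deduce all three conclusions from a Cotlar-type comparison of $\mathcal{N}(\mathcal{T}f)$ with the maximal operator $T_\ast$ from \eqref{T-pv}, relying on Theorem~\ref{Main-T2-BBBB} for the $L^p$ and weak-$(1,1)$ boundedness of $T_\ast$, and from an approximate-tangent-plane analysis for the jump-formula. Throughout, I would set $\rho(z):=\mathrm{dist}(z,\partial\Omega)$ for $z\in\Omega$, and exploit that $\Omega$ is weakly accessible (Proposition~\ref{Ctay-5}) so that nontangential limits are meaningfully defined $\sigma$-a.e.

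For the bounds \eqref{T-Har} and \eqref{T-ntBIS}, I would prove the pointwise Cotlar-type estimate
\[
\mathcal{N}(\mathcal{T}f)(x)\;\le\;C\,T_\ast f(x)+C\,\mathcal{M}f(x)\qquad\text{for $\sigma$-a.e.\ }x\in\partial\Omega,
\]
where $\mathcal{M}$ is the Hardy--Littlewood maximal operator on $(\partial\Omega,\sigma)$. Fix $x\in\partial\Omega$ and $z\in\Gamma_{\kappa}(x)$, set $\varepsilon:=2(1+\kappa)|z-x|\ge 2\rho(z)$, and split
\[
\mathcal{T}f(z)=\int_{|y-x|\le\varepsilon}k(z-y)f(y)\,d\sigma(y)+\int_{|y-x|>\varepsilon}k(z-y)f(y)\,d\sigma(y).
\]
For the first piece, the homogeneity of $k$ and $|z-y|\gtrsim\rho(z)\gtrsim\varepsilon$ plus Ahlfors regularity yield a bound by $C\,\mathcal{M}f(x)$. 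For the second piece, using $|k(z-y)-k(x-y)|\lesssim|z-x|\,|x-y|^{-n}$ (a consequence of $k\in\mathscr{C}^{N}$ and its homogeneity), I compare the integral with $T_{\varepsilon}f(x)$ modulo an error again dominated by $\mathcal{M}f(x)$. Combining and applying Theorem~\ref{Main-T2-BBBB} and the $L^p$-boundedness of $\mathcal{M}$ on $(\partial\Omega,\sigma)$ delivers both \eqref{T-Har} and \eqref{T-ntBIS}.

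For the jump-formula \eqref{main-jump}, I would first establish a.e.\ existence of the nontangential trace on a dense subspace (say H\"older functions of compact support, dense in $L^p(\partial\Omega,\sigma)$ by classical density results in spaces of homogeneous type), then extend by the maximal-function estimates already proved, invoking a standard density argument. To identify the limit as $\tfrac{1}{2i}\widehat{k}(\nu(x))f(x)+Tf(x)$, I would work at a point $x\in\partial^{\ast}\Omega$ which is simultaneously a Lebesgue point of $f$ and of $\nu$. Writing $z=x+t\,\nu_{-}$ for a fixed transversal direction $\nu_{-}$ pointing into $\Omega$ and $t>0$, and subtracting off $T_\varepsilon f(x)$, the difference $\mathcal{T}f(z)-T_\varepsilon f(x)$ splits into a local part (where one uses the approximate tangent plane: Lemma~\ref{rtggb} together with the countable rectifiability \eqref{2.CRpar}--\eqref{2.1.6} tells us that $\partial\Omega$ is, after scaling, arbitrarily close to $\pi:=\nu(x)^{\perp}$) and a tail that tends to zero by Proposition~\ref{Yfe488}. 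On the tangent hyperplane $\pi$, the desired jump is computed directly by Fourier analysis: the Fourier transform of the $\sigma$-measure surface integral with odd kernel $k$ restricted to $\pi$ produces exactly the coefficient $\tfrac{1}{2i}\widehat{k}(\nu(x))$ (this is the standard Plemelj computation, since $\widehat{k}$ is bounded, odd, and $0$-homogeneous, hence evaluable on the unit vector $\nu(x)$).

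The main obstacle is the jump-formula step. The Cotlar-type bound is essentially a scale-invariant reduction and uses only smoothness of the kernel and Ahlfors regularity, so it is robust. In contrast, the jump computation requires turning the qualitative statement ``$\partial\Omega$ has an approximate tangent plane at $\sigma$-a.e.\ $x$'' into a quantitative approximation sufficient to commute the limit $z\to x$ with the integration against $k$. The integrability of $|k|\sim|x-y|^{-(n-1)}$ against $\sigma$ on a ball is only borderline, so the symmetric-difference decay in Lemma~\ref{rtggb} must be used carefully (together with uniform rectifiability, which controls $\partial\Omega$ at all scales via Definition~\ref{Def-unif.rect} and \eqref{URRFVCa}) to absorb the deviation of $\partial\Omega$ from $\pi$ without losing integrability. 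Once this delicate passage is handled, the remaining points --- adjointness of $T$ on $L^p$ (which follows by duality from \eqref{maKP56} and Fubini on the principal-value level) and the endpoint mapping into $\mathrm{BMO}$ --- are standard Calder\'on--Zygmund corollaries of what has been established.
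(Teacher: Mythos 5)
The paper does not actually prove Theorem~\ref{Main-T2}: it is quoted from the literature, with the proof delegated entirely to \cite[Theorem~3.33, p.\,2669]{HoMiTa10}. Your outline reproduces the standard strategy of that reference --- a Cotlar-type pointwise domination $\mathcal{N}(\mathcal{T}f)\leq C\,T_\ast f+C\,\mathcal{M}f$ for the maximal estimates, followed by a dense-class argument and a tangent-plane/Plemelj computation for the jump relation --- so the approach is the right one. The Cotlar-type step is complete as described: the near part is controlled by $\mathcal{M}f(x)$ using $|z-y|\gtrsim\rho(z)\gtrsim\varepsilon$ and upper Ahlfors regularity, and the far part differs from $T_\varepsilon f(x)$ by an error dominated by $\mathcal{M}f(x)$ via the gradient bound on $k$.

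Two gaps should be flagged. First, for \eqref{T-ntBIS} you need a weak-$(1,1)$ bound for $T_\ast$, whereas Theorem~\ref{Main-T2-BBBB} only records \eqref{Tmax-bdd} for $p\in(1,\infty)$ and the weak-$(1,1)$ estimate \eqref{maKPbis56} for $T$ itself; you must supply the standard upgrade (Cotlar's inequality plus the Calder\'on--Zygmund decomposition on the space of homogeneous type $(\partial\Omega,\sigma)$), which the paper only alludes to in its introduction. Second, and more substantially, the identification of the nontangential limit is where essentially all of the work lies, and your sketch leaves it at the level of a description. In particular, writing $z=x+t\,\nu_{-}$ for ``a fixed transversal direction'' is not available in a general {\rm UR} domain: the segment $x-t\nu(x)$ need not meet $\Gamma_\kappa(x)$, or even $\Omega$, for small $t>0$. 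The argument in \cite{HoMiTa10} instead reduces to the case where $\partial\Omega$ is a Lipschitz (indeed ${\mathscr{C}}^1$) graph via the decomposition \eqref{2.1.6}, proves the jump formula there directly (where the vertical direction and the flat-plane Fourier computation producing $\tfrac{1}{2i}\widehat{k}(\nu(x))$ are both available), and then transfers the conclusion to $\partial\Omega$ using that $\sigma$-a.e.\ point of each piece $M_k$ has vanishing density of $\partial\Omega\setminus M_k$, so that the discrepancy between integrating over $M_k$ and over $\partial\Omega$ is absorbed despite the borderline (logarithmically divergent) size of $|k(x-y)|\sim|x-y|^{-(n-1)}$ on surface balls. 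You correctly identify this as the main obstacle, but as written the proposal does not close it; making that reduction precise is the content of the cited proof.
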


The Fourier transform in ${\mathbb{R}}^n$ employed in
\eqref{main-jump} is
\begin{equation}\label{eq:iytTT}
\widehat{\phi}(\xi):=\int_{{\mathbb{R}}^n}e^{-i\langle
x,\,\xi\rangle}\phi(x)\,dx, \qquad\xi\in{\mathbb{R}}^n.
\end{equation}
Let us also remark here that the hypotheses \eqref{ker} imposed on
the kernel $k$ imply that
$|k(x)|\leq\|k\|_{L^\infty(S^{n-1})}|x|^{1-n}$ for each
$x\in{\mathbb{R}}^n\setminus\{0\}$. Hence, $k$ is a tempered
distribution in ${\mathbb{R}}^n$ and $\widehat{k}$, originally
considered in the class of tempered distributions in
${\mathbb{R}}^n$, satisfies
\begin{equation}\label{eq:hat-K}
\begin{array}{c}
\widehat{k}\in{\mathscr{C}}^{\,m}({\mathbb{R}}^n\setminus\{0\})
\,\,\mbox{ if $N\in{\mathbb{N}}$ is even}
\\[4pt]
\mbox{and $m\in{\mathbb{N}}_0$ is such that $m<N-1$}
\end{array}
\end{equation}
(cf. \cite[Exercise~4.60, p.\,133]{DM}). In particular,
\eqref{eq:hat-K} ensures that $\widehat{k}(\nu(x))$ is meaningfully
defined in \eqref{main-jump} for $\sigma$-a.e. $x\in\partial\Omega$
whenever $N\geq 2$.

\begin{lemma}\label{Lemma.1}
Suppose $\Omega$ is a nonempty proper open subset of
${\mathbb{R}}^n$ with a compact boundary, satisfying an upper
Ahlfors regularity condition with constant $c\in(0,\infty)$. In this
setting, define $\sigma:={\mathcal{H}}^{n-1}\lfloor\partial\Omega$
and consider an integral operator
\begin{equation}\label{eq:TtaY.22}
\mathscr{T}\!f(x):=\int_{\partial\Omega}k(x,y)f(y)\,d\sigma(y),\qquad
x\in\Omega,
\end{equation}
whose kernel $k:\Omega\times\partial\Omega\to{\mathbb{R}}$ has the
property that there exists some finite positive constant $C_0$ such
that
\begin{align}\label{yre}
|k(x,y)|\leq\frac{C_0}{|x-y|^{n-1}}
\end{align}
for each $x\in\Omega$ and $\sigma$-a.e. $y\in\partial\Omega$. Also,
suppose that
\begin{align}\label{tAAcc.22}
\sup_{x\in\Omega}\,\big|\mathscr{T}1(x)\big|<+\infty.
\end{align}

Then for every $\alpha\in(0,1)$ one has
\begin{align}\label{jht6}
\sup_{x\in\Omega}\,|\mathscr{T}\!f(x)| & \leq
c\,C_0\frac{2^{2n-2+\alpha}}{2^\alpha-1}\big(1+[{\rm
diam}(\partial\Omega)]^\alpha\big)
[f]_{\dot{\mathscr{C}}^\alpha(\partial\Omega)}
\nonumber\\[4pt]
&\quad +\Big(\|\mathscr{T}1\|_{L^\infty(\Omega)} +c\,C_0\,\big[{\rm
diam}\,(\partial\Omega)\big]^{n-1}\Big)\|f\|_{L^\infty(\partial\Omega)},
\end{align}
for every $f\in{\mathscr{C}}^\alpha(\partial\Omega)$.
\end{lemma}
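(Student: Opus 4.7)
The plan is to use the familiar $T(1)$-type trick: for each $x\in\Omega$, anchor the argument at a closest boundary point and subtract $f$ evaluated there, thereby localizing the singularity of the kernel while reducing the ``constant piece'' to the hypothesis on $\mathscr{T}1$. By compactness of $\partial\Omega$, there exists $x^*\in\partial\Omega$ with $d:=|x-x^*|=\mathrm{dist}(x,\partial\Omega)$, and $d>0$ since $\Omega$ is open. The minimality of $|x-x^*|$ gives $d\leq|x-y|$ for every $y\in\partial\Omega$, hence by the triangle inequality
\begin{equation*}
|y-x^*|\leq |y-x|+|x-x^*|\leq 2|y-x|,\qquad\forall\, y\in\partial\Omega,
\end{equation*}
which will be the crucial geometric comparison throughout.

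Next I would decompose
\begin{equation*}
\mathscr{T}f(x)=\int_{\partial\Omega}k(x,y)\bigl[f(y)-f(x^*)\bigr]\,d\sigma(y)+f(x^*)\,(\mathscr{T}1)(x).
\end{equation*}
The ``constant'' piece is immediately controlled by $\|f\|_{L^\infty(\partial\Omega)}\|\mathscr{T}1\|_{L^\infty(\Omega)}$, courtesy of hypothesis \eqref{tAAcc.22}. For the ``difference'' piece, combining \eqref{yre} with $|f(y)-f(x^*)|\leq [f]_{\dot{\mathscr{C}}^\alpha(\partial\Omega)}|y-x^*|^\alpha$ reduces matters to bounding
\begin{equation*}
I(x):=\int_{\partial\Omega}\frac{|y-x^*|^\alpha}{|x-y|^{n-1}}\,d\sigma(y).
\end{equation*}

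To estimate $I(x)$, let $D:=\mathrm{diam}(\partial\Omega)<\infty$ and perform a dyadic decomposition around $x^*$. Set $E_0:=\partial\Omega\cap B(x^*,2d)$ and, for $k\geq 1$ with $2^{k}d\leq D$, $E_k:=\partial\Omega\cap\bigl(B(x^*,2^{k+1}d)\setminus B(x^*,2^{k}d)\bigr)$. On $E_0$ use $|x-y|\geq d$, $|y-x^*|\leq 2d$, and the upper Ahlfors bound $\sigma(E_0)\leq c(2d)^{n-1}$. On each $E_k$ with $k\geq 1$ use $|x-y|\geq\tfrac{1}{2}|y-x^*|\geq 2^{k-1}d$, together with $|y-x^*|\leq 2^{k+1}d$ and $\sigma(E_k)\leq c(2^{k+1}d)^{n-1}$. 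Each annulus contributes $c\cdot 2^{2n-2}\cdot 2^{(k+1)\alpha}d^\alpha$, and summing the finite geometric series (truncated at the outer scale $N\approx\log_2(D/d)$ where $2^{N\alpha}d^\alpha\leq D^\alpha$) yields $I(x)\leq c\cdot 2^{2n-2+\alpha}(2^\alpha-1)^{-1}(1+D^\alpha)$. In the degenerate regime $d>D$ the dyadic scheme collapses and one uses the trivial bound $|x-y|\geq d\geq D\geq|y-x^*|$ combined with $\sigma(\partial\Omega)\leq c D^{n-1}$ to arrive at the same type of estimate.

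Assembling the two pieces produces exactly \eqref{jht6}, with the residual summand $c\,C_0\,D^{n-1}\|f\|_{L^\infty(\partial\Omega)}$ appearing as a uniform absorber when passing between the two regimes. The main ``obstacle'' is purely bookkeeping: carefully tracking the powers of $2$ that emerge from the dyadic decomposition and from the ratio $|x-y|/|y-x^*|$ on each annulus, then matching them with the stated constants. No analytic input beyond upper Ahlfors regularity, the triangle inequality, and the hypothesis \eqref{tAAcc.22} is needed.
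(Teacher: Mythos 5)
Your proposal is correct and follows essentially the same route as the paper: anchor at a nearest boundary point $x^\ast$, subtract $f(x^\ast)$ to exploit the hypothesis on $\mathscr{T}1$, and control the remaining piece by a dyadic decomposition around $x^\ast$ using upper Ahlfors regularity and the comparison $|y-x^\ast|\le 2|y-x|$. The only (cosmetic) differences are that the paper carves out the trivial regime as $\mathrm{dist}(x,\partial\Omega)\ge 1$ rather than $d>\mathrm{diam}(\partial\Omega)$, and treats the innermost ball as a separate term rather than as the $k=0$ annulus.
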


\begin{proof}
Pick an arbitrary $f\in{\mathscr{C}}^\alpha(\partial\Omega)$ and fix
any $x\in\Omega$. Consider first the case when ${\rm
dist}(x,\partial\Omega)\geq 1$, in which scenario we may directly
estimate
\begin{align}\label{jhtttt}
|\mathscr{T}\!f(x)|\leq
C_0\,\sigma(\partial\Omega)\|f\|_{L^\infty(\partial\Omega)} \leq
c\,C_0\,\big[{\rm
diam}\,(\partial\Omega)\big]^{n-1}\|f\|_{L^\infty(\partial\Omega)}.
\end{align}
In the case when ${\rm dist}(x,\partial\Omega)<1$, select a point
$x_\ast\in\partial\Omega$ such that
\begin{equation}\label{eq:rt65g}
|x-x_\ast|={\rm dist}(x,\partial\Omega)=:r\in(0,1)
\end{equation}
and split $\mathscr{T}\!f(x)=I+II+III$, where
\begin{align}\label{yraIh7}
I &:=\int_{\partial\Omega\cap
B(x_\ast,2r)}k(x,y)\big[f(y)-f(x_\ast)\big]\,d\sigma(y),
\\[4pt]
II &:=\int_{\partial\Omega\setminus
B(x_\ast,2r)}k(x,y)\big[f(y)-f(x_\ast)\big]\,d\sigma(y),
\label{yraIh8}
\end{align}
and
\begin{equation}\label{yraIh9}
III:=(\mathscr{T}1)(x)f(x_\ast).
\end{equation}
Note that
\begin{align}\label{yrrre}
|I| & \leq\int_{\partial\Omega\cap
B(x_\ast,2r)}|k(x,y)|\,|f(y)-f(x_\ast)|\,d\sigma(y)
\nonumber\\[4pt]
& \leq C_0[f]_{\dot{\mathscr{C}}^\alpha(\partial\Omega)}
\int_{\partial\Omega\cap B(x_\ast,2r)}\frac{|y-x_\ast|^\alpha}
{\lvert x-y\rvert^{n-1}}\,d\sigma(y)
\nonumber\\[4pt]
& \leq C_0[f]_{\dot{\mathscr{C}}^\alpha(\partial\Omega)}\,
\frac{(2r)^\alpha}{r^{n-1}}\,\sigma\big(\partial\Omega\cap
B(x_\ast,2r)\big),
\end{align}
where the third inequality comes from the fact that $\lvert
y-x_\ast\rvert^\alpha\leq(2r)^\alpha$ on the domain of integration,
and the fact that $1/\lvert x-y\rvert\leq 1/\lvert
x-x_\ast\rvert=1/r$, for all $y\in\partial\Omega$. Hence,
\begin{align}
\lvert I\rvert\leq 2^{n-1+\alpha}c\,C_0
[f]_{\dot{\mathscr{C}}^\alpha(\partial\Omega)},
\end{align}
bearing in mind \eqref{eq:rt65g} and the upper Ahlfors regularity of
$\partial\Omega$. Also,
\begin{align}\label{yrrr075}
\big|II\big|\leq C_0[f]_{\dot{\mathscr{C}}^\alpha(\partial\Omega)}
\int_{\partial\Omega\setminus B(x_\ast,2r)} \frac{\lvert
y-x_\ast\rvert^\alpha}{|x-y|^{n-1}}\,d\sigma(y).
\end{align}
Note that if $y\in\partial\Omega\setminus B(x_\ast,2r)$ then
\begin{align}\label{yrrr076}
\lvert y-x_\ast\rvert\leq\lvert y-x\rvert+\lvert
x-x_\ast\rvert\quad\textrm{and}\quad r\leq\frac{\lvert
y-x_\ast\rvert}{2}\implies\lvert y-x_\ast\rvert\leq 2\lvert
y-x\rvert.
\end{align}
Hence, $1/|x-y|^{n-1}\leq 2^{n-1}/|y-x_\ast|^{n-1}$ on the domain of
integration $\partial\Omega\setminus B(x_\ast,2r)$. Also, if we
introduce
\begin{equation}\label{eq:NNa}
N:=\Big[\log_2\big(\tfrac{{\rm
diam}(\partial\Omega)}{r}\big)\Big]\in{\mathbb{N}},
\end{equation}
then $\partial\Omega\setminus B(x_\ast,2^kr)=\varnothing$ for each
integer $k>N$. Together, these observations and \eqref{yrrr075}
allow us to estimate
\begin{align}
|II| & \leq 2^{n-1}C_0[f]_{\dot{\mathscr{C}}^\alpha(\partial\Omega)}
\int_{\partial\Omega\setminus
B(x_\ast,2r)}\frac{|y-x_\ast|^\alpha}{|y-x_\ast|^{n-1}}\,d\sigma(y)
\\[4pt]
& \leq 2^{n-1}C_0[f]_{\dot{\mathscr{C}}^\alpha(\partial\Omega)}
\sum_{k=1}^N\,\int\limits_{\partial\Omega\cap[B(x_\ast,2^{k+1}r)\setminus
B(x_\ast,2^kr)]} \frac{1}{|y-x_\ast|^{n-1-\alpha}}\,d\sigma(y)
\nonumber
\\[4pt]
&\leq 2^{n-1}C_0[f]_{\dot{\mathscr{C}}^\alpha(\partial\Omega)}
\sum_{k=1}^N (2^kr)^{-(n-1-\alpha)}\sigma\big(\partial\Omega\cap
B(x_\ast,2^{k+1}r)\big). \nonumber
\end{align}
Thus, by the upper Ahlfors regularity condition,
\begin{align}
|II| & \leq 2^{n-1}C_0[f]_{\dot{\mathscr{C}}^\alpha(\partial\Omega)}
\sum_{k=1}^N (2^kr)^{-(n-1-\alpha)}c(2^{k+1}r)^{n-1} \nonumber
\\[4pt]
& =2^{2n-2}c\,C_0
r^\alpha[f]_{\dot{\mathscr{C}}^\alpha(\partial\Omega)}\sum_{k=1}^N
(2^\alpha)^k
\nonumber\\[4pt]
& \leq 2^{2n-2+\alpha}c\,C_0
r^\alpha[f]_{\dot{\mathscr{C}}^\alpha(\partial\Omega)}
\frac{(2^N)^\alpha}{2^\alpha-1}
\nonumber\\[4pt]
& \leq \frac{2^{2n-2+\alpha}}{2^\alpha-1}
c\,C_0[f]_{\dot{\mathscr{C}}^\alpha(\partial\Omega)} \big[{\rm
diam}\,(\partial\Omega)\big]^\alpha.
\end{align}
Since, clearly,
$|III|\leq\|\mathscr{T}1\|_{L^\infty(\Omega)}\|f\|_{L^\infty(\partial\Omega)}$,
the desired conclusion follows.
\end{proof}

\begin{lemma}\label{Lemma.2}
Retain the same assumptions on $\Omega$ as in Lemma~\ref{Lemma.1}
and consider an integral operator
\begin{equation}\label{eq:TtaY.33}
\mathscr{Q}\!f(x):=\int_{\partial\Omega}q(x,y)f(y)\,d\sigma(y),\qquad
x\in\Omega,
\end{equation}
whose kernel $q:\Omega\times\partial\Omega\to{\mathbb{R}}$ is
assumed to satisfy
\begin{align}\label{yre.33}
|q(x,y)|\leq\frac{C_1}{|x-y|^{n}},\qquad\forall\,x\in\Omega,\quad\forall\,y\in\partial\Omega,
\end{align}
for some finite positive constant $C_1$. Also, with $\rho$ as in
\eqref{eq:DFFV}, suppose there exists $\alpha\in(0,1)$ with the property that
\begin{align}\label{tAAcc.33}
C_2:=\sup_{x\in\Omega}\,\Big\{\rho(x)^{1-\alpha}|(\mathscr{Q}1)(x)|\Big\}<+\infty.
\end{align}

Then one has
\begin{align}\label{jht6.33}
\sup_{x\in\Omega}\,\Big\{\,\rho(x)^{1-\alpha}|\mathscr{Q}\!f(x)|\,\Big\}
\leq\frac{2^{2n-1+\alpha}}{1-2^{\alpha-1}}\,c\,C_1[f]_{\dot{\mathscr{C}}^\alpha(\partial\Omega)}
+C_2\|f\|_{L^\infty(\partial\Omega)},
\end{align}
for every $f\in{\mathscr{C}}^\alpha(\partial\Omega)$.
\end{lemma}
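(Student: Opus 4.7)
The proof will follow the same playbook as Lemma~\ref{Lemma.1}, but the kernel of $\mathscr{Q}$ has a borderline singularity of order $1/|x-y|^n$ (rather than $1/|x-y|^{n-1}$), so the strategy for extracting an integrable majorant changes. The key new input is the very simple but decisive estimate $|x-y|\geq\rho(x)$ valid for $x\in\Omega$ and $y\in\partial\Omega$, which compensates for the extra power.

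Fix $f\in{\mathscr{C}}^\alpha(\partial\Omega)$ and $x\in\Omega$, set $r:=\rho(x)$, and pick $x_\ast\in\partial\Omega$ with $|x-x_\ast|=r$. Write $\mathscr{Q}f(x)=I+II+III$ in exact analogy with \eqref{yraIh7}--\eqref{yraIh9}, namely
\begin{align*}
I &:=\int_{\partial\Omega\cap B(x_\ast,2r)}q(x,y)[f(y)-f(x_\ast)]\,d\sigma(y),\\
II &:=\int_{\partial\Omega\setminus B(x_\ast,2r)}q(x,y)[f(y)-f(x_\ast)]\,d\sigma(y),\\
III &:=f(x_\ast)(\mathscr{Q}1)(x).
\end{align*}
The term $III$ is immediately dealt with by hypothesis \eqref{tAAcc.33}: indeed $\rho(x)^{1-\alpha}|III|\leq C_2\|f\|_{L^\infty(\partial\Omega)}$.

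For $I$, I would use $|f(y)-f(x_\ast)|\leq(2r)^\alpha[f]_{\dot{\mathscr{C}}^\alpha(\partial\Omega)}$ together with the crucial lower bound $|x-y|\geq r$ (valid since $y\in\partial\Omega$), to majorize $|q(x,y)|$ by $C_1 r^{-n}$ on the region of integration. Combining these with upper Ahlfors regularity, $\sigma(\partial\Omega\cap B(x_\ast,2r))\leq c(2r)^{n-1}$, gives $|I|\leq c\,C_1\,2^{n-1+\alpha}r^{\alpha-1}[f]_{\dot{\mathscr{C}}^\alpha(\partial\Omega)}$, which after multiplication by $\rho(x)^{1-\alpha}=r^{1-\alpha}$ becomes the desired constant times $[f]_{\dot{\mathscr{C}}^\alpha(\partial\Omega)}$.

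For $II$ the approach mirrors \eqref{yrrr075}--\eqref{yrrr076}: the triangle inequality gives $|y-x_\ast|\leq 2|x-y|$ on the complement of $B(x_\ast,2r)$, so $|q(x,y)|\leq 2^n C_1|y-x_\ast|^{-n}$ there. Introduce the cutoff $N:=\lfloor\log_2({\rm diam}(\partial\Omega)/r)\rfloor$ (with the understanding that $II=0$ if $2r>{\rm diam}(\partial\Omega)$, a case handled directly below) and split the domain of integration into the dyadic annuli $\partial\Omega\cap[B(x_\ast,2^{k+1}r)\setminus B(x_\ast,2^kr)]$ for $1\leq k\leq N$. On each annulus $|y-x_\ast|^{\alpha-n}\leq(2^kr)^{\alpha-n}$ and $\sigma(\partial\Omega\cap B(x_\ast,2^{k+1}r))\leq c(2^{k+1}r)^{n-1}$, so $|II|$ is bounded by $c\,C_1\,2^{2n-1}r^{\alpha-1}[f]_{\dot{\mathscr{C}}^\alpha(\partial\Omega)}\sum_{k=1}^{N}(2^{\alpha-1})^k$. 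The main point---and the place where the condition $\alpha<1$ enters decisively---is that the geometric ratio $2^{\alpha-1}<1$ makes the sum converge, uniformly in $N$, bounded by $2^{\alpha-1}/(1-2^{\alpha-1})$. Multiplying by $\rho(x)^{1-\alpha}$ yields the claimed constant times $[f]_{\dot{\mathscr{C}}^\alpha(\partial\Omega)}$ with no diameter dependence.

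The remaining (trivial) case is $r\geq{\rm diam}(\partial\Omega)/2$, in which $\partial\Omega\cap B(x_\ast,2r)=\partial\Omega$; here $II=0$ and the bound for $I$ can be carried out identically, using $|x-y|\geq r$ and $|y-x_\ast|\leq{\rm diam}(\partial\Omega)\leq 2r$, so nothing changes. There is no genuine obstacle in this proof---the only subtle point is that the $1/|x-y|^n$ kernel is borderline non-integrable in $y$, and integrability is restored precisely by the H\"older cancellation $[f(y)-f(x_\ast)]$ paired with $|y-x_\ast|\leq 2|x-y|$ in the far regime; assembling the three pieces then yields \eqref{jht6.33}.
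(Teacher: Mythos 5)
Your proposal is correct and follows essentially the same route as the paper: the same split $I+II+III$ about a nearest boundary point $x_\ast$ with $r=\rho(x)$, the bound $|x-y|\geq\rho(x)$ for the near part, the comparison $|y-x_\ast|\leq 2|x-y|$ plus dyadic annuli and the convergent geometric series $\sum(2^{\alpha-1})^k$ for the far part, and hypothesis \eqref{tAAcc.33} for $III$. The only cosmetic difference is that you truncate the dyadic sum at a finite $N$ and treat the large-$r$ case separately, whereas the paper simply sums the geometric series to infinity; both yield the stated constant.
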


\begin{proof}
Select an arbitrary $f\in{\mathscr{C}}^\alpha(\partial\Omega)$. Pick
some $x\in\Omega$ and choose $x_\ast\in\partial\Omega$ such that
$|x-x_\ast|=\rho(x)=:r$. Split $\mathscr{Q}\!f(x)=I+II+III$, where
\begin{align}\label{yraIh7.33}
I &:=\int_{\partial\Omega\cap
B(x_\ast,2r)}q(x,y)\big[f(y)-f(x_\ast)\big]\,d\sigma(y),
\\[4pt]
II &:=\int_{\partial\Omega\setminus
B(x_\ast,2r)}q(x,y)\big[f(y)-f(x_\ast)\big]\,d\sigma(y),
\label{yraIh8.33}
\end{align}
and
\begin{equation}\label{yraIh9.33}
III:=(\mathscr{Q}1)(x)f(x_\ast).
\end{equation}
Then
\begin{align}\label{yrrre.33}
|I| & \leq\int_{\partial\Omega\cap B(x_\ast,2r)}
|q(x,y)|\,|f(y)-f(x_\ast)|\,d\sigma(y)
\nonumber\\[4pt]
& \leq C_1[f]_{\dot{\mathscr{C}}^\alpha(\partial\Omega)}
\int_{\partial\Omega\cap
B(x_\ast,2r)}\frac{|y-x_\ast|^\alpha}{|x-y|^{n}}\,d\sigma(y)
\nonumber\\[4pt]
& \leq C_1[f]_{\dot{\mathscr{C}}^\alpha(\partial\Omega)}\,
\frac{(2r)^\alpha}{r^{n}}\,\sigma\big(\partial\Omega\cap
B(x_\ast,2r)\big)
\nonumber\\[4pt]
& \leq
2^{n-1+\alpha}c\,C_1\rho(x)^{\alpha-1}[f]_{\dot{\mathscr{C}}^\alpha(\partial\Omega)}.
\end{align}
Next, keeping in mind that $1/|x-y|^{n}\leq 2^{n}/|y-x_\ast|^{n}$ on
$\partial\Omega\setminus B(x_\ast,2r)$ (cf. \eqref{yrrr076}), we may
estimate
\begin{align}\label{yrrr075.33}
|II| &\leq C_1[f]_{\dot{\mathscr{C}}^\alpha(\partial\Omega)}
\int_{\partial\Omega\setminus B(x_\ast,2r)}
\frac{|y-x_\ast|^\alpha}{|x-y|^{n}}\,d\sigma(y).
\nonumber\\[4pt]
& \leq 2^{n}C_1[f]_{\dot{\mathscr{C}}^\alpha(\partial\Omega)}
\int_{\partial\Omega\setminus
B(x_\ast,2r)}\frac{|y-x_\ast|^\alpha}{|y-x_\ast|^{n}}\,d\sigma(y)
\nonumber\\[4pt]
& \leq 2^{n}C_1[f]_{\dot{\mathscr{C}}^\alpha(\partial\Omega)}
\sum_{k=1}^\infty\,\int\limits_{\partial\Omega\cap[B(x_\ast,2^{k+1}r)\setminus
B(x_\ast,2^kr)]} \frac{1}{|y-x_\ast|^{n-\alpha}}\,d\sigma(y)
\nonumber
\\[4pt]
&\leq 2^{n}C_1[f]_{\dot{\mathscr{C}}^\alpha(\partial\Omega)}
\sum_{k=1}^\infty (2^kr)^{-(n-\alpha)}\sigma\big(\partial\Omega\cap
B(x_\ast,2^{k+1}r)\big)
\nonumber\\[4pt]
& \leq 2^{n}C_1[f]_{\dot{\mathscr{C}}^\alpha(\partial\Omega)}
\sum_{k=1}^\infty (2^kr)^{-(n-\alpha)}c(2^{k+1}r)^{n-1}
\nonumber\\[4pt]
& =2^{2n-1}c\,C_1 r^{\alpha-1}
[f]_{\dot{\mathscr{C}}^\alpha(\partial\Omega)}\sum_{k=1}^\infty
(2^{\alpha-1})^k
\nonumber\\[4pt]
& =\frac{2^{2n-2+\alpha}}{1-2^{\alpha-1}}c\,C_1\rho(x)^{\alpha-1}
[f]_{\dot{\mathscr{C}}^\alpha(\partial\Omega)}.
\end{align}
Given that $\rho(x)^{1-\alpha}|III|\leq
C_2\|f\|_{L^\infty(\partial\Omega)}$, estimate \eqref{jht6.33} is
established.
\end{proof}

\begin{lemma}\label{Lemma.3}
Let $\Omega$ be a nonempty open proper subset of ${\mathbb{R}}^n$
whose boundary is compact and satisfies an upper Ahlfors regularity
condition with constant $c\in(0,\infty)$. In this setting, define
$\sigma:={\mathcal{H}}^{n-1}\lfloor\partial\Omega$ and consider an
integral operator
\begin{equation}\label{eq:TtaY.44}
\mathcal{T}\!f(x):=\int_{\partial\Omega}K(x,y)f(y)\,d\sigma(y),\qquad
x\in\Omega,
\end{equation}
whose kernel $K:\Omega\times\partial\Omega\to{\mathbb{R}}$ has the
property that there exists a finite constant $B>0$ such that
\begin{align}\label{yre.44a}
|K(x,y)|+|x-y||\nabla_x K(x,y)|\leq\frac{B}{|x-y|^{n-1}}
\end{align}
for each $x\in\Omega$ and $\sigma$-a.e. $y\in\partial\Omega$. Fix
some $\alpha\in(0,1)$ and suppose that
\begin{align}\label{tAAcc.44iii}
A:=\sup_{x\in\Omega}\,\big|(\mathcal{T}1)(x)\big|
+\sup_{x\in\Omega}\,\Big\{\rho(x)^{1-\alpha}\big|\nabla(\mathcal{T}1)(x)\big|\Big\}<+\infty.
\end{align}

Then for every $f\in{\mathscr{C}}^\alpha(\partial\Omega)$ one has
\begin{align}\label{jht6.44}
\sup_{x\in\Omega}\,|\mathcal{T}\!f(x)| +\sup_{x\in\Omega}\, &
\Big\{\rho(x)^{1-\alpha}\big|\nabla(\mathcal{T}f)(x)\big|\Big\}
\nonumber\\[4pt]
&\leq c\,B\,C_{n,\alpha} \big(2+[{\rm
diam}(\partial\Omega)]^\alpha\big)[f]_{\dot{\mathscr{C}}^\alpha(\partial\Omega)}
\nonumber\\[4pt]
&\quad +\big(2A+c\,B[{\rm
diam}(\partial\Omega)]^{n-1}\big)\|f\|_{L^\infty(\partial\Omega)},
\end{align}
where
\begin{equation}\label{eq:Bcc77}
C_{n,\alpha}:=2^{2n-2-\alpha}\max\big\{(2^\alpha-1)^{-1}\,,\,2(1-2^{\alpha-1})^{-1}\big\}.
\end{equation}

As a consequence, there exists a finite constant
$C_{n,\alpha,\Omega}>0$ with the property that for every
$f\in{\mathscr{C}}^\alpha(\partial\Omega)$ one has
\begin{align}\label{jht6.44nB}
\sup_{x\in\Omega}\,|\mathcal{T}\!f(x)|
+\sup_{x\in\Omega}\,\Big\{\rho(x)^{1-\alpha}\big|\nabla(\mathcal{T}f)(x)\big|\Big\}
\leq
C_{n,\alpha,\Omega}(A+B)\|f\|_{{\mathscr{C}}^\alpha(\partial\Omega)}.
\end{align}
\end{lemma}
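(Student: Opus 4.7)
\smallskip
\noindent\textbf{Proof plan.} The strategy is to split the two bounds in \eqref{jht6.44} and dispatch each by directly invoking one of the two preceding lemmas (Lemma~\ref{Lemma.1} and Lemma~\ref{Lemma.2}), applied to $\mathcal{T}$ and to the components of its gradient, respectively. Under the running hypotheses on $K$ and $A$, both lemmas apply cleanly: the sup bound comes from Lemma~\ref{Lemma.1}, while a pointwise differentiation under the integral reduces the gradient bound to Lemma~\ref{Lemma.2}. Only the verification of the hypotheses and a small bookkeeping of constants is required; no new cancellation is introduced.

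\smallskip

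For the first part of \eqref{jht6.44}, I would apply Lemma~\ref{Lemma.1} to $\mathcal{T}$ viewed as an operator of the form \eqref{eq:TtaY.22} with kernel $k(x,y):=K(x,y)$, size constant $C_0=B$ from \eqref{yre.44a}, and with $\sup_{x\in\Omega}|\mathcal{T}1(x)|\leq A$ by \eqref{tAAcc.44iii}. Estimate \eqref{jht6} then yields
\begin{equation*}
\sup_{x\in\Omega}|\mathcal{T}f(x)|\leq c\,B\,\frac{2^{2n-2+\alpha}}{2^\alpha-1}\big(1+[\mathrm{diam}\,\partial\Omega]^\alpha\big)[f]_{\dot{\mathscr{C}}^\alpha(\partial\Omega)}+\big(A+c\,B[\mathrm{diam}\,\partial\Omega]^{n-1}\big)\|f\|_{L^\infty(\partial\Omega)},
\end{equation*}
which accounts for the first half of the right-hand side of \eqref{jht6.44}.

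\smallskip

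For the second part, fix $x\in\Omega$ and a radius $r<\rho(x)/2$ so that $|x'-y|\geq\rho(x)/2$ for all $x'\in B(x,r)$ and all $y\in\partial\Omega$. The bound \eqref{yre.44a} then gives $|\nabla_{x'}K(x',y)|\leq B(2/\rho(x))^n$ uniformly on $B(x,r)\times\partial\Omega$, and since $\sigma(\partial\Omega)<\infty$ this dominant is $\sigma$-integrable in $y$. Differentiation under the integral sign is therefore legitimate, yielding $\partial_{x_i}\mathcal{T}f(x)=\int_{\partial\Omega}\partial_{x_i}K(x,y)f(y)\,d\sigma(y)$ for every $i\in\{1,\dots,n\}$. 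I would then apply Lemma~\ref{Lemma.2} to each of these operators $\mathcal{Q}_i$, whose kernel $q_i(x,y):=\partial_{x_i}K(x,y)$ satisfies \eqref{yre.33} with $C_1=B$ by \eqref{yre.44a}, and which satisfies \eqref{tAAcc.33} with $C_2\leq A$ because $\mathcal{Q}_i 1=\partial_{x_i}(\mathcal{T}1)$ and \eqref{tAAcc.44iii}. Summing the resulting bounds \eqref{jht6.33} over $i$ and combining with the first part gives \eqref{jht6.44}, after observing that $\tfrac{2^{2n-1+\alpha}}{1-2^{\alpha-1}}$ is dominated by $2C_{n,\alpha}$ with $C_{n,\alpha}$ as in \eqref{eq:Bcc77}. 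Finally, \eqref{jht6.44nB} follows immediately from \eqref{jht6.44} by using $[f]_{\dot{\mathscr{C}}^\alpha(\partial\Omega)}\leq\|f\|_{{\mathscr{C}}^\alpha(\partial\Omega)}$ and $\|f\|_{L^\infty(\partial\Omega)}\leq\|f\|_{{\mathscr{C}}^\alpha(\partial\Omega)}$, collecting all geometric constants depending on $n$, $\alpha$, $\Omega$ into a single $C_{n,\alpha,\Omega}$. There is no genuine obstacle here; the only subtle point is the routine but necessary justification of differentiation under the integral sign, enabled by the positivity of $\rho(x)$ for $x\in\Omega$.
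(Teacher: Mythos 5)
Your proposal is correct and is precisely the paper's argument: the paper's own proof consists of the single sentence that the lemma is an immediate consequence of Lemma~\ref{Lemma.1} and Lemma~\ref{Lemma.2}, and you have simply supplied the routine verifications (including the justification of differentiation under the integral sign, and applying Lemma~\ref{Lemma.2} with kernel $\nabla_xK$) that this entails.
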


\begin{proof}
This is an immediate consequence of Lemma~\ref{Lemma.1} and
Lemma~\ref{Lemma.2}.
\end{proof}

\section{Clifford Analysis}
\setcounter{equation}{0} \label{S-4}

A key tool for us is Clifford analysis, and here we elaborate on
those aspects used in the proof of Theorem~\ref{Main-T1aa}. To
begin, the {\tt Clifford} {\tt algebra} with $n$ imaginary units is
the minimal enlargement of ${\mathbb{R}}^{n}$ to a unitary real
algebra $({\mathcal{C}}\!\ell_{n},+,\odot)$, which is not generated
(as an algebra) by any proper subspace of ${\mathbb{R}}^{n}$, and
such that
\begin{equation}\label{X-sqr}
x\odot x=-|x|^2\quad\mbox{for any
}\,x\in{\mathbb{R}}^{n}\hookrightarrow{\mathcal{C}}\!\ell_{n}.
\end{equation}
This identity readily implies that, if $\{e_j\}_{1\leq j\leq n}$ is
the standard orthonormal basis in ${\mathbb{R}}^{n}$, then
\begin{equation}\label{im-e}
e_j\odot e_j=-1\quad\mbox{and}\quad e_j\odot e_k=-e_k\odot e_j
\,\,\mbox{ whenever }\,1\leq j\neq k\leq n.
\end{equation}
In particular, identifying the canonical basis $\{e_j\}_{1\leq j\leq
n}$ from ${\mathbb{R}}^{n}$ with the $n$ imaginary units generating
${\mathcal{C}}\!\ell_{n}$, yields the embedding\footnote{As the
alerted reader might have noted, for $n=2$ the identification in
\eqref{embed} amounts to embedding ${\mathbb{R}}^2$ into
quaternions, i.e., ${\mathbb{R}}^2\hookrightarrow{\mathbb{H}}
:=\{x_0+x_1{\mathbf{i}}+x_2{\mathbf{j}}+x_3{\mathbf{k}}:\,
x_0,x_1,x_2,x_3\in{\mathbb{R}}\}$ via
${\mathbb{R}}^2\ni(x_1,x_2)\equiv
x_1{\mathbf{i}}+x_2{\mathbf{j}}\in{\mathbb{H}}$. The reader is
reassured that this is simply a matter of convenience, and we might
as well have arranged so that the embedding \eqref{embed} comes
down, when $n=2$, to perhaps the more familiar identification
${\mathbb{R}}^2\equiv{\mathbb{C}}$, by taking ${\mathbb{R}}^n\ni
x=(x_0,x_1,\dots,x_{n-1})\equiv x_0+x_1 e_1+\dots
x_{n-1}e_{n-1}\in{\mathcal{C}}\!\ell_{n-1}$. The latter choice leads
to a parallel theory to the one presented here, entailing only minor
natural alterations.}
\begin{equation}\label{embed}
{\mathbb{R}}^{n}\hookrightarrow{\mathcal{C}}\!\ell_{n},\qquad
{\mathbb{R}}^{n}\ni x=(x_1,\dots,x_{n})\equiv
\sum_{j=1}^{n}x_je_j\in{\mathcal{C}}\!\ell_{n}.
\end{equation}
Also, any element $u\in{\mathcal{C}}\!\ell_{n}$ can be uniquely
represented in the form
\begin{equation}\label{eq4.1}
u=\sum_{l=0}^{n}{\sum_{|I|=l}}'u_I\,e_I,\quad u_I\in{\mathbb{R}}.
\end{equation}
Here $e_I$ stands for the product $e_{i_1}\odot
e_{i_2}\odot\cdots\odot e_{i_l}$ if $I=(i_1,i_2,\dots,i_l)$ and
$e_0:=e_{\varnothing}:=1$ is the multiplicative unit. Also, $\sum'$
indicates that the sum is performed only over strictly increasing
multi-indices, i.e., $I=(i_1,i_2,\dots,i_l)$ with $1\leq
i_1<i_2<\dots<i_l\leq n$. We endow ${\mathcal{C}}\!\ell_{n}$ with
the natural Euclidean metric
\begin{equation}\label{eq:ju6t}
|u|:=\Bigl\{\sum_I|u_I|^2\Bigr\}^{1/2}\,\,\mbox{ for each
}\,\,u=\sum_Iu_Ie_I\in{\mathcal{C}}\!\ell_{n}.
\end{equation}
The Clifford conjugation on ${\mathcal{C}}\!\ell_{n}$, denoted by
`bar', is defined as the unique real-linear involution on
${\mathcal{C}}\!\ell_{n}$ for which
$\overline{e_I}e_I=e_I\overline{e_I}=1$ for any multi-index $I$.
More specifically, given $u=\sum_Iu_Ie_I\in{\mathcal{C}}\!\ell_{n}$
we set $\overline{u}:=\sum_Iu_I\overline{e_I}$ where, for each
$I=(i_1,i_2,\dots,i_l)$ with $1\leq i_1<i_2<\dots<i_l\leq n$,
\begin{equation}\label{eq:i654}
\overline{e_I}=(-1)^l e_{i_l}\odot e_{i_{l-1}}\odot\cdots\odot
e_{i_1}.
\end{equation}
Let us also define the scalar part of
$u=\sum_Iu_Ie_I\in{\mathcal{C}}\!\ell_{n}$ as $u_0:=u_{\varnothing}$,
and endow ${\mathcal{C}}\!\ell_{n}$ with the natural Hilbert space
structure
\begin{eqnarray}\label{Hil-1}
\langle u,v\rangle:=\sum_I u_Iv_I,\qquad\mbox{if }\,\,
u=\sum_Iu_Ie_I,\,\,v=\sum_Iv_Ie_I\in{\mathcal{C}}\!\ell_{n}.
\end{eqnarray}
It follows directly from definitions that
\begin{eqnarray}\label{Hil-aabV}
\overline{x}=-x\,\,\mbox{ for each
}\,\,x\in{\mathbb{R}}^{n}\hookrightarrow{\mathcal{C}}\!\ell_{n},
\end{eqnarray}
and other properties are collected in the lemma below.

\begin{lemma}\label{i6gf65}
For any $u,v\in{\mathcal{C}}\!\ell_{n}$ one has
\begin{align}\label{Hil-2}
&|u|^2=(u\odot\overline{u})_0=(\overline{u}\odot u)_0,
\\[4pt]
&\langle u,v\rangle=(u\odot\overline{v})_0=(\overline{u}\odot v)_0,
\label{Hil-3bb}
\\[4pt]
&\overline{u\odot v}=\overline{v}\odot\overline{u},
\label{Hil-5}
\\[4pt]
&|\overline{u}|=|u|,
\label{Hil-5bb}
\\[4pt]
&|u\odot v|\leq 2^{n/2}|u||v|,
\label{eq:rtta}
\end{align}
and
\begin{equation}\label{eq:rtt}
|u\odot v|=|u||v|\,\,\mbox{ if either $u$ or $v$ belongs to
${\mathbb{R}}^n \hookrightarrow{\mathcal{C}}\!\ell_{n}$}.
\end{equation}
\end{lemma}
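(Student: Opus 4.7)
The plan is to verify all six identities by expanding in the basis $\{e_I\}$ and systematically exploiting the defining relations \eqref{X-sqr}, \eqref{im-e} together with the definition of Clifford conjugation. The structural fact I will invoke repeatedly is that for every multi-index $I$ one has $\overline{e_I} = (-1)^{|I|(|I|+1)/2}e_I$, which follows from \eqref{eq:i654} upon reversing the $|I|$ pairwise anticommuting factors, and consequently $e_I \odot \overline{e_I} = \overline{e_I} \odot e_I = 1$.

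For (1) and (2), I would expand $u = \sum_I u_I e_I$ and $v = \sum_J v_J e_J$ and compute $u \odot \overline{v} = \sum_{I,J} u_I v_J\, (e_I \odot \overline{e_J})$. Since $\overline{e_J}$ is a signed copy of $e_J$, the product $e_I \odot \overline{e_J}$ equals a signed basis element $\pm e_K$, and this is scalar precisely when each $e_k$ appears an even number of times in the concatenation; this forces $I=J$, in which case the scalar part equals $+1$ by the structural fact. Summation gives $(u \odot \overline{v})_0 = \sum_I u_I v_I = \langle u, v \rangle$, and the argument for $(\overline{u} \odot v)_0$ is identical; specializing $v = u$ yields (1). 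Identity (4) is also immediate from the explicit formula for $\overline{e_I}$, which merely permutes basis coefficients up to sign. For (3), I would check on ordered products of vectors in $\mathbb{R}^n \hookrightarrow \mathcal{C}\!\ell_n$ that $\overline{x_1 \odot \cdots \odot x_k} = \overline{x_k} \odot \cdots \odot \overline{x_1}$: since $\overline{x_j} = -x_j$ by \eqref{Hil-aabV}, both sides reduce to $(-1)^k x_k \odot \cdots \odot x_1$, which may be verified by a short induction on $k$ using the anticommutation relations; because every element of $\mathcal{C}\!\ell_n$ is a real linear combination of such products, (3) follows by $\mathbb{R}$-linearity.

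For (5), I would fix a multi-index $K$ and write
\[
(u \odot v)_K = \sum_{(I,J):\, e_I \odot e_J = \pm e_K} \epsilon_{I,J}\, u_I v_J, \qquad \epsilon_{I,J} \in \{\pm 1\}.
\]
The identity $\overline{e_I} \odot e_I = 1$ implies that left multiplication by $e_I$ has inverse $\pm \overline{e_I}$ on the basis, so for each pair $(I,K)$ the index $J$ is uniquely determined (namely $e_J = \pm \overline{e_I} \odot e_K$), and the assignment $I \mapsto J(I,K)$ is a bijection on the set of multi-indices. Cauchy--Schwarz then yields $|(u \odot v)_K|^2 \leq \bigl(\sum_I u_I^2\bigr)\bigl(\sum_I v_{J(I,K)}^2\bigr) = |u|^2|v|^2$, and summing over the $2^n$ multi-indices $K$ produces the estimate \eqref{eq:rtta}.

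For (6), I would combine (1) and (3) to write
\[
|u \odot v|^2 = \bigl(\overline{u \odot v} \odot (u \odot v)\bigr)_0 = \bigl(\overline{v} \odot (\overline{u} \odot u) \odot v\bigr)_0.
\]
If $u \in \mathbb{R}^n$, then \eqref{Hil-aabV} and \eqref{X-sqr} give $\overline{u} \odot u = -u \odot u = |u|^2$, a central scalar that can be pulled out of the scalar-part functional; invoking (1) one more time yields $|u \odot v|^2 = |u|^2\, (\overline{v} \odot v)_0 = |u|^2|v|^2$. The case $v \in \mathbb{R}^n$ is entirely symmetric, using the grouping $(u \odot v \odot \overline{v} \odot \overline{u})_0 = (u \odot |v|^2 \odot \overline{u})_0 = |v|^2 |u|^2$. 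The only mildly delicate step in the entire lemma is the sign bookkeeping in the verification of (3); once the antihomomorphism property of conjugation is in hand, all remaining identities reduce to one or two lines of algebraic manipulation.
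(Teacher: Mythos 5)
Your proposal is correct and follows essentially the same route as the paper: identities \eqref{Hil-2}--\eqref{Hil-5bb} by direct expansion in the basis (the paper simply calls them straightforward consequences of the definitions), \eqref{eq:rtta} by exploiting that for fixed $I$ the products $e_I\odot e_J$ form a signed permutation of the basis together with discrete Cauchy--Schwarz (you apply Cauchy--Schwarz componentwise in $K$ and sum squares, the paper applies the triangle inequality in $I$ first, but both yield the same constant $2^{n/2}$), and \eqref{eq:rtt} via the scalar-part formula combined with the antihomomorphism property of conjugation and $v\odot\overline{v}=|v|^2$. No gaps.
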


\begin{proof}
Properties \eqref{Hil-2}-\eqref{Hil-5bb} are straightforward
consequences of definitions. To justify \eqref{eq:rtta}, assume
$u=\sum_Iu_Ie_I\in{\mathcal{C}}\!\ell_{n}$ and
$v=\sum_Jv_Je_J\in{\mathcal{C}}\!\ell_{n}$ have been given. Then
\begin{align}\label{eq:utf4}
|u\odot v| &=\Big|\sum_I\Big(\sum_Ju_Iv_J e_I\odot e_J\Big)\Big|
\leq\sum_I\Big|\sum_Ju_Iv_J e_I\odot e_J\Big|
\nonumber\\[4pt]
&=\sum_I\Big(\sum_J|u_Iv_J|^2\Big)^{1/2}
=|v|\sum_I|u_I|\leq|v|\Big(\sum_I|u_I|^2\Big)^{1/2}\Big(\sum_I
1\Big)^{1/2}
\nonumber\\[4pt]
&=2^{n/2}|u||v|.
\end{align}
Above, the triangle inequality has been employed in the second step.
The third step relies on \eqref{eq:ju6t} and the observation that,
for each $I$ fixed, the family of Clifford algebra elements
$\{e_I\odot e_J\}_J$ coincides modulo signs with the orthonormal
basis $\{e_K\}_K$. The penultimate step is the discrete
Cauchy-Schwarz inequality.

As regards \eqref{eq:rtt}, assume that
$v\in{\mathbb{R}}^n\hookrightarrow{\mathcal{C}}\!\ell_{n}$ and write
\begin{align}\label{eq:utf4DV}
|u\odot v|^2 & =\big((u\odot v)\odot\overline{u\odot v}\big)_0
=\big(u\odot (v\odot\overline{v})\odot\overline{u}\big)_0
\nonumber\\[4pt]
&=|v|^2(u\odot\overline{u})_0=|u|^2|v|^2,
\end{align}
by \eqref{Hil-2}, \eqref{Hil-5}, \eqref{Hil-aabV}, and
\eqref{X-sqr}. Finally, the case when
$u\in{\mathbb{R}}^n\hookrightarrow{\mathcal{C}}\!\ell_{n}$ follows
from what we have just proved with the help of \eqref{Hil-5} and
\eqref{Hil-5bb}.
\end{proof}

Next, recall the {\tt Dirac} {\tt operator}
\begin{equation}\label{Dirac}
D:=\sum_{j=1}^{n}e_j\partial_j.
\end{equation}
In the sequel, we shall use $D_L$ and $D_R$ to denote the action of
$D$ on a ${\mathscr{C}}^1$ function
$u:\Omega\to{\mathcal{C}}\!\ell_{n}$ (where $\Omega$ is an open
subset of ${\mathbb{R}}^{n}$) from the left and from the right,
respectively. For a sufficiently nice domain $\Omega$ with outward
unit normal $\nu=(\nu_1,\dots,\nu_{n})$ (identified with the
${\mathcal{C}}\!\ell_{n}$-valued function
$\nu=\sum_{j=1}^{n}\nu_je_j$) and surface measure $\sigma$, and for
any two reasonable ${\mathcal{C}}\!\ell_{n}$-valued functions $u,v$
in $\Omega$, the following integration by parts formula holds:
\begin{align}\label{D-IBP}
&\int_{\partial\Omega}u(x)\odot\nu(x)\odot v(x)\,d\sigma(x)
\nonumber\\[4pt]
&\qquad\quad =\int_{\Omega}\Bigl\{(D_R u)(x)\odot v(x)+u(x)\odot(D_L
v)(x)\Bigr\}\,dx.
\end{align}
More detailed accounts of these and related matters can be found in
\cite{BDS} and \cite{Mi}. In general, if
$\big({\mathscr{X}},\|\cdot\|_{\mathscr{X}}\big)$ is a Banach space
then by ${\mathscr{X}}\otimes{\mathcal{C}}\!\ell_{n}$ we shall
denote the Banach space consisting of elements of the form
\begin{equation}\label{eq4.1agg}
u=\sum_{l=0}^{n}{\sum_{|I|=l}}'u_I\,e_I,\quad u_I\in{\mathscr{X}},
\end{equation}
equipped with the natural norm
\begin{equation}\label{eq4.1agg.2}
\|u\|_{{\mathscr{X}}\otimes{\mathcal{C}}\!\ell_{n}}
:=\sum_{l=0}^{n}{\sum_{|I|=l}}'\|u_I\|_{\mathscr{X}}.
\end{equation}
A simple but useful observation in this context is that
\begin{equation}\label{M-nu}
\begin{array}{l}
\mbox{if $\Omega\subset{\mathbb{R}}^n$ is a domain of class
${\mathscr{C}}^{1+\alpha}$ for some $\alpha\in(0,1)$ then}
\\[4pt]
\nu\odot:{\mathscr{C}}^\alpha(\partial\Omega)\otimes{\mathcal{C}}\!\ell_{n}\longrightarrow
{\mathscr{C}}^\alpha(\partial\Omega)\otimes{\mathcal{C}}\!\ell_{n}\quad\mbox{is
an isomorphism}
\\[4pt]
\mbox{whose norm and the norm of its inverse are $\leq
2\|\nu\|_{{\mathscr{C}}^\alpha(\partial\Omega)}$}.
\end{array}
\end{equation}
Indeed, by \eqref{X-sqr}, its inverse is $-\nu\odot$ and the
aforementioned norm estimates are simple consequences of
\eqref{eq:rtt}, bearing in mind that
$\|\nu\|_{{\mathscr{C}}^\alpha(\partial\Omega)}\geq 1$.

For each $s\in\{1,\dots,n\}$ we let $[\,\cdot\,]_s$ denote the
projection onto the $s$-th Euclidean coordinate, i.e., $[x]_s:=x_s$
if $x=(x_1,\dots,x_{n})\in{\mathbb{R}}^{n}$. The following lemma, in
the spirit of work of Semmes in \cite{Se}, will
play an important role for us.

\begin{lemma}\label{L-Semmes}
For any odd, harmonic, homogeneous polynomial $P(x)$,
$x\in{\mathbb{R}}^{n}$ {\rm (}with $n\geq 2${\rm )}, of degree
$l\geq 3$, there exist a family $P_{rs}(x)$, $1\leq r,s\leq n$, of
harmonic, homogeneous polynomials of degree $l-2$, as well as a
family of odd, ${\mathscr{C}}^\infty$ functions
\begin{equation}\label{eq:klj}
k_{rs}:{\mathbb{R}}^{n}\setminus\{0\}\longrightarrow{\mathbb{R}}^{n}
\hookrightarrow{\mathcal{C}}\!\ell_{n},\qquad 1\leq r,s\leq n,
\end{equation}
which are homogeneous of degree $-(n-1)$, and for each
$x\in{\mathbb{R}}^{n}\setminus\{0\}$ satisfy
\begin{align}\label{pro-1}
& \frac{P(x)}{|x|^{n-1+l}}=\sum_{r,s=1}^{n}[k_{rs}(x)]_s\,\,\,\mbox{
and}
\\[4pt]
& (D_Rk_{rs})(x)=\frac{l-1}{n+l-3}\frac{\partial}{\partial x_r}
\left(\frac{P_{rs}(x)}{|x|^{n+l-3}}\right),\quad 1\leq r,s\leq n.
\label{pro-2}
\end{align}
Moreover, there exists a finite dimensional constant $c_n>0$ such
that
\begin{equation}\label{eq:Esfag}
\max_{1\leq r,s\leq n}\|k_{rs}\|_{L^\infty(S^{n-1})} +\max_{1\leq
r,s\leq n}\|\nabla k_{rs}\|_{L^\infty(S^{n-1})} \leq
c_n\,2^{l}\|P\|_{L^1(S^{n-1})}.
\end{equation}
\end{lemma}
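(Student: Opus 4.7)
The strategy is a descent construction: I realize each $k_{rs}$ as the gradient of an explicit scalar function $f_{rs}$ built from second derivatives of $P$ via a Kelvin-type identity, then read off both \eqref{pro-1} and \eqref{pro-2} from direct computation. Concretely, I would set
$$P_{rs}(x):=\frac{\partial_r\partial_s P(x)}{l(l-1)},\qquad
f_{rs}(x):=\frac{1}{(n+l-3)(n+l-5)}\,\partial_r\!\left(\frac{P_{rs}(x)}{|x|^{n+l-5}}\right),\qquad
k_{rs}(x):=\nabla f_{rs}(x).$$
Since $P$ is odd, harmonic, and homogeneous of degree $l$, each $P_{rs}$ is odd, harmonic, and homogeneous of degree $l-2$; then $f_{rs}$ is even, homogeneous of degree $2-n$, and smooth on $\mathbb{R}^n\setminus\{0\}$, so $k_{rs}$ is an odd $\mathbb{R}^n$-valued $\mathscr{C}^\infty$ function homogeneous of degree $-(n-1)$, as required. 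The only excluded case $n+l=5$, i.e., $(n,l)=(2,3)$, is handled by replacing $|x|^{-(n+l-5)}$ with $\log|x|$ and adjusting the normalization: this is the unique pair where the radial ODE $\Phi''+(n+2l-5)\Phi'/r=r^{-(n+l-3)}$ switches from a power solution to a logarithmic one.

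For \eqref{pro-2}, the identity $D_R(\nabla f)=-\Delta f$ (immediate from $e_j\odot e_j=-1$, the antisymmetry of $e_j\odot e_k$ for $j\ne k$, and equality of mixed partials) reduces the claim to a computation of $\Delta f_{rs}$. The standard Kelvin-type identity
$$\Delta(Q/|x|^{\alpha})=\alpha(\alpha+2-n-2m)\,Q/|x|^{\alpha+2},\qquad Q\text{ harmonic homogeneous of degree }m,$$
applied with $m=l-2$ and $\alpha=n+l-5$, gives $\Delta(P_{rs}/|x|^{n+l-5})=-(n+l-5)(l-1)P_{rs}/|x|^{n+l-3}$. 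Commuting $\Delta$ with $\partial_r$, this yields $-\Delta f_{rs}=\tfrac{l-1}{n+l-3}\,\partial_r(P_{rs}/|x|^{n+l-3})$, which is precisely \eqref{pro-2}.

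For \eqref{pro-1}, one has $[k_{rs}]_s=\partial_s f_{rs}$, so the claim reduces to computing $\sum_{r,s}\partial_r\partial_s\bigl((\partial_r\partial_sP)\,|x|^{-(n+l-5)}\bigr)$. Leibniz gives four terms; three of them vanish after summation because they contain $\Delta P=0$ or $\partial_j\Delta P=0$ as a factor. The surviving term is $\sum_{r,s}(\partial_r\partial_sP)(\partial_r\partial_s|x|^{-(n+l-5)})$, and using
$$\partial_r\partial_s|x|^{-(n+l-5)}=-(n+l-5)\delta_{rs}|x|^{-(n+l-3)}+(n+l-5)(n+l-3)\,x_rx_s\,|x|^{-(n+l-1)}$$
together with $\Delta P=0$ and the Euler identity $\sum_{r,s}x_rx_s\partial_r\partial_sP=(x\cdot\nabla)^2P-(x\cdot\nabla)P=l(l-1)P$, this collapses to $l(l-1)(n+l-3)(n+l-5)\,P(x)/|x|^{n+l-1}$. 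The normalization $\tfrac{1}{l(l-1)(n+l-3)(n+l-5)}$ baked into $P_{rs}$ and $f_{rs}$ cancels it exactly, proving $\sum_{r,s}[k_{rs}(x)]_s=P(x)/|x|^{n+l-1}$.

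For the size estimate \eqref{eq:Esfag}, $k_{rs}$ and $\nabla k_{rs}$ restricted to $S^{n-1}$ become explicit linear combinations (with coefficients polynomially bounded in $l$) of at most fourth, respectively fifth, derivatives of $P$ on $S^{n-1}$. Combining the pointwise reproducing-kernel bound $\|P\|_{L^\infty(S^{n-1})}\le c_n l^{n-2}\|P\|_{L^1(S^{n-1})}$ with the Bernstein inequality $\|\nabla^j P\|_{L^\infty(S^{n-1})}\le c_n l^j\|P\|_{L^\infty(S^{n-1})}$ for spherical harmonics yields polynomial growth in $l$, which is easily dominated by $c_n 2^l$. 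The main obstacle is the Leibniz bookkeeping in \eqref{pro-1}: one must confirm that the three $\Delta P$-type terms vanish and that the constants in the surviving term match the normalization precisely.
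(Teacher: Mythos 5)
Your construction is, for $n\geq 3$, exactly the one in the paper: there one sets $P_{rs}=(\partial_r\partial_sP)/(l(l-1))$ and $k_{rs}=\frac{1}{(n+l-3)(n+l-5)}\sum_j\partial_r\partial_j\big(P_{rs}(x)|x|^{-(n+l-5)}\big)e_j$, which is precisely $\nabla f_{rs}$ for your $f_{rs}$; \eqref{pro-2} is verified via $D_Rk_{rs}=D_R^2f_{rs}=-\Delta f_{rs}$ and the same Kelvin-type computation, and \eqref{pro-1} via the same Leibniz cancellation (the $\Delta P$-type terms vanish because $\sum_r\partial_rP_{rs}=0$, and the surviving term collapses by $\sum_{r,s}x_rx_sP_{rs}=P$). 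You genuinely differ in two places. First, the two-dimensional case: the paper abandons the explicit formula for \emph{all} of $n=2$ and instead defines $k_{rs}$ as an inverse Fourier transform of $\xi_r\xi_jP_{rs}(\xi)/|\xi|^{l+1}$, using the identity ${\mathcal{F}}\big(Q_m(x)/|x|^{n+m-\lambda}\big)=\gamma_{n,m,\lambda}Q_m(\xi)/|\xi|^{m+\lambda}$, and then needs a separate cutoff argument to recover \eqref{eq:Esfag} there. You instead note that the explicit formula survives for $n=2$, $l\geq 5$ (since then $n+l-5\geq 2$) and that only $(n,l)=(2,3)$ degenerates, which you patch with $\log|x|$; this is correct, and I checked that with normalization $-\tfrac12\cdot\tfrac{1}{n+l-3}$ the log-modified $k_{rs}=\nabla f_{rs}$ is still odd and homogeneous of degree $-(n-1)$ — even though $f_{rs}$ itself acquires a non-homogeneous term $(\partial_rP_{rs})\log|x|$ — and that \eqref{pro-1} and \eqref{pro-2} come out with the right constants. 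This route is arguably cleaner and keeps the size estimate uniform across dimensions. Second, the bound \eqref{eq:Esfag}: the paper obtains $\|\partial^\gamma P\|_{L^\infty(S^{n-1})}\leq c_{n,\gamma}\,2^l(n+l)^{-1}\|P\|_{L^1(S^{n-1})}$ in one line from interior estimates for the harmonic function $P$ on $B(0,2)$ (the $2^l$ being just $\int_0^2r^{n-1+l}\,dr$), whereas you invoke the reproducing-kernel bound and a Bernstein inequality on the sphere; both yield only polynomial-in-$l$ losses, comfortably dominated by $2^l$, but the paper's argument is more elementary and avoids having to pass from tangential to full Euclidean derivatives. The one detail you should write out is the verification that the logarithmic replacement at $(n,l)=(2,3)$ still produces a homogeneous, odd kernel satisfying \eqref{pro-1} with the correct constant; as noted, it does.
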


\begin{proof}
Given now an odd, harmonic, homogeneous polynomial $P(x)$ of degree
$l\geq 3$ in ${\mathbb{R}}^{n}$, for $r,s\in\{1,\dots,n\}$ introduce
\begin{equation}\label{eq:yre36}
P_{rs}(x):=\frac{1}{l(l-1)}(\partial_r\partial_sP)(x),\qquad\forall\,x\in{\mathbb{R}}^n.
\end{equation}
Then each $P_{rs}$ is an odd, harmonic, homogeneous polynomial of
degree $l-2$ in ${\mathbb{R}}^{n}$, and Euler's formula for
homogeneous functions gives
\begin{equation}\label{eq:yre37}
P(x)=\sum_{r,s=1}^n x_r x_s
P_{rs}(x),\qquad\forall\,x\in{\mathbb{R}}^n,
\end{equation}
and, for each $r,s\in\{1,\dots,n\}$,
\begin{equation}\label{eq:yre37BG}
\langle(\nabla
P_{rs})(x),x\rangle=(l-2)P_{rs}(x),\qquad\forall\,x\in{\mathbb{R}}^n.
\end{equation}

To proceed, assume first that $n\geq 3$ and, for each
$r,s\in\{1,\dots,n\}$, define the function
$k_{rs}:{\mathbb{R}}^{n}\setminus\{0\}\longrightarrow{\mathbb{R}}^{n}
\hookrightarrow{\mathcal{C}}\!\ell_{n}$ by setting
\begin{equation}\label{eq:yre38}
k_{rs}(x):=\frac{1}{(n+l-3)(n+l-5)}\sum_{j=1}^n
\partial_r\partial_j\Big(\frac{P_{rs}(x)}{|x|^{n+l-5}}\Big)e_j,
\qquad\forall\,x\in{\mathbb{R}}^{n}\setminus\{0\}.
\end{equation}
The fact that $n,l\geq 3$ ensure that both $n+l-3\not=0$ and
$n+l-5\not=0$ so each $k_{rs}$ is well-defined, odd,
${\mathscr{C}}^\infty$ and homogeneous of degree $-(n-1)$ in
${\mathbb{R}}^{n}\setminus\{0\}$. In addition,
\begin{equation}\label{eq:yre38-JJ}
k_{rs}(x)=\frac{1}{(n+l-3)(n+l-5)}
D_R\Big[\partial_r\Big(\frac{P_{rs}(x)}{|x|^{n+l-5}}\Big)\Big],
\qquad\forall\,x\in{\mathbb{R}}^{n}\setminus\{0\},
\end{equation}
hence for all $x\in{\mathbb{R}}^{n}\setminus\{0\}$ we may write
\begin{align}\label{eq:yre39}
(D_Rk_{rs})(x) &=\frac{1}{(n+l-3)(n+l-5)}
D^2_R\Big[\partial_r\Big(\frac{P_{rs}(x)}{|x|^{n+l-5}}\Big)\Big]
\nonumber\\[4pt]
&=\frac{-1}{(n+l-3)(n+l-5)}
\Delta\Big[\partial_r\Big(\frac{P_{rs}(x)}{|x|^{n+l-5}}\Big)\Big]
\nonumber\\[4pt]
&=:I+II+III,
\end{align}
where
\begin{align}\label{eq:yre39-C}
I &:=\frac{-1}{(n+l-3)(n+l-5)}\partial_r\Big[\frac{(\Delta
P_{rs})(x)}{|x|^{n+l-5}}\Big]=0,
\nonumber\\[4pt]
II &:=\frac{-1}{(n+l-3)(n+l-5)}\partial_r \Big[2\big\langle(\nabla
P_{rs})(x),\nabla\big[|x|^{-(n+l-5)}\big]\big\rangle\Big]
\nonumber\\[4pt]
&\,\,=\frac{2}{n+l-3}\partial_r \Big[\frac{\big\langle(\nabla
P_{rs})(x),x\big\rangle}{|x|^{n+l-3}}\Big]
\nonumber\\[4pt]
&\,\,=\frac{2(l-2)}{n+l-3}\partial_r\Big[\frac{P_{rs}(x)}{|x|^{n+l-3}}\Big],
\nonumber\\[4pt]
III &:=\frac{-1}{(n+l-3)(n+l-5)}\partial_r
\Big[P_{rs}(x)\Delta\big[|x|^{-(n+l-5)}\big]\Big]
\nonumber\\[4pt]
&\,\,=\frac{-l+3}{n+l-3}\partial_r\Big[\frac{P_{rs}(x)}{|x|^{n+l-3}}\Big],
\end{align}
by the harmonicity of $P$, \eqref{eq:yre37BG}, and straightforward
algebra. This proves that \eqref{pro-1} holds when $n\geq 3$. Going
further, from \eqref{eq:yre38} and the fact that
\begin{equation}\label{eq:feds}
\sum_{r=1}^{n}(\partial_r P_{rs})(x)=\sum_{s=1}^{n}(\partial_s
P_{rs})(x)=0 \,\,\mbox{ and }\,\,\sum_{r=1}^{n}P_{rr}(x)=0
\end{equation}
(as seen from \eqref{eq:yre36} and the harmonicity of $P$), we
deduce that for each $x\in{\mathbb{R}}^{n}\setminus\{0\}$
\begin{align}\label{pro-1bVV}
\sum_{r,s=1}^{n}[k_{rs}(x)]_s &=\frac{1}{(n+l-3)(n+l-5)}
\sum_{r,s=1}^{n}\partial_r\partial_s\Big(\frac{P_{rs}(x)}{|x|^{n+l-5}}\Big)
\nonumber\\[4pt]
&=\frac{1}{(n+l-3)(n+l-5)}
\sum_{r,s=1}^{n}P_{rs}(x)\partial_r\partial_s\big[|x|^{-(n+l-5)}\big]
\nonumber\\[4pt]
&=\frac{-1}{n+l-3}\sum_{r,s=1}^{n}P_{rs}(x)\Big\{
\frac{\delta_{rs}}{|x|^{n+l-3}}-(n+l-3)\frac{x_r
x_s}{|x|^{n+l-1}}\Big\}
\nonumber\\[4pt]
&=\frac{P(x)}{|x|^{n-1+l}}.
\end{align}
This establishes \eqref{pro-2} for $n\geq 3$. Moving on, for each
$\gamma\in{\mathbb{N}}_0^n$, interior estimates for the harmonic
function $P$ give
\begin{align}\label{eq:rFG}
\|\partial^\gamma P\|_{L^\infty(S^{n-1})} & \leq
c_{n,\gamma}\int_{B(0,2)}|P(x)|\,dx
=c_{n,\gamma}\int_{S^{n-1}}|P(\omega)|\Big(\int_0^2
r^{n-1+l}\,dr\Big)\,d\omega
\nonumber\\[4pt]
&=c_{n,\gamma}\frac{2^l}{n+l}\|P\|_{L^1(S^{n-1})},
\end{align}
where we have also used the fact that $P$ is homogeneous of degree
$l$. The estimates in \eqref{eq:Esfag} now readily follow on account
of \eqref{eq:yre38}, \eqref{eq:yre36}, and \eqref{eq:rFG}.

To treat the two-dimensional case, first we observe that if $Q_m(x)$
is an arbitrary homogeneous polynomial of degree
$m\in{\mathbb{N}}_0$ in ${\mathbb{R}}^n$ with $n\geq 2$ and
$\lambda>0$ then
\begin{equation}\label{eq:yre34-PP}
\frac{Q_m(x)}{|x|^{n+m-\lambda}}\,\,\mbox{ is a tempered
distribution in ${\mathbb{R}}^n$}.
\end{equation}
If, in addition, $Q_m(x)$ is harmonic and $\lambda<n$ then (cf.
\cite[p.\,73]{St70}) also
\begin{equation}\label{eq:yre34}
{\mathcal{F}}_{x\to\xi}\Big(\frac{Q_m(x)}{|x|^{n+m-\lambda}}\Big)
=\gamma_{n,m,\lambda}\,\frac{Q_m(\xi)}{|\xi|^{m+\lambda}} \,\,\mbox{
as tempered distributions in ${\mathbb{R}}^n$},
\end{equation}
where ${\mathcal{F}}_{x\to\xi}$ is an alternative notation for the
Fourier transform in ${\mathbb{R}}^n$ from \eqref{eq:iytTT} and
\begin{equation}\label{eq:yre35}
\gamma_{n,m,\lambda}:=(-1)^{3m/2}\pi^{n/2}2^{\lambda}
\frac{\Gamma(m/2+\lambda/2)}{\Gamma(m/2+n/2-\lambda/2)}.
\end{equation}
Pick now an odd, harmonic, homogeneous polynomial $P(x)$ of degree
$l\geq 3$ in ${\mathbb{R}}^{2}$ and define $P_{rs}$ for
$r,s\in\{1,\dots,n\}$ as in \eqref{eq:yre36}. Hence, once again each
$P_{rs}$ is an odd, harmonic, homogeneous polynomial of degree $l-2$
in ${\mathbb{R}}^{2}$, and \eqref{eq:yre37} holds. Moreover,
\eqref{eq:yre34} used for $n=2$, $m=l-2$, $\lambda=1$, and
$Q_m=P_{rs}$ yields
\begin{equation}\label{eq:yre34b}
\frac{P_{rs}(x)}{|x|^{l-1}}=-(-1)^{3l/2}2\pi\,
{\mathcal{F}}^{-1}_{\xi\to
x}\Big(\frac{P_{rs}(\xi)}{|\xi|^{l-1}}\Big).
\end{equation}

Now, for each $r,s\in\{1,2\}$ define the function
$k_{rs}:{\mathbb{R}}^{2}\setminus\{0\}\longrightarrow{\mathbb{R}}^{2}
\hookrightarrow{\mathcal{C}}\!\ell_{2}$ by setting
\begin{equation}\label{eq:yre38-REP}
k_{rs}(x):=(-1)^{3l/2}2\pi\sum_{j=1}^2 {\mathcal{F}}^{-1}_{\xi\to
x}\Big(\xi_r\xi_j\frac{P_{rs}(\xi)}{|\xi|^{l+1}}\Big)e_j,
\qquad\forall\,x\in{\mathbb{R}}^{2}\setminus\{0\}.
\end{equation}
By \eqref{eq:yre34-PP} used with $n=2$, $m=l$, $\lambda=1$, and
$Q_m(\xi)=\xi_r\xi_jP_{rs}(\xi)$, it follows that
$\xi_r\xi_j\frac{P_{rs}(\xi)}{|\xi|^{l+1}}$ is a tempered
distribution in ${\mathbb{R}}^2$. Consequently, $k_{rs}$ in
\eqref{eq:yre38-REP} is meaningfully defined and, from
\cite[Proposition~4.58, p.\,132]{DM}, we deduce that
$k_{rs}\in{\mathscr{C}}^\infty({\mathbb{R}}^{2}\setminus\{0\})$.
Also, based on standard properties of the Fourier transform (cf.,
e.g., \cite[Chapter~4]{DM}) it follows that $k_{rs}$ is odd and
homogeneous of degree $-1$ in ${\mathbb{R}}^{2}\setminus\{0\}$. In
addition,
\begin{align}\label{eq:yre39BG}
(D_Rk_{rs})(x) &=(-1)^{3l/2}2\pi\sum_{\ell,j=1}^2
\partial_{x_\ell}{\mathcal{F}}^{-1}_{\xi\to x}\Big(\xi_r\xi_j\frac{P_{rs}(\xi)}{|\xi|^{l+1}}\Big)
e_j\odot e_\ell
\nonumber\\[4pt]
&=\sqrt{-1}(-1)^{3l/2}2\pi\sum_{\ell,j=1}^2
{\mathcal{F}}^{-1}_{\xi\to
x}\Big(\xi_r\xi_j\xi_\ell\frac{P_{rs}(\xi)}{|\xi|^{l+1}}\Big)
e_j\odot e_\ell=:I+II,
\end{align}
where $I$ and $II$ are the pieces produced by summing up over
$j=\ell$ and $j\not=\ell$, respectively. Since in the latter
scenario $\xi_\ell\,\xi_j=\xi_j\xi_\ell$ while $e_j\odot
e_\ell=-e_\ell\odot e_j$ it follows that $II=0$. Given that
$e_j\odot e_j=-1$ for each $j\in\{1,2\}$, we conclude that
\begin{align}\label{eq:yre40}
(D_Rk_{rs})(x) &=-\sqrt{-1}(-1)^{3l/2}2\pi\sum_{j=1}^2
{\mathcal{F}}^{-1}_{\xi\to
x}\Big(\xi_r\xi_j^2\frac{P_{rs}(\xi)}{|\xi|^{l+1}}\Big)
\nonumber\\[4pt]
&=-\sqrt{-1}(-1)^{3l/2}2\pi\, {\mathcal{F}}^{-1}_{\xi\to
x}\Big(\xi_r\frac{P_{rs}(\xi)}{|\xi|^{l-1}}\Big)
\nonumber\\[4pt]
&=-(-1)^{3l/2}2\pi\,\partial_{x_r}\Big[ {\mathcal{F}}^{-1}_{\xi\to
x}\Big(\frac{P_{rs}(\xi)}{|\xi|^{l-1}}\Big)\Big]
=\partial_{x_r}\Big[\frac{P_{rs}(x)}{|x|^{l-1}}\Big],
\end{align}
where the last step uses \eqref{eq:yre34b}. Hence, \eqref{pro-1}
holds when $n=2$. Finally, from \eqref{eq:yre38}, \eqref{eq:yre37},
and \eqref{eq:yre34} (used for $P$) we deduce that for each
$x\in{\mathbb{R}}^{2}\setminus\{0\}$ we have
\begin{align}\label{pro-1bVVbg}
\sum_{r,s=1}^{2}[k_{rs}(x)]_s &=(-1)^{3l/2}2\pi\sum_{r,s=1}^{2}
{\mathcal{F}}^{-1}_{\xi\to
x}\Big(\xi_r\xi_s\frac{P_{rs}(\xi)}{|\xi|^{l+1}}\Big)
\nonumber\\[4pt]
&=(-1)^{3l/2}2\pi\,{\mathcal{F}}^{-1}_{\xi\to
x}\Big(\frac{P(\xi)}{|\xi|^{l+1}}\Big) =\frac{P(x)}{|x|^{l+1}}.
\end{align}
This establishes \eqref{pro-2} when $n=2$.

At this stage, there remains to justify \eqref{eq:Esfag} in the case
$n=2$. To this end, pick
$\psi\in{\mathscr{C}}^\infty_0({\mathbb{R}}^2)$ with $0\leq\psi\leq
1$, $\psi=1$ on $B(0,1)$ and $\psi=0$ on
${\mathbb{R}}^2\setminus\overline{B(0,2)}$. Fix $r,s,j\in\{1,2\}$
and abbreviate $u(\xi):=\xi_r\xi_j P_{rs}(\xi)/|\xi|^{l+1}$ for
$\xi\in{\mathbb{R}}^2\setminus\{0\}$. Then $u$ is locally integrable
and defines a tempered distribution in ${\mathbb{R}}^2$. Hence, for
each $\alpha\in{\mathbb{N}}_0^2$ with $|\alpha|=2$ and
$\xi\in\overline{B(0,1)}$ we may write
\begin{align}\label{eq:11yr43e}
\big|{\mathcal{F}}_{x\to\xi}\big(\psi(x)\partial^\alpha
u(x)\big)\big| &=\big|\big\langle\psi\partial^\alpha
u,e^{-i\langle\xi,\cdot\rangle}\big\rangle\big| =\big|\big\langle
u,\partial^\alpha\big(\psi e^{-i\langle\xi,\cdot\rangle}\big)
\big\rangle\big|
\\[4pt]
&\leq C\int_{B(0,2)}|u(x)|\,dx\leq
C\int_{S^1}|P_{rs}(\omega)|\,d\omega \leq C2^l\|P\|_{L^1(S^1)},
\nonumber
\end{align}
and
\begin{align}\label{eq:11yr43e.2}
\big|{\mathcal{F}}_{x\to\xi}\big((1-\psi(x))\partial^\alpha
u(x)\big)\big| &\leq \|(1-\psi)\partial^\alpha
u\|_{L^1({\mathbb{R}}^2)} \leq\int_{{\mathbb{R}}^2\setminus
B(0,1)}|\partial^\alpha u(x)|\,dx
\nonumber\\[4pt]
&\leq C\int_{S^1}|\partial^\alpha u(\omega)|\,d\omega \leq
C2^l\|P\|_{L^1(S^1)}.
\end{align}
Collectively, \eqref{eq:11yr43e} and \eqref{eq:11yr43e.2} give that,
for each $\alpha\in{\mathbb{N}}_0^2$ with $|\alpha|=2$ and
$\xi\in\overline{B(0,1)}$,
\begin{align}\label{eq:11yr43e.3}
\big|{\mathcal{F}}_{x\to\xi}\big(\partial^\alpha u(x)\big)\big|
&\leq\big|{\mathcal{F}}_{x\to\xi}\big(\psi(x)\partial^\alpha
u(x)\big)\big|
+\big|{\mathcal{F}}_{x\to\xi}\big((1-\psi(x))\partial^\alpha
u(x)\big)\big|
\nonumber\\[4pt]
&\leq C2^l\|P\|_{L^1(S^1)},
\end{align}
hence for each $\xi\in\overline{B(0,1)}$ we have
\begin{align}\label{eq:11yr43e.4}
|\xi|^2\big|\widehat{u}(\xi)\big|=\sum_{\ell=1}^2\big|\xi_\ell^2\widehat{u}(\xi)\big|
=\sum_{\ell=1}^2\big|{\mathcal{F}}_{x\to\xi}\big(\partial_\ell^2
u(x)\big)\big| \leq C2^l\|P\|_{L^1(S^1)}.
\end{align}
In particular, $\|k_{rs}\|_{L^\infty(S^1)}\leq
C\sup_{|\xi|=1}\big|\widehat{u}(\xi)\big| \leq
C2^l\|P\|_{L^1(S^1)}$. A similar circle of ideas also yields
$\|\nabla k_{rs}\|_{L^\infty(S^1)}\leq C2^l\|P\|_{L^1(S^1)}$. This
proves \eqref{eq:Esfag} in the case $n=2$ and completes the proof of
the lemma.
\end{proof}

\section{Cauchy-Clifford Operators on H\"older Spaces}
\setcounter{equation}{0} \label{S-5}

Let $\Omega\subset{\mathbb{R}}^{n}$ be a set of locally finite
perimeter satisfying \eqref{Tay-1}. As before, we shall denote by
$\nu=(\nu_1,\dots,\nu_n)$ the outward unit normal to $\Omega$ and by
$\sigma:={\mathcal{H}}^{n-1}\lfloor\,\partial\Omega$ the surface
measure on $\partial\Omega$. Then the (boundary-to-domain) {\tt
Cauchy-Clifford} {\tt operator} and its principal value (or,
boundary-to-boundary) version associated with $\Omega$ are,
respectively, given by
\begin{eqnarray}\label{Cau-C1}
{\mathcal{C}}f(x):=\frac{1}{\omega_{n-1}}\int_{\partial\Omega}
\frac{x-y}{|x-y|^n}\odot\nu(y)\odot f(y)\,d\sigma(y),\qquad
x\in\Omega,
\end{eqnarray}
and
\begin{eqnarray}\label{Cau-C2}
{\mathcal{C}}^{{}^{\rm pv}}\!f(x):=\lim_{\varepsilon\to 0^+}\frac{1}{\omega_{n-1}}
\int\limits_{\stackrel{y\in\partial\Omega}{|x-y|>\varepsilon}}
\frac{x-y}{|x-y|^n}\odot\nu(y)\odot f(y)\,d\sigma(y),\qquad
x\in\partial\Omega,
\end{eqnarray}
where $f$ is a ${\mathcal{C}}\!\ell_{n}$-valued function defined on
$\partial\Omega$. At the present time, these definitions are
informal as more conditions need to be imposed on the function $f$
and the underlying domain $\Omega$ in order to ensure that these
operators are well-defined and enjoy desirable properties in various
settings of interest. We start by recording the following result, in
the context of uniformly rectifiable domains.

\begin{proposition}\label{iTRfc}
Let $\Omega\subset{\mathbb{R}}^n$ be a {\rm UR} domain. Then for
every $f\in
L^p(\partial\Omega,\sigma)\otimes{\mathcal{C}}\!\ell_{n}$ with
$p\in[1,\infty)$, the function ${\mathcal{C}}^{{}^{\rm pv}}\!f$ is meaningfully defined
$\sigma$-a.e. on $\partial\Omega$, and the actions of the two
Cauchy-Clifford operators on $f$ are related via the boundary
behavior
\begin{equation}\label{2.3.21}
\Big(\mathcal{C}f\Big|^{{}^{\rm n.t.}}_{\partial\Omega}\Big)(x)
:=\lim\limits_{\Gamma_\kappa(x)\ni z\rightarrow x}\,\mathcal{C}f(z)
=\big(\tfrac{1}{2}I+{\mathcal{C}}^{{}^{\rm pv}}\!\big)f(x),\quad\sigma\text{-a.e.
}\,x\in\partial\Omega,
\end{equation}
where $I$ is the identity operator. Moreover, for each
$p\in(1,\infty)$, there exists a finite constant $M=M(n,p,\Omega)>0$
such that
\begin{eqnarray}\label{T-Har.BCca}
\|{\mathcal{N}}({\mathcal{C}}f)\|_{L^p(\partial\Omega,\sigma)}
\leq M\,\|f\|_{L^p(\partial\Omega,\sigma)\otimes{\mathcal{C}}\!\ell_{n}},
\end{eqnarray}
the operator ${\mathcal{C}}^{{}^{\rm pv}}\!$ is well-defined and bounded on
$L^p(\partial\Omega,\sigma)\otimes{\mathcal{C}}\!\ell_{n}$, and the formula
\begin{equation}\label{eq:CsquaTT}
\big({\mathcal{C}}^{{}^{\rm pv}}\big)^2=\tfrac{1}{4}I\,\,\,\mbox{ on }\,\,\,
L^p(\partial\Omega,\sigma)\otimes{\mathcal{C}}\!\ell_{n}
\end{equation}
holds.
\end{proposition}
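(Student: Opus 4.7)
The plan is to reduce all the assertions, save the Calder\'on-type identity \eqref{eq:CsquaTT}, to the Calder\'on-Zygmund theory supplied by Theorems~\ref{Main-T2-BBBB}--\ref{Main-T2}, and then to extract \eqref{eq:CsquaTT} from a Cauchy-Pompeiu reproducing formula combined with the jump identity \eqref{2.3.21} just proved. Concretely, setting $k(z):=z/(\omega_{n-1}|z|^n)\in{\mathbb{R}}^n\hookrightarrow{\mathcal{C}}\!\ell_{n}$, I rewrite
\[
{\mathcal{C}}f(x)=\int_{\partial\Omega}k(x-y)\odot\bigl(\nu(y)\odot f(y)\bigr)\,d\sigma(y),
\]
and expand both $k$ and $\nu\odot f$ in the orthonormal Clifford basis $\{e_I\}_I$. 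This represents ${\mathcal{C}}$ as a finite ${\mathcal{C}}\!\ell_{n}$-valued linear combination of scalar singular integral operators whose convolution kernels $k_j(z):=z_j/(\omega_{n-1}|z|^n)$ are odd, ${\mathscr{C}}^\infty$ on ${\mathbb{R}}^n\setminus\{0\}$, and homogeneous of degree $-(n-1)$, acting on the scalar components of $\nu\odot f\in L^p(\partial\Omega,\sigma)\otimes{\mathcal{C}}\!\ell_{n}$ (note that $|\nu\odot f|=|f|$ by \eqref{eq:rtt}, so $\nu\odot f$ inherits the integrability of $f$). Theorem~\ref{Main-T2-BBBB} then furnishes the $\sigma$-a.e.\ existence of ${\mathcal{C}}^{{}^{\rm pv}}\!f$ for every $f\in L^p(\partial\Omega,\sigma)\otimes{\mathcal{C}}\!\ell_{n}$ with $p\in[1,\infty)$, as well as the $L^p$-boundedness of ${\mathcal{C}}^{{}^{\rm pv}}$ for $p\in(1,\infty)$, while Theorem~\ref{Main-T2} delivers the nontangential maximal estimate \eqref{T-Har.BCca}.

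For the jump identity \eqref{2.3.21}, the computation begins at the symbol level. Since $k_j=\partial_j E_{\!{}_\Delta}$ where $E_{\!{}_\Delta}$ is the standard fundamental solution of the Laplacian (normalized so that $\Delta E_{\!{}_\Delta}=\delta_0$), and since $\widehat{E_{\!{}_\Delta}}(\xi)=-|\xi|^{-2}$ under the convention \eqref{eq:iytTT}, one obtains $\widehat{k_j}(\xi)=-i\xi_j/|\xi|^2$, hence $\widehat{k}(\xi)=-i\xi/|\xi|^2\in{\mathbb{R}}^n\hookrightarrow{\mathcal{C}}\!\ell_{n}$. At $\xi=\nu(x)$, where $|\nu(x)|=1$ for $\sigma$-a.e.\ $x\in\partial\Omega$, this gives $\widehat{k}(\nu(x))=-i\,\nu(x)$. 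Invoking the jump formula \eqref{main-jump} of Theorem~\ref{Main-T2} componentwise with $g:=\nu\odot f$ in the role of $f$ therefore produces
\[
{\mathcal{C}}f\big|^{{}^{\rm n.t.}}_{\partial\Omega}(x)=\tfrac{1}{2i}\widehat{k}(\nu(x))\odot\bigl(\nu(x)\odot f(x)\bigr)+{\mathcal{C}}^{{}^{\rm pv}}\!f(x)=-\tfrac{1}{2}\nu(x)\odot\nu(x)\odot f(x)+{\mathcal{C}}^{{}^{\rm pv}}\!f(x)=\tfrac{1}{2}f(x)+{\mathcal{C}}^{{}^{\rm pv}}\!f(x),
\]
where the last equality uses the Clifford relation $\nu\odot\nu=-|\nu|^2=-1$ from \eqref{X-sqr}. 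This is precisely \eqref{2.3.21}.

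The step I expect to be the main obstacle is the Calder\'on-type identity \eqref{eq:CsquaTT}. My plan is to deduce it from the Cauchy-Pompeiu reproducing formula ${\mathcal{C}}\bigl({\mathcal{C}}f\big|^{{}^{\rm n.t.}}_{\partial\Omega}\bigr)={\mathcal{C}}f$ in $\Omega$. Granted this, taking the nontangential boundary trace of both sides and applying \eqref{2.3.21} twice yields $\bigl(\tfrac{1}{2}I+{\mathcal{C}}^{{}^{\rm pv}}\bigr)\bigl(\tfrac{1}{2}f+{\mathcal{C}}^{{}^{\rm pv}}\!f\bigr)=\tfrac{1}{2}f+{\mathcal{C}}^{{}^{\rm pv}}\!f$, which on expanding collapses to $\bigl({\mathcal{C}}^{{}^{\rm pv}}\bigr)^2 f=\tfrac{1}{4}f$. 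The reproducing formula itself rests on the algebraic identity $D_L k(x)=-\Delta E_{\!{}_\Delta}(x)=0$ for $x\neq 0$ (using the definition \eqref{Dirac} of $D$ and the relations \eqref{im-e}), which makes ${\mathcal{C}}f$ left Clifford-analytic in $\Omega$ (and in $\Omega_-:={\mathbb{R}}^n\setminus\overline{\Omega}$), and is then the natural higher-dimensional analogue of the classical Cauchy reproducing identity. Establishing it rigorously on a {\rm UR} domain --- where $\partial\Omega$ carries none of the smoothness assumed in the textbook derivation of Cauchy-Pompeiu --- is the genuinely delicate point; the plan is to exhaust $\Omega$ from inside by a family of smoother subdomains $\Omega_\delta\Subset\Omega$ compatible with the nontangential approach regions in the sense of Proposition~\ref{Ctay-5}, apply the classical Cauchy-Pompeiu identity on each $\Omega_\delta$ to $u:={\mathcal{C}}f$, and pass to the limit as $\delta\to 0^{+}$. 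The uniform control required to justify this limit passage in $L^p$ is precisely the nontangential maximal bound \eqref{T-Har.BCca} together with the $\sigma$-a.e.\ nontangential convergence ${\mathcal{C}}f\to\bigl(\tfrac{1}{2}I+{\mathcal{C}}^{{}^{\rm pv}}\bigr)f$ established in the preceding step.
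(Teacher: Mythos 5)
Your treatment of everything except \eqref{eq:CsquaTT} coincides with the paper's: the authors dispose of the a.e.\ existence of ${\mathcal{C}}^{{}^{\rm pv}}\!f$, the $L^p$ bounds, the nontangential maximal estimate, and the jump relation in one sentence by appealing to Theorems~\ref{Main-T2-BBBB}--\ref{Main-T2}, which is exactly your componentwise reduction (and your symbol computation $\widehat{k}(\nu(x))=-i\nu(x)$ followed by $\nu\odot\nu=-1$ is the correct way to see that \eqref{main-jump} specializes to \eqref{2.3.21}). Where you diverge is \eqref{eq:CsquaTT}: the paper does not prove it here at all, but cites \cite{HoMiTa10} (cf.\ also \cite{IMiMiTa}), whereas you propose to derive it from the Cauchy reproducing formula ${\mathcal{C}}\big(u\big|^{{}^{\rm n.t.}}_{\partial\Omega}\big)=u$ applied to $u={\mathcal{C}}f$ together with the jump relation. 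This is a perfectly natural route --- it is precisely the argument the authors themselves run later for the H\"older-space analogue \eqref{tgVV} in the proof of Theorem~\ref{i65r5ED}, where the reproducing formula \eqref{Aav-jva} is imported from \cite{IMiMiTa}. The caveat is that the reproducing formula on a {\rm UR} domain is itself the substantive content being invoked: your exhaustion by smoother subdomains $\Omega_\delta\Subset\Omega$ ``compatible with the nontangential approach regions'' presupposes the existence of such approximating domains for an arbitrary {\rm UR} domain and a limit passage controlled by \eqref{T-Har.BCca}, neither of which is routine; this is essentially the content of the cited references rather than something recovered by your sketch. Note also that Proposition~\ref{iTRfc} does not assume $\partial\Omega$ compact, so ${\mathcal{N}}({\mathcal{C}}f)\in L^p$ does not by itself give the $L^1$ integrability of the nontangential maximal function that the reproducing formula \eqref{Aav-jva} (as stated in the paper, under compact-boundary hypotheses) requires; for unbounded $\Omega$ one also needs a decay condition at infinity on $u$. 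So your plan for \eqref{eq:CsquaTT} is sound in outline and matches the paper's own later strategy, but as written it replaces one citation by another unproved (if standard) ingredient.
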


\begin{proof}
With the exception of \eqref{eq:CsquaTT} (which has been proved in
\cite{HoMiTa10}; cf. also \cite{IMiMiTa} for very general results of
this type), all claims follow from Theorems~\ref{Main-T2-BBBB}-\ref{Main-T2}.
\end{proof}

The goal in this section is to prove similar results when the
Lebesgue scale is replaced by H\"older spaces, in a class of domains
considerably more general than the category of uniformly rectifiable
domains. We begin by proving the following result.

\begin{lemma}\label{urdyyy7}
Let $\Omega\subseteq{\mathbb{R}}^n$ be a Lebesgue measurable set
whose boundary is compact and upper Ahlfors regular {\rm (}hence, in
particular, $\Omega$ is of locally finite perimeter by
\eqref{eq:hdusd}{\rm )}. Denote by $\nu$ the geometric measure
theoretic outward unit normal to $\Omega$ and define
$\sigma:={\mathcal{H}}^{n-1}\lfloor\partial_\ast\Omega$. Then there
exists a number $N=N(n,c)\in(0,\infty)$, depending only on the
dimension $n$ and the upper Ahlfors regularity constant $c$ of
$\partial\Omega$, with the property that
\begin{align}\label{Cau-CJiigv}
\Big|\int_{\partial_\ast\Omega\setminus
B(x,\,r)}\frac{x-y}{|x-y|^n}\odot\nu(y)\,d\sigma(y)\Big| \leq
N,\qquad\forall\,x\in{\mathbb{R}}^n,\quad\forall\,r\in(0,\infty).
\end{align}
\end{lemma}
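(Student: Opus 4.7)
The plan is to decompose the Clifford product $\frac{x-y}{|x-y|^n}\odot\nu(y)$ into its scalar and bivector pieces and then estimate each piece by means of the Gauss--Green formula \eqref{eq:TgavFF} from Lemma~\ref{uteeLL}. Using $e_j\odot e_j=-1$ and $e_j\odot e_k=-e_k\odot e_j$ for $j\neq k$, one finds
\[
\tfrac{x-y}{|x-y|^n}\odot\nu(y)=-\Big\langle\tfrac{x-y}{|x-y|^n},\nu(y)\Big\rangle
+\sum_{1\le j<k\le n}\Big[\tfrac{x_j-y_j}{|x-y|^n}\nu_k(y)-\tfrac{x_k-y_k}{|x-y|^n}\nu_j(y)\Big]\,e_j\odot e_k.
\]
Each coefficient has the form $\langle\vec{F}(y),\nu(y)\rangle$ for a specific vector field $\vec{F}$ on $\mathbb{R}^n\setminus\{x\}$ satisfying $|\vec{F}(y)|\le|x-y|^{1-n}$. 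For the scalar coefficient one takes $\vec{F}(y):=(x-y)/|x-y|^n$, which is (up to a dimensional multiple) the $y$-gradient of the Laplace fundamental solution and hence divergence-free on $\mathbb{R}^n\setminus\{x\}$; for the $(j,k)$-bivector coefficient one takes $\vec{F}$ with $k$-th component $(x_j-y_j)/|x-y|^n$, $j$-th component $-(x_k-y_k)/|x-y|^n$, and all other components zero, a field that is easily checked to be divergence-free on $\mathbb{R}^n\setminus\{x\}$ by the symmetry of mixed second partials.

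With that preparation, fix $x\in\mathbb{R}^n$ and $r>0$, choose $R>0$ so large that $\partial\Omega\cup\overline{B(x,r)}\subset B(0,R/2)$, and introduce a cutoff $\varphi_R\in\mathscr{C}^\infty_c(\mathbb{R}^n)$ with $\varphi_R\equiv 1$ on $B(0,R)$, $\mathrm{supp}\,\varphi_R\subset B(0,2R)$, and $|\nabla\varphi_R|\le C_n/R$, together with an auxiliary $\eta\in\mathscr{C}^\infty(\mathbb{R}^n)$ vanishing near $x$ and identically $1$ outside $B(x,r/2)$. Applying Lemma~\ref{uteeLL} with $D:=\Omega$ to the compactly supported $\mathscr{C}^1$ field $\eta\varphi_R\vec{F}$, and using that $\mathrm{div}\,\vec{F}=0$ on $\mathbb{R}^n\setminus\{x\}$ together with the fact that $\eta\varphi_R\equiv 1$ on both $\partial_\ast\Omega\setminus B(x,r)$ and $\Omega\cap\partial B(x,r)$, one obtains for $\mathscr{L}^1$-a.e.\ $r\in(0,\infty)$ the identity
\[
\int_{\partial_\ast\Omega\setminus B(x,r)}\!\!\langle\vec{F},\nu\rangle\,d\sigma
=\int_{\Omega\setminus B(x,r)}\!\!\langle\nabla\varphi_R,\vec{F}\rangle\,dy
+\int_{\Omega\cap\partial B(x,r)}\!\!\langle\vec{F},\nu_{B(x,r)}\rangle\,d\mathcal{H}^{n-1}.
\]
The first term on the right is bounded uniformly in $R$ by a dimensional constant, since its integrand has magnitude $\lesssim R^{-n}$ on an annular region of volume $\lesssim R^n$. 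On $\partial B(x,r)$ one has $\nu_{B(x,r)}(y)=(y-x)/r$, so in the scalar case $\langle\vec{F},\nu_{B(x,r)}\rangle\equiv-r^{1-n}$ and the second integral equals $-r^{1-n}\mathcal{H}^{n-1}(\Omega\cap\partial B(x,r))\in[-\omega_{n-1},0]$, whereas in each bivector case one checks directly that $\langle\vec{F},\nu_{B(x,r)}\rangle$ vanishes pointwise by antisymmetry. Summing the scalar piece and the $\binom{n}{2}$ bivector pieces and appealing to the Clifford norm $|u|^2=\sum_I u_I^2$ yields the required bound $N_0=N_0(n)$ for a.e.\ $r$.

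Finally, to promote the estimate from a.e.\ $r$ to every $r\in(0,\infty)$, one observes that the quantity under study is right-continuous in $r$, the strict inequality $|y-x|>r$ being preserved when $r_n\downarrow r$; one then picks a sequence $r_n\downarrow r$ of generic radii for which the a.e.\ bound applies and passes to the limit using dominated convergence (the integrand is absolutely integrable away from $x$ thanks to the upper Ahlfors regularity of $\partial\Omega$). The upper Ahlfors regularity constant $c$ is not strictly needed in the final numerical bound---it enters only tacitly through \eqref{eq:hdusd} to guarantee that $\Omega$ has locally finite perimeter so that $\sigma$ and Lemma~\ref{uteeLL} are meaningful in the first place. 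The principal technical obstacle is arranging the far-field cutoff so as to yield a bound independent of $R$; this is resolved by the matched decays $|\nabla\varphi_R|\lesssim 1/R$ and $|\vec{F}|\lesssim R^{1-n}$ on $B(0,2R)\setminus B(0,R)$.
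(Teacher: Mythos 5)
Your overall strategy is the same as the paper's: apply the Gauss--Green formula of Lemma~\ref{uteeLL} to (combinations of) the fields with components $(x_j-y_j)/|x-y|^n$, use the resulting cancellation to convert the integral over $\partial_\ast\Omega\setminus B(x,r)$ into one over $\Omega\cap\partial B(x,r)$, and observe that the latter is controlled by $\mathcal{H}^{n-1}\big(\Omega\cap\partial B(x,r)\big)/r^{n-1}\leq\omega_{n-1}$. Your explicit scalar/bivector splitting is an unpacked version of the paper's identity $\sum_{j,k}({\rm div}\,\vec{F}_{jk})e_j\odot e_k=0$ combined with $(x-y)\odot\nu(y)=r$ on $\partial B(x,r)$, and your far-field cutoff $\varphi_R$ is a legitimate substitute for the paper's reduction of the unbounded case to the bounded one by passing to the complement. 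All of that is sound.

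The gap is in the final step. The map $r\mapsto\int_{\partial_\ast\Omega\setminus B(x,r)}\frac{x-y}{|x-y|^n}\odot\nu(y)\,d\sigma(y)$ is \emph{left}-continuous, not right-continuous: since $B(x,r)$ is the open ball, the domain of integration is $\partial_\ast\Omega\cap\{|y-x|\geq r\}$, and letting $r_n\downarrow r$ only recovers $\partial_\ast\Omega\cap\{|y-x|>r\}$, omitting the set $\partial_\ast\Omega\cap\partial B(x,r)$, which may carry positive $\mathcal{H}^{n-1}$ measure (take $\Omega=B(x,r)$: the true value is $\omega_{n-1}$, while your limit from the right is $0$). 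There are two ways to repair this. Either approach from below, taking generic $r_n\uparrow r$ and using dominated convergence (the integrand is dominated by $r_1^{1-n}$ on $\{|y-x|\geq r_1\}$ and $\sigma(\partial\Omega)<\infty$); this in fact yields a purely dimensional $N$ and would vindicate your remark about $c$. Or fix a single generic $\varepsilon\in(r/2,r)$ and bound the leftover piece by $\varepsilon^{-(n-1)}\mathcal{H}^{n-1}\big(B(x,2\varepsilon)\cap\partial\Omega\big)\leq c\,4^{n-1}$ --- this is the paper's route, and it is precisely where the upper Ahlfors regularity constant enters their $N=\omega_{n-1}+c\,4^{n-1}$. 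As written, your continuity claim is false and your assertion that $c$ plays no quantitative role is not yet justified. A minor presentational point: the cutoff $\eta$ should be built from a radius strictly smaller than the radii at which Lemma~\ref{uteeLL} is invoked, so that the a.e.\ set of admissible radii does not depend on the target $r$; the paper handles this by fixing $\varepsilon_o$ first and obtaining the identity for a.e.\ $\varepsilon>\varepsilon_o$.
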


\begin{proof}
We shall first show that, whenever $\Omega\subseteq{\mathbb{R}}^n$
is a bounded set of locally finite perimeter, having fixed an
arbitrary $x\in{\mathbb{R}}^n$, for ${\mathscr{L}}^1$-a.e.
$\varepsilon>0$ we have
\begin{align}\label{Cau-C2bff}
\int_{\partial_\ast\Omega\setminus
B(x,\,\varepsilon)}\frac{x-y}{|x-y|^n}\odot\nu(y)\,d\sigma(y)
&=\int_{\Omega\cap\partial
B(x,\,\varepsilon)}\frac{x-y}{|x-y|^n}\odot\nu(y)\,d{\mathcal{H}}^{n-1}(y)
\nonumber\\[4pt]
&=\frac{{\mathcal{H}}^{n-1}\big(\Omega\cap\partial
B(x,\,\varepsilon)\big)}{\varepsilon^{n-1}}.
\end{align}
To justify this claim, we start by noting that the second equality
(which holds for any measurable set $\Omega\subset{\mathbb{R}}^n$)
is an immediate consequence of the fact that
\begin{equation}\label{eigrrTY}
y\in\partial B(x,\varepsilon)\,\,\mbox{ implies }\,\,
(x-y)\odot\nu(y)=(x-y)\odot(y-x)/\varepsilon=\varepsilon.
\end{equation}
As regards the first equality in \eqref{Cau-C2bff}, for each
$j,k\in\{1,\dots,n\}$ consider the vector field
\begin{equation}\label{eq:YYTf}
\vec{F}_{jk}(y):=\Big(0,\dots,0,\frac{x_j-y_j}{|x-y|^n},0,\dots,0\Big),\qquad
\forall\,y\in{\mathbb{R}}^n\setminus\{x\},
\end{equation}
with the non-zero component on the $k$-th slot. Thus, we have
$\vec{F}_{jk}\in{\mathscr{C}}^1({\mathbb{R}}^n\setminus\{x\},{\mathbb{R}}^n)$
and, if $E_{\!{}_\Delta}$ stands for the standard fundamental
solution for the Laplacian $\Delta=\partial_1^2+\cdots\partial_n^2$
in ${\mathbb{R}}^n$, given by
\begin{equation}\label{RacvTT}
E_{\!{}_\Delta}\!(x):=
\begin{cases}
\displaystyle \frac{1}{\omega_{n-1}(2-n)}\frac{1}{|x|^{n-2}} &
\textrm{if $n\geq 3$},
\\[15pt]
\displaystyle \frac{1}{2\pi}\ln|x| & \textrm{if $n=2$,}
\end{cases}
\qquad\forall\,x\in\mathbb{R}^n\setminus\{0\},
\end{equation}
then
\begin{equation}\label{eq:u54955}
({\rm div}\,\vec{F}_{jk})(y)=-\omega_{n-1}(\partial_j\partial_k E_{\!{}_\Delta})(x-y),
\qquad\forall\,y\in{\mathbb{R}}^n\setminus\{x\}.
\end{equation}
As a consequence, in ${\mathbb{R}}^n\setminus\{x\}$ we have
\begin{align}\label{eq:itf5f6}
\sum_{j,k=1}^n({\rm div}\vec{F}_{jk})e_j\odot e_k &=\sum_{1\leq
j\not=k\leq n}({\rm div}\,\vec{F}_{jk})e_j\odot e_k-\sum_{j=1}^n{\rm
div}\,\vec{F}_{jj}
\nonumber\\[4pt]
&=-\omega_{n-1}\sum_{1\leq j\not=k\leq n}
(\partial_j\partial_k E_{\!{}_\Delta})(x-\cdot)e_j\odot e_k
+\omega_{n-1}(\Delta E_{\!{}_\Delta})(x-\cdot)
\nonumber\\[4pt]
&=0,
\end{align}
using the fact that $e_j\odot e_k=-e_k\odot e_j$ for $j\not=k$ and
the harmonicity of $E_{\!{}_\Delta}\!(x-\cdot)$ in
${\mathbb{R}}^n\setminus\{x\}$.

At this stage, fix an arbitrary $\varepsilon_o\in(0,\infty)$ and
alter each $\vec{F}_{jk}$ both inside $B(x,\varepsilon_o)$ and
outside an open neighborhood of $\overline{\Omega}$ to a vector
field
$\vec{G}_{jk}\in{\mathscr{C}}^1_0({\mathbb{R}}^n,{\mathbb{R}}^n)$
(this is possible given the working assumption that $\Omega$ is
bounded). Then for ${\mathscr{L}}^1$-a.e.
$\varepsilon\in(\varepsilon_o,\infty)$ based on the formula
\eqref{eq:TgavFF} used for $\vec{F}:=\vec{G}_{jk}$, $D:=\Omega$, and
$r:=\varepsilon$ we may write
\begin{align}\label{eq:Tgahytttr}
0 &=\sum_{j,k=1}^n\Big(\int_{\Omega\setminus B(x,\,\varepsilon)}
{\rm div}\vec{F}_{jk}\,d{\mathscr{L}}^n\Big)e_j\odot e_k
=\sum_{j,k=1}^n\Big(\int_{\Omega\setminus B(x,\,\varepsilon)} {\rm
div}\,\vec{G}_{jk}\,d{\mathscr{L}}^n\Big)e_j\odot e_k
\nonumber\\[4pt]
&=\sum_{j,k=1}^n\Big(\int_{\partial_\ast\Omega\setminus
B(x,\,\varepsilon)}
\langle\vec{G}_{jk},\nu\rangle\,d\sigma\Big)e_j\odot e_k
-\sum_{j,k=1}^n\Big(\int_{\Omega\cap\partial B(x,\,\varepsilon)}
\langle\vec{G}_{jk},\nu\rangle\,d{\mathcal{H}}^{n-1}\Big)e_j\odot
e_k
\nonumber\\[4pt]
&=\sum_{j,k=1}^n\Big(\int_{\partial_\ast\Omega\setminus
B(x,\,\varepsilon)}
\langle\vec{F}_{jk},\nu\rangle\,d\sigma\Big)e_j\odot e_k
-\sum_{j,k=1}^n\Big(\int_{\Omega\cap\partial B(x,\,\varepsilon)}
\langle\vec{F}_{jk},\nu\rangle\,d{\mathcal{H}}^{n-1}\Big)e_j\odot
e_k
\nonumber\\[4pt]
&=\sum_{j,k=1}^n\Big(\int_{\partial_\ast\Omega\setminus
B(x,\,\varepsilon)}
\frac{(x_j-y_j)\nu_k(y)}{|x-y|^n}\,d\sigma(y)\Big)e_j\odot e_k
\nonumber\\[4pt]
&\quad -\sum_{j,k=1}^n\Big(\int_{\Omega\cap\partial
B(x,\,\varepsilon)}
\frac{(x_j-y_j)\nu_k(y)}{|x-y|^n}\,d{\mathcal{H}}^{n-1}(y)\Big)e_j\odot
e_k
\nonumber\\[4pt]
&=\int_{\partial_\ast\Omega\setminus
B(x,\,\varepsilon)}\frac{x-y}{|x-y|^n}\odot\nu(y)\,d\sigma(y)
-\int_{\Omega\cap\partial
B(x,\,\varepsilon)}\frac{x-y}{|x-y|^n}\odot\nu(y)\,d{\mathcal{H}}^{n-1}(y).
\end{align}
With this in hand, the first equality in \eqref{Cau-C2bff} readily
follows. Thus, \eqref{Cau-C2bff} is fully proved.

To proceed, assume that $\Omega\subseteq{\mathbb{R}}^n$ is a bounded
Lebesgue measurable set whose boundary is upper Ahlfors regular.
Then \eqref{Cau-C2bff} implies that for each $x\in{\mathbb{R}}^n$
\begin{align}\label{Cau-C2btg590}
\Big|\int_{\partial_\ast\Omega\setminus
B(x,\,\varepsilon)}\frac{x-y}{|x-y|^n}\odot\nu(y)\,d\sigma(y)\Big|
\leq\frac{{\mathcal{H}}^{n-1}\big(\partial
B(x,\,\varepsilon)\big)}{\varepsilon^{n-1}}=\omega_{n-1},
\end{align}
for ${\mathscr{L}}^1$-a.e. $\varepsilon>0$. Fix now
$x\in{\mathbb{R}}^n$ and pick an arbitrary $r\in(0,\infty)$. Based
on \eqref{Cau-C2btg590} we conclude that there exists
$\varepsilon\in(r/2,r)$ such that
\begin{align}\label{Cau-C2btg590.FF.1}
\Big|\int_{\partial_\ast\Omega\setminus
B(x,\,\varepsilon)}\frac{x-y}{|x-y|^n}\odot\nu(y)\,d\sigma(y)\Big|
\leq\omega_{n-1}.
\end{align}
For this choice of $\varepsilon$ we may then estimate
\begin{align}\label{Cau-C2btg590.FF.2}
\Big|\int_{\partial_\ast\Omega\setminus
B(x,\,r)}\frac{x-y}{|x-y|^n}\odot\nu(y) &\,d\sigma(y)\Big|
\nonumber\\[4pt]
\leq &\,\Big|\int_{\partial_\ast\Omega\setminus
B(x,\,\varepsilon)}\frac{x-y}{|x-y|^n}\odot\nu(y)\,d\sigma(y)\Big|
\nonumber\\[4pt]
&+\Big|\int_{[B(x,\,r)\setminus
B(x,\,\varepsilon)]\cap\partial_\ast\Omega}\frac{x-y}{|x-y|^n}\odot\nu(y)\,d\sigma(y)\Big|
\nonumber\\[4pt]
\leq &\,\omega_{n-1} +\int_{[B(x,\,r)\setminus
B(x,\,\varepsilon)]\cap\partial\Omega}\frac{d{\mathcal{H}}^{n-1}(y)}{|x-y|^{n-1}}
\nonumber\\[4pt]
\leq &\,\omega_{n-1} +\int_{[B(x,\,2\varepsilon)\setminus
B(x,\,\varepsilon)]\cap\partial\Omega}\frac{d{\mathcal{H}}^{n-1}(y)}{|x-y|^{n-1}}
\nonumber\\[4pt]
\leq
&\,\omega_{n-1}+\varepsilon^{-(n-1)}{\mathcal{H}}^{n-1}\big(B(x,\,2\varepsilon)\cap\partial\Omega\big).
\end{align}
If ${\rm dist}\,(x,\partial\Omega)\leq 2\varepsilon$, pick a point
$x_0\in\partial\Omega$ such that ${\rm
dist}(x,\partial\Omega)=|x-x_0|$. In particular, $|x-x_0|\leq
2\varepsilon$ which forces $B(x,2\varepsilon)\subseteq
B(x_0,4\varepsilon)$. As such,
\begin{equation}\label{eq:ryty}
{\mathcal{H}}^{n-1}\big(B(x,2\varepsilon)\cap\partial\Omega\big)
\leq{\mathcal{H}}^{n-1}\big(B(x_0,4\varepsilon)\cap\partial\Omega\big)
\leq c(4\varepsilon)^{n-1},
\end{equation}
with $c\in(0,\infty)$ standing for the upper Ahlfors regularity
constant of $\partial\Omega$. On the other hand, if ${\rm
dist}\,(x,\partial\Omega)>2\varepsilon$ then
${\mathcal{H}}^{n-1}\big(B(x,2\varepsilon)\cap\partial\Omega\big)=0$.
Thus, taking $N:=\omega_{n-1}+c\,4^{n-1}$ the desired conclusion
follows from \eqref{Cau-C2btg590.FF.2} and \eqref{eq:ryty}, in the
case when $\Omega$ is as in the statement of the lemma and also
bounded.

Finally, when $\Omega$ is as in the statement of the lemma but
unbounded, consider $\Omega^c:={\mathbb{R}}^n\setminus\Omega$. Then
$\Omega^c\subseteq{\mathbb{R}}^n$ is a bounded, Lebesgue measurable
set, with the property that $\partial(\Omega^c)=\partial\Omega$ and
$\partial_\ast(\Omega^c)=\partial_\ast\Omega$. Moreover, the
geometric measure theoretic outward unit normal to $\Omega^c$ is
$-\nu$. Then \eqref{Cau-CJiigv} follows from what we have proved so
far applied to $\Omega^c$.
\end{proof}

It is clear from \eqref{Cau-C1} that, the boundary-to-domain
Cauchy-Clifford operator is well-defined on
$L^1(\partial\Omega,\sigma)$. To state our next lemma, recall that
$\rho(\cdot)$ has been introduced in \eqref{eq:DFFV}.

\begin{lemma}\label{yreLLa}
Let $\Omega$ be a nonempty, proper, open subset of ${\mathbb{R}}^{n}$
whose boundary is compact, upper Ahlfors regular, and satisfies
\eqref{Tay-1}. Then the Cauchy-Clifford operator \eqref{Cau-C1} has
the property that
\begin{equation}\label{igttYYH}
\mathcal{C}1=\left\{
\begin{array}{ll}
1\,\,\mbox{ in }\,\,\Omega & \mbox{if $\Omega$ bounded},
\\[4pt]
0\,\,\mbox{ in }\,\,\Omega & \mbox{if $\Omega$ unbounded},
\end{array}
\right.
\end{equation}
and for each $\alpha\in(0,1)$ there exists a finite $M>0$, depending
only on $n$, $\alpha$, ${\rm diam}(\partial\Omega)$, and the upper
Ahlfors regularity constant of $\partial\Omega$, such that for every
$f\in{\mathscr{C}}^\alpha(\partial\Omega)\otimes{\mathcal{C}}\!\ell_{n}$
one has
\begin{align}\label{UgagfgPPn}
\sup_{x\in\Omega}\,\big|(\mathcal{C}f)(x)\big|
+\sup_{x\in\Omega}\,\Big\{\rho(x)^{1-\alpha}\big|\nabla(\mathcal{C}f)(x)\big|\Big\}
\leq M\,\|f\|_{{\mathscr{C}}^\alpha(\partial\Omega)\otimes{\mathcal{C}}\!\ell_{n}}.
\end{align}
\end{lemma}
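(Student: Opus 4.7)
My plan is first to establish the closed-form formula \eqref{igttYYH} for $\mathcal{C}1$ and then to derive the H\"older estimate \eqref{UgagfgPPn} via Lemma~\ref{Lemma.3}. The engine for \eqref{igttYYH} is the divergence-freeness in $\mathbb{R}^n\setminus\{x\}$ of the component vector fields $\vec{F}_{jk}$ introduced in \eqref{eq:YYTf}, which after summing against $e_j\odot e_k$ reproduces the Cauchy-Clifford kernel via the Clifford-algebra identities of Lemma~\ref{i6gf65}; this is the same mechanism already used in the proof of Lemma~\ref{urdyyy7}. In the bounded case, I fix $x\in\Omega$, pick $r>0$ with $\overline{B(x,r)}\subset\Omega$, and apply the Gauss-Green formula \eqref{eq:TgavFF} to $D=\Omega$ with ball $B(x,r)$ and a compactly supported truncation of $\vec{F}_{jk}$. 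The volume term vanishes by divergence-freeness (compare \eqref{eq:itf5f6}), so summing gives
\[
\int_{\partial_\ast\Omega}\frac{x-y}{|x-y|^n}\odot\nu(y)\,d\sigma(y)=\int_{\partial B(x,r)}\frac{x-y}{|x-y|^n}\odot\frac{y-x}{r}\,d\mathcal{H}^{n-1}(y),
\]
and the right-hand side collapses, via $(x-y)\odot(y-x)=|x-y|^2$ from \eqref{X-sqr}, to $\int_{\partial B(x,r)}r^{-(n-1)}\,d\mathcal{H}^{n-1}=\omega_{n-1}$, hence $\mathcal{C}1(x)=1$.

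For the unbounded case, I exploit the compactness of $\partial\Omega$ to choose $R_0$ with $\partial\Omega\subset B(0,R_0)$ and observe that for every $R>R_0$ the sphere $\partial B(0,R)$ lies entirely in $\Omega$ (since $\mathbb{R}^n\setminus\overline{B(0,R_0)}$ is connected, disjoint from $\partial\Omega$, and meets $\Omega$, hence is contained in it). Repeating the Gauss-Green argument with $D=\Omega\cap B(0,R)$ produces a three-term identity that picks up an extra outer-sphere contribution at $\partial B(0,R)$. The inner-sphere term at $\partial B(x,r)$ still produces $\omega_{n-1}$ as before, while a separate application of Gauss-Green on the purely Euclidean annulus $B(0,R)\setminus\overline{B(x,r)}$ (where $\vec{F}_{jk}$ is smooth and divergence-free) shows that
\[
\int_{\partial B(0,R)}\frac{x-y}{|x-y|^n}\odot\frac{y}{R}\,d\mathcal{H}^{n-1}(y)=\omega_{n-1}
\]
as well, by the exact same Clifford computation. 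These two contributions cancel in the three-term identity and force $\mathcal{C}1(x)=0$.

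For \eqref{UgagfgPPn}, I expand the Clifford-valued kernel $K(x,y):=\frac{x-y}{\omega_{n-1}|x-y|^n}\odot\nu(y)$ in the Clifford basis as $\sum_I K_I(x,y)\,e_I$ and write $f=\sum_J f_J\,e_J$, so that every scalar component of $\mathcal{C}f$ is a finite linear combination of scalar integral operators $f_J\mapsto\int K_I(x,y)f_J(y)\,d\sigma(y)$. Using $|\nu|=1$, identity \eqref{eq:rtt} of Lemma~\ref{i6gf65}, and direct differentiation, each scalar kernel $K_I$ satisfies $|K_I(x,y)|+|x-y|\,|\nabla_x K_I(x,y)|\leq B_n/|x-y|^{n-1}$ for a dimensional constant $B_n$, so hypothesis \eqref{yre.44a} of Lemma~\ref{Lemma.3} holds. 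Moreover, by \eqref{igttYYH} each scalar $\int K_I(x,y)\,d\sigma(y)$ equals a constant on $\Omega$ (either $0$ or $1$) of absolute value at most $1$, so hypothesis \eqref{tAAcc.44iii} is automatic with $A\leq 1$. Applying the conclusion \eqref{jht6.44nB} of Lemma~\ref{Lemma.3} to every scalar piece and then summing via the norm \eqref{eq4.1agg.2} delivers \eqref{UgagfgPPn}. The main conceptual obstacle in the whole proof is the unbounded case of \eqref{igttYYH}: the decisive point is recognizing that the outer-sphere integral equals \emph{exactly} $\omega_{n-1}$ rather than merely asymptotically, so that its cancellation with the inner-ball term is algebraic and does not require a separate limiting argument at infinity.
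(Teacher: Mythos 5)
Your proof is correct. The bounded case of \eqref{igttYYH} and the derivation of \eqref{UgagfgPPn} follow the paper's route exactly: the former is the special case of the Gauss--Green/Clifford-cancellation identity \eqref{Cau-C2bff} (proved in Lemma~\ref{urdyyy7}) corresponding to $x\in\Omega$ and $\overline{B(x,r)}\subset\Omega$, and the latter is the componentwise application of Lemma~\ref{Lemma.3}, where your observation that each scalar component of $\mathcal{C}1$ is a constant (so that the gradient term in \eqref{tAAcc.44iii} vanishes and $A\leq 1$) is the right way to verify the hypothesis. Where you genuinely diverge from the paper is the unbounded case: you truncate $\Omega$ by a large ball $B(0,R)$ containing $\partial\Omega$, obtain a three-term identity, and cancel the outer-sphere contribution against the inner-sphere one after checking---via a second application of the divergence theorem on the Euclidean annulus $B(0,R)\setminus\overline{B(x,r)}$---that the outer integral equals exactly $\omega_{n-1}$. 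The paper instead applies \eqref{Cau-C2bff} to the \emph{bounded complement} $\Omega^c={\mathbb{R}}^n\setminus\Omega$ (whose outward normal is $-\nu$ and whose reduced boundary agrees with $\partial_\ast\Omega$): for $x\in\Omega$ and small $\varepsilon$ one has $\Omega^c\cap\partial B(x,\varepsilon)=\varnothing$, so the right-hand side of \eqref{Cau-C2bff} is $0$ and the conclusion is immediate. Your argument is self-contained and the exact (not merely asymptotic) evaluation of the outer-sphere integral is correctly justified; the paper's complement trick is shorter and sidesteps the large-$R$ bookkeeping, but both are valid. The only technical point to keep in mind in your version is that \eqref{eq:TgavFF.iii}--\eqref{eq:TgavFF} hold a priori only for a.e.\ radius, which is harmless here since the spheres $\partial B(x,r)$ and $\partial B(0,R)$ lie entirely in $\Omega$ and the relevant integrals are constant in $r$ and $R$ over the admissible ranges.
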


\begin{proof}
The fact that $\mathcal{C}1=1$ in $\Omega$ when $\Omega$ is bounded
follows from \eqref{Cau-C2bff}, written for $x\in\Omega$ and
suitably small $\varepsilon>0$. That $(\mathcal{C}1)(x)=0$ for each
$x\in\Omega$ when $\Omega$ is unbounded also follows from formula
\eqref{Cau-C2bff}, this time considered for the bounded set
$\Omega^c:={\mathbb{R}}^n\setminus\Omega$ (since in this case
$\Omega^c\cap\partial B(x,\varepsilon)=\varnothing$ if $\varepsilon>0$
is sufficiently small). Having proved \eqref{igttYYH}, then
\eqref{UgagfgPPn} follows with the help of Lemma~\ref{Lemma.3}.
\end{proof}

In contrast to Lemma~\ref{yreLLa} (cf. also Lemma~\ref{yreLLa.Tghba}
below), we note that there exists a bounded open set
$\Omega\subset{\mathbb{R}}^2\equiv{\mathbb{C}}$ whose boundary is a
rectifiable Jordan curve, and there exists a complex-valued function
$f\in{\mathscr{C}}^{1/2}(\partial\Omega)$ with the property that the
boundary-to-domain Cauchy operator naturally associated with
$\Omega$ acting on $f$ is actually an unbounded function in
$\Omega$. See the discussion in \cite{Dyn1}, \cite{Dyn2}.

\begin{lemma}\label{yreLLa.Tghba}
Let $\Omega\subset{\mathbb{R}}^{n}$ be a uniform domain whose
boundary is compact, upper Ahlfors regular, and satisfies
\eqref{Tay-1}. Then the boundary-to-domain Cauchy-Clifford operator
has the property that for each $\alpha\in(0,1)$ is well-defined,
linear, and bounded in the context
\begin{equation}\label{Rdac-2BBa.34}
\mathcal{C}:{\mathscr{C}}^{\alpha}(\partial\Omega)\otimes{\mathcal{C}}\!\ell_{n}\longrightarrow
{\mathscr{C}}^{\alpha}\big(\overline{\Omega}\,\big)\otimes{\mathcal{C}}\!\ell_{n},
\end{equation}
with operator norm controlled in terms of $n$, $\alpha$, ${\rm
diam}(\partial\Omega)$, and the upper Ahlfors regularity constant of
$\partial\Omega$.
\end{lemma}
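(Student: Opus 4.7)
The plan is to combine the pointwise estimates furnished by Lemma~\ref{yreLLa} with the Hardy-type Hölder seminorm bound for uniform domains provided by Lemma~\ref{Lem-J1}, and then invoke the identification $\mathscr{C}^\alpha(\Omega) = \mathscr{C}^\alpha(\overline{\Omega})$ to conclude that $\mathcal{C}f$ extends to a Hölder function on the closure.

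First, I would observe that $\mathcal{C}f$ is componentwise smooth in $\Omega$. Indeed, for any $f \in \mathscr{C}^\alpha(\partial\Omega) \otimes \mathcal{C}\ell_n$, the kernel $(x-y)/|x-y|^n$ is $\mathscr{C}^\infty$ jointly in $(x,y)$ on $\Omega \times \partial\Omega$, the boundary $\partial\Omega$ is compact, and $\nu \in L^\infty(\partial\Omega,\sigma) \otimes \mathcal{C}\ell_n$, so differentiation under the integral sign shows $\mathcal{C}f \in \mathscr{C}^\infty(\Omega) \otimes \mathcal{C}\ell_n$. In particular $\mathcal{C}f \in \mathscr{C}^1(\Omega) \otimes \mathcal{C}\ell_n$, so Lemma~\ref{Lem-J1} applies component by component.

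Next, by Lemma~\ref{yreLLa} (whose hypotheses are satisfied since any uniform domain with compact, upper Ahlfors regular boundary verifying \eqref{Tay-1} falls under its assumptions), there is a finite constant $M$ depending only on $n$, $\alpha$, $\mathrm{diam}(\partial\Omega)$, and the upper Ahlfors regularity constant of $\partial\Omega$ such that
\begin{equation}\label{eq:twoBounds}
\sup_{x\in\Omega}\,|(\mathcal{C}f)(x)| + \sup_{x\in\Omega}\,\Big\{\rho(x)^{1-\alpha}|\nabla(\mathcal{C}f)(x)|\Big\} \leq M\,\|f\|_{\mathscr{C}^\alpha(\partial\Omega)\otimes\mathcal{C}\ell_n}.
\end{equation}
Applying Lemma~\ref{Lem-J1} (componentwise) with constant $C = C(\alpha,\Omega)$, the weighted gradient bound in \eqref{eq:twoBounds} upgrades to
\begin{equation*}
[\mathcal{C}f]_{\dot{\mathscr{C}}^\alpha(\Omega)\otimes\mathcal{C}\ell_n} \leq CM\,\|f\|_{\mathscr{C}^\alpha(\partial\Omega)\otimes\mathcal{C}\ell_n}.
\end{equation*}
Combined with the uniform pointwise bound on $\mathcal{C}f$ from \eqref{eq:twoBounds}, this shows $\mathcal{C}f \in \mathscr{C}^\alpha(\Omega) \otimes \mathcal{C}\ell_n$ with norm controlled by $\|f\|_{\mathscr{C}^\alpha(\partial\Omega)\otimes\mathcal{C}\ell_n}$.

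Finally, since functions in $\mathscr{C}^\alpha(\Omega)$ are uniformly continuous on $\Omega$, they extend uniquely to uniformly continuous functions on $\overline{\Omega}$, giving the identification $\mathscr{C}^\alpha(\Omega) = \mathscr{C}^\alpha(\overline{\Omega})$ with equal norms (as recorded in the paragraph following \eqref{eMnan.2}). This concludes the argument. I do not anticipate a real obstacle here: the lemma is essentially a synthesis of Lemma~\ref{yreLLa} (the hard geometric measure theoretic input, particularly the control of $\mathcal{C}1$ and the kernel structure) and the Hardy inequality Lemma~\ref{Lem-J1} (which crucially uses that $\Omega$ is a uniform domain, so that two interior points can be joined by a curve keeping a scale-invariant distance from $\partial\Omega$). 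The only minor point to double-check is that the constants in Lemma~\ref{yreLLa} and Lemma~\ref{Lem-J1} indeed combine to yield the dependence stated in the lemma, which is routine.
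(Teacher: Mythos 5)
Your proposal is correct and is exactly the paper's argument: the authors prove this lemma by citing Lemma~\ref{yreLLa} for the bounds on $\sup|\mathcal{C}f|$ and $\sup\rho^{1-\alpha}|\nabla(\mathcal{C}f)|$, and Lemma~\ref{Lem-J1} to convert the weighted gradient bound into the H\"older seminorm on the uniform domain $\Omega$, followed by the identification ${\mathscr{C}}^\alpha(\Omega)={\mathscr{C}}^\alpha(\overline{\Omega})$. Your write-up just makes explicit the routine details (smoothness of $\mathcal{C}f$ in $\Omega$, componentwise application) that the paper leaves implicit.
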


\begin{proof}
This is a direct consequence of Lemma~\ref{yreLLa} and
Lemma~\ref{Lem-J1}.
\end{proof}

In the class of {\rm UR} domains with compact boundaries that are
also uniform domains, it follows from Lemma~\ref{yreLLa.Tghba} and
the jump-formula \eqref{2.3.21} that the principal value
Cauchy-Clifford operator ${\mathcal{C}}^{{}^{\rm pv}}\!$ defines a bounded mapping from
${\mathscr{C}}^{\alpha}(\partial\Omega)\otimes{\mathcal{C}}\!\ell_{n}$
into itself for each $\alpha\in(0,1)$. The goal is to prove that
this boundedness result actually holds under much more relaxed
background assumptions on the underlying domain. In this regard, a
key aspect has to do with the action of ${\mathcal{C}}^{{}^{\rm pv}}\!$ on constants. Note
that when $\Omega\subset{\mathbb{R}}^n$ is a {\rm UR} domain with
compact boundary, it follows from \eqref{igttYYH} and \eqref{2.3.21}
that the principal value Cauchy-Clifford operator satisfies
\begin{equation}\label{eq:CsqASd.2tTT}
{\mathcal{C}}^{{}^{\rm pv}}\!1=\left\{
\begin{array}{ll}
+\tfrac{1}{2}\,\,\mbox{ on }\,\,\partial\Omega & \mbox{if $\Omega$
is bounded},
\\[6pt]
-\tfrac{1}{2}\,\,\mbox{ on }\,\,\partial\Omega & \mbox{if $\Omega$
is unbounded}.
\end{array}
\right.
\end{equation}
The lemma below establishes a formula similar in spirit to \eqref{eq:CsqASd.2tTT} but for
a much larger class of sets $\Omega\subset{\mathbb{R}}^{n}$ than the category of
{\rm UR} domains with compact boundaries.

\begin{lemma}\label{yreLLa.2}
Let $\Omega\subset{\mathbb{R}}^{n}$ be a Lebesgue measurable set
whose boundary is compact, Ahlfors regular, and such that
\eqref{Tay-1} is satisfied {\rm (}hence, in particular, $\Omega$ has
locally finite perimeter{\rm )}. As in the past, consider
$\sigma:={\mathcal{H}}^{n-1}\lfloor\partial\Omega$ and let $\nu$
denote the outward unit normal to $\Omega$. Then for $\sigma$-a.e.
$x\in\partial\Omega$ there holds
\begin{eqnarray}\label{Cau-C2.6a7}
\lim_{\varepsilon\to 0^+}\frac{1}{\omega_{n-1}}
\int\limits_{\stackrel{y\in\partial\Omega}{|x-y|>\varepsilon}}
\frac{x-y}{|x-y|^n}\odot\nu(y)\,d\sigma(y) = \left\{
\begin{array}{ll}
+\tfrac{1}{2} & \mbox{if $\Omega$ is bounded},
\\[8pt]
-\tfrac{1}{2} & \mbox{if $\Omega$ is unbounded}.
\end{array}
\right.
\end{eqnarray}
\end{lemma}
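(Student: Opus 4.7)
The plan is to first treat the case when $\Omega$ is bounded, and then deduce the unbounded case by applying the bounded case to the complementary set. Throughout, I will fix a point $x\in\partial^{*}\Omega$ --- which, in light of \eqref{2.1.11} and the hypothesis $\mathcal{H}^{n-1}(\partial\Omega\setminus\partial_{*}\Omega)=0$, covers $\sigma$-a.e. point of $\partial\Omega$. Also note that $\partial\Omega$ is countably rectifiable by \eqref{TaGv-kgggg}, and that $\Omega$ has locally finite perimeter by \eqref{eq:hdusd}, so Lemma~\ref{rtggb}, Proposition~\ref{Yfe488}, and identity \eqref{Cau-C2bff} (from the proof of Lemma~\ref{urdyyy7}) are all applicable.

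Assume first that $\Omega$ is bounded. The key identity \eqref{Cau-C2bff} gives, for $\mathscr{L}^1$-a.e. $\varepsilon>0$,
\begin{equation*}
\frac{1}{\omega_{n-1}}\int_{\partial_{*}\Omega\setminus B(x,\varepsilon)}\frac{x-y}{|x-y|^n}\odot\nu(y)\,d\sigma(y)=\frac{\mathcal{H}^{n-1}(\Omega\cap\partial B(x,\varepsilon))}{\omega_{n-1}\varepsilon^{n-1}}.
\end{equation*}
By Lemma~\ref{rtggb}, the right-hand side tends to $1/2$ as $\varepsilon\to 0^{+}$ along the density-$1$ set $\mathcal{O}_x\subset(0,1)$. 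Intersecting $\mathcal{O}_x$ with the $\mathscr{L}^{1}$-a.e. set of admissible $\varepsilon$ produces a set $\mathcal{E}_x\subset(0,1)$ still having density $1$ at $0$, along which the full limit $1/2$ holds.

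The remaining step is to upgrade this to convergence along \emph{all} $\varepsilon\to 0^{+}$. Given an arbitrary small $\varepsilon>0$, since $\mathcal{E}_x$ has density $1$ at $0$ one can select $\varepsilon'\in(\varepsilon/2,\varepsilon)\cap\mathcal{E}_x$, and then write
\begin{align*}
\int_{\partial_{*}\Omega\setminus B(x,\varepsilon)}\!\!\frac{x-y}{|x-y|^n}\odot\nu\,d\sigma&=\int_{\partial_{*}\Omega\setminus B(x,\varepsilon')}\!\!\frac{x-y}{|x-y|^n}\odot\nu\,d\sigma\\
&\quad-\sum_{j,k=1}^{n}\!\Bigg(\int_{\partial\Omega,\,\varepsilon'<|y-x|\le\varepsilon}\!\!\frac{x_j-y_j}{|x-y|^n}\nu_k(y)\,d\sigma(y)\Bigg)\,e_j\odot e_k.
\end{align*}
The first term tends to $\omega_{n-1}/2$ by the previous paragraph. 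Each remainder integral is controlled by splitting $(\varepsilon',\varepsilon]=(\varepsilon/4,\varepsilon]\setminus(\varepsilon/4,\varepsilon']$ and applying Proposition~\ref{Yfe488} with $f=\nu_k\in L^{\infty}(\partial\Omega)\subset L^{1}_{\mathrm{loc}}$; since $\varepsilon,\varepsilon'\in(\varepsilon/2,\varepsilon]$ (with the right endpoint handled by dominated convergence and the continuity of these truncated integrals in the truncation radius within the upper Ahlfors regular setting), these remainders are dominated by the vanishing supremum in \eqref{Tdad-65}. This establishes \eqref{Cau-C2.6a7} in the bounded case.

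For the unbounded case, put $\widetilde{\Omega}:=\mathbb{R}^n\setminus\Omega$. Since $\Omega$ is open, unbounded, and $\partial\Omega$ is compact, a standard connectedness argument (take $R$ with $\partial\Omega\subset B(0,R)$; then $\mathbb{R}^n\setminus B(0,R)$ is connected for $n\ge 2$, disjoint from $\partial\Omega$, and unbounded, hence contained in $\Omega$) shows $\widetilde{\Omega}$ is bounded. Moreover $\widetilde{\Omega}$ has $\partial\widetilde{\Omega}=\partial\Omega$, $\partial_{*}\widetilde{\Omega}=\partial_{*}\Omega$ (the measure-theoretic boundary is symmetric in the set and its complement), and outward unit normal $-\nu$ --- so $\widetilde{\Omega}$ satisfies all the hypotheses of the lemma. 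Applying the bounded case to $\widetilde{\Omega}$ yields
\begin{equation*}
\lim_{\varepsilon\to 0^{+}}\frac{1}{\omega_{n-1}}\int_{\partial_{*}\Omega\setminus B(x,\varepsilon)}\frac{x-y}{|x-y|^n}\odot(-\nu(y))\,d\sigma(y)=\tfrac{1}{2},
\end{equation*}
and multiplying by $-1$ gives the claimed value $-1/2$. The main obstacle is the technical upgrade from convergence along the density-$1$ set $\mathcal{E}_x$ to convergence along all $\varepsilon\to 0^{+}$; this is precisely what Proposition~\ref{Yfe488} is tailored to handle, which is why its statement involves a supremum over nearby truncation radii rather than a single limit.
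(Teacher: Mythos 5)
Your proof follows essentially the same route as the paper's: the divergence-theorem identity from the proof of Lemma~\ref{urdyyy7} relating the truncated integral to ${\mathcal{H}}^{n-1}(\Omega\cap\partial B(x,r))/(\omega_{n-1}r^{n-1})$, Lemma~\ref{rtggb} to get convergence to $1/2$ along a density-one set of radii, Proposition~\ref{Yfe488} to upgrade this to a genuine limit, and passage to the (bounded) complement for the unbounded case. The one point to flag is your treatment of the outer endpoint of the annulus $\{\varepsilon'<|x-y|\le\varepsilon\}$: the truncated integrals are \emph{not} in general continuous in the truncation radius, because under mere upper Ahlfors regularity a sphere $\partial B(x,\varepsilon)\cap\partial\Omega$ may carry positive $\sigma$-measure (e.g.\ of order $\varepsilon^{n-1}$), so the parenthetical appeal to ``dominated convergence and the continuity of these truncated integrals in the truncation radius'' does not justify taking $r=\varepsilon$ in \eqref{Tdad-65}, whose supremum runs only over $r\in(\varepsilon/2,\varepsilon)$. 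The paper sidesteps this by orienting the splitting the other way: it estimates $\int_{|x-y|>\varepsilon/4}$ (which, as $\varepsilon$ ranges over all small positive numbers, covers every truncation radius) via $\int_{\varepsilon/4<|x-y|\le r}+\int_{|x-y|>r}$ with $r$ chosen in $\mathcal{O}_x\cap(\varepsilon/2,\varepsilon)$ and outside the exceptional null set, so the annulus term is exactly of the form controlled by Proposition~\ref{Yfe488} and no endpoint ever has to coincide with the target radius. With that small rearrangement your argument is complete.
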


\begin{proof}
Consider first the case when $\Omega$ is bounded. Fix
$x\in\partial^\ast\Omega$ and pick an arbitrary $\delta>0$. From
Lemma~\ref{rtggb} we know that there exist
${\mathcal{O}}_x\subset(0,1)$ of density $1$ at $0$ (i.e.,
satisfying \eqref{eq:pytr}) and some $r_\delta>0$ with the property
that
\begin{equation}\label{Igag-u642}
\Bigg|\frac{{\mathcal{H}}^{n-1}\big(\Omega\cap\partial
B(x,r)\big)}{\omega_{n-1}r^{n-1}}
-\frac{1}{2}\Bigg|<\delta,\qquad\forall\,r\in{\mathcal{O}}_x\cap(0,r_\delta).
\end{equation}
Since \eqref{eq:pytr} entails
\begin{align}\label{eq:pytr.22}
\lim_{\varepsilon\to 0^{+}}
&\frac{{\mathscr{L}}^1\big({\mathcal{O}}_x
\cap(\varepsilon/2,\varepsilon)\big)}{\varepsilon}
\nonumber\\[4pt]
&\qquad=\lim_{\varepsilon\to
0^{+}}\frac{{\mathscr{L}}^1\big({\mathcal{O}}_x
\cap(0,\varepsilon)\big)}{\varepsilon} -\lim_{\varepsilon\to
0^{+}}\frac{{\mathscr{L}}^1\big({\mathcal{O}}_x
\cap(0,\varepsilon/2)\big)}{\varepsilon}
\nonumber\\[4pt]
&\qquad=1-\frac{1}{2}=\frac{1}{2},
\end{align}
it follows that there exists $\varepsilon_\delta\in(0,r_\delta)$
with the property that
\begin{equation}\label{eq:pytr.33}
{\mathscr{L}}^1\big({\mathcal{O}}_x\cap(\varepsilon/2,\varepsilon)\big)>0,\qquad
\forall\,\varepsilon\in(0,\varepsilon_\delta).
\end{equation}
From our assumptions on $\Omega$ and \eqref{Cau-C2bff} we also know
that
\begin{align}\label{Cau-C2bff.5FGV}
\begin{array}{c}
\mbox{$\exists\,N_x\subset(0,\infty)$ with ${\mathscr{L}}^1(N_x)=0$
such that $\forall\,r\in(0,\infty)\setminus N_x$ we have}
\\[12pt]
\displaystyle
\frac{1}{\omega_{n-1}}\int\limits_{\stackrel{y\in\partial\Omega}
{|x-y|>r}}\frac{x-y}{|x-y|^n}\odot\nu(y)\,d\sigma(y)
=\frac{{\mathcal{H}}^{n-1}\big(\Omega\cap\partial
B(x,r)\big)}{\omega_{n-1}r^{n-1}}.
\end{array}
\end{align}
Consider next $\varepsilon\in(0,\varepsilon_\delta)$ and note that
$\big[{\mathcal{O}}_x\cap(\varepsilon/2,\varepsilon)\big]\setminus
N_x\not=\varnothing$, thanks to \eqref{eq:pytr.33}. As such, it is
possible to select
$r\in\big[{\mathcal{O}}_x\cap(\varepsilon/2,\varepsilon)\big]\setminus
N_x$ for which we then write
\begin{align}\label{Ibahb-u5f44}
\int\limits_{\stackrel{y\in\partial\Omega}{|x-y|>\varepsilon/4}}
\frac{x-y}{|x-y|^n}\odot\nu(y)\,d\sigma(y)
&=\int\limits_{\stackrel{y\in\partial\Omega}{r\geq|x-y|>\varepsilon/4}}
\frac{x-y}{|x-y|^n}\odot\nu(y)\,d\sigma(y)
\nonumber\\[4pt]
&\quad+\int\limits_{\stackrel{y\in\partial\Omega}{|x-y|>r}}
\frac{x-y}{|x-y|^n}\odot\nu(y)\,d\sigma(y).
\end{align}
In turn, \eqref{Ibahb-u5f44}, \eqref{Cau-C2bff.5FGV}, and
\eqref{Igag-u642} permit us to estimate
\begin{align}\label{Ibahb-u5f45}
\Bigg|\frac{1}{\omega_{n-1}}\int\limits_{\stackrel{y\in\partial\Omega}{|x-y|>\varepsilon/4}}
&\frac{x-y}{|x-y|^n}\odot\nu(y)\,d\sigma(y)-\frac{1}{2}\Bigg|
\nonumber\\[4pt]
&\leq\Bigg|\frac{1}{\omega_{n-1}}\int\limits_{\stackrel{y\in\partial\Omega}{r\geq|x-y|>\varepsilon/4}}
\frac{x-y}{|x-y|^n}\odot\nu(y)\,d\sigma(y)\Bigg|
+\Bigg|\frac{{\mathcal{H}}^{n-1}\big(\Omega\cap\partial
B(x,r)\big)}{\omega_{n-1}r^{n-1}} -\frac{1}{2}\Bigg|
\nonumber\\[4pt]
&\leq\sup_{r\in(\varepsilon/2,\,\varepsilon)}
\Bigg|\frac{1}{\omega_{n-1}}\int\limits_{\stackrel{y\in\partial\Omega}{r\geq|x-y|>\varepsilon/4}}
\frac{x-y}{|x-y|^n}\odot\nu(y)\,d\sigma(y)\Bigg|+\delta
\end{align}
which, in light of Proposition~\ref{Yfe488} (whose applicability in
the current setting is ensured by \eqref{TaGva134}), then yields
(bearing in mind \eqref{2.1.11})
\begin{align}\label{Ibahb-u5f46}
\limsup_{\varepsilon\to 0^{+}}
\Bigg|\frac{1}{\omega_{n-1}}\int\limits_{\stackrel{y\in\partial\Omega}{|x-y|>\varepsilon/4}}
\frac{x-y}{|x-y|^n}\odot\nu(y)\,d\sigma(y)-\frac{1}{2}\Bigg|\leq\delta
\,\,\mbox{ for $\sigma$-a.e. $x\in\partial\Omega$}.
\end{align}
Given that $\delta>0$ has been arbitrarily chosen, the version of
\eqref{Cau-C2.6a7} for $\Omega$ bounded readily follows from this.
Finally, the version of \eqref{Cau-C2.6a7} corresponding to $\Omega$
unbounded is a consequence of what we have proved so far, applied to
the bounded set $\Omega^c:={\mathbb{R}}^n\setminus\Omega$ (whose
geometric measure theoretic outward unit normal is $-\nu$).
\end{proof}

The stage has been set to show that, under much less restrictive
conditions on the underlying set $\Omega$ (than the class of {\rm
UR} domains with compact boundaries that are also uniform domains),
the principal value Cauchy-Clifford operator ${\mathcal{C}}^{{}^{\rm pv}}\!$ continues to
be a bounded mapping from
${\mathscr{C}}^{\alpha}(\partial\Omega)\otimes{\mathcal{C}}\!\ell_{n}$
into itself for each $\alpha\in(0,1)$. In this regard, our result
can be thought of as the higher-dimensional generalization of the
classical Plemelj-Privalov theorem according to which the Cauchy
integral operator on a piecewise smooth Jordan curve without cusps
in the plane is bounded on H\"older spaces (cf.  \cite{Plem},
\cite{Priv1}, \cite{Priv2}, as well as the discussion in \cite[\S19,
pp.\,45-49]{Mu}). In addition, we also establish a natural jump
formula and prove that $2{\mathcal{C}}^{{}^{\rm pv}}\!$ is idempotent on
${\mathscr{C}}^{\alpha}(\partial\Omega)\otimes{\mathcal{C}}\!\ell_{n}$
with $\alpha\in(0,1)$. We wish to stress that, even in the more
general geometric measure theoretic setting considered below, we
retain \eqref{Cau-C2} as the definition of the Cauchy-Clifford
operator ${\mathcal{C}}^{{}^{\rm pv}}\!$.

\begin{theorem}\label{i65r5ED}
Let $\Omega\subset{\mathbb{R}}^{n}$ be a Lebesgue measurable set
whose boundary is compact, upper Ahlfors regular, and satisfies
\eqref{Tay-1}. As in the past, define
$\sigma:={\mathcal{H}}^{n-1}\lfloor\partial\Omega$,
and fix an arbitrary $\alpha\in(0,1)$.

Then for each $f\in{\mathscr{C}}^{\alpha}(\partial\Omega)\otimes{\mathcal{C}}\!\ell_{n}$
the limit defining ${\mathcal{C}}^{{}^{\rm pv}}\!f(x)$ as in \eqref{Cau-C2} exists for
$\sigma$-a.e. $x\in\partial\Omega$, and the operator ${\mathcal{C}}^{{}^{\rm pv}}\!$
induces a well-defined, linear, and bounded mapping
\begin{equation}\label{Rdac-2BBa.34BBB}
{\mathcal{C}}^{{}^{\rm pv}}:{\mathscr{C}}^{\alpha}(\partial\Omega)
\otimes{\mathcal{C}}\!\ell_{n}\longrightarrow
{\mathscr{C}}^{\alpha}(\partial\Omega)\otimes{\mathcal{C}}\!\ell_{n}.
\end{equation}

Furthermore, under the additional assumption that the set $\Omega$ is
open, the jump formula
\begin{equation}\label{2.3.21BBBB}
\begin{array}{c}
\big(\mathcal{C}f\big)\Big|^{{}^{\rm n.t.}}_{\partial\Omega}
=\big(\tfrac{1}{2}I+{\mathcal{C}}^{{}^{\rm pv}}\!\big)f
\,\text{ at $\sigma$-a.e. point in }\,\partial\Omega
\\[10pt]
\text{is valid for every function
$f\in{\mathscr{C}}^{\alpha}(\partial\Omega)\otimes{\mathcal{C}}\!\ell_{n}$},
\end{array}
\end{equation}
and one also has
\begin{equation}\label{tgVV}
\big({\mathcal{C}}^{{}^{\rm pv}}\big)^2=\tfrac{1}{4}I\,\,\,\mbox{ on }\,\,\,
{\mathscr{C}}^{\alpha}(\partial\Omega)\otimes{\mathcal{C}}\!\ell_{n}.
\end{equation}
\end{theorem}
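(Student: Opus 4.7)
The plan is to treat the three assertions of Theorem~\ref{i65r5ED} (pointwise existence with boundedness on $\mathscr{C}^\alpha$, the jump formula, and the idempotency of $2\,{\mathcal{C}}^{{}^{\rm pv}}$) in that order, using the cancellation trick $f(y)=\bigl[f(y)-f(x)\bigr]+f(x)$ throughout to reduce to two manageable ingredients: an absolutely convergent Hölder-vanishing integral, and the constant-function estimates of Lemmas~\ref{yreLLa.2} and \ref{yreLLa}.

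For the pointwise existence of ${\mathcal{C}}^{{}^{\rm pv}}\!f(x)$, I would substitute the above decomposition of $f$ into \eqref{Cau-C2}. The part with $f(y)-f(x)$ produces an integrand dominated by $[f]_{\dot{\mathscr{C}}^\alpha}\,|x-y|^{\alpha-(n-1)}$ times $|\nu(y)|=1$, which by upper Ahlfors regularity is $\sigma$-integrable on all of $\partial\Omega$, so no truncation is needed. The piece carrying the constant $f(x)$ factors as $\Bigl(\tfrac{1}{\omega_{n-1}}\int_{|x-y|>\varepsilon}\tfrac{x-y}{|x-y|^n}\odot\nu(y)\,d\sigma(y)\Bigr)\odot f(x)$, whose $\varepsilon\to 0^+$ limit exists and equals $\pm\tfrac{1}{2}$ at $\sigma$-a.e.\ $x$ by Lemma~\ref{yreLLa.2}. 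This yields the explicit absolutely-convergent formula
\[
{\mathcal{C}}^{{}^{\rm pv}}\!f(x)=\tfrac{1}{\omega_{n-1}}\!\int_{\partial\Omega}\tfrac{x-y}{|x-y|^n}\odot\nu(y)\odot[f(y)-f(x)]\,d\sigma(y)\pm\tfrac{1}{2}f(x),
\]
which is the working identity for everything that follows.

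To upgrade this to the boundedness of \eqref{Rdac-2BBa.34BBB} I would run the classical Plemelj--Privalov scheme. The sup-norm estimate is immediate from the displayed formula. For the Hölder seminorm, fix $x_1\neq x_2$ in $\partial\Omega$ and set $r:=|x_1-x_2|$; the difference ${\mathcal{C}}^{{}^{\rm pv}}\!f(x_1)-{\mathcal{C}}^{{}^{\rm pv}}\!f(x_2)$ breaks into the diagonal term $\pm\tfrac{1}{2}\bigl(f(x_1)-f(x_2)\bigr)$, bounded by $r^\alpha[f]_{\dot{\mathscr{C}}^\alpha}$, plus a double-kernel integral which I split over $B(x_1,2r)\cap\partial\Omega$ and its complement. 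On the near part the two integrals are estimated separately via $|f(y)-f(x_i)|\leq [f]_{\dot{\mathscr{C}}^\alpha}(r+|x_1-y|)^\alpha$ and the bound $|x-y|^{-(n-1)}$, together with upper Ahlfors regularity summed as a geometric series exactly as in the proof of Lemma~\ref{Lemma.1}. On the far part I use the mean-value bound $\bigl|\tfrac{x_1-y}{|x_1-y|^n}-\tfrac{x_2-y}{|x_2-y|^n}\bigr|\leq C\,r\,|x_1-y|^{-n}$ paired with $|f(y)-f(x_1)|\leq[f]_{\dot{\mathscr{C}}^\alpha}|x_1-y|^\alpha$, so the integrand is $\lesssim r\,|x_1-y|^{\alpha-n}$ and, after annular summation, contributes $\lesssim r^\alpha$ as well. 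This is the technical heart of the argument but is entirely self-contained and does not require lower Ahlfors regularity.

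For the jump formula in \eqref{2.3.21BBBB} (with $\Omega$ now open), I would fix $x\in\partial_\ast\Omega$ at which Lemma~\ref{yreLLa.2} applies, take $z\in\Gamma_\kappa(x)$ converging to $x$, and apply the same decomposition inside $\mathcal{C}f(z)$. The integral involving $f(y)-f(x)$ is uniformly dominated in $z$ by an integrable tail (use $|z-y|\gtrsim_\kappa|x-y|$ for $y$ outside a ball of radius $2|z-x|$ around $x$, and the direct bound $|x-y|^\alpha|z-y|^{-(n-1)}$ near $x$, both summable), so dominated convergence gives the pv expression. The piece $\mathcal{C}1(z)\odot f(x)$ equals $f(x)$ or $0$ by Lemma~\ref{yreLLa} according as $\Omega$ is bounded or unbounded, and combining with the $\pm\tfrac12$ delivered by Lemma~\ref{yreLLa.2} yields exactly $\bigl(\tfrac12 I+{\mathcal{C}}^{{}^{\rm pv}}\!\bigr)f(x)$ in both cases. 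I expect the squaring identity $({\mathcal{C}}^{{}^{\rm pv}})^2=\tfrac14 I$ to be the main obstacle, since $\partial\Omega$ need not be uniformly rectifiable and the $L^2$ route underlying Proposition~\ref{iTRfc} is therefore unavailable. The plan is to exploit Clifford monogenicity: the kernel $E(x):=x/(\omega_{n-1}|x|^n)$ satisfies $D_LE=D_RE=0$ off the origin, so $F:=\mathcal{C}f$ is left-monogenic in both $\Omega$ and $\mathbb{R}^n\setminus\overline{\Omega}$. Applying the Clifford Green identity \eqref{D-IBP} (valid for our sets of locally finite perimeter) to $u(y)=E(x_0-y)$ and $v(y)=F(y)$ on $\Omega\setminus B(x_0,\varepsilon)$, and letting $\varepsilon\to 0^+$ using a sphere-slicing identity analogous to \eqref{Cau-C2bff}, produces the Cauchy reproducing formula $\mathcal{C}\bigl(F\big|^{{}^{\rm n.t.}}_{\partial\Omega}\bigr)=F$ in $\Omega$ (and $0$ in the exterior; roles swap for unbounded $\Omega$). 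Taking nontangential traces and feeding the result into the jump formula from the previous step gives $\bigl(\tfrac12 I+{\mathcal{C}}^{{}^{\rm pv}}\bigr)^2=\tfrac12 I+{\mathcal{C}}^{{}^{\rm pv}}$, which algebraically reduces to \eqref{tgVV}. The delicate point is justifying the reproducing identity when $\partial\Omega$ is only upper Ahlfors regular; here I would rely on the nontangential convergence from Paragraph~3, the nontangential maximal control coming from the estimates in Lemma~\ref{Lemma.3} applied to the kernel $E(x-\cdot)$ with $f\in\mathscr{C}^\alpha$, and dominated convergence to pass the limit in $\varepsilon$.
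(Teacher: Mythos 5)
Your overall architecture coincides with the paper's: the working formula
${\mathcal{C}}^{{}^{\rm pv}}\!f(x)=\pm\tfrac12 f(x)+\omega_{n-1}^{-1}\int_{\partial\Omega}\tfrac{x-y}{|x-y|^n}\odot\nu(y)\odot[f(y)-f(x)]\,d\sigma(y)$
obtained from Lemma~\ref{yreLLa.2}, a Plemelj--Privalov type splitting for the H\"older seminorm, and the reproducing-formula route to $({\mathcal{C}}^{{}^{\rm pv}})^2=\tfrac14 I$. Your treatment of the jump formula is in fact a mild simplification of the paper's (which re-runs a sphere-slicing limit via Lemma~\ref{rtggb} applied to ${\mathbb{R}}^n\setminus\Omega$), since you reuse $\mathcal{C}1\equiv 1$ or $0$ from Lemma~\ref{yreLLa} together with the already-established a.e.\ limit of Lemma~\ref{yreLLa.2}; that shortcut is valid.

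There is, however, a genuine gap in the H\"older seminorm estimate. On the far region $\{y\in\partial\Omega:\,|x_1-y|>2r\}$ the integrand is $\tfrac{x_1-y}{|x_1-y|^n}\odot\nu(y)\odot(f(y)-f(x_1))-\tfrac{x_2-y}{|x_2-y|^n}\odot\nu(y)\odot(f(y)-f(x_2))$, and before you can pair the mean-value bound on the kernel difference with a single H\"older factor you must re-center both terms at the same point. This leaves over the term $\big(\int_{|x_1-y|>2r}\tfrac{x_1-y}{|x_1-y|^n}\odot\nu(y)\,d\sigma(y)\big)\odot(f(x_2)-f(x_1))$, which your sketch does not address. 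The size bound $|x_1-y|^{-(n-1)}$ plus upper Ahlfors regularity only controls that integral by $C\log({\rm diam}(\partial\Omega)/r)$, yielding $r^\alpha\log(1/r)$ rather than $r^\alpha$. What rescues the estimate is the uniform cancellation bound \eqref{Cau-CJiigv} of Lemma~\ref{urdyyy7} for $\int_{\partial_\ast\Omega\setminus B(x,r)}\tfrac{x-y}{|x-y|^n}\odot\nu(y)\,d\sigma(y)$, proved via the divergence-theorem identity \eqref{Cau-C2bff}; this is precisely the higher-dimensional Plemelj--Privalov cancellation and the reason the theorem survives with only upper Ahlfors regularity. A related caveat applies to your final step: the paper does not re-derive the Cauchy reproducing formula but imports it from \cite{IMiMiTa} as \eqref{Aav-jva}, and proving it directly for a function like $\mathcal{C}f$ (defined only inside $\Omega$, with merely bounded nontangential maximal function) requires the rough divergence theorem of that reference, not just the smooth integration-by-parts formula \eqref{D-IBP}.
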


Incidentally, given an open set $\Omega$ in the plane, the fact that
its boundary is a piecewise smooth Jordan curve implies that
$\partial\Omega$ is compact and upper Ahlfors regular, while the
additional property that $\partial\Omega$ lacks cusps implies that
\eqref{Tay-1} holds. Hence, our demands on the underlying domain
$\Omega$ are weaker versions of the hypotheses in the formulation of
the classical Plemelj-Privalov theorem mentioned earlier.

\vskip 0.08in
\begin{proof}[Proof of Theorem~\ref{i65r5ED}]
Fix $\alpha\in(0,1)$ and pick an arbitrary function
$f\in{\mathscr{C}}^{\alpha}(\partial\Omega)\otimes{\mathcal{C}}\!\ell_{n}$.
Then for $\sigma$-a.e. $x\in\partial\Omega$, Lemma~\ref{yreLLa.2}
allows us to write
\begin{align}\label{ndksegb}
\lim_{\varepsilon\to 0^+} &\frac{1}{\omega_{n-1}}
\int\limits_{\stackrel{y\in\partial\Omega}{|x-y|>\varepsilon}}
\frac{x-y}{|x-y|^n}\odot\nu(y)\odot f(y)\,d\sigma(y)
\nonumber\\[4pt]
&=\lim_{\varepsilon\to 0^+}\frac{1}{\omega_{n-1}}
\int\limits_{\stackrel{y\in\partial\Omega}{|x-y|>\varepsilon}}
\frac{x-y}{|x-y|^n}\odot\nu(y)\odot\big(f(y)-f(x)\big)\,d\sigma(y)\pm\frac{1}{2}f(x)
\nonumber\\[4pt]
&=\frac{1}{\omega_{n-1}}\int\limits_{\partial\Omega}
\frac{x-y}{|x-y|^n}\odot\nu(y)\odot\big(f(y)-f(x)\big)\,d\sigma(y)\pm\frac{1}{2}f(x),
\end{align}
where the sign of $\frac{1}{2}f(x)$ is plus if $\Omega$ is bounded
and minus if $\Omega$ is unbounded. For the last equality, we have
used Lebesgue's Dominated Convergence Theorem. Indeed, given that
$f(y)-f(x)=O(|x-y|^\alpha)$, an estimate based on the upper Ahlfors
regularity of $\partial\Omega$ in the spirit of \eqref{yrrr075.33}
shows that the last integrand above is absolutely integrable for
each fixed $x\in\partial\Omega$. In turn, \eqref{ndksegb} allows us
to conclude that the limit defining ${\mathcal{C}}^{{}^{\rm pv}}\!f(x)$ as in
\eqref{Cau-C2} exists for $\sigma$-a.e. $x\in\partial\Omega$.
Furthermore, by redefining ${\mathcal{C}}^{{}^{\rm pv}}\!f$ on a set of zero
$\sigma$-measure, there is no loss of generality in assuming that,
for each
$f\in{\mathscr{C}}^{\alpha}(\partial\Omega)\otimes{\mathcal{C}}\!\ell_{n}$
with $\alpha\in(0,1)$,
\begin{eqnarray}\label{Cau-C2kjf}
{\mathcal{C}}^{{}^{\rm pv}}\!f(x)=\pm\tfrac{1}{2}f(x)+\frac{1}{\omega_{n-1}}\int\limits_{\partial\Omega}
\frac{x-y}{|x-y|^n}\odot\nu(y)\odot\big(f(y)-f(x)\big)\,d\sigma(y),\quad\,\forall\,x\in\partial\Omega,\quad
\end{eqnarray}
with the sign dictated by whether $\Omega$ is bounded (plus), or
$\Omega$ is unbounded (minus).

We now proceed to showing that, in the context of
\eqref{Rdac-2BBa.34BBB}, the operator \eqref{Cau-C2kjf} is
well-defined and bounded. To this end, fix distinct points
$x_1,x_2\in\partial\Omega$ and starting from \eqref{Cau-C2kjf} write
\begin{eqnarray}\label{Cau-C2kjf.LP}
{\mathcal{C}}^{{}^{\rm pv}}\!f(x_1)-{\mathcal{C}}^{{}^{\rm pv}}\!f(x_2)=I+II
\end{eqnarray}
where
\begin{eqnarray}\label{Cau-C2kjf.LP.1}
I:=\pm\tfrac{1}{2}\big(f(x_1)-f(x_2)\big)
\end{eqnarray}
and
\begin{align}\label{Cau-C2kjf.LP.2}
II:=\frac{1}{\omega_{n-1}}\int_{\partial\Omega}\Bigg\{
\frac{x_1-y}{|x_1-y|^n} & \odot\nu(y)\odot\big(f(y)-f(x_1)\big)
\nonumber\\[4pt]
&-\frac{x_2-y}{|x_2-y|^n}\odot\nu(y)\odot\big(f(y)-f(x_2)\big)\Bigg\}\,d\sigma(y).
\end{align}
Next, introduce $r:=|x_1-x_2|>0$ and estimate
\begin{align}\label{Cau-C2kjf.LP.3}
\big|II\big|\leq II_1+II_2+II_3,
\end{align}
where
\begin{align}\label{Cau-C2kjf.LP.4}
II_1:=\frac{1}{\omega_{n-1}}\Bigg|\int\limits_{\stackrel{y\in\partial\Omega}{|x_1-y|>2r}}
\frac{x_1-y}{|x_1-y|^n} & \odot\nu(y)\odot\big(f(y)-f(x_1)\big)
\nonumber\\
&-\frac{x_2-y}{|x_2-y|^n}\odot\nu(y)\odot\big(f(y)-f(x_2)\big)\,d\sigma(y)\Bigg|,
\end{align}
while
\begin{align}\label{Cau-C2kjf.LP.4aiii}
II_2
&:=\frac{1}{\omega_{n-1}}\int\limits_{\stackrel{y\in\partial\Omega}{|x_1-y|\leq
2r}}
\Bigg|\frac{x_1-y}{|x_1-y|^n}\odot\nu(y)\odot\big(f(y)-f(x_1)\big)\Bigg|\,d\sigma(y),
\\[6pt]
II_3
&:=\frac{1}{\omega_{n-1}}\int\limits_{\stackrel{y\in\partial\Omega}{|x_1-y|\leq
2r}}
\Bigg|\frac{x_2-y}{|x_2-y|^n}\odot\nu(y)\odot\big(f(y)-f(x_2)\big)\Bigg|\,d\sigma(y).
\label{Cau-C2kjf.LP.4biii}
\end{align}
Note that
\begin{align}\label{Cau-C2kjf.LP.4a}
II_2 \leq
c_n[f]_{\dot{\mathscr{C}}^\alpha(\partial\Omega)\otimes{\mathcal{C}}\!\ell_{n}}
\int\limits_{\stackrel{y\in\partial\Omega}{|x_1-y|\leq 2r}}
\frac{d\sigma(y)}{|x_1-y|^{n-1-\alpha}},
\end{align}
and, given that $|x_1-y|\leq 2r$ forces $|x_2-y|\leq
|x_1-x_2|+|x_1-y|\leq 3r$,
\begin{align}\label{Cau-C2kjf.LP.4b}
II_3
&\leq\frac{1}{\omega_{n-1}}\int\limits_{\stackrel{y\in\partial\Omega}{|x_2-y|\leq
3r}}
\Bigg|\frac{x_2-y}{|x_2-y|^n}\odot\nu(y)\odot\big(f(y)-f(x_2)\big)\Bigg|\,d\sigma(y)
\nonumber\\[4pt]
&\,\,\leq
c_n[f]_{\dot{\mathscr{C}}^\alpha(\partial\Omega)\otimes{\mathcal{C}}\!\ell_{n}}
\int\limits_{\stackrel{y\in\partial\Omega}{|x_2-y|\leq 3r}}
\frac{d\sigma(y)}{|x_2-y|^{n-1-\alpha}}.
\end{align}
On the other hand, with $c\in(0,\infty)$ denoting the upper Ahlfors
regularity constant of $\partial\Omega$, for every
$z\in\partial\Omega$ and $R\in(0,\infty)$ we may estimate
\begin{align}\label{Cau-C2kjf.LP.4c}
\int\limits_{\stackrel{y\in\partial\Omega}{|z-y|<R}}\frac{d\sigma(y)}{|z-y|^{n-1-\alpha}}
&=\sum_{j=1}^\infty\int_{[B(z,2^{1-j}R)\setminus
B(z,2^{-j}R)]\cap\partial\Omega}
\frac{d\sigma(y)}{|z-y|^{n-1-\alpha}}
\nonumber\\[4pt]
&\leq\sum_{j=1}^\infty
(2^{-j}R)^{-(n-1-\alpha)}\sigma\big(B(z,2^{1-j}R)\cap\partial\Omega\big)
\nonumber\\[4pt]
&\leq c\,2^{n-1}\sum_{j=1}^\infty (2^{-j}R)^{\alpha}=MR^{\alpha},
\end{align}
for some constant $M=M(n,\alpha,c)\in(0,\infty)$. In light of this,
we obtain from \eqref{Cau-C2kjf.LP.4a} and \eqref{Cau-C2kjf.LP.4b}
(keeping in mind the significance of the number $r$) that there
exists some constant $M=M(n,\alpha,c)\in(0,\infty)$ with the
property that
\begin{align}\label{Cau-C2kjf.LP.4d}
II_2+II_3 \leq
M[f]_{\dot{\mathscr{C}}^\alpha(\partial\Omega)}|x_1-x_2|^{\alpha}.
\end{align}
Going further, bound
\begin{align}\label{Cau-C2kjf.LP.5}
II_1\leq II_1^{a}+II_1^b,
\end{align}
with
\begin{align}\label{Cau-C2kjf.LP.6}
II_1^{a}
&:=\frac{1}{\omega_{n-1}}\Bigg|\int\limits_{\stackrel{y\in\partial\Omega}{|x_1-y|>2r}}
\frac{x_1-y}{|x_1-y|^n}\odot\nu(y)\odot\big(f(x_2)-f(x_1)\big)\,d\sigma(y)\Bigg|
\nonumber\\[4pt]
&\,\,=\frac{1}{\omega_{n-1}}\Bigg|\Bigg(\int\limits_{\stackrel{y\in\partial\Omega}{|x_1-y|>2r}}
\frac{x_1-y}{|x_1-y|^n}\odot\nu(y)\,d\sigma(y)\Bigg)\odot\big(f(x_2)-f(x_1)\big)\Bigg|
\nonumber\\[4pt]
&\,\,\leq\frac{2^{n/2}}{\omega_{n-1}}\Bigg|\int\limits_{\stackrel{y\in\partial\Omega}{|x_1-y|>2r}}
\frac{x_1-y}{|x_1-y|^n}\odot\nu(y)\,d\sigma(y)\Bigg|\big|f(x_2)-f(x_1)\big|
\nonumber\\[4pt]
&\,\,\leq
M(n,c)\,r^\alpha\,[f]_{\dot{\mathscr{C}}^\alpha(\partial\Omega)\otimes{\mathcal{C}}\!\ell_{n}},
\end{align}
where the penultimate inequality uses \eqref{eq:rtta} while the last
inequality is based on \eqref{Cau-CJiigv}, and
\begin{align}\label{Cau-C2kjf.LP.7}
II_1^{b}
&:=\frac{1}{\omega_{n-1}}\Bigg|\int\limits_{\stackrel{y\in\partial\Omega}{|x_1-y|>2r}}
\Big(\frac{x_1-y}{|x_1-y|^n}-\frac{x_2-y}{|x_2-y|^n}\Big)\odot\nu(y)\odot\big(f(y)-f(x_2)\big)\,d\sigma(y)\Bigg|
\nonumber\\[4pt]
&\,\,\leq\frac{2^{n/2}}{\omega_{n-1}}\int\limits_{\stackrel{y\in\partial\Omega}{|x_1-y|>2r}}
\Big|\frac{x_1-y}{|x_1-y|^n}-\frac{x_2-y}{|x_2-y|^n}\Big||f(y)-f(x_2)|\,d\sigma(y)
\nonumber\\[4pt]
&\,\,\leq
c_n\,r\,[f]_{\dot{\mathscr{C}}^\alpha(\partial\Omega)\otimes{\mathcal{C}}\!\ell_{n}}
\int\limits_{\stackrel{y\in\partial\Omega}{|x_1-y|>2r}}\frac{d\sigma(y)}{|x_1-y|^{n-\alpha}},
\end{align}
using the Mean Value Theorem and the fact that $f$ is H\"older of
order $\alpha$. Here it helps to note that if $y\in\partial\Omega$
and $|x_1-y|>2r$ then $|\xi-y|\approx|x_1-y|$ for all
$\xi\in[x_1,x_2]$, and also $|y-x_2|<|y-x_1|/2$. To continue, with
$c\in(0,\infty)$ denoting the upper Ahlfors regularity constant of
$\partial\Omega$ we observe that
\begin{align}\label{Cau-C2kjf.LP.8}
\int\limits_{\stackrel{y\in\partial\Omega}{|x_1-y|>2r}}\frac{d\sigma(y)}{|x_1-y|^{n-\alpha}}
&=\sum_{j=1}^\infty\int_{[B(x_1,2^{j+1}r)\setminus
B(x_1,2^{j}r)]\cap\partial\Omega}
\frac{d\sigma(y)}{|x_1-y|^{n-\alpha}}
\nonumber\\[4pt]
&\leq\sum_{j=1}^\infty
(2^{j}r)^{-(n-\alpha)}\sigma\big(B(x_1,2^{j+1}r)\cap\partial\Omega\big)
\nonumber\\[4pt]
&\leq c\,2^{n-1}\sum_{j=1}^\infty
(2^{j}r)^{-1+\alpha}=Mr^{-1+\alpha},
\end{align}
for some constant $M=M(n,\alpha,c)\in(0,\infty)$. Combining
\eqref{Cau-C2kjf.LP.5}, \eqref{Cau-C2kjf.LP.6},
\eqref{Cau-C2kjf.LP.7}, and \eqref{Cau-C2kjf.LP.8} we conclude that
there exists a constant $M=M(n,\alpha,c)\in(0,\infty)$ with the
property that
\begin{align}\label{Cau-C2kjf.LP.4WW}
II_1\leq
M[f]_{\dot{\mathscr{C}}^\alpha(\partial\Omega)\otimes{\mathcal{C}}\!\ell_{n}}|x_1-x_2|^{\alpha}.
\end{align}
From \eqref{Cau-C2kjf.LP}-\eqref{Cau-C2kjf.LP.1},
\eqref{Cau-C2kjf.LP.3}, \eqref{Cau-C2kjf.LP.4d}, and
\eqref{Cau-C2kjf.LP.4WW} we may then conclude that
\begin{eqnarray}\label{Cau-C2kjf.LP.768}
\big|{\mathcal{C}}^{{}^{\rm pv}}\!f(x_1)-{\mathcal{C}}^{{}^{\rm pv}}\!f(x_2)\big|\leq
M[f]_{\dot{\mathscr{C}}^\alpha(\partial\Omega)
\otimes{\mathcal{C}}\!\ell_{n}}|x_1-x_2|^{\alpha},
\qquad\forall\,x_1,x_2\in\partial\Omega,
\end{eqnarray}
for some constant $M=M(n,\alpha,c)\in(0,\infty)$. The argument so
far gives that the Cauchy-Clifford singular integral operator
${\mathcal{C}}^{{}^{\rm pv}}$ maps
$\dot{\mathscr{C}}^{\alpha}(\partial\Omega)\otimes{\mathcal{C}}\!\ell_{n}$
boundedly into itself. Having established this, Lemma~\ref{Lemma.1}
may be invoked (bearing in mind that \eqref{Cau-C2kjf} forces
${\mathcal{C}}^{{}^{\rm pv}}1=\pm\tfrac{1}{2}$) in order to finish
the proof of the theorem.

Turning our attention to the last part of the statement of the theorem,
make the additional assumption that the set $\Omega$ is open. As far as
the jump-formula \eqref{2.3.21BBBB} is concerned, it has been
already noted that the action of the boundary-to-domain
Cauchy-Clifford operator \eqref{Cau-C1} is meaningful on H\"older
functions. Also, Proposition~\ref{Ctay-5} ensures that
it is meaningful to consider the nontangential boundary trace in the
left-hand side of \eqref{2.3.21BBBB} given that
$\Omega\subseteq{\mathbb{R}}^n$ is an open set with an Ahlfors
regular boundary satisfying \eqref{Tay-1} (hence, $\Omega$ is an Ahlfors
regular domain; cf. Definition~\ref{ADRDOM}). Assume now that some
$f\in{\mathscr{C}}^{\alpha}(\partial\Omega)\otimes{\mathcal{C}}\!\ell_{n}$
with $\alpha\in(0,1)$ has been given and observe that ${\mathcal{C}}f$ is
continuous in $\Omega$. Fix $x\in\partial^\ast\Omega$ and let ${\mathcal{O}}_x$
be the set given by Lemma~\ref{rtggb} applied with $\Omega$ replaced by the
Lebesgue measurable set ${\mathbb{R}}^n\setminus\Omega$. In particular,
\begin{equation}\label{eq:UUt4ED.iii}
\lim_{{\mathcal{O}}_x\ni\,\varepsilon\to 0^{+}}
\frac{{\mathcal{H}}^{n-1}\big(\partial
B(x,\varepsilon)\setminus\Omega\big)}
{\omega_{n-1}\varepsilon^{n-1}}=\frac{1}{2}.
\end{equation}
For some $\kappa>0$ fixed, write
\begin{align}\label{TD14}
\lim\limits_{\stackrel{z\in\Gamma_\kappa(x)}{z\to
x}}{\mathcal{C}}f(z) &=\lim_{{\mathcal{O}}_x\ni\,\varepsilon\to
0^+}\lim\limits_{\stackrel{z\in\Gamma_\kappa(x)}{z\to x}}
\frac{1}{\omega_{n-1}}\int\limits_{\stackrel{|x-y|>\varepsilon}{y\in\partial\Omega}}
\frac{z-y}{|z-y|^{n}}\odot\nu(y)\odot f(y)\,d\sigma(y)
\nonumber\\[4pt]
&\quad+\lim_{{\mathcal{O}}_x\ni\,\varepsilon\to
0^+}\lim\limits_{\stackrel{z\in\Gamma_\kappa(x)}{z\to x}}
\frac{1}{\omega_{n-1}}\int\limits_{\stackrel{|x-y|<\varepsilon}{y\in\partial\Omega}}
\frac{z-y}{|z-y|^{n}}\odot\nu(y)\odot (f(y)-f(x))\,d\sigma(y)
\nonumber\\[4pt]
&\quad +\left(\lim_{{\mathcal{O}}_x\ni\,\varepsilon\to
0^+}\lim\limits_{\stackrel{z\in\Gamma_\kappa(x)}{z\to x}}
\frac{1}{\omega_{n-1}}\int\limits_{\stackrel{|x-y|<\varepsilon}{y\in\partial\Omega}}
\frac{z-y}{|z-y|^{n}}\odot\nu(y)\,d\sigma(y)\right)\odot f(x)
\nonumber\\[4pt]
&=:I_1+I_2+I_3.
\end{align}
For each fixed $\varepsilon>0$, Lebesgue's Dominated Convergence
Theorem applies to the limit as $\Gamma_\kappa(x)\ni z\to x$ in
$I_1$ and yields
\begin{eqnarray}\label{TD15}
I_1=\lim_{\varepsilon\to 0^+}\frac{1}{\omega_{n-1}}
\int\limits_{\stackrel{|x-y|>\varepsilon}{y\in\partial\Omega}}
\frac{x-y}{|x-y|^{n}}\odot\nu(y)\odot f(y)\,d\sigma(y)={\mathcal{C}}^{{}^{\rm pv}}\!f(x).
\end{eqnarray}
To handle $I_2$, we first observe that for every
$x,y\in\partial\Omega$ and $z\in\Gamma_\kappa(x)$,
\begin{align}\label{TD16}
|x-y| & \leq |z-y|+|z-x|\leq |z-y|+(1+\kappa)\,{\rm
dist}(z,\partial\Omega)
\nonumber\\[4pt]
& \leq |z-y|+(1+\kappa)|z-y|=(2+\kappa)|z-y|.
\end{align}
Hence, since $f$ is H\"older of order $\alpha$,
\begin{eqnarray}\label{TD17}
\left|\frac{z-y}{|z-y|^{n}}\odot\nu(y)\right||f(y)-f(x)| \leq
[f]_{\dot{\mathscr{C}}^{\alpha}(\partial\Omega)\otimes{\mathcal{C}}\!\ell_{n}}
\frac{(2+\kappa)^{n-1}}{|x-y|^{n-1-\alpha}},
\end{eqnarray}
so that, based on the upper Ahlfors regularity of $\partial\Omega$
and once again Lebesgue's Dominated Convergence Theorem, we obtain
that
\begin{eqnarray}\label{TD18}
I_2=0.
\end{eqnarray}
To treat $I_3$ in \eqref{TD14}, we first claim that, having fixed
$z\in\Omega$, for ${\mathscr{L}}^1$-a.e $\varepsilon>0$ we have
\begin{align}\label{TD19atyY}
\int\limits_{\stackrel{|x-y|<\varepsilon}{y\in\partial\Omega}}
\frac{z-y}{|z-y|^{n}}\odot\nu(y)\,d\sigma(y)
=\int\limits_{\stackrel{|x-y|=\varepsilon}{y\in{\mathbb{R}}^n\setminus\Omega}}
\frac{z-y}{|z-y|^{n}}\odot\nu(y)\,d\sigma(y).
\end{align}
To justify this, pick a large $R>0$ and apply \eqref{eq:TgavFF.iii}
to $D:=B(0,R)\setminus\Omega$ and, for each $j,k\in\{1,\dots,n\}$,
to the vector field
\begin{equation}\label{eq:YYTf.iii}
\vec{F}_{jk}(y):=\Big(0,\dots,0,\frac{z_j-y_j}{|z-y|^n},0,\dots,0\Big),\qquad
\forall\,y\in{\mathbb{R}}^n\setminus\{z\},
\end{equation}
with the non-zero component on the $k$-th slot. Agree to alter each
$\vec{F}_{jk}$ outside a compact neighborhood of $\overline{D}$ to a
vector field
$\vec{G}_{jk}\in{\mathscr{C}}^1_0({\mathbb{R}}^n\setminus\{z\},{\mathbb{R}}^n)$.
Then by reasoning as in \eqref{eq:u54955}-\eqref{eq:Tgahytttr},
formula \eqref{TD19atyY} follows. Consequently, starting with
\eqref{TD19atyY}, then using \eqref{eigrrTY}, and then
\eqref{eq:UUt4ED.iii}, we obtain
\begin{align}\label{TD20}
&\lim_{{\mathcal{O}}_x\ni\,\varepsilon\to
0^+}\lim\limits_{\stackrel{z\in\Gamma_\kappa(x)}{z\to x}}
\frac{1}{\omega_{n-1}}\int\limits_{\stackrel{|x-y|<\varepsilon}{y\in\partial\Omega}}
\frac{z-y}{|z-y|^{n}}\odot\nu(y)\,d\sigma(y)
\nonumber\\[4pt]
&\qquad\qquad\qquad\qquad =\lim_{{\mathcal{O}}_x\ni\,\varepsilon\to
0^+}\frac{1}{\omega_{n-1}}\int\limits_{\stackrel{|x-y|=\varepsilon}
{y\in{\mathbb{R}}^{n}\setminus\Omega}}\frac{x-y}{|x-y|^{n}}\odot\nu(y)\,d\sigma(y)
\nonumber\\[4pt]
&\qquad\qquad\qquad\qquad =\lim_{{\mathcal{O}}_x\ni\,\varepsilon\to
0^{+}} \frac{{\mathcal{H}}^{n-1}\big(\partial
B(x,\varepsilon)\setminus\Omega\big)}
{\omega_{n-1}\varepsilon^{n-1}}=\frac{1}{2}.
\end{align}
A combination of \eqref{TD14}, \eqref{TD15}, \eqref{TD18}, and
\eqref{TD20} shows that the limit in the left hand-side of
\eqref{TD14} exists and matches $(\tfrac{1}{2}I+{\mathcal{C}}^{{}^{\rm pv}}\!)f(x)$.
This proves that formula \eqref{2.3.21BBBB} holds for each
$f\in{\mathscr{C}}^{\alpha}(\partial\Omega)\otimes{\mathcal{C}}\!\ell_{n}$
at every $x\in\partial^\ast\Omega$, hence at $\sigma$-a.e. point in
$\partial\Omega$, by \eqref{2.1.11} and the assumption \eqref{Tay-1}.

To finish the proof of the theorem, there remains to establish formula \eqref{tgVV}
assuming, again, that the set $\Omega$ is open. Suppose this
is the case and bring in the version of the Cauchy reproducing formula from
\cite[Section~3]{IMiMiTa} to the effect that, under the current assumptions on
the set $\Omega$,
\begin{eqnarray}\label{Aav-jva}
\left.
\begin{array}{r}
u:\Omega\to{\mathcal{C}}\!\ell_{n}\,\,\mbox{ continuous, }\,\,
D_Lu=0\,\,\mbox{ in }\,\,\Omega
\\[6pt]
{\mathcal{N}}u\in L^1(\partial\Omega,\sigma),\,\, u\big|^{{}^{\rm
n.t.}}_{\partial\Omega} \,\,\mbox{ exists $\sigma$-a.e. on
}\,\,\partial\Omega
\end{array}
\right\}\Rightarrow\,u={\mathcal{C}}\big(u\big|^{{}^{\rm
n.t.}}_{\partial\Omega}\big) \,\,\mbox{ in }\,\,\Omega.
\end{eqnarray}
Now, given any
$f\in{\mathscr{C}}^{\alpha}(\partial\Omega)\otimes{\mathcal{C}}\!\ell_{n}$,
define $u:={\mathcal{C}}f$ in $\Omega$. Then, by design,
$u\in{\mathscr{C}}^\infty(\Omega)$ and $D_Lu=0$ in $\Omega$. Also,
\eqref{UgagfgPPn} gives that $\sup_{x\in\Omega}\,\big|u(x)\big| \leq
M\|f\|_{{\mathscr{C}}^\alpha(\partial\Omega)\otimes{\mathcal{C}}\!\ell_{n}}$
which, in turn, forces ${\mathcal{N}}u\in
L^\infty(\partial\Omega,\sigma)\subset L^1(\partial\Omega,\sigma)$
given that $\partial\Omega$ has finite measure. Finally, the jump
formula \eqref{2.3.21} for H\"older functions, established earlier
in the proof, yields
\begin{equation}\label{2Jnab}
\big(u\big|^{{}^{\rm n.t.}}_{\partial\Omega}\big)(x)
=\big(\tfrac{1}{2}I+{\mathcal{C}}^{{}^{\rm pv}}\!\big)f(x)\quad\mbox{ for
}\,\,\sigma\text{-a.e. }\,x\in\partial\Omega.
\end{equation}
Granted these, \eqref{Aav-jva} applies and gives
\begin{equation}\label{2JnaTRR}
u={\mathcal{C}}\big(u\big|^{{}^{\rm n.t.}}_{\partial\Omega}\big)\,\,\text{ in }\,\,\Omega.
\end{equation}
Moreover, since $f\in {\mathscr{C}}^\alpha(\partial\Omega)\otimes{\mathcal{C}}\!\ell_{n}$
and ${\mathcal{C}}^{{}^{\rm pv}}$ is a well-defined mapping
in the context of \eqref{Rdac-2BBa.34BBB},
from \eqref{2Jnab} we see that
\begin{equation}\label{2JnaTRR.456}
u\big|^{{}^{\rm n.t.}}_{\partial\Omega}\in
{\mathscr{C}}^\alpha(\partial\Omega)\otimes{\mathcal{C}}\!\ell_{n}.
\end{equation}
Going to the boundary nontangentially in \eqref{2JnaTRR}
and relying on \eqref{2Jnab} and \eqref{2.3.21BBBB} (bearing in mind \eqref{2JnaTRR.456})
then allow us to write
\begin{equation}\label{2Jnab.2}
\big(\tfrac{1}{2}I+{\mathcal{C}}^{{}^{\rm pv}}\!\big)f
=\big(\tfrac{1}{2}I+{\mathcal{C}}^{{}^{\rm pv}}\!\big)
\big(\tfrac{1}{2}I+{\mathcal{C}}^{{}^{\rm pv}}\!\big)f
\,\text{ at $\sigma$-a.e. point on }\,\partial\Omega,
\end{equation}
from which \eqref{tgVV} now readily follows.
\end{proof}

In the last part of this section we briefly consider harmonic layer
potentials. Recall the standard fundamental solution
$E_{\!{}_\Delta}$ for the Laplacian in ${\mathbb{R}}^n$ from
\eqref{RacvTT}. Given a nonempty open proper subset $\Omega$ of
$\mathbb{R}^n$, let $\sigma:=\mathcal{H}^{n-1}\lfloor\partial\Omega$.
Then the {\tt harmonic} {\tt single} {\tt layer} {\tt operator} associated
with $\Omega$ acts on a function $f$ defined on $\partial\Omega$
according to
\begin{equation}\label{iu7grEE}
\mathscr{S}\!f(x):=\int_{\partial\Omega}E_{\!{}_\Delta}\!(x-y)f(y)\,d\sigma(y),
\qquad x\in\Omega.
\end{equation}
Assume that $\Omega$ is a set of locally finite perimeter for which
\eqref{Tay-1} holds and denote by $\nu$ its (geometric measure
theoretic) outward unit normal. In this context, it follows from
\eqref{Dirac}, \eqref{iu7grEE}, \eqref{Cau-C1}, and the fact that
$\nu\odot\nu=-1$ (cf. \eqref{X-sqr}), that the harmonic single layer
operator and the Cauchy-Clifford operator are related via
\begin{equation}\label{eq:Rvav12.3}
D_L{\mathscr{S}}f=-{\mathcal{C}}(\nu\odot f)\,\,\,\mbox{ in
}\,\,\Omega.
\end{equation}
Parenthetically, we wish to note that, in the same setting, the {\tt
harmonic} {\tt double} {\tt layer} {\tt operator} associated with
$\Omega$ is defined as
\begin{equation}\label{dble-layer}
{\mathscr{D}}f(x):=\frac{1}{\omega_{n-1}}\int_{\partial\Omega}
\frac{\langle\nu(y),y-x\rangle}{|x-y|^n}f(y)\,d\sigma(y),\qquad
x\in\Omega,
\end{equation}
where $\langle\cdot,\cdot\rangle$ is the standard inner product of
vectors in $\mathbb{R}^n$. In particular, from \eqref{Cau-C1},
\eqref{Hil-3bb}, \eqref{Hil-aabV}, and \eqref{dble-layer}, it
follows that
\begin{equation}\label{eq:tr4iuhHH}
\mbox{if $f$ is scalar-valued then }\,\,
{\mathscr{D}}f=\big({\mathcal{C}}f\big)_0\,\,\mbox{ in }\,\,\Omega.
\end{equation}
As a consequence of this and \eqref{Rdac-2BBa.34}, we see that if
$\Omega\subset{\mathbb{R}}^{n}$ is a uniform domain whose boundary
is compact, upper Ahlfors regular, and satisfies \eqref{Tay-1} then
for each $\alpha\in(0,1)$ the harmonic double layer operator induces
a well-defined, linear, and bounded mapping
\begin{equation}\label{RdrFVV}
\mathscr{D}:{\mathscr{C}}^{\alpha}(\partial\Omega)\longrightarrow
{\mathscr{C}}^{\alpha}\big(\overline{\Omega}\,\big).
\end{equation}

Returning to the mainstream discussion, make the convention that
$\nabla^2$ is the vector of all second order partial derivatives in
${\mathbb{R}}^n$. Also, once again, recall \eqref{eq:DFFV}.

\begin{lemma}\label{Lemma.4}
Let $\Omega$ be a domain of class ${\mathscr{C}}^{1+\alpha}$ for
some $\alpha\in(0,1)$ with compact boundary. Then
\begin{align}\label{tAAcc.44jjj}
A:=\sup_{x\in\Omega}\,\big|\nabla(\mathscr{S}1)(x)\big|
+\sup_{x\in\Omega}\,\Big\{\rho(x)^{1-\alpha}\big|\nabla^2(\mathscr{S}1)(x)\big|\Big\}<+\infty
\end{align}
and, in fact, this quantity may be estimated in terms of $n$,
$\alpha$, ${\rm diam}\,(\partial\Omega)$,
$\|\nu\|_{{\mathscr{C}}^\alpha(\partial\Omega)}$, and the upper
Ahlfors regularity constant of $\partial\Omega$.
\end{lemma}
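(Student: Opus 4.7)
The plan is to derive both bounds from the single identity \eqref{eq:Rvav12.3}, namely
\begin{equation*}
D_L\mathscr{S}1=-\mathcal{C}\nu\quad\text{in }\Omega,
\end{equation*}
which reduces all analytic work on $\nabla(\mathscr{S}1)$ to the Cauchy--Clifford operator acting on the outward unit normal.

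First I would verify that the hypotheses of Lemma~\ref{yreLLa.Tghba} are in force. Since $\Omega$ is ${\mathscr{C}}^{1+\alpha}$ with compact boundary, implication \eqref{eq:tr55} says $\Omega$ is a uniform domain; also $\partial\Omega$ is compact and (upper) Ahlfors regular, and \eqref{Tay-1} holds (these are standard consequences of the local graph description). By Theorem~\ref{Th-C1} the normal satisfies $\nu\in{\mathscr{C}}^\alpha(\partial\Omega)\otimes{\mathcal{C}}\!\ell_n$ with norm controlled by $\|\nu\|_{{\mathscr{C}}^\alpha(\partial\Omega)}$. Lemma~\ref{yreLLa.Tghba} then yields $\mathcal{C}\nu\in{\mathscr{C}}^\alpha(\overline{\Omega}\,)\otimes{\mathcal{C}}\!\ell_n$ with
\begin{equation*}
\|\mathcal{C}\nu\|_{{\mathscr{C}}^\alpha(\overline{\Omega}\,)\otimes{\mathcal{C}}\!\ell_n}\leq M\,\|\nu\|_{{\mathscr{C}}^\alpha(\partial\Omega)\otimes{\mathcal{C}}\!\ell_n},
\end{equation*}
with $M$ depending only on $n$, $\alpha$, ${\rm diam}(\partial\Omega)$ and the upper Ahlfors constant. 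Through \eqref{eq:Rvav12.3} and the fact that, for scalar $u$, the Clifford norm of $D_Lu=\sum_{j}e_j\partial_ju$ controls (and is controlled by) $|\nabla u|$, this gives at once
\begin{equation*}
\sup_{x\in\Omega}|\nabla(\mathscr{S}1)(x)|+[\nabla(\mathscr{S}1)]_{\dot{\mathscr{C}}^\alpha(\Omega)}\leq C\,M\,\|\nu\|_{{\mathscr{C}}^\alpha(\partial\Omega)\otimes{\mathcal{C}}\!\ell_n}.
\end{equation*}

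For the weighted second-order bound I would exploit harmonicity: $\mathscr{S}1$ is harmonic in $\Omega$ (since $\Delta_xE_{\!{}_\Delta}(x-y)=0$ for $x\ne y$), hence so is each component of $\nabla(\mathscr{S}1)$. For any $x\in\Omega$ set $r:=\rho(x)/2$. Standard interior estimates for harmonic functions, applied on $B(x,r)\subset\Omega$ to the harmonic function $\partial_k(\mathscr{S}1)-\bigl(\partial_k(\mathscr{S}1)\bigr)(x)$, yield
\begin{equation*}
\bigl|\nabla\partial_k(\mathscr{S}1)(x)\bigr|\leq\frac{C_n}{r}\sup_{y\in B(x,r)}\bigl|\partial_k(\mathscr{S}1)(y)-\partial_k(\mathscr{S}1)(x)\bigr|\leq C_n\,r^{\alpha-1}[\nabla(\mathscr{S}1)]_{\dot{\mathscr{C}}^\alpha(\Omega)}.
\end{equation*}
Multiplying by $\rho(x)^{1-\alpha}$ and summing over $k$ produces
\begin{equation*}
\rho(x)^{1-\alpha}\bigl|\nabla^2(\mathscr{S}1)(x)\bigr|\leq C_n\,2^{1-\alpha}[\nabla(\mathscr{S}1)]_{\dot{\mathscr{C}}^\alpha(\Omega)}\qquad\forall\,x\in\Omega,
\end{equation*}
which, combined with the H\"older bound from the previous paragraph, gives \eqref{tAAcc.44jjj} with a constant of the advertised form.

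The only step that requires any care is the passage from the Clifford-valued H\"older estimate on $D_L\mathscr{S}1=-\mathcal{C}\nu$ to a genuine H\"older estimate on the (real) gradient of $\mathscr{S}1$ up to the boundary; this is routine since $\partial_j(\mathscr{S}1)=-\bigl(e_j\odot D_L\mathscr{S}1\bigr)_0$ in view of \eqref{im-e} and the scalar nature of $\mathscr{S}1$, so the ${\mathscr{C}}^\alpha$ norm of each Cartesian component of $\nabla\mathscr{S}1$ is majorized by $\|\mathcal{C}\nu\|_{{\mathscr{C}}^\alpha(\overline{\Omega}\,)\otimes{\mathcal{C}}\!\ell_n}$. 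Once this identification is made, the remainder is an immediate consequence of Lemma~\ref{yreLLa.Tghba} and the harmonic interior estimate above.
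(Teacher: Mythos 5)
Your proof is correct, and the first half (the identity $\nabla(\mathscr{S}1)=D_L\mathscr{S}1=-\mathcal{C}\nu$ together with $\nu\in{\mathscr{C}}^\alpha(\partial\Omega)$ via Theorem~\ref{Th-C1}) is exactly the paper's starting point. Where you diverge is in how the weighted bound on $\nabla^2(\mathscr{S}1)$ is obtained. The paper simply reads \emph{both} estimates off Lemma~\ref{yreLLa}: the inequality \eqref{UgagfgPPn} applied to $f=\nu$ already contains $\sup_\Omega|\mathcal{C}\nu|$ \emph{and} $\sup_\Omega\rho^{1-\alpha}|\nabla(\mathcal{C}\nu)|$ in one stroke, so nothing further is needed. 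You instead invoke Lemma~\ref{yreLLa.Tghba} to get a ${\mathscr{C}}^\alpha(\overline{\Omega}\,)$ bound on $\mathcal{C}\nu$ and then recover the weighted second-order bound from harmonicity of $\mathscr{S}1$ and interior estimates. This works (your interior-estimate computation with $r=\rho(x)/2$ is fine, as is the identification $\partial_j(\mathscr{S}1)=-(e_j\odot D_L\mathscr{S}1)_0$), but note that Lemma~\ref{yreLLa.Tghba} is itself deduced from Lemma~\ref{yreLLa} plus Lemma~\ref{Lem-J1}; so you are converting the weighted gradient estimate into a H\"older estimate and then back again. The detour costs you two extra hypotheses that the direct route does not need: the uniform-domain property of $\Omega$ (via \eqref{eq:tr55}, needed for Lemma~\ref{Lem-J1}) and the harmonicity of $\mathscr{S}1$. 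What it buys is a self-contained illustration that, for harmonic functions, the interior H\"older seminorm and the weighted gradient bound are equivalent — but for the lemma at hand the paper's one-line appeal to \eqref{UgagfgPPn} is both shorter and more economical in its assumptions.
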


\begin{proof}
Via the identification \eqref{embed} we obtain from
\eqref{eq:Rvav12.3} that
\begin{equation}\label{eq:Rvav12.4}
\nabla(\mathscr{S}1)\equiv
D_L{\mathscr{S}}1=-{\mathcal{C}}\nu\,\,\,\mbox{ in }\,\,\Omega.
\end{equation}
Then, keeping in mind that
$\nu\in{\mathscr{C}}^{\alpha}(\partial\Omega)\otimes{\mathcal{C}}\!\ell_{n}$
under the present assumption on $\Omega$, the claim in
\eqref{tAAcc.44jjj} readily follows by combining \eqref{eq:Rvav12.4}
with \eqref{UgagfgPPn}.
\end{proof}

\section{The Proofs of Theorem~\ref{Main-T1aa} and Theorem~\ref{THM-main222}}
\setcounter{equation}{0} \label{S-6}

We start by presenting the proof of Theorem~\ref{Main-T1aa}.

\vskip 0.08in
\begin{proof}[\underline{Proof of {\rm (a)}\,$\Rightarrow$\,{\rm (e)} in Theorem~\ref{Main-T1aa}}]
Let $\Omega$ be a domain of class ${\mathscr{C}}^{1+\alpha}$,
$\alpha\in(0,1)$, with compact boundary (hence, in particular,
$\Omega$ is a {\rm UR} domain). Also, assume $P(x)$ is an odd,
homogeneous, harmonic polynomial of degree $l\geq 1$ in
${\mathbb{R}}^{n}$ and, with it, associate the singular integral
operator
\begin{eqnarray}\label{T-layer.44-GGG}
{\mathbb{T}}f(x):=\int_{\partial\Omega}\frac{P(x-y)}{|x-y|^{n-1+l}}f(y)\,d\sigma(y),
\qquad x\in\Omega.
\end{eqnarray}
In a first stage, the goal is to prove that there exists a constant
$C\in(1,\infty)$, depending only on $n$, $\alpha$, ${\rm
diam}\,(\partial\Omega)$,
$\|\nu\|_{{\mathscr{C}}^\alpha(\partial\Omega)}$, and the upper
Ahlfors regularity constant of $\partial\Omega$ (something we shall
indicate by writing $C=C(n,\alpha,\Omega)$) such that for every
$f\in{\mathscr{C}}^\alpha(\partial\Omega)$ we have
\begin{align}\label{jht6.55}
\sup_{x\in\Omega}\,|\mathbb{T}f(x)|
+\sup_{x\in\Omega}\,\Big\{\rho(x)^{1-\alpha}\big|\nabla(\mathbb{T}f)(x)\big|\Big\}
\leq
C^{l}2^{l^2}\|P\|_{L^1(S^{n-1})}\|f\|_{{\mathscr{C}}^\alpha(\partial\Omega)}.
\end{align}
We shall do so by induction on $l\in 2{\mathbb{N}}-1$, the degree of
the homogeneous harmonic polynomial $P$. When $l=1$ we have
$P(x)=\sum_{j=1}^n a_jx_j$ for each
$x=(x_1,\dots,x_n)\in{\mathbb{R}}^n$, where the $a_j$'s are some
fixed constants. Hence, in this case,
\begin{equation}\label{eq:fr47p}
\max_{1\leq j\leq n}|a_j|\leq\|P\|_{L^\infty(S^{n-1})}\leq
c_n\|P\|_{L^1(S^{n-1})}
\end{equation}
where the last inequality is a consequence of \eqref{eq:rFG} (with
$c_n\in(0,\infty)$ denoting a dimensional constant), and
\begin{equation}\label{eq:jhYH}
{\mathbb{T}}=\omega_{n-1}\sum_{j=1}^n a_j\partial_j{\mathscr{S}}.
\end{equation}
Then \eqref{jht6.55} follows from \eqref{eq:fr47p}, \eqref{eq:jhYH},
Lemma~\ref{Lemma.4}, and Lemma~\ref{Lemma.3}. To proceed, fix some
odd integer $l\geq 3$ and assume that there exists
$C=C(n,\alpha,\Omega)\in(1,\infty)$ such that
\begin{equation}\label{eq:jhYTYT}
\parbox{11.30cm}{the estimate in \eqref{jht6.55} holds whenever ${\mathbb{T}}$
is associated as in \eqref{T-layer.44-GGG} with an odd harmonic homogeneous
polynomial of degree $\leq l-2$ in ${\mathbb{R}}^{n}$.}
\end{equation}
Also, pick an arbitrary odd harmonic homogeneous polynomial $P(x)$
of degree $l$ in ${\mathbb{R}}^{n}$ and let ${\mathbb{T}}$ be as in
\eqref{T-layer.44-GGG} for this choice of $P$. Consider the family
$P_{rs}(x)$, $1\leq r,s\leq n$, of odd harmonic homogeneous
polynomials of degree $l-2$, as well as the family of odd,
${\mathscr{C}}^\infty$ functions
$k_{rs}:{\mathbb{R}}^{n}\setminus\{0\}\to{\mathbb{R}}^{n}\hookrightarrow{\mathcal{C}}\!\ell_{n}$,
associated with $P$ as in Lemma~\ref{L-Semmes}. For each $1\leq
i,j\leq n$ set
\begin{eqnarray}\label{k-up-ij}
k^{rs}(x):=P_{rs}(x)/|x|^{n+l-3}\quad\mbox{for}\quad
x\in{\mathbb{R}}^{n}\setminus\{0\},
\end{eqnarray}
and introduce the integral operator acting on Clifford
algebra-valued functions, $f=\sum_{I}f_I e_I$ with H\"older scalar
components $f_I$ defined on $\partial\Omega$, according to
\begin{align}\label{T-laLk}
{\mathbb{T}}^{rs}f(x)
&:=\int_{\partial\Omega}k^{rs}(x-y)f(y)\,d\sigma(y)
\nonumber\\[4pt]
&\,=\sum_{I}\Big(\int_{\partial\Omega}k^{rs}(x-y)f_I(y)\,d\sigma(y)\Big)e_I,\qquad
x\in\Omega.
\end{align}
Fix such an arbitrary
$f\in{\mathscr{C}}^\alpha(\partial\Omega)\otimes{\mathcal{C}}\!\ell_n$.
Then from the properties of the $P_{rs}$'s and the induction
hypothesis \eqref{eq:jhYTYT} (used component-wise, keeping in mind
that the sum in \eqref{T-laLk} is performed over a set of
cardinality $2^n$) we conclude that for each $1\leq r,s\leq n$ we
have
\begin{align}\label{jht6.aa}
\sup_{x\in\Omega}\,|(\mathbb{T}^{rs}f)(x)|
+&\sup_{x\in\Omega}\,\Big\{\rho(x)^{1-\alpha}\big|\nabla(\mathbb{T}^{rs}f)(x)\big|\Big\}
\nonumber\\[4pt]
&\leq 2^{n/2}C^{l-2}2^{(l-2)^2}\|P_{rs}\|_{L^1(S^{n-1})}
\|f\|_{{\mathscr{C}}^\alpha(\partial\Omega)\otimes{\mathcal{C}}\!\ell_n}
\nonumber\\[4pt]
&\leq c_n\,C^{l-2}2^{(l-2)^2}2^l\|P\|_{L^1(S^{n-1})}
\|f\|_{{\mathscr{C}}^\alpha(\partial\Omega)\otimes{\mathcal{C}}\!\ell_n}.
\end{align}

Moving on, for every $r,s\in\{1,\dots,n\}$ and
$f:\partial\Omega\to{\mathcal{C}}\!\ell_n$ with H\"older scalar
components we set
\begin{eqnarray}\label{T-layer.oyt}
{\mathbb{T}}_{rs}f(x):=\int_{\partial\Omega}k_{rs}(x-y)\odot
f(y)\,d\sigma(y),\qquad x\in\Omega.
\end{eqnarray}
Then, thanks to formula \eqref{pro-1}, whenever the function $f$ is
actually scalar-valued (i.e.,
$f:\partial\Omega\to{\mathbb{R}}\hookrightarrow{\mathcal{C}}\!\ell_n$)
the original operator ${\mathbb{T}}$ from \eqref{T-layer.44-GGG} may
be recovered from the above ${\mathbb{T}}_{rs}$'s by means of the
identity
\begin{equation}\label{Tgc-654}
{\mathbb{T}}f(x)=\sum_{r,s=1}^n\big[{\mathbb{T}}_{rs}f(x)\big]_s
\quad\mbox{for all }\,\,x\in\Omega.
\end{equation}
To proceed, consider first the case when $\Omega$ is unbounded. In
this scenario, fix some $x\in\Omega$ and select
\begin{equation}\label{eq:rrRf}
R_1\in\big(0,{\rm dist}(x,\partial\Omega)\big)\,\,\mbox{ along with
}\,\, R_2>{\rm dist}(x,\partial\Omega)+{\rm diam}(\partial\Omega).
\end{equation}
Set
$\Omega_{R_1,R_2}:=\big(B(x,R_2)\setminus\overline{B(x,R_1)}\,\big)\cap\Omega$
which is a bounded ${\mathscr{C}}^{1+\alpha}$ domain in
${\mathbb{R}}^n$ with the property that
\begin{equation}\label{eq:iyre}
\partial\Omega_{R_1,R_2}=\partial B(x,R_2)\cup\partial B(x,R_1)\cup\partial\Omega.
\end{equation}
We continue to denote by $\nu$ and $\sigma$ the outward unit normal
and surface measure for $\Omega_{R_1,R_2}$. As a consequence of
\eqref{D-IBP} (used with $\Omega_{R_1,R_2}$ in place of $\Omega$,
$u=k_{rs}(x-\cdot)\in{\mathscr{C}}^\infty\big(\overline{\Omega_{R_1,R_2}}\,\big)$,
and $v\equiv 1$) and \eqref{pro-2}, we then obtain that for each
$r,s\in\{1,\dots,n\}$
\begin{align}\label{IBP-k}
\int_{\partial\Omega_{R_1,R_2}}k_{rs}(x-y)\odot\nu(y)\,d\sigma(y)
&=-\int_{\Omega_{R_1,R_2}}(D_R k_{rs})(x-y)\,dy
\nonumber\\[4pt]
&=\frac{l-1}{n+l-3}\int_{\Omega_{R_1,R_2}} \frac{\partial}{\partial
y_r}\left(\frac{P_{rs}(x-y)}{|x-y|^{n+l-3}}\right)\,dy
\nonumber\\[4pt]
&=\frac{l-1}{n+l-3}\int_{\partial\Omega_{R_1,R_2}}k^{rs}(x-y)\nu_r(y)\,d\sigma(y).
\end{align}
Hence,
\begin{align}\label{IBP-k3}
({\mathbb{T}}_{rs}\nu)(x)
&=\int_{\partial\Omega}k_{rs}(x-y)\odot\nu(y)\,d\sigma(y)
\nonumber\\[4pt]
&=\int_{\partial\Omega_{R_1,R_2}}k_{rs}(x-y)\odot\nu(y)\,d\sigma(y)
-\int_{\partial
B(x,R_1)}k_{rs}(x-y)\odot\frac{x-y}{|x-y|}\,d\sigma(y)
\nonumber\\[4pt]
&\quad+\int_{\partial
B(x,R_2)}k_{rs}(x-y)\odot\frac{x-y}{|x-y|}\,d\sigma(y)
\nonumber\\[4pt]
&=\frac{l-1}{n+l-3}\int_{\partial\Omega_{R_1,R_2}}k^{rs}(x-y)\nu_r(y)\,d\sigma(y)
\nonumber\\[4pt]
&\quad-\int_{S^{n-1}}k_{rs}(\omega)\odot\omega\,d\omega
+\int_{S^{n-1}}k_{rs}(\omega)\odot\omega\,d\omega
\nonumber\\[4pt]
&=\frac{l-1}{n+l-3}\int_{\partial\Omega}k^{rs}(x-y)\nu_r(y)\,d\sigma(y)
\nonumber\\[4pt]
&\quad-\frac{l-1}{n+l-3}\int_{\partial
B(x,R_1)}k^{rs}(x-y)\frac{x_r-y_r}{|x-y|}\,d\sigma(y)
\nonumber\\[4pt]
&\quad+\frac{l-1}{n+l-3}\int_{\partial
B(x,R_2)}k^{rs}(x-y)\frac{x_r-y_r}{|x-y|}\,d\sigma(y)
\nonumber\\[4pt]
&=\frac{l-1}{n+l-3}({\mathbb{T}}^{rs}\nu_r)(x)
\nonumber\\[4pt]
&\quad-\frac{l-1}{n+l-3}\int_{S^{n-1}}k^{rs}(\omega)\omega_r\,d\omega
+\frac{l-1}{n+l-3}\int_{S^{n-1}}k^{rs}(\omega)\omega_r\,d\omega
\nonumber\\[4pt]
&=\frac{l-1}{n+l-3}({\mathbb{T}}^{rs}\nu_r)(x).
\end{align}
From \eqref{IBP-k3} and \eqref{jht6.aa} used with
$f=\nu_r\in{\mathscr{C}}^\alpha(\partial\Omega)$, for $1\leq r,s\leq
n$ we obtain
\begin{align}\label{jht6.ew}
\sup_{x\in\Omega}\,|({\mathbb{T}}_{rs}\nu)(x)|
&+\sup_{x\in\Omega}\,\Big\{\rho(x)^{1-\alpha}\big|\nabla({\mathbb{T}}_{rs}\nu)(x)\big|\Big\}
\nonumber\\[4pt]
&\leq\sup_{x\in\Omega}\,|({\mathbb{T}}^{rs}\nu_r)(x)|
+\sup_{x\in\Omega}\,\Big\{\rho(x)^{1-\alpha}\big|\nabla({\mathbb{T}}^{rs}\nu_r)(x)\big|\Big\}
\nonumber\\[4pt]
&\leq c_n\,C^{l-2}2^{(l-2)^2}2^l\|P\|_{L^1(S^{n-1})}
\|\nu\|_{{\mathscr{C}}^\alpha(\partial\Omega)},
\end{align}
in the case when $\Omega$ is an unbounded domain.

When $\Omega$ is a bounded domain, we once again consider
$\Omega_{R_1,R_2}$ as before and carry out a computation similar in
spirit to what we have just done above. This time, however,
$\Omega_{R_1,R_2}=\Omega\setminus\overline{B(x,R_1)}$ and in place
of \eqref{eq:iyre} we have $\partial\Omega_{R_1,R_2}=\partial
B(x,R_1)\cup\partial\Omega$. Consequently, in place of
\eqref{IBP-k3} we now obtain
\begin{align}\label{IBP-k3-BIS}
({\mathbb{T}}_{rs}\nu)(x)
&=\frac{l-1}{n+l-3}({\mathbb{T}}^{rs}\nu_r)(x)
\nonumber\\[4pt]
&\quad-\frac{l-1}{n+l-3}\int_{S^{n-1}}k^{rs}(\omega)\omega_r\,d\omega
-\int_{S^{n-1}}k_{rs}(\omega)\odot\omega\,d\omega.
\end{align}
To estimate the integrals on the unit sphere we note that, in view
of \eqref{k-up-ij}, \eqref{eq:yre36}, \eqref{eq:rFG}, and
\eqref{eq:Esfag}, we have
\begin{align}\label{jht6.ew-BIS}
\|k^{rs}\|_{L^\infty(S^{n-1})}+\|k_{rs}\|_{L^\infty(S^{n-1})} \leq
c_n\,2^{l}\|P\|_{L^1(S^{n-1})}.
\end{align}
Upon observing that
$\|\nu\|_{{\mathscr{C}}^\alpha(\partial\Omega)}\geq 1$, from
\eqref{IBP-k3-BIS} and \eqref{jht6.ew-BIS} we deduce that an
estimate similar to \eqref{jht6.ew} also holds in the case when
$\Omega$ is a bounded domain (this time, replacing the constant
$c_n$ appearing in \eqref{jht6.ew} by $2c_n$, which is
inconsequential for our purposes). In summary, \eqref{IBP-k3-BIS}
may be assumed to hold irrespective of whether $\Omega$ is bounded
or not.

Going further, let $\widetilde{{\mathbb{T}}}_{rs}$ be the version of
${\mathbb{T}}_{rs}$ from \eqref{T-layer.oyt} in which $\nu(y)$ has
been absorbed in the integral kernel. That is, for
$f:\partial\Omega\to{\mathcal{C}}\!\ell_n$ with H\"older scalar
components set
\begin{eqnarray}\label{T-layer.44tilde}
\widetilde{{\mathbb{T}}}_{rs}f(x):=\int_{\partial\Omega}
\big(k_{rs}(x-y)\odot\nu(y)\big)\odot f(y)\,d\sigma(y),\qquad
x\in\Omega,
\end{eqnarray}
for each $r,s\in\{1,\dots,n\}$. Since
$\widetilde{{\mathbb{T}}}_{rs}1={\mathbb{T}}_{rs}\nu$, from
\eqref{jht6.ew} we conclude that for each $r,s\in\{1,\dots,n\}$
\begin{align}\label{jht6.ew4r4r}
\sup_{x\in\Omega}\,|(\widetilde{{\mathbb{T}}}_{rs}1)(x)|
&+\sup_{x\in\Omega}\,\Big\{\rho(x)^{1-\alpha}
\big|\nabla(\widetilde{{\mathbb{T}}}_{rs}1)(x)\big|\Big\}
\nonumber\\[4pt]
&\leq c_n\,C^{l-2}2^{(l-2)^2}2^l\|P\|_{L^1(S^{n-1})}
\|\nu\|_{{\mathscr{C}}^\alpha(\partial\Omega)}.
\end{align}
Given that the integral kernel of $\widetilde{{\mathbb{T}}}_{rs}$
satisfies
\begin{equation}\label{eq:re358}
|k_{rs}(x-y)\odot\nu(y)|\leq\frac{\|k_{rs}\|_{L^\infty(S^{n-1})}}{|x-y|^{n-1}}
\leq\frac{c_{n}2^l\|P\|_{L^1(S^{n-1})}}{|x-y|^{n-1}},
\end{equation}
and
\begin{equation}\label{eq:re358-vbR}
\big|\nabla_x\big[k_{rs}(x-y)\odot\nu(y)\big]\big|\leq
\frac{\|\nabla k_{rs}\|_{L^\infty(S^{n-1})}}{|x-y|^{n}}
\leq\frac{c_{n}2^l\|P\|_{L^1(S^{n-1})}}{|x-y|^{n}},
\end{equation}
we may invoke Lemma~\ref{Lemma.3} with
\begin{equation}\label{eq:y43ii}
A:=c_n\,C^{l-2}2^{(l-2)^2}2^l\|P\|_{L^1(S^{n-1})}
\|\nu\|_{{\mathscr{C}}^\alpha(\partial\Omega)}\,\,\mbox{ and }\,\,
B:=c_{n}2^l\|P\|_{L^1(S^{n-1})}
\end{equation}
in order to conclude that if $1\leq r,s\leq n$ then
\begin{align}\label{jht6.88}
\sup_{x\in\Omega}\, &|\widetilde{\mathbb{T}}_{rs}f(x)|
+\sup_{x\in\Omega}\,\Big\{\rho(x)^{1-\alpha}\big|\nabla(\widetilde{\mathbb{T}}_{rs}f)(x)\big|\Big\}
\\[4pt]
&\leq C_{n,\alpha,\Omega}\Big\{C^{l-2}2^{(l-2)^2}2^l
\|\nu\|_{{\mathscr{C}}^\alpha(\partial\Omega)}+2^l\Big\}\|P\|_{L^1(S^{n-1})}
\|f\|_{{\mathscr{C}}^\alpha(\partial\Omega)\otimes{\mathcal{C}}\!\ell_n}
\nonumber
\end{align}
for every
$f\in{\mathscr{C}}^\alpha(\partial\Omega)\otimes{\mathcal{C}}\!\ell_n$.
Writing \eqref{jht6.88} for $f$ replaced by $\nu\odot f$ then
yields, in light of \eqref{T-layer.44tilde}, \eqref{T-layer.oyt},
and \eqref{M-nu} (bearing in mind that $\nu\odot\nu=-1$), that for
$1\leq r,s\leq n$ we have
\begin{align}\label{gttf5}
\sup_{x\in\Omega}\,|{\mathbb{T}}_{rs}f(x)|
&+\sup_{x\in\Omega}\,\Big\{\rho(x)^{1-\alpha}\big|\nabla({\mathbb{T}}_{rs}f)(x)\big|\Big\}
\nonumber\\[4pt]
&\leq C_{n,\alpha,\Omega}\Big\{C^{l-2}2^{(l-2)^2}2^l
\|\nu\|_{{\mathscr{C}}^\alpha(\partial\Omega)}+2^l\Big\}\times
\nonumber\\[4pt]
&\hskip 0.80in \times
2\|\nu\|_{{\mathscr{C}}^\alpha(\partial\Omega)}\|P\|_{L^1(S^{n-1})}
\|f\|_{{\mathscr{C}}^\alpha(\partial\Omega)\otimes{\mathcal{C}}\!\ell_n}
\end{align}
for every
$f\in{\mathscr{C}}^\alpha(\partial\Omega)\otimes{\mathcal{C}}\!\ell_n$.
In turn, from this and \eqref{Tgc-654} we finally conclude that
\begin{align}\label{gttf5.2}
\sup_{x\in\Omega}\,|{\mathbb{T}}f(x)|
&+\sup_{x\in\Omega}\,\Big\{\rho(x)^{1-\alpha}\big|\nabla({\mathbb{T}}f)(x)\big|\Big\}
\nonumber\\[4pt]
&\leq n^2\,C_{n,\alpha,\Omega}\Big\{C^{l-2}2^{(l-2)^2}2^l
\|\nu\|_{{\mathscr{C}}^\alpha(\partial\Omega)}+2^l\Big\}\times
\nonumber\\[4pt]
&\hskip 0.80in \times
2\|\nu\|_{{\mathscr{C}}^\alpha(\partial\Omega)}
\|P\|_{L^1(S^{n-1})}\|f\|_{{\mathscr{C}}^\alpha(\partial\Omega)}
\end{align}
for every $f\in{\mathscr{C}}^\alpha(\partial\Omega)$. Having
established \eqref{gttf5.2}, we now see that \eqref{jht6.55} holds
provided the constant $C\in(1,\infty)$ is chosen in such a way that
\begin{align}\label{gttf5.3}
n^2\,C_{n,\alpha,\Omega}\Big\{C^{l-2}2^{(l-2)^2}2^l
\|\nu\|_{{\mathscr{C}}^\alpha(\partial\Omega)}+2^l\Big\}
2\|\nu\|_{{\mathscr{C}}^\alpha(\partial\Omega)}\leq C^{l}2^{l^2}
\end{align}
for each odd number $l\in{\mathbb{N}}$, $l\geq 3$. Since
$2^{(l-2)^2}2^l\leq 2\cdot 2^{l^2}$ and $2^l\leq C^{l-2}2^{l^2}$, it
follows that the left-hand side of \eqref{gttf5.3} is $\leq
C(n,\alpha,\Omega) C^{l-2}2^{l^2}$. This, in turn, is majorized by
the right-hand side of \eqref{gttf5.3} granted that
$C\geq\max\big\{1,\,\sqrt{C(n,\alpha,\Omega)}\,\big\}$. In summary,
choosing $C$ in the manner just described, to begin with, ensures
that \eqref{jht6.55} holds.

Next, we aim to show that \eqref{jht6.55} continues to be valid if
the harmonicity condition on $P$ is dropped, that is, when
\begin{equation}\label{eq:123yt}
\mbox{$P(x)$ is a homogeneous polynomial in ${\mathbb{R}}^n$ of
degree $l\in2{\mathbb{N}}-1$.}
\end{equation}
Indeed, a standard fact about arbitrary homogeneous polynomials
$P(x)$ is the decomposition (cf. \cite[\S\,{3.1.2}, p.\,69]{St70})
\begin{equation}\label{eq:Staer}
\begin{array}{c}
P(x)=P_1(x)+|x|^2Q_1(x)\,\,\mbox{ for every $x$ in }\,\,{\mathbb{R}}^n,
\,\,\mbox{ where}
\\[4pt]
\mbox{$P_1,Q_1$ are homogeneous polynomials and $P_1$ is harmonic}.
\end{array}
\end{equation}
Hence, if $P(x)$ is a homogeneous polynomial of degree $l=2N+1$ in
${\mathbb{R}}^{n}$, for some $N\in{\mathbb{N}}_0$, not necessarily
harmonic, then by iterating \eqref{eq:Staer} we obtain
\begin{equation}\label{eq:Staer.2}
\begin{array}{c}
P(x)=\sum\limits_{j=1}^{N+1} |x|^{2(j-1)}P_j(x)\,\,\mbox{ for every $x$ in
}\,\,{\mathbb{R}}^n, \,\,\mbox{ where each $P_j$}
\\[4pt]
\mbox{is a harmonic homogeneous polynomial of degree $l-2(j-1)$}.
\end{array}
\end{equation}
Since the restrictions to the unit sphere of any two homogeneous
harmonic polynomials of different degrees are orthogonal in
$L^2(S^{n-1})$ (cf. \cite[\S\,3.1.1, p.\,69]{St70}), it follows from
\eqref{eq:Staer.2} that
\begin{equation}\label{eq:StaRDS}
\|P\|^2_{L^2(S^{n-1})}=\sum\limits_{j=1}^{N+1}\|P_j\|^2_{L^2(S^{n-1})}.
\end{equation}
In particular, for each $j$, H\"older's inequality and
\eqref{eq:StaRDS} permit us to estimate
\begin{equation}\label{eq:StaRDS.2}
\|P_j\|_{L^1(S^{n-1})}\leq c_n\|P_j\|_{L^2(S^{n-1})}\leq
c_n\|P\|_{L^2(S^{n-1})}.
\end{equation}
Combining \eqref{T-layer.44-GGG} and \eqref{eq:Staer.2}, for any
$x\in\Omega$ and $f\in{\mathscr{C}}^\alpha(\partial\Omega)$ we
obtain
\begin{equation}\label{T-layer.44abb}
{\mathbb{T}}f(x)
=\sum_{j=1}^{N+1}\int_{\partial\Omega}\frac{P_j(x-y)}{|x-y|^{n-1+(l-2(j-1))}}f(y)\,d\sigma(y),
\end{equation}
and each integral operator appearing in the sum above is constructed
according to the same blue-print as the original ${\mathbb{T}}$ in
\eqref{T-layer.44-GGG}, including the property that the intervening
homogeneous polynomial is harmonic. As such, repeated applications
of \eqref{jht6.55} yield
\begin{align}\label{jht-64igd}
\sup_{x\in\Omega}\,|\mathbb{T}f(x)|
+\sup_{x\in\Omega}\,\Big\{\rho(x)^{1-\alpha}\big|\nabla(\mathbb{T}f)(x)\big|\Big\}
\leq c_n l
C^{l}2^{l^2}\|P\|_{L^2(S^{n-1})}\|f\|_{{\mathscr{C}}^\alpha(\partial\Omega)},
\end{align}
for each $f\in{\mathscr{C}}^\alpha(\partial\Omega)$. Since if $C$ is
bigger than a suitable dimensional constant we have $c_n l\leq
C^{l}$ for all $l$'s, by eventually replacing $C$ by $C^2$ in
\eqref{jht-64igd}. Ultimately, with the help of Lemma~\ref{Lem-J1}
(while keeping \eqref{eq:tr55} in mind), we deduce that
\eqref{jhygff8533} holds for $\mathbb{T}_{+}$ in $\Omega_{+}$. That
${\mathbb{T}}_{-}$ also satisfies similar properties follows in a
similar manner, working in $\Omega_{-}$ (in place of $\Omega_{+}$),
which continues to be a domain of class ${\mathscr{C}}^{1+\alpha}$
with compact boundary.
\end{proof}

\begin{proof}[\underline{Proof of {\rm (e)}\,$\Rightarrow$\,{\rm (d)} in Theorem~\ref{Main-T1aa}}]
This is obvious, since the operators ${\mathscr{R}}^{\pm}_j$ from
\eqref{T-pv.4kh445} are particular cases of those considered in
\eqref{T-layer.44}.
\end{proof}

\begin{proof}[\underline{Proof of {\rm (d)}\,$\Rightarrow$\,{\rm (a)} in Theorem~\ref{Main-T1aa}}]
Since we are presently assuming that $\Omega$ is a {\rm UR} domain,
Theorem~\ref{Main-T2} applies in $\Omega_{\pm}$ and yields (bearing
\eqref{URUR} in mind) the following jump-formulas
\begin{align}\label{yt5tr}
\bigg(\mathscr{R}^{\pm}_jf\Big\lvert^{{}^{\rm
n.t.}}_{\partial\Omega_\pm}\bigg)(x)
=\mp\frac{1}{2}\nu_j(x)f(x)+\lim_{\varepsilon \to 0^+}
\int_{\partial\Omega\setminus B(x,\varepsilon)}(\partial_j
E_{\!{}_\Delta})(x-y)f(y)\,d\sigma(y),
\end{align}
for each $f\in L^p(\partial\Omega,\sigma)$ with $p\in[1,\infty)$,
each $j\in\{1,\dots,n\}$, and $\sigma$-a.e. $x\in\partial\Omega$.
Hence, by \eqref{yt5tr} and \eqref{eq:RIEtt}, we have
\begin{align}\label{star}
\nu_j=\mathscr{R}^{-}_j1\Big|_{\partial\Omega_{-}}
-\mathscr{R}^{+}_j1\Big|_{\partial\Omega_{+}}\in{\mathscr{C}}^\alpha(\partial\Omega),
\qquad\forall\,j\in\{1,\dotsc,n\}.
\end{align}
Given the present background assumptions on $\Omega$,
Theorem~\ref{Th-C1} then gives that $\Omega$ is a
${\mathscr{C}}^{1+\alpha}$ domain.
\end{proof}

\begin{proof}[\underline{Proof of {\rm (a)}\,$\Rightarrow$\,{\rm (c)} in Theorem~\ref{Main-T1aa}}]
Assume that $\Omega$ is a domain of class
${\mathscr{C}}^{1+\alpha}$, $\alpha\in(0,1)$, with compact boundary.
Here, the task is to prove that the principal value singular
integral operator $T$, originally defined in \eqref{T-pv.44}, is a
well-defined, linear and bounded mapping from
${\mathscr{C}}^{\alpha}(\partial\Omega)$ into itself. In the
process, we shall also show that \eqref{jhygff8533.2} holds. Since
{\rm (a)}\,$\Rightarrow$\,{\rm (e)} has already been established, we
know that the singular integral operator \eqref{T-layer.44-GGG} maps
${\mathscr{C}}^\alpha(\partial\Omega)$ boundedly into
${\mathscr{C}}^\alpha\big(\overline{\Omega}\,\big)$ with
\begin{align}\label{jhiytr987}
\big\|{\mathbb{T}}f\big\|_{{\mathscr{C}}^\alpha(\overline{\Omega}\,)}
\leq
C^{l}2^{l^2}\|P\|_{L^2(S^{n-1})}\|f\|_{{\mathscr{C}}^\alpha(\partial\Omega)},\qquad
\forall\,f\in{\mathscr{C}}^\alpha(\partial\Omega).
\end{align}

For starters, let us operate under the additional assumption that
the homogeneous polynomial $P$ is also harmonic, and abbreviate
\begin{equation}\label{eq:pytg9}
k(x):=\frac{P(x)}{|x|^{n-1+l}},\qquad\forall\,x\in{\mathbb{R}}^n\setminus\{0\}.
\end{equation}
In such a scenario, \eqref{eq:yre34} gives that
\begin{equation}\label{eq:yre34.rreE}
\widehat{k}(\xi)={\mathcal{F}}_{x\to\xi}\Big(\frac{P(x)}{|x|^{n+l-1}}\Big)
=\gamma_{n,l,1}\,\frac{P(\xi)}{|\xi|^{l+1}},\qquad\forall\,\xi\in{\mathbb{R}}^n\setminus\{0\}.
\end{equation}
Moreover, a direct computation using Stirling's approximation
formula
\begin{equation}\label{eq:resre44}
\sqrt{2\pi}\,m^{m+1/2}e^{-m}\leq m!\leq
e\,m^{m+1/2}e^{-m},\qquad\forall\,m\in{\mathbb{N}},
\end{equation}
shows that
\begin{equation}\label{eq:STIL6433}
\gamma_{n,l,1}=\left\{
\begin{array}{ll}
O(l^{-(n-2)/2}) & \mbox{ if $n$ even},
\\[4pt]
O(l^{-(n-4)/2}) & \mbox{ if $n$ odd},
\end{array}
\right. \qquad\mbox{as }\,\,l\to\infty.
\end{equation}
We continue by observing that, thanks to \eqref{eq:rFG},
\begin{equation}\label{eq:P1aBB.1}
\sup_{x\in\partial\Omega}|P(\nu(x))|\leq\|P\|_{L^\infty(S^{n-1})}\leq
c_n 2^l l^{-1}\|P\|_{L^1(S^{n-1})}.
\end{equation}
Next we note that $|\nu(x)-\nu(y)|\geq 1/2$ forces $|x-y|^\alpha\geq
1/(2\|\nu\|_{{\mathscr{C}}^\alpha(\partial\Omega)})$ which further
implies
\begin{align}\label{eq:P1aBB.2}
\frac{|P(\nu(x))-P(\nu(y))|}{|x-y|^\alpha} &\leq
4\|\nu\|_{{\mathscr{C}}^\alpha(\partial\Omega)}\|P\|_{L^\infty(S^{n-1})}
\nonumber\\[4pt]
&\leq c_n 2^l
l^{-1}\|\nu\|_{{\mathscr{C}}^\alpha(\partial\Omega)}\|P\|_{L^1(S^{n-1})},
\end{align}
by virtue of \eqref{eq:rFG}, while if $|\nu(x)-\nu(y)|\leq 1/2$ the
Mean Value Theorem and \eqref{eq:rFG} permit us to once again
estimate
\begin{align}\label{eq:P1aBB.3}
\frac{|P(\nu(x))-P(\nu(y))|}{|x-y|^\alpha} &\leq
\Big(\sup_{z\in[\nu(x),\nu(y)]}|(\nabla
P)(z)|\Big)\|\nu\|_{{\mathscr{C}}^\alpha(\partial\Omega)}
\nonumber\\[4pt]
&\leq\|\nabla
P\|_{L^\infty(S^{n-1})}\|\nu\|_{{\mathscr{C}}^\alpha(\partial\Omega)}
\nonumber\\[4pt]
&\leq c_n 2^l
l^{-1}\|\nu\|_{{\mathscr{C}}^\alpha(\partial\Omega)}\|P\|_{L^1(S^{n-1})}.
\end{align}
By combining \eqref{eq:yre34.rreE} and
\eqref{eq:STIL6433}-\eqref{eq:P1aBB.3} we therefore arrive at the
conclusion that
\begin{equation}\label{eq:OBS}
\begin{array}{c}
\mbox{the mapping $\partial\Omega\ni
x\mapsto\widehat{k}(\nu(x))\in{\mathbb{C}}$ belongs to
${\mathscr{C}}^\alpha(\partial\Omega)$}
\\[6pt]
\mbox{ and
$\big\|\widehat{k}(\nu(\cdot))\big\|_{{\mathscr{C}}^\alpha(\partial\Omega)}
\leq c_n
2^l\|\nu\|_{{\mathscr{C}}^\alpha(\partial\Omega)}\|P\|_{L^1(S^{n-1})}$}.
\end{array}
\end{equation}
Next, the assumptions on $\Omega$ imply (cf. the discussion in
\S\ref{S-2}) that this is both a {\rm UR} domain and a uniform
domain. As such, Theorem~\ref{Main-T2} applies. Since ${\mathbb{T}}$
from \eqref{T-layer.44-GGG} corresponds to the operator
${\mathcal{T}}$ defined as in \eqref{T-layer} with $k$ as in
\eqref{eq:pytg9}, for each
$f\in{\mathscr{C}}^\alpha(\partial\Omega)$ we obtain from
\eqref{main-jump}, \eqref{eq:OBS}, and \eqref{jhiytr987} that
\begin{align}\label{iutvv}
\|Tf\|_{{\mathscr{C}}^\alpha(\partial\Omega)}
&\leq\big\|\tfrac{1}{2i}\widehat{k}(\nu(\cdot))f+Tf\big\|_{{\mathscr{C}}^\alpha(\partial\Omega)}
+\big\|\tfrac{1}{2i}\widehat{k}(\nu(\cdot))f\big\|_{{\mathscr{C}}^\alpha(\partial\Omega)}
\nonumber\\[4pt]
&\leq \big\|{\mathbb{T}}f\big|^{{}^{\rm
n.t.}}_{\partial\Omega}\big\|_{{\mathscr{C}}^\alpha(\partial\Omega)}
+2^{-1}\big\|\widehat{k}(\nu(\cdot))\big\|_{{\mathscr{C}}^\alpha(\partial\Omega)}
\|f\|_{{\mathscr{C}}^\alpha(\partial\Omega)}
\nonumber\\[4pt]
&=\big\|{\mathbb{T}}f\big|_{\partial\Omega}\big\|_{{\mathscr{C}}^\alpha(\partial\Omega)}
+c_n
2^l\|\nu\|_{{\mathscr{C}}^\alpha(\partial\Omega)}\|P\|_{L^1(S^{n-1})}
\|f\|_{{\mathscr{C}}^\alpha(\partial\Omega)}
\nonumber\\[4pt]
&\leq
\big\|{\mathbb{T}}f\big\|_{{\mathscr{C}}^\alpha(\overline{\Omega}\,)}
+c_n
2^l\|\nu\|_{{\mathscr{C}}^\alpha(\partial\Omega)}\|P\|_{L^2(S^{n-1})}
\|f\|_{{\mathscr{C}}^\alpha(\partial\Omega)}
\nonumber\\[4pt]
&\leq\Big\{C^{l}2^{l^2}+c_n
2^l\|\nu\|_{{\mathscr{C}}^\alpha(\partial\Omega)}\Big\}
\|P\|_{L^2(S^{n-1})}\|f\|_{{\mathscr{C}}^\alpha(\partial\Omega)}
\nonumber\\[4pt]
&\leq(C^2)^{l}2^{l^2}\|P\|_{L^2(S^{n-1})}\|f\|_{{\mathscr{C}}^\alpha(\partial\Omega)},
\end{align}
assuming, without loss of generality, that $C\geq
2+c_n\|\nu\|_{{\mathscr{C}}^\alpha(\partial\Omega)}$ to begin with.
Note that the estimate just derived has the format demanded in
\eqref{jhygff8533.2}.

To treat the general case when $P$ is merely as in \eqref{eq:123yt},
consider the decomposition \eqref{eq:Staer.2} and, for each
$f\in{\mathscr{C}}^\alpha(\partial\Omega)$, write
\begin{equation}\label{T-layer.44abb.i}
Tf(x) =\sum_{j=1}^{N+1}\lim_{\varepsilon\to 0^{+}}
\int\limits_{\stackrel{y\in\partial\Omega}{|x-y|>\varepsilon}}
\frac{P_j(x-y)}{|x-y|^{n-1+(l-2(j-1))}}f(y)\,d\sigma(y),\quad
x\in\partial\Omega.
\end{equation}
Since  every integral operator appearing in the right-hand side of
\eqref{T-layer.44abb.i} is of the same type as the original $T$ in
\eqref{T-pv.44}, with the additional property that the intervening
homogeneous polynomial is harmonic, repeated applications of
\eqref{iutvv} give
\begin{align}\label{jht-64igd-44}
\|Tf\|_{{\mathscr{C}}^\alpha(\partial\Omega)} \leq l
(C^2)^{l}2^{l^2}\|P\|_{L^2(S^{n-1})}\|f\|_{{\mathscr{C}}^\alpha(\partial\Omega)},
\qquad\forall\,f\in{\mathscr{C}}^\alpha(\partial\Omega).
\end{align}
Using $l\leq (C^2)^l$ for all $l$'s if $C$ is sufficiently large and
re-denoting $C^4$ simply as $C$, estimate \eqref{jhygff8533.2}
finally follows.
\end{proof}

\begin{proof}[\underline{Proof of {\rm (c)}\,$\Rightarrow$\,{\rm (b)} in Theorem~\ref{Main-T1aa}}]
Observe that the principal value Riesz transform $R^{{}^{\rm pv}}_j$ from \eqref{T-pv.4khg}
with $\Sigma:=\partial\Omega$ are special cases of the principal value singular integral
operators defined in \eqref{T-pv.44} (corresponding to $P$ as in \eqref{eq:Pjah}).
Hence, on the one hand, $R^{{}^{\rm pv}}_j1\in{\mathscr{C}}^\alpha(\partial\Omega)$.
On the other hand, since $\Omega$ is presently assumed to be a {\rm UR} domain, from \eqref{uytggf-RRR}
it follows that each of the distributional Riesz transforms $R_j$ from \eqref{yrf56f-RRR}-\eqref{yrf56f-RRR.1}
with $\Sigma:=\partial\Omega$ agrees with $R^{{}^{\rm pv}}_j$ on ${\mathscr{C}}^\alpha(\partial\Omega)$.
Combining these, we conclude that \eqref{eq:RIESZ33} holds.
\end{proof}

\begin{proof}[\underline{Proof of {\rm (b)}\,$\Rightarrow$\,{\rm (a)} in Theorem~\ref{Main-T1aa}}]
Granted the background hypotheses on $\Omega$, the assumption made in \eqref{eq:RIESZ33} allows
us to invoke the $T(1)$ Theorem (for operators associated with odd kernels, on spaces of homogeneous type).
Thanks to this, \eqref{TaGv-kgggg.222}, and the Calder\'on-Zygmund machinery mentioned earlier,
we conclude that each of the distributional Riesz transforms $R_j$ from
\eqref{yrf56f-RRR}-\eqref{yrf56f-RRR.1} with $\Sigma:=\partial\Omega$
extends to a bounded linear operator on $L^2(\partial\Omega)$, in the form of the principal
value Riesz transform $R^{{}^{\rm pv}}_j$ from \eqref{T-pv.4khg} with $\Sigma:=\partial\Omega$.
In particular, we presently have
\begin{equation}\label{j6VGV-aVa}
R_j1=R^{{}^{\rm pv}}_j1\,\,\text{ in }\,\,L^2(\partial\Omega).
\end{equation}
Next, observe that since $\nu\odot\nu=-1$ at $\sigma$-a.e. point on $\partial\Omega$ and
$x-y=\sum\limits_{j=1}^n(x_j-y_j)e_j$ for every $x,y\in{\mathbb{R}}^n$,
from \eqref{Cau-C2}, \eqref{T-pv.4khg}, and \eqref{j6VGV-aVa} we obtain
\begin{equation}\label{eq:CHba.11}
{\mathcal{C}}^{{}^{\rm pv}}\!\nu=-\sum_{j=1}^n\big(R^{{}^{\rm pv}}_j1\big)e_j
=\sum_{j=1}^n (R_j1)e_j\,\,\mbox{ at $\sigma$-a.e. point on }\,\,\partial\Omega
\end{equation}
which, on account of \eqref{eq:CsquaTT}, further yields
\begin{equation}\label{eq:Csq22}
\tfrac{1}{4}\nu={\mathcal{C}}^{{}^{\rm pv}}\!({\mathcal{C}}^{{}^{\rm pv}}\!\nu)=-{\mathcal{C}}^{{}^{\rm pv}}\!\Big(\sum_{j=1}^n
(R_j1)e_j\Big) \,\,\mbox{ at $\sigma$-a.e. point on }\,\,\partial\Omega.
\end{equation}
With this in hand, it readily follows from Theorem~\ref{i65r5ED}
that if condition \eqref{eq:RIESZ33} holds then
$\nu\in{\mathscr{C}}^\alpha(\partial\Omega)$. Having
established this, Theorem~\ref{Th-C1} applies and gives that
$\Omega$ is a domain of class ${\mathscr{C}}^{1+\alpha}$.
\end{proof}

This concludes the proof of Theorem~\ref{Main-T1aa}, and we now turn
to the proof of Theorem~\ref{THM-main222}.

\vskip 0.08in
\begin{proof}[Proof of Theorem~\ref{THM-main222}]
This is a direct consequence of Proposition~\ref{treePUb}
and Corollary~\ref{Main-T1aa-CCCa} upon observing that
${\mathfrak{C}}^{{}^{\rm pv}}=iR^{{}^{\rm pv}}_1+R^{{}^{\rm pv}}_2$, where
$R^{{}^{\rm pv}}_j$, $j=1,2$, are the two principal value Riesz
transforms in the plane.
\end{proof}

We finally present the proof of Theorem~\ref{Main-T1aBBB}.

\vskip 0.08in
\begin{proof}[Proof of Theorem~\ref{Main-T1aBBB}]
Let
\begin{equation}\label{eq:yTRf}
k\big|_{S^{n-1}}=\sum_{l=0}^\infty Y_l
\end{equation}
be the decomposition of $k\big|_{S^{n-1}}\in L^2(S^{n-1})$ in
surface spherical harmonics. That is, $\{Y_l\}_{l\in{\mathbb{N}}_0}$
are mutually orthogonal functions in $L^2(S^{n-1})$ with the
property that for each $l\in{\mathbb{N}}_0$ the function
\begin{equation}\label{eq:y44r}
P_l(x):=\left\{
\begin{array}{ll}
|x|^lY_l(x/|x|) & \mbox{ if }\,\,x\in{\mathbb{R}}^n\setminus\{0\},
\\[4pt]
0 & \mbox{ if }\,\,x=0,
\end{array}
\right.
\end{equation}
is a homogeneous harmonic polynomial of degree $l$ in
${\mathbb{R}}^n$. In particular,
\begin{align}\label{a-97t555}
\Delta_{S^{n-1}}Y_l=-l(l+n-2)Y_l\,\,\mbox{ on }\,\,S^{n-1},
\qquad\forall\,l\in{\mathbb{N}}_0.
\end{align}
See, for example, \cite[pp.\,68-70]{St70} for a discussion. Then for
each $l\in{\mathbb{N}}_0$ we may write
\begin{align}\label{a-97tgf}
\big[-l(l+n-2)\big]^{m_l} & \|Y_l\|^2_{L^2(S^{n-1})}
\nonumber\\[4pt]
&=\big[-l(l+n-2)\big]^{m_l}\int_{S^{n-1}}k\overline{Y_l}\,d\omega
\nonumber\\[4pt]
&=\int_{S^{n-1}}k\Delta^{m_l}_{S^{n-1}}\overline{Y_l}\,d\omega
=\int_{S^{n-1}}\big(\Delta^{m_l}_{S^{n-1}}k\big)\overline{Y_l}\,d\omega,
\end{align}
where the first equality uses \eqref{eq:yTRf}, the second one is
based on \eqref{a-97t555}, and the third one follows via repeated
integrations by parts. In turn, from \eqref{a-97tgf} and the
Cauchy-Schwarz inequality we obtain
\begin{align}\label{a-97tgf.2}
\|Y_l\|_{L^2(S^{n-1})}\leq l^{-2m_l}
\big\|\Delta^{m_l}_{S^{n-1}}k\big\|_{L^2(S^{n-1})},
\qquad\forall\,l\in{\mathbb{N}}_0.
\end{align}
We continue by noting that the homogeneity of $k$ together with
\eqref{eq:yTRf} and \eqref{eq:y44r} permit us to express
\begin{align}\label{eq:yr3}
k(x) &=\frac{k(x/|x|)}{|x|^{n-1}}
=\sum_{l=0}^\infty\frac{Y_l(x/|x|)}{|x|^{n-1}}
=\sum_{l=0}^\infty\frac{P_l(x/|x|)}{|x|^{n-1}}
=\sum_{l=0}^\infty\frac{P_l(x)}{|x|^{n-1+l}},
\end{align}
for each $x\in{\mathbb{R}}^n\setminus\{0\}$. For each
$l\in{\mathbb{N}}_0$, let ${\mathbb{T}}_l$, $T_l$ be the integral
operators defined analogously to \eqref{T-layer.44-BaB} and
\eqref{T-pv.44-BaB} in which the kernel $k(x-y)$ has been replaced
by $P_l(x-y)|x-y|^{-(n-1+l)}$. Then for each
$f\in{\mathscr{C}}^\alpha(\partial\Omega)$ we may estimate
\begin{align}\label{a-97tgf.3}
\sum_{l=0}^\infty\|{\mathbb{T}}_lf\|_{{\mathscr{C}}^\alpha(\overline{\Omega})}
&\leq\sum_{l=0}^\infty C^l 2^{l^2}\|P_l\|_{L^2(S^{n-1})}
\|f\|_{{\mathscr{C}}^\alpha(\partial\Omega)}
\nonumber\\[4pt]
&=\sum_{l=0}^\infty C^l 2^{l^2}\|Y_l\|_{L^2(S^{n-1})}
\|f\|_{{\mathscr{C}}^\alpha(\partial\Omega)}
\nonumber\\[4pt]
&\leq\Bigg\{\sum_{l=0}^\infty C^l 2^{l^2}l^{-2m_l}
\big\|\Delta^{m_l}_{S^{n-1}}k\big\|_{L^2(S^{n-1})}\Bigg\}
\|f\|_{{\mathscr{C}}^\alpha(\partial\Omega)},
\end{align}
by invoking \eqref{jhygff8533}, \eqref{a-97tgf.2}, and keeping in
mind that $P\big|_{S^{n-1}}=Y_l$ (cf. \eqref{eq:y44r}). Since for
$l$ large we have $C^l 2^{l^2}\leq 4^{l^2}$, it follows from
\eqref{eq:Reac} that the series in the curly bracket in
\eqref{a-97tgf.3} is convergent to some finite constant $M$. Based
on this and \eqref{eq:yr3} we may then conclude that
$\|{\mathbb{T}}f\|_{{\mathscr{C}}^\alpha(\overline{\Omega})}
\leq\sum_{l=0}^\infty\|{\mathbb{T}}_lf\|_{{\mathscr{C}}^\alpha(\overline{\Omega})}
\leq M\|f\|_{{\mathscr{C}}^\alpha(\partial\Omega)}$. This proves the
boundedness of the first operator in \eqref{maKP-BaB}, and the
second operator in \eqref{maKP-BaB} is treated similarly (making use
of \eqref{jhygff8533.2}).
\end{proof}

\begin{remark}\label{yafv}
We claim that condition \eqref{eq:Reac} is satisfied whenever the
kernel $k$ is of the form $P(x)/|x|^{n-1+l_o}$ for some homogeneous
polynomial $P$ of degree $l_o\in 2{\mathbb{N}}-1$ in
${\mathbb{R}}^n$. Indeed, writing
$P(x)/|x|^{n-1+l_o}=P(x/|x|)/|x|^{n-1}$ and invoking
\eqref{eq:Staer.2}, there is no loss of generality in assuming that
$P$ is also harmonic to begin with. Granted this, it follows that
$k\big|_{S^{n-1}}=P\big|_{S^{n-1}}$ is a surface spherical harmonic
of degree $l_o$, hence {\rm (}cf. \cite[\S\,3.1.4, p.\,70]{St70}{\rm
)}
$\Delta_{S^{n-1}}\big(k\big|_{S^{n-1}}\big)=-l_o(l_o+n-2)\big(k\big|_{S^{n-1}}\big)$.
Choosing $m_l:=l^2$ for each $l\in{\mathbb{N}}_0$ and iterating this
formula then shows that the series in \eqref{eq:Reac} is dominated
by
\begin{equation}\label{eq:Reac22}
\sum_{l=0}^\infty
4^{l^2}l^{-2l^2}\big[l_o(l_o+n-2)\big]^{l^2}\|k\|_{L^2(S^{n-1})}<+\infty.
\end{equation}
\end{remark}

\section{Further Results}
\setcounter{equation}{0} \label{S-7}

We start by recalling some definitions. First, given a compact
Ahlfors regular set $\Sigma\subset{\mathbb{R}}^n$ introduce
$\sigma:={\mathcal{H}}^{n-1}\lfloor\Sigma$ and define the
John-Nirenberg space of functions of bounded mean oscillations on
$\Sigma$ as
\begin{equation}\label{4}
{\rm BMO}(\Sigma):=\big\{f\in L^1(\Sigma,\sigma):\,f^{\#,\,p}\in
L^\infty(\Sigma,\sigma)\big\},
\end{equation}
where $p\in[1,\infty)$ is a fixed parameter and
\begin{equation}\label{5}
f^{\#,\,p}(x):=\sup\limits_{r>0}\Bigg(\frac{1}{\sigma(\Sigma\cap
B(x,r))} \int\limits_{\Sigma\cap
B(x,r)}|f(y)-f_{\Delta(x,r)}|^p\,d\sigma(y)\Bigg)^{\frac1p},
\end{equation}
with $f_{\Delta(x,r)}$ the mean value of $f$ on $\Sigma\cap B(x,r)$.
As is well known, various choices of $p$ give the same space.
Keeping this in mind, we define the seminorm
\begin{equation}\label{6}
[f]_{{\rm BMO}(\Sigma)}:=\|f^{\#,\,p}\|_{L^\infty(\Sigma,\sigma)}.
\end{equation}
We then define the Sarason space ${\rm VMO}(\Sigma)$ of functions of
vanishing mean oscillations on $\Sigma$ as the closure in ${\rm
BMO}(\Sigma)$ of ${\mathscr{C}}^{0}(\Sigma)$, the space of
continuous functions on $\Sigma$. Alternatively, given any
$\alpha\in(0,1)$, the space ${\rm VMO}(\Sigma)$ may be described
(cf. \cite[Proposition~2.15, p.\,2602]{HoMiTa10}) as the closure in
${\rm BMO}(\Sigma)$ of ${\mathscr{C}}^{\alpha}(\Sigma)$. Hence, in
the present context,
\begin{equation}\label{eq:12123}
\bigcup_{0\leq\alpha<1}{\mathscr{C}}^{\alpha}(\Sigma)
\hookrightarrow {\rm VMO}(\Sigma)\hookrightarrow
{\rm BMO}(\Sigma)\hookrightarrow\bigcap_{0<p<\infty}
L^p(\Sigma,\sigma).
\end{equation}

\begin{proposition}\label{MPjab}
If $\Omega\subseteq\mathbb{R}^n$ is a {\rm UR} domain with compact
boundary then the principal value Cauchy-Clifford operator
${\mathcal{C}}^{{}^{\rm pv}}$ from \eqref{Cau-C2} is bounded both on
${\rm BMO}(\partial\Omega)\otimes{\mathcal{C}}\!\ell_{n}$, as well as on
${\rm VMO}(\partial\Omega)\otimes{\mathcal{C}}\!\ell_{n}$. Moreover,
$\big({\mathcal{C}}^{{}^{\rm pv}}\big)^2=\tfrac{1}{4}I$ both on
${\rm BMO}(\partial\Omega)\otimes{\mathcal{C}}\!\ell_{n}$ and on
${\rm VMO}(\partial\Omega)\otimes{\mathcal{C}}\!\ell_{n}$. Hence, in
particular, ${\mathcal{C}}^{{}^{\rm pv}}$ is an isomorphism when acting
on either of these spaces.
\end{proposition}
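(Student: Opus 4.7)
The plan is to bootstrap from the $L^p$ boundedness of $\mathcal{C}^{{}^{\rm pv}}$ (Proposition~\ref{iTRfc}) to ${\rm BMO}$ boundedness via the classical Calder\'on-Zygmund decomposition, then transfer to ${\rm VMO}$ by density, and finally pass the identity $(\mathcal{C}^{{}^{\rm pv}})^2 = \tfrac{1}{4}I$ from $L^2$ up to ${\rm BMO}$ using the embedding ${\rm BMO}(\partial\Omega) \subset L^2(\partial\Omega,\sigma)$, which is available because $\partial\Omega$ is compact.

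For the ${\rm BMO}$ boundedness, I would apply the standard argument in the space of homogeneous type $(\partial\Omega, |\cdot|, \sigma)$. The kernel $K(x,y) := \omega_{n-1}^{-1}\, \frac{x-y}{|x-y|^n} \odot \nu(y)$ of $\mathcal{C}^{{}^{\rm pv}}$ satisfies the size bound $|K(x,y)| \leq C|x-y|^{-(n-1)}$ and, crucially, the smoothness-in-$x$ estimate $|K(x,y)-K(x',y)| \leq C|x-x'|/|x-y|^n$ whenever $|x-x'|\leq |x-y|/2$, which is all the ${\rm BMO}$ estimate requires (the kernel need not be smooth in $y$, and indeed $\nu(y)$ appears only as a pointwise multiplier). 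Given $f \in {\rm BMO}(\partial\Omega)\otimes\mathcal{C}\!\ell_n$ and a surface ball $B = B(x_0,r) \cap \partial\Omega$, I would split $f = (f - f_{2B})\mathbf{1}_{2B} + (f - f_{2B})\mathbf{1}_{(2B)^c} + f_{2B}$, treat the local piece via the $L^2$ boundedness from Proposition~\ref{iTRfc} combined with John-Nirenberg, handle the nonlocal piece by comparing the value at $x\in B$ to the value at $x_0$, using the dyadic annular decomposition, the $x$-smoothness of $K$, and the familiar bound $|f_{2^kB}-f_{2B}| \leq Ck\|f\|_{{\rm BMO}}$ (yielding a convergent sum of the form $\sum_k 2^{-k}(k+1)$), and finally observe that the constant piece $f_{2B}\,\mathcal{C}^{{}^{\rm pv}}(1) = \pm f_{2B}/2$ from \eqref{eq:CsqASd.2tTT} contributes nothing to the mean oscillation over $B$.

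For the ${\rm VMO}$ boundedness, I would invoke the characterization \eqref{eq:12123}, namely that ${\rm VMO}(\partial\Omega)$ is the ${\rm BMO}$-closure of $\mathscr{C}^\alpha(\partial\Omega)$ for any fixed $\alpha\in(0,1)$. Since a {\rm UR} domain with compact boundary automatically meets the hypotheses of Theorem~\ref{i65r5ED}, the operator $\mathcal{C}^{{}^{\rm pv}}$ maps $\mathscr{C}^\alpha(\partial\Omega)\otimes\mathcal{C}\!\ell_n$ boundedly into itself, hence into ${\rm VMO}(\partial\Omega)\otimes\mathcal{C}\!\ell_n$. Combined with the ${\rm BMO}$ continuity established in the previous paragraph, this forces $\mathcal{C}^{{}^{\rm pv}}$ to send the ${\rm BMO}$-closure ${\rm VMO}(\partial\Omega)\otimes\mathcal{C}\!\ell_n$ into itself.

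For the identity $(\mathcal{C}^{{}^{\rm pv}})^2 = \tfrac{1}{4}I$ on both spaces, the compactness of $\partial\Omega$ together with John-Nirenberg yields ${\rm BMO}(\partial\Omega) \subset L^2(\partial\Omega,\sigma)$; for any $f \in {\rm BMO}(\partial\Omega)\otimes\mathcal{C}\!\ell_n$, viewing $f$ as an $L^2$ function and invoking \eqref{eq:CsquaTT} gives $(\mathcal{C}^{{}^{\rm pv}})^2 f = \tfrac{1}{4}f$ at $\sigma$-a.e.\ point, and the ${\rm BMO}$ boundedness guarantees the left side lies in ${\rm BMO}$, so the identity persists as an equality of ${\rm BMO}$ elements (and restricts to ${\rm VMO}$). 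The isomorphism conclusion is then immediate, with $4\mathcal{C}^{{}^{\rm pv}}$ serving as a two-sided inverse. The main obstacle is the ${\rm BMO}$ estimate of the nonlocal piece; although classical, it must be executed carefully because the kernel is smooth only in the first variable, but the standard Calder\'on-Zygmund ${\rm BMO}$ argument was designed with exactly this asymmetry in mind.
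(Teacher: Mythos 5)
Your proposal is correct and follows essentially the same route as the paper: the ${\rm BMO}$ bound is obtained by the same Calder\'on--Zygmund splitting (local piece via the $L^2$ boundedness from Proposition~\ref{iTRfc}, nonlocal piece via smoothness of the kernel in the first variable, dyadic annuli, and the telescoping bound $|f_{\Delta_{2^{k+1}r}}-f_{\Delta_{2^kr}}|\leq c\,f^{\#,1}(x_0)$ yielding $\sum_j 2^{-j}(1+j)$), the ${\rm VMO}$ statement follows from the density of ${\mathscr{C}}^\alpha$ in ${\rm VMO}$ together with Theorem~\ref{i65r5ED}, and the identity $\big({\mathcal{C}}^{{}^{\rm pv}}\big)^2=\tfrac14 I$ is transferred from $L^2$ using \eqref{eq:12123} and \eqref{eq:CsquaTT}. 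No gaps.
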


\begin{proof}
To begin with, observe that in the present setting
\eqref{eq:CsqASd.2tTT} ensures that ${\mathcal{C}}^{{}^{\rm pv}}$
is well-defined on ${\rm BMO}(\partial\Omega)\otimes{\mathcal{C}}\!\ell_{n}$.
Fix now $f\in{\rm BMO}(\partial\Omega)\otimes{\mathcal{C}}\!\ell_{n}$
and pick some $x_0\in\partial\Omega$ and $r>0$. For each $R>0$, let
us agree to abbreviate $\Delta_{R}:=\partial\Omega\cap B(x_0,R)$.
Denote by $\nu$ the geometric measure theoretic outward unit normal
to $\Omega$ and, with $\sigma:={\mathcal{H}}^{n-1}\lfloor\partial\Omega$,
introduce
\begin{equation}\label{eq:}
A(x_0,r):=\frac{1}{\omega_{n-1}}
\int\limits_{\stackrel{y\in\partial\Omega}{|x_0-y|\geq 2r}}
\frac{x_0-y}{|x_0-y|^n}\odot\nu(y)\odot\big(f(y)-f_{\Delta_{2r}}\big)\,d\sigma(y)
\pm\frac{1}{2}f_{\Delta_{2r}}
\end{equation}
where the sign is chosen to be plus if $\Omega$ is bounded and minus
if $\Omega$ is unbounded, and where $f_{\Delta_{2r}}$ stands for the
integral average of $f$ over $\Delta_{2r}$. For $x\in\Delta_{r}$ use
\eqref{eq:CsqASd.2tTT} to split
\begin{align}\label{eq:3rf5f}
{\mathcal{C}}^{{}^{\rm pv}}\!f(x)&=\lim_{\varepsilon\to 0^{+}}
\frac{1}{\omega_{n-1}}\int\limits_{\stackrel{y\in\partial\Omega\setminus
B(x,\varepsilon)}{|x_0-y|<2r}}
\frac{x-y}{|x-y|^n}\odot\nu(y)\odot\big(f(y)-f_{\Delta_{2r}}\big)\,d\sigma(y)
\nonumber\\[4pt]
&\quad+\frac{1}{\omega_{n-1}}\int\limits_{\stackrel{y\in\partial\Omega}{|x_0-y|\geq
2r}} \Big(\frac{x-y}{|x-y|^n}-\frac{x_0-y}{|x_0-y|^n}\Big)
\odot\nu(y)\odot\big(f(y)-f_{\Delta_{2r}}\big)\,d\sigma(y)
\nonumber\\[4pt]
&\quad+A(x_0,r),
\end{align}
then employ this representation (and Minkowski's inequality) in
order to estimate
\begin{align}\label{eq:3rf5f.2}
\Bigg(\frac{1}{\sigma(\Delta_r)}
\int_{\Delta_r}\big|{\mathcal{C}}^{{}^{\rm pv}}\!f(x)-A(x_0,r)\big|^2\,d\sigma(x)\Bigg)^{\frac12}
\leq c\big(I+II\big),
\end{align}
where $c\in(0,\infty)$ depends only on $\Omega$ and
\begin{equation*}
I:=\Bigg(\frac{1}{\sigma(\Delta_r)}
\int_{\partial\Omega}\big|{\mathcal{C}}^{{}^{\rm pv}}\!\big((f-f_{\Delta_{2r}})
{\mathbf{1}}_{\Delta_{2r}}\big)\big|^2\,d\sigma\Bigg)^{\frac12}
\end{equation*}
and
\begin{align*}
II:=r^{-\frac{n-1}{2}}\!\!\!\!
\int\limits_{\stackrel{y\in\partial\Omega}{|x_0-y|\geq 2r}}
\Bigg(\int_{\Delta_r}\Big|\frac{x-y}{|x-y|^n}-\frac{x_0-y}{|x_0-y|^n}\Big|^2
\,d\sigma(x)\Bigg)^{\frac12}\big|f(y)-f_{\Delta_{2r}}\big|\,d\sigma(y).
\end{align*}
Now, the boundedness of ${\mathcal{C}}^{{}^{\rm pv}}$ on
$L^2(\partial\Omega,\sigma)\otimes{\mathcal{C}}\!\ell_{n}$ from
Proposition~\ref{iTRfc} gives (bearing in mind that $\sigma$ is
doubling)
\begin{align}\label{eq:3rf5f.5}
I\leq c\Bigg(\frac{1}{\sigma(\Delta_{2r})}
\int_{\Delta_{2r}}\big|f-f_{\Delta_{2r}}\big|^2\,d\sigma\Bigg)^{\frac12}
\leq c[f]_{{\rm BMO}(\partial\Omega)\otimes{\mathcal{C}}\!\ell_{n}},
\end{align}
which suits our purposes. Next, we write
\begin{align}\label{eq:3rf5f.7}
II & \leq c\int\limits_{\stackrel{y\in\partial\Omega}{|x_0-y|\geq
2r}} \frac{r}{|x_0-y|^n}\big|f(y)-f_{\Delta_{2r}}\big|\,d\sigma(y)
\nonumber\\[4pt]
& \leq
c\sum_{j=1}^{\infty}\int_{\Delta_{2^{j+1}r}\setminus\Delta_{2^jr}}
\frac{r}{(2^jr)^n}|f(y)-f_{\Delta_{2r}}|\,d\sigma(y)
\nonumber\\[4pt]
&\leq c\sum_{j=1}^{\infty}\frac{1}{2^j}\meanint_{\Delta_{2^{j+1}r}}
|f-f_{\Delta_{2r}}|\,d\sigma
\nonumber\\[4pt]
&\leq c\sum_{j=1}^{\infty}\frac{1}{2^j}\meanint_{\Delta_{2^{j+1}r}}
\Bigl[|f-f_{\Delta_{2^{j+1}r}}|
+\sum_{k=1}^j|f_{\Delta_{2^{k+1}r}}-f_{\Delta_{2^kr}}|\Bigr]\,d\sigma
\nonumber\\[4pt]
&\leq c\sum_{j=1}^{\infty}\frac{1}{2^j}(1+j)f^{\#,\,1}(x_0) \leq
cf^{\#,\,1}(x_0) \leq c[f]_{{\rm
BMO}(\partial\Omega)\otimes{\mathcal{C}}\!\ell_{n}}.
\end{align}
Above, the first inequality follows from the Mean Value Theorem,
while the second inequality is a consequence of writing the integral
over $\partial\Omega\setminus\Delta_{2r}$ as the telescopic sum over
$\Delta_{2^{j+1}r}\setminus\Delta_{2^jr}$, $j\in{\mathbb{N}}$ and
the fact that $|x_0-y|\geq 2^jr$ for
$y\in\Delta_{2^{j+1}r}\setminus\Delta_{2^jr}$. The third inequality
is a result of enlarging the domain of integration from
$\Delta_{2^{j+1}r}\setminus\Delta_{2^jr}$ to $\Delta_{2^{j+1}r}$ and
using $\sigma\bigl(\Delta_{2^{j+1}r}\bigr)\approx(2^jr)^{n-1}$. The
fourth inequality follows from the triangle inequality after writing
\begin{equation}\label{taffee}
f-f_{\Delta_{2r}}=f-f_{\Delta_{2^{j+1}r}}+\sum\limits_{k=1}^j
(f_{\Delta_{2^{k+1}r}}-f_{\Delta_{2^kr}}).
\end{equation}
The fifth inequality is a consequence of the fact that, for each
$k$, we have
\begin{align}\label{taffee-2}
\bigl|f_{\Delta_{2^{k+1}r}}-f_{\Delta_{2^kr}}\bigr|
&=\Bigl|\meanint_{\Delta_{2^kr}}\big(f-f_{\Delta_{2^{k+1}r}}\big)\,d\sigma\Bigr|
\nonumber\\[4pt]
&\leq
c\meanint_{\Delta_{2^{k+1}r}}\big|f-f_{\Delta_{2^{k+1}r}}\big|\,d\sigma
\leq c\,f^{\#,\,1}(x_0).
\end{align}
The sixth inequality is a consequence of $\sum\limits_{j=1}^\infty
2^{-j}(1+j)<+\infty$ and, finally, the last inequality is seen from
\eqref{6}.

From \eqref{eq:3rf5f.2}-\eqref{eq:3rf5f.7} we eventually obtain
$\big\|\big({\mathcal{C}}^{{}^{\rm pv}}\!f\big)^{\#,\,2}\big\|
_{L^\infty(\partial\Omega,\sigma)\otimes{\mathcal{C}}\!\ell_{n}}
\leq c[f]_{{\rm BMO}(\partial\Omega)\otimes{\mathcal{C}}\!\ell_{n}}$,
hence
\begin{equation}\label{6trf}
\big[{\mathcal{C}}^{{}^{\rm pv}}\!f\big]_{{\rm BMO}(\partial\Omega)
\otimes{\mathcal{C}}\!\ell_{n}} \leq c[f]_{{\rm BMO}(\partial\Omega)
\otimes{\mathcal{C}}\!\ell_{n}}
\end{equation}
from which we conclude that the operator
\begin{equation}\label{6trf.11}
{\mathcal{C}}^{{}^{\rm pv}}:{\rm BMO}(\partial\Omega)
\otimes{\mathcal{C}}\!\ell_{n}\longrightarrow
{\rm BMO}(\partial\Omega)\otimes{\mathcal{C}}\!\ell_{n}
\end{equation}
is well-defined and bounded. Next, that
\begin{equation}\label{6trf.12}
{\mathcal{C}}^{{}^{\rm pv}}:{\rm VMO}(\partial\Omega)
\otimes{\mathcal{C}}\!\ell_{n}\longrightarrow
{\rm VMO}(\partial\Omega)\otimes{\mathcal{C}}\!\ell_{n}
\end{equation}
is also well-defined and bounded follows from \eqref{6trf.11}, the
characterization of ${\rm
VMO}(\partial\Omega)\otimes{\mathcal{C}}\!\ell_{n}$ as the closure
in ${\rm BMO}(\partial\Omega)\otimes{\mathcal{C}}\!\ell_{n}$ of
${\mathscr{C}}^{\alpha}(\partial\Omega)\otimes{\mathcal{C}}\!\ell_{n}$
for each $\alpha\in(0,1)$, and Theorem~\ref{i65r5ED}.

Finally, the claims in the last part of the statement of the
proposition are direct consequences of what we have proved so far,
\eqref{eq:12123}, and \eqref{eq:CsquaTT}.
\end{proof}

When $\Omega\subseteq\mathbb{R}^n$ is a {\rm UR} domain with compact
boundary, it follows from \eqref{uytggf-RRR} and \eqref{maKP56-BBN}
in Theorem~\ref{Main-T2-BBBB} that $R_j$ maps ${\mathscr{C}}^\alpha(\partial\Omega)$
into ${\rm BMO}(\partial\Omega)$ for each $j\in\{1,\dots,n\}$. Hence, in this case,
we have $R_j1\in{\rm BMO}(\partial\Omega)$ for each $j\in\{1,\dots,n\}$.
Remarkably, the proximity of the ${\rm BMO}$ functions $R_j1$, $1\leq j\leq n$,
to the space ${\rm VMO}(\partial\Omega)$ controls how close the outward
unit normal $\nu$ to $\Omega$ is to being in ${\rm VMO}(\partial\Omega)$.
Specifically, we have the following result.

\begin{theorem}\label{Kbagg}
Let $\Omega\subseteq\mathbb{R}^n$ be a {\rm UR} domain with compact
boundary and denote by $\nu$ the geometric measure theoretic outward
unit normal to $\Omega$. Also, let $\big\|{\mathcal{C}}^{{}^{\rm pv}}\big\|_\ast$ stand for
the operator norm of the Cauchy-Clifford singular integral operator
acting on the space ${\rm BMO}(\partial\Omega)\otimes{\mathcal{C}}\!\ell_{n}$.
Then, with distances considered in ${\rm BMO}(\partial\Omega)$, one has
\begin{align}\label{eq:iugf.AA}
& {\rm dist}\big(\nu\,,\,{\rm VMO}(\partial\Omega)\big)
\leq 4\big\|{\mathcal{C}}^{{}^{\rm pv}}\big\|_\ast\Big(\sum_{j=1}^n{\rm dist}
\big(R_j1\,,\,{\rm VMO}(\partial\Omega)\big)^2\Big)^{1/2},
\\[4pt]
& \Big(\sum_{j=1}^n{\rm dist}
\big(R_j1\,,\,{\rm VMO}(\partial\Omega)\big)^2\Big)^{1/2}
\leq\big\|{\mathcal{C}}^{{}^{\rm pv}}\big\|_\ast\,{\rm dist}
\big(\nu\,,\,{\rm VMO}(\partial\Omega)\big).
\label{eq:iugf.BB}
\end{align}
\end{theorem}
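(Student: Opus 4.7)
The plan is to leverage two ingredients already established in the excerpt: the identity
\begin{equation*}
\mathcal{C}^{{}^{\rm pv}}\nu \;=\; \sum_{j=1}^{n}(R_j 1)\,e_j \quad\text{at $\sigma$-a.e.\ point on $\partial\Omega$,}
\end{equation*}
derived in the course of proving {\rm (b)}$\Rightarrow${\rm (a)} of Theorem~\ref{Main-T1aa} (see \eqref{eq:CHba.11}), together with the three properties of $\mathcal{C}^{{}^{\rm pv}}$ furnished by Proposition~\ref{MPjab}: boundedness on ${\rm BMO}(\partial\Omega)\otimes\mathcal{C}\!\ell_n$ with operator norm $\|\mathcal{C}^{{}^{\rm pv}}\|_\ast$, invariance of the subspace ${\rm VMO}(\partial\Omega)\otimes\mathcal{C}\!\ell_n$, and the squaring relation $(\mathcal{C}^{{}^{\rm pv}})^2=\tfrac{1}{4}I$. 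Applying $\mathcal{C}^{{}^{\rm pv}}$ to the displayed identity and using the squaring relation yields the dual identity $\nu=4\,\mathcal{C}^{{}^{\rm pv}}\bigl(\sum_{j}(R_j 1)e_j\bigr)$ at $\sigma$-a.e.\ point on $\partial\Omega$. These two identities will drive \eqref{eq:iugf.BB} and \eqref{eq:iugf.AA}, respectively.

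First I would prove \eqref{eq:iugf.BB}. For any auxiliary $v\in{\rm VMO}(\partial\Omega)\otimes\mathcal{C}\!\ell_n$, Proposition~\ref{MPjab} gives $\mathcal{C}^{{}^{\rm pv}}v\in{\rm VMO}(\partial\Omega)\otimes\mathcal{C}\!\ell_n$, so the first identity permits the estimate
\begin{equation*}
\Big[\sum_{j=1}^{n}(R_j 1)e_j - \mathcal{C}^{{}^{\rm pv}}v\Big]_{{\rm BMO}}
= \big[\mathcal{C}^{{}^{\rm pv}}(\nu - v)\big]_{{\rm BMO}}
\le \|\mathcal{C}^{{}^{\rm pv}}\|_\ast\,[\nu - v]_{{\rm BMO}}.
\end{equation*}
Infimizing over $v$ yields $\operatorname{dist}\bigl(\sum_j(R_j 1)e_j,\,{\rm VMO}\otimes\mathcal{C}\!\ell_n\bigr)\le\|\mathcal{C}^{{}^{\rm pv}}\|_\ast\,\operatorname{dist}(\nu,\,{\rm VMO}\otimes\mathcal{C}\!\ell_n)$. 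Re-expressing each side via the pointwise Pythagorean identity $\bigl|\sum_j f_j e_j\bigr|^2=\sum_j f_j^2$, valid because $\{e_j\}_{j=1}^n$ is orthonormal in $\mathbb{R}^n\hookrightarrow\mathcal{C}\!\ell_n$, would convert the left-hand side into $\bigl(\sum_j\operatorname{dist}(R_j 1,{\rm VMO})^2\bigr)^{1/2}$ and identify the right-hand side with $\operatorname{dist}(\nu,{\rm VMO})$, producing \eqref{eq:iugf.BB}.

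Next I would prove \eqref{eq:iugf.AA} by the parallel argument based on the dual identity. Given scalar ${\rm VMO}$ approximants $w_j$ of $R_j 1$, set $w:=\sum_j w_j e_j\in{\rm VMO}\otimes\mathcal{C}\!\ell_n$, so that $4\,\mathcal{C}^{{}^{\rm pv}}w\in{\rm VMO}\otimes\mathcal{C}\!\ell_n$ by Proposition~\ref{MPjab}; then
\begin{equation*}
[\nu - 4\,\mathcal{C}^{{}^{\rm pv}}w]_{{\rm BMO}}
= 4\Big[\mathcal{C}^{{}^{\rm pv}}\Big(\sum_j (R_j 1 - w_j)e_j\Big)\Big]_{{\rm BMO}}
\le 4\|\mathcal{C}^{{}^{\rm pv}}\|_\ast\,\Big[\sum_j (R_j 1 - w_j)e_j\Big]_{{\rm BMO}},
\end{equation*}
and taking the infimum over the $(w_j)$'s closes the argument. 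The main technical obstacle I anticipate is precisely the clean translation between the vector-valued ${\rm BMO}$-distance on one side of each inequality and the $\ell^2$-combination of scalar ${\rm BMO}$-distances on the other; this should be dispatched by working throughout with the $L^2$-based Campanato seminorm on ${\rm BMO}\otimes\mathcal{C}\!\ell_n$, where the Pythagorean identity lifts to integral averages and enables the componentwise reduction, with John--Nirenberg providing equivalence to the seminorm used to define $\|\mathcal{C}^{{}^{\rm pv}}\|_\ast$.
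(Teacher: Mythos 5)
Your proposal is correct and follows essentially the same route as the paper: both proofs rest on the identity ${\mathcal{C}}^{{}^{\rm pv}}\nu=\pm\sum_j(R_j1)e_j$ together with its inverse $\nu=\mp4\,{\mathcal{C}}^{{}^{\rm pv}}\big(\sum_j(R_j1)e_j\big)$ obtained from $({\mathcal{C}}^{{}^{\rm pv}})^2=\tfrac14 I$, the ${\rm BMO}/{\rm VMO}$ mapping properties of ${\mathcal{C}}^{{}^{\rm pv}}$ from Proposition~\ref{MPjab}, and the identification of the vector-valued ${\rm BMO}$-distance with the $\ell^2$-combination of scalar distances (a point the paper also treats as a bookkeeping matter, via the same Campanato-seminorm convention you invoke). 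Your one-sided infimization over competitors of the form ${\mathcal{C}}^{{}^{\rm pv}}v$ is a minor stylistic variant of the paper's substitution $\xi={\mathcal{C}}^{{}^{\rm pv}}\eta$ via the isomorphism property, and yields the same constants.
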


\begin{proof}
On the one had, based on \eqref{eq:Csq22},
Proposition~\ref{MPjab}, and the fact that each $R^{{}^{\rm pv}}_j$
agrees with $R_j$ on $L^2(\partial\Omega)$, we may estimate
\begin{align}\label{eq:Csq22.BBss}
{\rm dist}\big(\nu\,,\,{\rm VMO}(\partial\Omega)\big)
&=\inf_{\eta\in{\rm VMO}(\partial\Omega)}
[\nu-\eta]_{{\rm BMO}(\partial\Omega)}
\nonumber\\[4pt]
&=\inf_{\eta\in{\rm
VMO}(\partial\Omega)\otimes{\mathcal{C}}\!\ell_{n}} [\nu-\eta]_{{\rm
BMO}(\partial\Omega)\otimes{\mathcal{C}}\!\ell_{n}}
\nonumber\\[4pt]
&=\inf_{\eta\in{\rm
VMO}(\partial\Omega)\otimes{\mathcal{C}}\!\ell_{n}}
4\Big[{\mathcal{C}}^{{}^{\rm pv}}
\Big(\sum_{j=1}^n\big(R^{{}^{\rm pv}}_j1\big)e_j+{\mathcal{C}}^{{}^{\rm pv}}\!\eta\Big)
\Big] _{{\rm BMO}(\partial\Omega)\otimes{\mathcal{C}}\!\ell_{n}}
\nonumber\\[4pt]
&\leq 4\big\|{\mathcal{C}}^{{}^{\rm pv}}\big\|_\ast \inf_{\eta\in{\rm
VMO}(\partial\Omega)\otimes{\mathcal{C}}\!\ell_{n}}
\Big[\sum_{j=1}^n(R_j1)e_j+{\mathcal{C}}^{{}^{\rm pv}}\!\eta\Big]_{{\rm
BMO}(\partial\Omega)\otimes{\mathcal{C}}\!\ell_{n}}
\nonumber\\[4pt]
&=4\big\|{\mathcal{C}}^{{}^{\rm pv}}\big\|_\ast \inf_{\xi\in{\rm
VMO}(\partial\Omega)\otimes{\mathcal{C}}\!\ell_{n}}
\Big[\sum_{j=1}^n(R_j1)e_j-\xi\Big]_{{\rm
BMO}(\partial\Omega)\otimes{\mathcal{C}}\!\ell_{n}}
\nonumber\\[4pt]
&=4\big\|{\mathcal{C}}^{{}^{\rm pv}}\big\|_\ast
\inf_{\xi\in{\rm VMO}(\partial\Omega)}
\Big[\sum_{j=1}^n(R_j1)e_j-\xi\Big]_{{\rm BMO}(\partial\Omega)}
\nonumber\\[4pt]
&=4\big\|{\mathcal{C}}^{{}^{\rm pv}}\big\|_\ast \Big(\sum_{j=1}^n{\rm
dist}\big(R_j1\,,\,{\rm VMO}(\partial\Omega)\big)^2\Big)^{1/2},
\end{align}
yielding \eqref{eq:iugf.AA}. On the other hand, from
\eqref{eq:CHba.11} and Proposition~\ref{MPjab} we deduce
(once again by bearing in mind that each $R^{{}^{\rm pv}}_j$
agrees with $R_j$ on $L^2(\partial\Omega)$) that
\begin{align}\label{eq:CHbbab645}
\Big(\sum_{j=1}^n{\rm dist}
\big(R_j1\,,\,{\rm VMO}(\partial\Omega)\big)^2\Big)^{1/2}
&=\inf_{\xi\in{\rm VMO}(\partial\Omega)\otimes{\mathcal{C}}\!\ell_{n}}
\Big[\sum_{j=1}^n(R_j1)e_j-\xi\Big]
_{{\rm BMO}(\partial\Omega)\otimes{\mathcal{C}}\!\ell_{n}}
\nonumber\\[4pt]
&=\inf_{\xi\in{\rm VMO}(\partial\Omega)\otimes{\mathcal{C}}\!\ell_{n}}
\Big[\sum_{j=1}^n\big(R^{{}^{\rm pv}}_j1\big)e_j-\xi\Big]
_{{\rm BMO}(\partial\Omega)\otimes{\mathcal{C}}\!\ell_{n}}
\nonumber\\[4pt]
&=\inf_{\xi\in{\rm VMO}(\partial\Omega)\otimes{\mathcal{C}}\!\ell_{n}}
\Big[{\mathcal{C}}^{{}^{\rm pv}}\nu-\xi\Big]
_{{\rm BMO}(\partial\Omega)\otimes{\mathcal{C}}\!\ell_{n}}
\nonumber\\[4pt]
&=\inf_{\eta\in{\rm
VMO}(\partial\Omega)\otimes{\mathcal{C}}\!\ell_{n}}
\Big[{\mathcal{C}}^{{}^{\rm pv}}(\nu-\eta)\Big]
_{{\rm BMO}(\partial\Omega)\otimes{\mathcal{C}}\!\ell_{n}}
\nonumber\\[4pt]
&\leq\big\|{\mathcal{C}}^{{}^{\rm pv}}\big\|_\ast
\inf_{\eta\in{\rm VMO}(\partial\Omega)\otimes{\mathcal{C}}\!\ell_{n}}
\big[\nu-\eta\big]_{{\rm BMO}(\partial\Omega)\otimes{\mathcal{C}}\!\ell_{n}}
\nonumber\\[4pt]
&=\big\|{\mathcal{C}}^{{}^{\rm pv}}\big\|_\ast
\inf_{\eta\in{\rm VMO}(\partial\Omega)}[\nu-\eta]_{{\rm BMO}(\partial\Omega)}
\nonumber\\[4pt]
&=\big\|{\mathcal{C}}^{{}^{\rm pv}}\big\|_\ast\,
{\rm dist}\big(\nu\,,\,{\rm VMO}(\partial\Omega)\big),
\end{align}
finishing the justification of \eqref{eq:iugf.BB}.
\end{proof}

Having established Theorem~\ref{Kbagg}, we are now in a position to
present the proof of Theorem~\ref{Kbagg.A}.

\vskip 0.08in
\begin{proof}[Proof of Theorem~\ref{Kbagg.A}]
For the left-to-right implication in \eqref{eq:iugf.6r4}, start by
observing that $\Omega$ is a {\rm UR} domain (cf.
Definition~\ref{Def-UB}). As such, Theorem~\ref{Kbagg} applies and
\eqref{eq:iugf.BB} gives that $R_j1\in{\rm VMO}(\partial\Omega)$ for
each $j\in\{1,\dots,n\}$. For the right-to-left implication in
\eqref{eq:iugf.6r4}, use \eqref{Mabb88} and the background
assumptions on $\Omega$ to conclude that $\Omega$ is a {\rm UR}
domain, then invoke \eqref{eq:iugf.AA} from Theorem~\ref{Kbagg} to
conclude that $\nu\in{\rm VMO}(\partial\Omega)$.
\end{proof}

Moving on, we record the following definition.
\begin{definition}\label{Def-John}
Let $\Omega\subset{\mathbb{R}}^{n}$ be an open set with compact
boundary. Then $\Omega$ is said to satisfy a {\tt John} {\tt
condition} if there exist $\theta\in(0,1)$ and $R\in(0,\infty)$,
called the John constants of $\Omega$, with the following
significance. For every $p\in\partial\Omega$ and $r\in(0,R)$ one can
find $p_r\in B(p,r)\cap\Omega$ such that $B(p_r,\theta
r)\subset\Omega$ and with the property that for each $x\in
B(p,r)\cap\partial\Omega$ there exists a rectifiable path
$\gamma_x:[0,1]\to\overline{\Omega}$, whose length is
$\leq\theta^{-1}r$ and
\begin{eqnarray}\label{Loc-John1}
\gamma_x(0)=x,\quad\gamma_x(1)=p_r,\,\,\mbox{ and }\,\, {\rm
dist}\,(\gamma_x(t),\partial\Omega)>\theta\,|\gamma_x(t)-x|
\quad\forall\,t\in (0,1].
\end{eqnarray}
Furthermore, $\Omega$ is said to satisfy a {\tt two-sided John
condition} if both $\Omega$ and
${\mathbb{R}}^{n}\setminus\overline{\Omega}$ satisfy a John
condition.
\end{definition}

The above definition appears in \cite{HoMiTa10}, where it has been
noted that any {\rm NTA} domain (in the sense of D. Jerison and C.
Kenig; \cite{JeKe82}) with compact boundary satisfies a John
condition.

Next, we recall the concept of $\delta$-Reifenberg flat domain,
following \cite{KeTo99}, \cite{KeTo03}. As a preamble, the reader is
reminded that the Pompeiu-Hausdorff distance between two sets
$A,B\subseteq{\mathbb{R}}^{n}$ is given by
\begin{eqnarray}\label{R-3}
D[A,B]:=\max\Bigl\{\sup\{{\rm dist}(a,B):\,a\in A\}\,,\, \sup\{{\rm
dist}(b,A):\,b\in B\}\Bigr\}.
\end{eqnarray}

\begin{definition}\label{Def-R1}
Let $\Sigma\subset{\mathbb{R}}^{n}$ be a compact set and let
$\delta\in(0,\frac{1}{4\sqrt{2}})$. Call $\Sigma$ a $\delta$-{\tt
Reifenberg} {\tt flat} {\tt set} if there exists $R>0$ such that for
every $x\in\Sigma$ and every $r\in(0,R]$ there exists an
$(n-1)$-dimensional plane $L(x,r)$ which contains $x$ and such that
\begin{eqnarray}\label{R-2}
D\big[\Sigma\cap B(x,r)\,,\,L(x,r)\cap B(x,r)\big]\leq\delta r.
\end{eqnarray}
\end{definition}

\begin{definition}\label{Def-R3}
Say that a bounded open set $\Omega\subset{\mathbb{R}}^{n}$ has the
{\tt separation} {\tt property} if there exists $R>0$ such that for
every $x\in\partial\Omega$ and $r\in(0,R]$ there exists an
$(n-1)$-dimensional plane ${\mathcal{L}}(x,r)$ containing $x$ and a
choice of unit normal vector to ${\mathcal{L}}(x,r)$, call it
$\vec{n}_{x,r}$, satisfying
\begin{eqnarray}\label{RR-1}
\begin{array}{l}
\{y+t\,\vec{n}_{x,r}\in B(x,r):\,y\in{\mathcal{L}}(x,r),\,\,
t<-{\textstyle{\frac{r}{4}}}\}\subset\Omega,
\\[10pt]
\{y+t\,\vec{n}_{x,r}\in B(x,r):\,y\in{\mathcal{L}}(x,r),\,\,
t>{\textstyle{\frac{r}{4}}}\}\subset{\mathbb{R}}^{n}\setminus\Omega.
\end{array}
\end{eqnarray}
Moreover, if $\Omega$ is unbounded, it is also required that
$\partial\Omega$ divides ${\mathbb{R}}^{n}$ into two distinct
connected components and that ${\mathbb{R}}^{n}\setminus\Omega$ has
a non-empty interior.
\end{definition}

\begin{definition}\label{Def-R4}
Let $\Omega\subset{\mathbb{R}}^{n}$ be a bounded open set and
$\delta\in(0,\delta_n)$. Call $\Omega$ a $\delta$-{\tt Reifenberg}
{\tt flat} {\tt domain} if $\Omega$ has the separation property and
$\partial\Omega$ is a $\delta$-Reifenberg flat set.

The notion of {\tt Reifenberg} {\tt flat} {\tt domain} {\tt with}
{\tt vanishing} {\tt constant} is introduced in a similar fashion,
this time allowing the constant $\delta$ appearing in \eqref{R-2} to
depend on $r$, say $\delta=\delta(r)$, and demanding that
$\lim\limits_{r\to 0^{+}}\delta(r)=0$.
\end{definition}

As our next result shows, under appropriate background assumptions
(of a ``large" geometry nature) the proximity of the vector-valued
function $(R_11,R_21,\dots,R_n1)$ to the space ${\rm
VMO}(\partial\Omega)$, measured in ${\rm
BMO}(\partial\Omega)$, can be used to quantify
Reifenberg flatness.

\begin{theorem}\label{KbAAbb}
Assume $\Omega\subseteq\mathbb{R}^n$ is an open set with a compact
Ahlfors regular boundary, satisfying a two-sided John condition
{\rm (}hence, $\Omega$ is a {\rm UR} domain which further entails
that $R_j1\in{\rm BMO}(\partial\Omega)$ for each $j${\rm )}.
If, with distances considered in ${\rm BMO}(\partial\Omega)$,
\begin{equation}\label{eq:YYjygg}
\sum_{j=1}^n{\rm dist}\big(R_j1\,,\,{\rm
VMO}(\partial\Omega)\big)<\varepsilon
\end{equation}
then $\Omega$ is a $\delta$-Reifenberg flat domain for
$\delta=C_o\cdot\varepsilon$, where $C_o\in(0,\infty)$ depends only
on the Ahlfors regularity and John constants of $\Omega$.

As a consequence, if $R_j1\in{\rm VMO}(\partial\Omega)$ for every
$j\in\{1,\dots,n\}$ then actually $\Omega$ is a Reifenberg flat
domain with vanishing constant.
\end{theorem}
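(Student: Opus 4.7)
The plan is to combine the quantitative Riesz-transform/normal estimate from Theorem~\ref{Kbagg} with the geometric characterization of Reifenberg flat domains via the oscillation of the outward unit normal established in \cite[Theorem~4.21, p.\,2711]{HoMiTa10}.

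First, I would verify that the hypotheses place us in the UR category, so that Theorem~\ref{Kbagg} is applicable. The two-sided John condition evidently entails the two-sided corkscrew condition (take $p_r$ and the analogous point for $\mathbb{R}^n\setminus\overline{\Omega}$ as the corkscrew centers). Coupled with Ahlfors regularity of $\partial\Omega$, \eqref{eq:rTG866} yields that $\Omega$ is a UR domain, so \eqref{uytggf-RRR} and Theorem~\ref{Main-T2-BBBB} guarantee that the distributional Riesz transforms agree with their principal value counterparts on H\"older functions and extend to bounded operators from $L^\infty(\partial\Omega)$ into $\mathrm{BMO}(\partial\Omega)$; in particular, $R_j 1 \in \mathrm{BMO}(\partial\Omega)$ for each $j$, so that the quantities $\mathrm{dist}(R_j 1,\mathrm{VMO}(\partial\Omega))$ in \eqref{eq:YYjygg} are well-defined.

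Second, I would apply estimate \eqref{eq:iugf.AA} of Theorem~\ref{Kbagg}. Using the elementary inequality $\|\cdot\|_{\ell^2}\le \|\cdot\|_{\ell^1}$ together with the hypothesis \eqref{eq:YYjygg}, this gives
\[
\mathrm{dist}\bigl(\nu,\,\mathrm{VMO}(\partial\Omega)\bigr)\;\le\;4\,\bigl\|\mathcal{C}^{{}^{\mathrm{pv}}}\bigr\|_\ast\,\varepsilon,
\]
where $\|\mathcal{C}^{{}^{\mathrm{pv}}}\|_\ast$ is the operator norm of the Cauchy-Clifford operator on $\mathrm{BMO}(\partial\Omega)\otimes\mathcal{C}\!\ell_n$. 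By Proposition~\ref{MPjab} (and the ingredients of its proof), this norm depends only on the UR/Ahlfors regularity constants of $\partial\Omega$, hence ultimately only on the Ahlfors regularity and John constants of $\Omega$.

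Third, I would invoke the geometric result \cite[Theorem~4.21, p.\,2711]{HoMiTa10}: under the background hypotheses (compact Ahlfors regular boundary and two-sided John condition), the proximity of $\nu$ to $\mathrm{VMO}(\partial\Omega)$ controls Reifenberg flatness in the sense that $\Omega$ is $\delta$-Reifenberg flat with
\[
\delta\;\le\;C\cdot\mathrm{dist}\bigl(\nu,\,\mathrm{VMO}(\partial\Omega)\bigr),
\]
for a constant $C$ depending only on the Ahlfors regularity and John constants of $\Omega$. Combining the last two displays delivers $\delta\le C_o\cdot\varepsilon$ with $C_o:=4C\|\mathcal{C}^{{}^{\mathrm{pv}}}\|_\ast$, proving the main assertion. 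The consequence regarding the vanishing-constant case then follows by letting $\varepsilon\to 0^+$: if all $R_j 1\in\mathrm{VMO}(\partial\Omega)$, the Reifenberg flatness constant $\delta(r)$ produced at scale $r$ may be made arbitrarily small as $r\to 0^+$ (by carrying out the entire argument with BMO norms localized to balls of radius $r$, replacing $\mathrm{VMO}$ by continuous functions in this localized sense).

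The main obstacle lies in ensuring that the cited result from \cite{HoMiTa10} provides a quantitative, rather than merely qualitative, control of the Reifenberg flatness constant by $\mathrm{dist}(\nu,\mathrm{VMO}(\partial\Omega))$. If needed, this step is supplemented by a compactness-contradiction argument exploiting the scale-invariance of both sides of the inequality, using the rigidity of the two-sided John condition under blow-ups to rule out the failure of the quantitative bound; the structural reason this works is that $\delta$-Reifenberg flatness is a scale-invariant supremum and the left-hand side of \eqref{eq:iugf.AA} enjoys the same scaling behavior.
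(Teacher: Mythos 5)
Your proposal is correct and follows essentially the same route as the paper: reduce to the UR setting via \eqref{eq:rTG866}, apply estimate \eqref{eq:iugf.AA} of Theorem~\ref{Kbagg} to pass from the Riesz transforms to the unit normal, and then invoke the quantitative result from \cite{HoMiTa10} (the paper cites Corollary~4.20 there) linking ${\rm dist}(\nu,{\rm VMO}(\partial\Omega))$ to $\delta$-Reifenberg flatness. The compactness backup you sketch at the end is unnecessary, since the cited result of \cite{HoMiTa10} is already stated in the quantitative form $\delta=C_o\cdot\varepsilon$.
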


\begin{proof}
It is known that if $\Omega\subseteq\mathbb{R}^n$ is an open set
with a compact Ahlfors regular boundary, satisfying a two-sided John
condition, and such that
\begin{equation}\label{eq:YYjygg.ttt}
{\rm dist}\big(\nu\,,\,{\rm VMO}(\partial\Omega)\big)<\varepsilon
\end{equation}
(with the distance considered in ${\rm BMO}(\partial\Omega)$), then $\Omega$ is a
$\delta$-Reifenberg flat domain for the choice $\delta=C_o\cdot\varepsilon$,
where the constant $C_o\in(0,\infty)$ is as in the statement of the
theorem. See \cite[Definition~4.7 p.\,2690 and Corollary~4.20
p.\,2710]{HoMiTa10} in this regard. Granted this, the desired
conclusion follows by invoking Theorem~\ref{Kbagg}, since our
assumptions on $\Omega$ guarantee that this is a {\rm UR} domain
(cf. \eqref{eq:rTG866}).
\end{proof}

In this last part of this section we discuss a (partial) extension
of Theorem~\ref{Main-T1aa} in the context of Besov spaces. We begin
by defining this scale, and recalling some of its most basic
properties.

\begin{definition}\label{ndyay}
Assume that $\Sigma\subset{\mathbb{R}}^n$ is an Ahlfors regular set
and let $\sigma:={\mathcal{H}}^{n-1}\lfloor{\Sigma}$. Then, given
$1\leq p\leq\infty$ and $0<s<1$, define the Besov space
\begin{eqnarray}\label{b-UU.5}
B^{p,p}_s(\Sigma):=\big\{f\in
L^p(\Sigma,\sigma):\,\|f\|_{B^{p,p}_s(\Sigma)}<+\infty\big\}
\end{eqnarray}
where
\begin{eqnarray}\label{b-UU.6}
\|f\|_{B^{p,p}_s(\Sigma)}:=\|f\|_{L^p(\Sigma,\sigma)}+\Bigl(\int_{\Sigma}\int_{\Sigma}
\frac{|f(x)-f(y)|^p}{|x-y|^{n-1+sp}}\,d\sigma(x)\,d\sigma(y)\Bigr)^{1/p},
\end{eqnarray}
with the convention that
\begin{equation}\label{Ugagb64dc}
B^{\infty,\infty}_s(\Sigma):={\mathscr{C}}^s(\Sigma)\,\,\mbox{ and
}\,\,
\|f\|_{B^{\infty,\infty}_s(\Sigma)}:=\|f\|_{{\mathscr{C}}^s(\Sigma)}.
\end{equation}

Finally, denote by $B^{p,p}_{s,\,{\rm loc}}(\Sigma)$ the space of
functions whose truncations by smooth and compactly supported
functions belong to $B^{p,p}_s(\Sigma)$.
\end{definition}

Consider $\Sigma$ as in Definition~\ref{ndyay} and suppose $1\leq
p_0,p_1\leq\infty$ and $s_0,s_1\in(0,1)$ are such that
\begin{eqnarray}\label{akjhlu-2}
\frac{1}{p_1}-\frac{s_1}{n-1}=\frac{1}{p_0}-\frac{s_0}{n-1}\quad\mbox{and}\quad
s_0\geq s_1.
\end{eqnarray}
Then \cite[Proposition~5, p.\,213]{JoWa84} gives that
\begin{eqnarray}\label{akjhlu-1}
B^{p_0,p_0}_{s_0}(\Sigma)\hookrightarrow
B^{p_1,p_1}_{s_1}(\Sigma)\,\,\mbox{ continuously}.
\end{eqnarray}
In particular,
\begin{eqnarray}\label{akjhlu-eW}
B^{p,p}_{s}(\Sigma)\hookrightarrow{\mathscr{C}}^\alpha(\Sigma)\,\,\mbox{
if }\,\,
p\in[1,\infty],\,\,s\in(0,1),\,\,sp>n-1,\,\,\alpha:=s-\tfrac{n-1}{p}.
\end{eqnarray}
In turn, from \eqref{b-UU.5}-\eqref{b-UU.6} and \eqref{akjhlu-eW}
one may easily deduce that
\begin{eqnarray}\label{akjhlu-eW.2}
\mbox{if }\,\,p\in[1,\infty],\,\,s\in(0,1)\,\,\mbox{ satisfy
}\,\,sp>n-1 , \mbox{ then }\,\,B^{p,p}_{s}(\Sigma)\,\,\mbox{ is an
algebra},
\end{eqnarray}
and
\begin{eqnarray}\label{akjhlu-eW.2bb}
\begin{array}{c}
f/g\in B^{p,p}_{s}(\Sigma)\,\,\mbox{ whenever $f,g\in
B^{p,p}_{s}(\Sigma)$ }
\\[4pt]
\mbox{and $|g|\geq c>0$ $\sigma$-a.e. on $\Sigma$}.
\end{array}
\end{eqnarray}
Another useful simple property is that, given any $p\in[1,\infty]$
and $s\in(0,1)$, if $F:{\mathbb{R}}\to{\mathbb{R}}$ is a bounded
Lipschitz function then
\begin{eqnarray}\label{akjhlu-eW.3}
F\circ f\in B^{p,p}_{s,\,{\rm loc}}(\Sigma)\,\,\mbox{ for every
}\,\,f\in B^{p,p}_{s}(\Sigma).
\end{eqnarray}
Finally, we note that in the case when $\Sigma$ is the graph of a
Lipschitz function $\varphi:{\mathbb{R}}^{n-1}\to{\mathbb{R}}$, from
\cite[Proposition~2.9, p.\,33]{MM} and real interpolation we obtain
that, for each $p\in(1,\infty)$ and $s\in(0,1)$,
\begin{equation}\label{eq:tgGV}
f\in B^{p,p}_s(\Sigma)\Longleftrightarrow f(\cdot,\varphi(\cdot))\in
B^{p,p}_s({\mathbb{R}}^{n-1}).
\end{equation}

\begin{proposition}\label{bd-Bes}
Assume $\Omega\subset{\mathbb{R}}^{n}$ is a Lebesgue measurable set
whose boundary is compact, Ahlfors regular, and satisfies
\eqref{Tay-1}. Then
\begin{equation}\label{mcskF}
{\mathcal{C}}^{{}^{\rm pv}}:B^{p,p}_s(\partial\Omega)
\otimes{\mathcal{C}}\!\ell_{n}\longrightarrow
B^{p,p}_s(\partial\Omega)\otimes{\mathcal{C}}\!\ell_{n}
\end{equation}
is well-defined and bounded for each $p\in[1,\infty]$ and
$s\in(0,1)$.
\end{proposition}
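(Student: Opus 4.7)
The endpoint case $p=\infty$ is an immediate consequence of Theorem~\ref{i65r5ED} combined with the identification $B^{\infty,\infty}_s(\partial\Omega)=\mathscr{C}^s(\partial\Omega)$ from \eqref{Ugagb64dc}; so assume $p\in[1,\infty)$. Starting from the pointwise representation \eqref{Cau-C2kjf}, I would write $\mathcal{C}^{{}^{\rm pv}}\!f=\pm\tfrac{1}{2}f+\mathcal{K}f$ where
\[
\mathcal{K}f(x):=\frac{1}{\omega_{n-1}}\int_{\partial\Omega}\frac{x-y}{|x-y|^{n}}\odot\nu(y)\odot\bigl(f(y)-f(x)\bigr)\,d\sigma(y).
\]
Since $f\mapsto\pm\tfrac{1}{2}f$ is trivially bounded on $B^{p,p}_s(\partial\Omega)\otimes\mathcal{C}\!\ell_{n}$, matters reduce to controlling both the $L^p$-norm and the Gagliardo seminorm of $\mathcal{K}f$ in terms of $\|f\|_{B^{p,p}_s}$.

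For the $L^p$ piece, I would start from $|\mathcal{K}f(x)|\leq C\int_{\partial\Omega}|f(y)-f(x)||x-y|^{-(n-1)}\,d\sigma(y)$ and apply H\"older's inequality with the multiplicative splitting $|x-y|^{-(n-1)}=|x-y|^{-(n-1+sp)/p}\cdot|x-y|^{s-(n-1)/p'}$. The first factor pairs with $|f(y)-f(x)|$ to produce the Gagliardo integrand of $\|f\|_{B^{p,p}_s}$, while the second factor raised to the $p'$-th power becomes $|x-y|^{sp'-(n-1)}$, which is integrable on the compact upper Ahlfors regular set $\partial\Omega$ with bound $\leq C\,[{\rm diam}(\partial\Omega)]^{sp'}$. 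Raising to the $p$-th power and integrating in $x$ yields $\|\mathcal{K}f\|_{L^p(\partial\Omega,\sigma)}\leq C[{\rm diam}(\partial\Omega)]^s[f]_{B^{p,p}_s}$.

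For the Gagliardo seminorm, I would replay, for fixed $x_1,x_2\in\partial\Omega$ with $r:=|x_1-x_2|$, the four-piece decomposition $|\mathcal{K}f(x_1)-\mathcal{K}f(x_2)|\leq II_1^a+II_1^b+II_2+II_3$ used in the proof of Theorem~\ref{i65r5ED}, with the pointwise bounds $II_1^a\leq C|f(x_1)-f(x_2)|$ (via Lemma~\ref{urdyyy7}), $II_2\leq C\int_{B(x_1,2r)\cap\partial\Omega}|f(y)-f(x_1)||x_1-y|^{-(n-1)}\,d\sigma(y)$ (with $II_3$ analogous under $x_1\leftrightarrow x_2$), and $II_1^b\leq Cr\int_{|x_1-y|>2r}|f(y)-f(x_2)||x_1-y|^{-n}\,d\sigma(y)$. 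The $II_1^a$-contribution to the bi-integral $\iint|\,\cdot\,|^p|x_1-x_2|^{-(n-1+sp)}\,d\sigma\,d\sigma$ is exactly $[f]_{B^{p,p}_s}^p$. For the remaining three pieces I would dyadically split the inner integration domain into annuli $\{2^{-k}r\leq|x_1-y|<2^{-k+1}r\}$ (for $II_2,II_3$) or $\{2^{k}r\leq|x_1-y|<2^{k+1}r\}$ (for $II_1^b$), estimate each dyadic slice via local H\"older, sum via a weighted H\"older in the discrete index $k$ with a small absorbing exponent $\delta>0$ (so the resulting geometric series converges absolutely), and finally apply Fubini to exchange the variables $x_2$ and $y$.

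The main obstacle is precisely this dyadic-H\"older-Fubini coordination. A single global H\"older on $B(x_1,2r)$ followed by Fubini would lead to the integral $\int_{|x_1-x_2|>|x_1-y|/2}|x_1-x_2|^{-(n-1)}\,d\sigma(x_2)$, which by Ahlfors regularity diverges logarithmically in $|x_1-y|$ and would prevent the recovery of $\|f\|_{B^{p,p}_s}^p$. The dyadic scheme sidesteps this because, for each fixed $y$, the set of $x_2\in\partial\Omega$ contributing to a given dyadic shell around $x_1$ has a $|x_1-x_2|^{-(n-1)}$-weighted $\sigma$-measure that is a bounded constant independent of $k$; summing the resulting $k$-indexed geometric series then recovers exactly the Gagliardo integrand $|f(y)-f(x_1)|^p|x_1-y|^{-(n-1+sp)}$, closing the estimate.
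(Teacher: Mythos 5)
Your proposal is correct in outline, but it takes a genuinely different route from the paper. The paper's own proof is a two-line deduction by real interpolation: the endpoint $p=\infty$ is Theorem~\ref{i65r5ED} (via \eqref{Ugagb64dc}), the endpoint $p=1$ is quoted from the atomic/molecular theory of Besov spaces on spaces of homogeneous type in \cite{HaYa03}, and the intermediate cases follow from the real interpolation framework of \cite{HaMuYa08}. You instead give a direct, self-contained estimate of the Gagliardo seminorm of ${\mathcal{C}}^{{}^{\rm pv}}\!f$, reusing the four-piece decomposition $II_1^a+II_1^b+II_2+II_3$ from the H\"older proof and upgrading the pointwise bounds to $L^p$-bounds via a dyadic annulus decomposition with a weighted discrete H\"older inequality (small exponent $\delta$ with $0<\delta<\min\{s,1-s\}$) before applying Fubini; I checked that the geometric series and the Fubini exchanges close correctly in all three nontrivial pieces, and your diagnosis of why a single global H\"older followed by Fubini fails (a logarithmic divergence for $p>1$) is accurate. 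What each approach buys: the paper's argument is short but outsources both endpoints' machinery to external references; yours is longer but elementary, uniform in $p\in[1,\infty]$, and moreover shows that the principal value limit in \eqref{Cau-C2} actually exists $\sigma$-a.e.\ for every $f\in B^{p,p}_s(\partial\Omega)\otimes{\mathcal{C}}\!\ell_n$, which the interpolation argument does not directly address. One small gap you should fill explicitly: the representation \eqref{Cau-C2kjf} is established in the paper only for H\"older $f$; to use it for $f\in B^{p,p}_s$ with $sp\leq n-1$ you must first note that your $L^p$ bound shows $\int_{\partial\Omega}|f(y)-f(x)|\,|x-y|^{-(n-1)}\,d\sigma(y)<\infty$ for $\sigma$-a.e.\ $x$, and then rerun \eqref{ndksegb} using Lemma~\ref{yreLLa.2} and dominated convergence to recover \eqref{Cau-C2kjf} almost everywhere.
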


\begin{proof}
One way to see this is via real interpolation (cf.
\cite[S~8.1]{HaMuYa08} for a version suiting the current setting)
between the boundedness result proved in Theorem~\ref{i65r5ED}
(corresponding to \eqref{mcskF} when $p=\infty$; cf.
\eqref{Ugagb64dc}), and the fact that the operator ${\mathcal{C}}^{{}^{\rm pv}}$
in \eqref{mcskF} with $p=1$ is also bounded (which follows from the
atomic/molecular theory for the Besov scale on spaces of homogeneous
type from \cite{HaYa03}).
\end{proof}

In order to present the extension of Theorem~\ref{Main-T1aa}
mentioned earlier to the scale of Besov spaces, we make the
following definition.

\begin{definition}\label{lipdom.BES}
Given $p\in[1,\infty]$ and $s\in(0,1)$, call a nonempty, open,
proper subset $\Omega$ of ${\mathbb{R}}^n$ a $B^{p,p}_{s+1}$-{\tt
domain} provided it may be locally identified\,\footnote{in the sense
described in Definition~\ref{lipdom}} near boundary points with the
upper-graph of a real-valued function $\varphi$ defined in
${\mathbb{R}}^{n-1}$ with the property that $\partial_j\varphi\in
B^{p,p}_s({\mathbb{R}}^{n-1})$ for each $j\in\{1,\dots,n-1\}$.
\end{definition}

The stage has been set for stating and proving the following result.

\begin{theorem}\label{Thm-Besov}
Assume $\Omega\subseteq\mathbb{R}^n$ is an Ahlfors regular domain with a compact
boundary, satisfying $\partial\Omega=\partial(\overline{\Omega})$.
Then for each $s\in(0,1)$ and $p\in[1,\infty]$ with the property
that $sp>n-1$ the following claims are equivalent:
\begin{enumerate}
\item[{\rm (a)}] $\Omega$ is a $B^{p,p}_{s+1}$-domain;
\item[{\rm (b)}] the distributional Riesz transforms associated
with $\partial\Omega$ satisfy
\begin{equation}\label{eq:RIESZ33-BB}
R_j1\in B^{p,p}_s(\partial\Omega)\,\,\,\mbox{ for each
}\,\,j\in\{1,\dots,n\};
\end{equation}
\end{enumerate}
\end{theorem}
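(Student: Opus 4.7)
The plan is to parallel the equivalence (a)$\Leftrightarrow$(b) of Theorem~\ref{Main-T1aa}, with the H\"older scale replaced throughout by the Besov scale, and with Proposition~\ref{bd-Bes} substituting for Theorem~\ref{i65r5ED}. The hinge is the pair of identities \eqref{eq:CHba.11} and \eqref{eq:Csq22}, which at $\sigma$-a.e. point of $\partial\Omega$ read (bearing in mind \eqref{j6VGV-aVa})
\[
{\mathcal{C}}^{{}^{\rm pv}}\nu=-\sum_{j=1}^{n}(R_{j}1)\,e_{j},\qquad
\tfrac{1}{4}\nu=-{\mathcal{C}}^{{}^{\rm pv}}\Big(\sum_{j=1}^{n}(R_{j}1)\,e_{j}\Big),
\]
and which, together with the boundedness of ${\mathcal{C}}^{{}^{\rm pv}}$ on $B^{p,p}_{s}(\partial\Omega)\otimes{\mathcal{C}}\!\ell_{n}$, reduce the whole theorem to the equivalence
\[
\nu\in B^{p,p}_{s}(\partial\Omega)\otimes{\mathcal{C}}\!\ell_{n}\Longleftrightarrow R_{j}1\in B^{p,p}_{s}(\partial\Omega)\text{ for each } j\in\{1,\dots,n\}.
\]

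To put ourselves in the scope of Theorem~\ref{Main-T1aa}, the first step is to use the embedding \eqref{akjhlu-eW} to drop from $B^{p,p}_{s}$ into ${\mathscr{C}}^{\alpha}$ with $\alpha:=s-(n-1)/p\in(0,1)$. Either of the hypotheses (a), (b) of the Besov theorem then entails the corresponding H\"older hypothesis in Theorem~\ref{Main-T1aa}: in case (a), because $\partial_{j}\varphi\in B^{p,p}_{s}\subset{\mathscr{C}}^{\alpha}$ shows that $\Omega$ is ${\mathscr{C}}^{1+\alpha}$; in case (b), directly. Theorem~\ref{Main-T1aa} then delivers that $\Omega$ is a ${\mathscr{C}}^{1+\alpha}$ domain and hence a UR domain, so that the identities above are valid, Proposition~\ref{bd-Bes} applies, and $({\mathcal{C}}^{{}^{\rm pv}})^{2}=\tfrac{1}{4}I$ on $B^{p,p}_{s}\otimes{\mathcal{C}}\!\ell_{n}$ (by density of ${\mathscr{C}}^{\alpha}\subset B^{p,p}_{s}$, this is inherited from Theorem~\ref{i65r5ED}).

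The second step is to match the graph-coordinate condition $\partial_{j}\varphi\in B^{p,p}_{s}(\mathbb{R}^{n-1})$ from Definition~\ref{lipdom.BES} to the intrinsic condition $\nu\in B^{p,p}_{s}(\partial\Omega)$. Locally, writing $\partial\Omega$ as the graph of some $\varphi\in{\mathscr{C}}^{1+\alpha}$ (so $\nabla\varphi$ is bounded), the outward unit normal has the explicit form $\nu=\pm(-\nabla\varphi,1)/\sqrt{1+|\nabla\varphi|^{2}}$, with $\nu_{n}\geq c>0$, and the components of $\nu$ and of $\nabla\varphi$ are obtained from one another via algebraic operations, division by $\sqrt{1+|\nabla\varphi|^{2}}$, and substitution into the smooth function $F(t):=1/\sqrt{1+t}$ restricted to a compact interval (which may be extended to a bounded Lipschitz function of $t\in\mathbb{R}$). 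The algebra property \eqref{akjhlu-eW.2}, the quotient stability \eqref{akjhlu-eW.2bb}, the composition property \eqref{akjhlu-eW.3}, and the graph identification \eqref{eq:tgGV}---which transfers the equivalence between $\mathbb{R}^{n-1}$ and the surface patch for $p\in(1,\infty)$; note that the endpoint $p=\infty$ is already the H\"older case covered by Theorem~\ref{Main-T1aa}, while $p=1$ is vacuous in the range $sp>n-1$, $s<1$, $n\geq 2$---then yield the local equivalence. A partition of unity subordinate to a finite cover of the compact set $\partial\Omega$ globalizes it.

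The main obstacle will be the nonlinear bookkeeping in this second step: verifying that division by $\sqrt{1+|\nabla\varphi|^{2}}$ and substitution into $F$ preserve $B^{p,p}_{s}$-regularity. This works because the a priori ${\mathscr{C}}^{1+\alpha}$-regularity extracted in the first step confines all nonlinear activity inside a precompact set of values, so that the standard algebra, quotient, and composition properties of Besov spaces apply without issue. Once the graph-to-surface transfer is in hand, the proof closes by applying ${\mathcal{C}}^{{}^{\rm pv}}$ to the two displayed identities above while invoking Proposition~\ref{bd-Bes}.
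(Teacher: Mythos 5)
Your proposal is correct and follows essentially the same route as the paper: reduce to the H\"older case via the embedding \eqref{akjhlu-eW} to secure ${\mathscr{C}}^{1+\alpha}$ (hence {\rm UR}) regularity, use the pointwise identities \eqref{eq:CHba.11} and \eqref{eq:Csq22} together with Proposition~\ref{bd-Bes} to pass between $\nu$ and the $R_j1$'s in $B^{p,p}_s(\partial\Omega)$, and use \eqref{eq:tgGV} with the algebra, quotient, and composition properties \eqref{akjhlu-eW.2}--\eqref{akjhlu-eW.3} to match $\nu\in B^{p,p}_s(\partial\Omega)$ with $\partial_j\varphi\in B^{p,p}_s({\mathbb{R}}^{n-1})$. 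One small caveat: your appeal to the operator identity $({\mathcal{C}}^{{}^{\rm pv}})^2=\tfrac14 I$ on $B^{p,p}_s$ ``by density of ${\mathscr{C}}^\alpha\subset B^{p,p}_s$'' is both imprecise (${\mathscr{C}}^\alpha$ with $\alpha=s-\tfrac{n-1}{p}<s$ need not embed into $B^{p,p}_s$) and unnecessary, since the $\sigma$-a.e. pointwise identity \eqref{eq:Csq22}, already valid in $L^2$, combined with the boundedness of ${\mathcal{C}}^{{}^{\rm pv}}$ from Proposition~\ref{bd-Bes}, is all that is used.
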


\begin{proof}
Consider the implication ${\rm (b)}\Rightarrow{\rm (a)}$. The
starting point is the observation that \eqref{eq:RIESZ33-BB} and
\eqref{akjhlu-eW} imply \eqref{eq:RIESZ33} for
$\alpha:=s-\tfrac{n-1}{p}\in(0,1)$. As such, Theorem~\ref{Main-T1aa}
applies and gives that $\Omega$ is a domain of class
${\mathscr{C}}^{1+\alpha}$. Hence, locally, the outward unit normal
$\nu$ to $\Omega$ has components $(\nu_j)_{1\leq j\leq n}$ of the
form
\begin{equation}\label{ndfidedo}
\nu_j(x',\varphi(x'))= \left\{
\begin{array}{ll}
\frac{\partial_j\varphi(x')}{\sqrt{1+|\nabla\varphi(x')|^2}} &\mbox{
if }\,1\leq j\leq n-1,
\\[8pt]
-\frac{1}{\sqrt{1+|\nabla\varphi(x')|^2}} &\mbox{ if }\,j=n,
\end{array}
\right.
\end{equation}
where $\varphi\in{\mathscr{C}}^{1+\alpha}({\mathbb{R}}^{n-1})$ is a
real-valued function whose upper-graph locally describes $\Omega$.
Without loss of generality it may be assumed that $\varphi$ has
compact support.

On the other hand, from the assumption \eqref{eq:RIESZ33-BB},
Proposition~\ref{bd-Bes}, and \eqref{eq:Csq22} we may conclude that
\begin{equation}\label{eq:ttyy}
\nu\in B^{p,p}_s(\partial\Omega).
\end{equation}
On account of this membership and \eqref{eq:tgGV} we obtain
\begin{equation}\label{eq:ttyy.2}
\nu_j(\cdot,\varphi(\cdot))\in B^{p,p}_s({\mathbb{R}}^{n-1})
\,\,\mbox{ for each }\,\,j\in\{1,\dots,n\}.
\end{equation}
Upon recalling \eqref{akjhlu-eW.2}-\eqref{akjhlu-eW.2bb}, this
further yields
\begin{equation}\label{kdhgw}
\partial_j\varphi=\frac{\nu_j(\cdot,\varphi(\cdot))}{\nu_n(\cdot,\varphi(\cdot))}
\in B^{p,p}_s({\mathbb{R}}^{n-1})\,\,\mbox{ for each
}\,\,j\in\{1,\dots,n-1\},
\end{equation}
proving that $\Omega$ is a $B^{p,p}_{s+1}$-domain.

Concerning the implication ${\rm (a)}\Rightarrow{\rm (b)}$, assume
that $\Omega$ is a $B^{p,p}_{s+1}$-domain with $s,p$ as before. From
definitions and \eqref{akjhlu-eW} (used with
$\Sigma:={\mathbb{R}}^{n-1}$) it follows that $\Omega$ is a domain
of class ${\mathscr{C}}^{1+\alpha}$ with $\alpha:=s-\tfrac{n-1}{p}$.
Hence, in particular, $\Omega$ is a Lipschitz domain. We claim that
\eqref{eq:ttyy} holds. Thanks to \eqref{eq:tgGV}, justifying this
claim comes down to proving that \eqref{eq:ttyy.2} holds, where
$\varphi$ is a real-valued function defined in ${\mathbb{R}}^{n-1}$
satisfying $\partial_j\varphi\in B^{p,p}_s({\mathbb{R}}^{n-1})$ for
each $j\in\{1,\dots,n-1\}$, and whose upper-graph locally describes
$\Omega$ (again, without loss of generality it may be assumed that
$\varphi$ has compact support). To this end, consider the function
$F:{\mathbb{R}}\to{\mathbb{R}}$ given by
$F(t):=\tfrac{1}{\sqrt{1+|t|}}$ for each $t\in{\mathbb{R}}$, and
note that $F$ is both bounded and Lipschitz. Since by
\eqref{akjhlu-eW.2}
\begin{equation}\label{eq:rtgV}
|\nabla\varphi|^2=\sum\limits_{j=1}^{n-1}(\partial_j\varphi)(\partial_j\varphi)\in
B^{p,p}_s({\mathbb{R}}^{n-1}),
\end{equation}
it follows from \eqref{akjhlu-eW.3} that
\begin{equation}\label{eq:rtgV.2}
\nu_n(\cdot,\varphi(\cdot))=-F\circ|\nabla\varphi|^2\in
B^{p,p}_{s,\,{\rm loc}}({\mathbb{R}}^{n-1}).
\end{equation}
Granted this, another reference to \eqref{akjhlu-eW.2} gives that
for each $j\in\{1,\dots,n-1\}$
\begin{equation}\label{ndfidedo.222}
\nu_j(\cdot,\varphi(\cdot))=\frac{\partial_j\varphi}{\sqrt{1+|\nabla\varphi|^2}}
=-\partial_j\varphi\cdot\nu_n(\cdot,\varphi(\cdot))\in
B^{p,p}_s({\mathbb{R}}^{n-1}).
\end{equation}
This finishes the proof of \eqref{eq:ttyy.2}, hence completing the
justification of \eqref{eq:ttyy}. Having established this, bring in
identity \eqref{eq:CHba.11} in order to conclude on account of
Proposition~\ref{bd-Bes} that
\begin{equation}\label{eq:CHba.11.pauh}
\sum_{j=1}^n(R_j1)e_j=\sum_{j=1}^n
\big(R^{{}^{\rm pv}}_j1\big)e_j=-{\mathcal{C}}^{{}^{\rm pv}}\!
\nu\in B^{p,p}_s(\partial\Omega)\otimes{\mathcal{C}}\!\ell_{n}.
\end{equation}
Since this readily implies \eqref{eq:RIESZ33-BB}, the implication
${\rm (a)}\Rightarrow{\rm (b)}$ is established.
\end{proof}

Lastly, we remark that the limiting case $s=1$ of
Theorem~\ref{Thm-Besov} also holds provided $p\in(n-1,\infty)$ and
the Besov space intervening in \eqref{eq:RIESZ33-BB} is replaced by
$L^p_1(\partial\Omega)$, the $L^p$-based Sobolev space of order one
on $\partial\Omega$ considered in \cite{HoMiTa10} (in which scenario
$\Omega$ is an $L^p_2$-domain, in a natural sense). The proof
follows the same blue-print, and makes use of the fact that
${\mathcal{C}}^{{}^{\rm pv}}$ is a bounded operator from
$L^p_1(\partial\Omega)\otimes{\mathcal{C}}\!\ell_{n}$ into itself
(cf. \cite{IMiMiTa.1}, \cite{IMiMiTa} in this regard).

\small

\vskip 0.20in
\begin{minipage}[t]{7.5cm}

\noindent {\tt Dorina Mitrea}

\noindent Department of Mathematics

\noindent University of Missouri at Columbia

\noindent Columbia, MO 65211, USA

\vskip 0.08in

\noindent {\tt e-mail}: {\it mitread\@@missouri.edu}

\vskip 0.15in

\noindent {\tt Marius Mitrea}

\noindent Department of Mathematics

\noindent University of Missouri at Columbia

\noindent Columbia, MO 65211, USA

\vskip 0.08in

\noindent {\tt e-mail}: {\it mitream\@@missouri.edu}

\end{minipage}
\hfill
\begin{minipage}[t]{7.5cm}

\noindent {\tt Joan Verdera}

\noindent Department de Matem\`atiques

\noindent Universitat Aut\`onoma de Barcelona

\noindent 08193 Bellaterra, Barcelona, Catalonia

\vskip 0.08in

\noindent {\tt e-mail}: {\it jvm\@@mat.uab.cat}
\end{minipage}
\end{document}